\documentclass[11pt,a4paper,reqno]{amsart}
\usepackage[T1]{fontenc}
\usepackage{amssymb}
\usepackage[margin=1in]{geometry}
\usepackage{enumitem}
\usepackage{xcolor}
\usepackage[hidelinks]{hyperref}
\numberwithin{equation}{section}

\theoremstyle{plain}
\newtheorem{theorem}{Theorem}[section]
\newtheorem{lemma}[theorem]{Lemma}
\newtheorem{proposition}[theorem]{Proposition}
\newtheorem{corollary}[theorem]{Corollary}
\newtheorem{maintheorem}{Theorem}

\theoremstyle{definition}
\newtheorem{definition}[theorem]{Definition}
\newtheorem{example}[theorem]{Example}
\newtheorem{question}[theorem]{Question}
\theoremstyle{remark}
\newtheorem{remark}[theorem]{Remark}

\newcommand{\N}{\mathbb N}
\newcommand{\R}{\mathbb R}
\newcommand{\K}{\mathcal K}
\newcommand{\dimH}{\dim_{\mathrm H}}
\newcommand{\dimP}{\dim_{\mathrm P}}
\newcommand{\udimB}{\overline{\dim}_{\mathrm B}}
\newcommand{\ldimB}{\underline{\dim}_{\mathrm B}}
\DeclareMathOperator{\diam}{diam}
\DeclareMathOperator{\dist}{dist}
\DeclareMathOperator{\supp}{supp}

\title[The packing spectrum: interpolation between Hausdorff and packing dimensions]{The packing spectrum: interpolation between Hausdorff and packing dimensions}

\author[Peizhi Liu]{Peizhi Liu}
\address{School of Mathematics and Statistics,  Nanjing University of Science and Technology, Nanjing 210094, China}
\email{liupeizhi@njust.edu.cn}

\subjclass[2020]{Primary 28A78; Secondary 28A80, 28A12}
\keywords{Packing spectrum, Hausdorff dimension, packing dimension,
intermediate dimensions, metric outer measure, dimension interpolation,
Cartesian products}
\date{}

\begin{document}
\begin{abstract}

We introduce a family of metric outer measures whose critical exponents
form a packing spectrum interpolating between Hausdorff and packing
dimensions. For analytic sets, we characterize each spectral value
as the supremum of the Hausdorff dimension of a product with a
compact testing set minus the testing set's upper intermediate
dimension at the complementary parameter. The associated product
bounds refine the classical Hausdorff--packing inequalities and can
give exact Hausdorff additivity when both factors have distinct
endpoint dimensions. Below the packing endpoint, we prove a
variational principle in terms of windowed local mass exponents
and sharp parameter estimates implying local Lipschitz continuity;
a jump at the endpoint is possible. We characterize all attainable
profiles by monotonicity, continuity below the packing endpoint,
and a sharp upper right Dini-derivative bound. Every admissible
profile is realized by a compact binary digit set whose uniform
measure attains the finite-window and classical packing variational
suprema. Complementary digit positions yield reflected upper
intermediate-dimension profiles and a single factor attaining
the Hausdorff product bounds at every parameter, with an Ahlfors
regular product measure.
\end{abstract}

\maketitle

\section{Introduction}
\label{sec:introduction}

Hausdorff dimension need not be additive under Cartesian products.
For nonempty compact sets $E\subset\R^m$ and $F\subset\R^n$, write
\[
 \Delta(E,F)=\dimH(E\times F)-\dimH E-\dimH F.
\]
The classical Hausdorff--packing inequalities give
\[
 0\le\Delta(E,F)
 \le\min\{\dimP E-\dimH E,\,\dimP F-\dimH F\};
\]
see~\cite{Tricot1982,Howroyd1996}.
These bounds depend only on the endpoint dimensions of the factors.
We seek finer bounds that record their behavior over intermediate
ranges of scales, and a dimension invariant characterized by the
resulting product estimates.

We introduce a packing spectrum $\dim_\theta^{\mathrm P}$,
$0\le\theta\le1$, whose endpoints are Hausdorff and packing
dimensions. Its definition uses metric outer measures obtained by
covering a set with carriers and measuring packings within each
carrier over a prescribed scale window. Matching this window with
the covering window for upper intermediate dimension yields
Hausdorff product bounds and a converse characterization by compact
testing sets.

Scale restrictions already underlie several dimension spectra.
The intermediate dimensions of Falconer, Fraser and
Kempton~\cite{FFK2020} restrict the diameters in Hausdorff covers,
whereas the Assouad spectrum restricts the relation between the
scales in local covering estimates~\cite{FraserYu2018,FraserBook2020}.
For surveys, see~\cite{FraserInterpolating2021,FalconerSurvey2021}.
Our construction interpolates between Hausdorff and packing
dimensions at the level of metric outer measures and retains
countable stability at every parameter.

\subsection*{The packing spectrum}
\begin{definition}
\label{def:packing-spectrum}
A \emph{labelled carrier} is a pair $(E,R)$ with
$E\subset\mathbb R^d$, $0<R<1$, and $\diam E\le R$.
For $s\ge0$, $\theta\in[0,1]$, and a nonempty labelled carrier $(E,R)$,
define
\begin{equation}\label{eq:local-labelled}
\Phi^{s}_{\theta}(E,R)
=
\sup\left\{
\sum_{i=1}^{N}(2r_i)^s:
\begin{array}{l}
x_i\in\overline E,\quad R^{1/(1-\theta)}\le r_i\le R,\\
|x_i-x_j|\ge r_i+r_j\quad(i\ne j)
\end{array}
\right\}.
\end{equation}
At $\theta=1$, replace the radius condition by $0<r_i\le R$.
Set $\Phi^s_\theta(\varnothing,R)=0$.
For $F\subset\mathbb R^d$ and $0<\delta<1$, define
\begin{equation}\label{eq:outer-labelled}
\mathcal P^{s}_{\theta,\delta}(F)
=\inf\left\{
\sum_j \Phi^s_\theta(E_j,R_j):
F\subset\bigcup_j E_j,\quad
\diam E_j\le R_j\le\delta,\quad R_j>0
\right\}.
\end{equation}
Set
\[
\mathcal P^{s}_{\theta}(F)
=\lim_{\delta\downarrow0}\mathcal P^{s}_{\theta,\delta}(F),
\qquad
\dim_\theta^{\mathrm P}F
=\inf\{s>0:\mathcal P^{s}_{\theta}(F)=0\}.
\]
The function $\theta\mapsto\dim_\theta^{\mathrm P}F$ is the
\emph{packing spectrum} of $F$.
\end{definition}

At $\theta=0$, the window consists of the single radius $R$.
For $0<\theta<1$, it is $[R^{1/(1-\theta)},R]$; at $\theta=1$,
all radii up to the label are allowed. The last case is the
labelled packing construction of~\cite{Liu2026Labelled}.
That work supplies the unrestricted-radius foundation and packing
endpoint; the present paper develops the finite-window theory.

\begin{maintheorem}
\label{thm:intro-structure}
For every $s\ge0$ and $\theta\in[0,1]$, $\mathcal P^s_\theta$ is a
metric outer measure. For $s>0$,
\[
 \mathcal P^s_0=2^s\mathcal H^s,
 \qquad \mathcal P^s\le\mathcal P^s_1\le M_d\mathcal P^s,
\]
where $\mathcal H^s$ is Hausdorff measure with the
diameter-cost normalization, $\mathcal P^s$ is classical packing measure with cost $(2r)^s$
and $M_d$ depends only on $d$. The packing spectrum is countably stable, bi-Lipschitz
invariant, and satisfies the usual H\"older image bound. The
spectrum is locally Lipschitz on $[0,1)$, although a jump at $1$ is
possible. For analytic sets, the spectrum is determined by compact
subsets.
For each $0\le\theta<1$, the map
$K\mapsto\dim_\theta^{\mathrm P}K$ on the hyperspace of nonempty
compact sets, equipped with the Hausdorff metric, is of Baire class
at most $2$.
\end{maintheorem}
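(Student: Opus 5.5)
This theorem collects several separate assertions, so I will prove them in turn, working mostly from the definitions.

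\emph{Metric outer measure.} $\mathcal P^s_{\theta,\delta}$ is a Method~I construction from the set function $(E,R)\mapsto\Phi^s_\theta(E,R)$ on labelled carriers. Hence it is an outer measure, nondecreasing as $\delta\downarrow0$, and $\mathcal P^s_\theta$ is an outer measure. For the metric property, take $F_1,F_2$ with $\dist(F_1,F_2)>0$ and $\delta<\dist(F_1,F_2)$. Every carrier with $\diam E_j\le R_j\le\delta$ meets at most one of $F_1,F_2$. A near-optimal cover of $F_1\cup F_2$ therefore splits into covers of $F_1$ and of $F_2$, which gives additivity in the limit.

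\emph{Endpoints.} At $\theta=0$ the window is $\{R\}$. Any two packed balls satisfy $|x_i-x_j|\ge 2R>\diam \overline E$, so at most one ball fits, and $\Phi^s_0(E,R)=(2R)^s$ for nonempty $E$. Optimising $R=\diam E$ gives $\mathcal P^s_{0,\delta}=2^s\mathcal H^s_\delta$, up to the harmless case $\diam E=0$, which is handled by a limiting choice $R\downarrow0$. At $\theta=1$ the comparison $\mathcal P^s\le\mathcal P^s_1\le M_d\mathcal P^s$ is the labelled packing result of \cite{Liu2026Labelled}, and I would quote it.

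\emph{Stability, Lipschitz and H\"older invariance, compact determination.} Countable stability follows from countable subadditivity together with monotonicity. For an $L$-bi-Lipschitz map $f$, send a carrier $(E,R)$ to $(f(E),LR)$. Packings in $f(\overline E)$ pull back to packings in $\overline E$ with radii changed by factors of at most $L^{\pm2}$; by taking $\delta$ small, a constant factor change of radius distorts the window only at the level of exponents that vanish as $R\to0$. So the critical exponent is unchanged, at the cost of an $\varepsilon$-loss in $\theta$. This loss is removed by the local Lipschitz continuity proved later. For H\"older images $|f(x)-f(y)|\le C|x-y|^\alpha$, I would use the carrier $(f(E),CR^\alpha)$. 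The window $[R^{\alpha/(1-\theta)},R^\alpha]$ corresponds to the same $\theta$, and an image packing forces separated preimage points. A covering-number comparison then gives $\dim_\theta^{\mathrm P} f(F)\le\alpha^{-1}\dim_\theta^{\mathrm P}F$. Compact determination for analytic sets follows the Joyce--Preiss/Howroyd route. For $s<\dim_\theta^{\mathrm P}A$, I would construct, via an increasing-sets lemma for $\mathcal P^s_{\theta,\delta}$ (outer regularity along increasing unions), a compact $K\subset A$ with $\mathcal P^s_\theta(K)>0$. I would transfer the Howroyd--Joyce--Preiss argument from the $\theta=1$ case in \cite{Liu2026Labelled}.

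\emph{Continuity in $\theta$ and Baire class.} These are the main obstacles. For local Lipschitz continuity on $[0,1)$ I would compare windows. If $\theta<\theta'$, a carrier of label $R$ at parameter $\theta'$ can be refined into sub-carriers of label $R^{(1-\theta)/(1-\theta')}$. This is done by covering $\overline E$ by about $R^{-d(\ldots)}$ pieces, or better by a maximal packing at the smallest radius. The result is an estimate of the form $|\dim_{\theta'}-\dim_\theta|\le C(\theta'-\theta)/(1-\theta')$, with $C$ depending on $d$ and on the dimension bounds; this constant blows up at $\theta=1$, consistent with a possible jump there. I expect the precise exponent bookkeeping in this refinement to be the hardest step. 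For the Baire class, I would write $\dim_\theta^{\mathrm P}K>t$ as $\exists s>t\ \forall\delta\ \mathcal P^s_{\theta,\delta}(K)\ge c$. Since $K$ is compact, the infimum may be taken over finite covers by open carriers with rational data, making $K\mapsto\mathcal P^s_{\theta,\delta}(K)$ upper semicontinuous. Countable unions and intersections of the resulting sets then place the superlevel sets in $\Sigma^0_3$, so the map is of Baire class at most $2$.
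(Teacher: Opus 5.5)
Most of your outline follows the paper: the metric splitting argument, the computation $\Phi^s_0(E,R)=(2R)^s$, countable stability, the H\"older pullback of packings, an increasing-sets lemma with a Suslin scheme for compact determination, and upper semicontinuity of a content on $\mathcal K(\R^d)$ for the Baire class. Quoting the earlier labelled-packing work for $\theta=1$ is acceptable, although the paper reproves it.

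\textbf{The continuity step has a genuine gap.} As described, you start from a carrier at the larger parameter $\theta'$ and refine it into carriers at $\theta$. That either bounds the $\theta$-costs of the pieces by the $\theta'$-cost, which only reproduces the trivial monotonicity $\dim_\theta^{\mathrm P}F\le\dim_{\theta'}^{\mathrm P}F$, or bounds the $\theta'$-cost of a carrier you would first have to possess. What you need is the opposite passage: from a labelled cover of $F$ with small $\Phi^s_\theta$-cost, produce one with small $\Phi^t_{\theta'}$-cost. The proposal never supplies this.

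The fix is to keep the carriers and enlarge the labels. Your exponent $(1-\theta)/(1-\theta')$ is the right scale relation, but no refinement or covering count is needed. Put $p=(1-\theta)^{-1}<p'=(1-\theta')^{-1}$ and $T=R^{p/p'}>R$. Then the $\theta'$-window of $(E,T)$ is $[R^p,T]$. A radius exceeding $R\ge\diam E$ forces a single ball, and every other radius lies in $[R^p,R]$. Since $\Phi^s_\theta(E,R)\ge(2R)^s$,
\[
 \Phi^t_{\theta'}(E,T)\le2^{t-s}\bigl(R^{tp/p'-s}+R^{t-s}\bigr)\Phi^s_\theta(E,R).
\]
Hence $\dim_{\theta'}^{\mathrm P}F\le\frac{1-\theta}{1-\theta'}\dim_\theta^{\mathrm P}F$, which already gives local Lipschitz continuity on $[0,1)$.

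The paper's Lemma~\ref{lem:local_comp} sharpens this. It uses the label $R^\alpha$ with $\alpha$ slightly above $p/p'$. It controls radii below $R^p$ by a maximal $2R^p$-separated set, of cardinality at most $(2R^p)^{-s}\Phi^s_\theta(E,R)$, together with a volume count. This yields the sharp bound of Theorem~\ref{thm:quantitative-continuity}, which is needed later for the Dini condition but not for this statement.

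\textbf{Three further points.}

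First, the clause ``a jump at $1$ is possible'' is an existence claim. Saying the estimate is ``consistent with a possible jump'' does not prove it. The estimate above at $\theta=0$ shows that $\dimH F=0$ forces $\dim_\theta^{\mathrm P}F=0$ for every $\theta<1$. So any compact set with $\dimH F=0<\dimP F$ exhibits a jump.

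Second, bi-Lipschitz maps incur no $\varepsilon$-loss in $\theta$. Pulling back $(y_i,\rho_i)$ with $r_i=\rho_i/L$ sends the window $[(LR)^p,LR]$ into $[L^{p-1}R^p,R]\subset[R^p,R]$, because $L\ge1$. So the H\"older case with $\alpha=1$, applied to $f$ and $f^{-1}$, suffices.

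Third, in the Baire-class and compact-determination steps, compactness of $K$ only gives finite subcovers. The fact that open carriers cost essentially no more rests on the upper semicontinuity of $\Phi^s_\theta$ under Hausdorff convergence of carriers and labels (Lemma~\ref{lem:local-cost-usc}). That lemma uses $\theta<1$, through the bounded number of balls with radii bounded below, and it fails at $\theta=1$. You also need strict labels $R<\delta$, as in the paper, so that carriers can be enlarged.
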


In particular,
\[
 \dim_0^{\mathrm P}E=\dimH E,\qquad
 \dim_1^{\mathrm P}E=\dimP E.
\]
The spectrum is nondecreasing in $\theta$.
More precisely, if $f(\theta)=\dim_\theta^{\mathrm P}E$ and
$0\le\theta_1<\theta_2<1$, then
\[
 f(\theta_1)\le f(\theta_2)
 \le \frac{d f(\theta_1)}
 {c d+(1-c)f(\theta_1)},\qquad
 c=\frac{1-\theta_2}{1-\theta_1}.
\]
This is Theorem~\ref{thm:quantitative-continuity}; it gives both
the stated regularity and the necessary profile constraint below.
The remaining properties are established in
Section~\ref{sec:basic-properties}.

\subsection*{A measure-theoretic characterization}
The packing spectrum admits a local mass description, which will
also be used in the compact testing construction below. For a compactly supported Borel
probability measure $\mu$, let $B(x,r)$ be the open ball and put
\[
 d_\theta^\mu(x)
 =\liminf_{R\downarrow0}
   \sup_{R^{1/(1-\theta)}\le r\le R}
   \frac{\log\mu(B(x,r))}{\log r},
 \qquad x\in\supp\mu,\quad 0\le\theta<1.
\]
At $\theta=0$ this is the lower local dimension.
The classical packing endpoint is treated separately.

\begin{maintheorem}
\label{thm:intro-variational}
For every nonempty analytic set $E\subset\R^d$ and every
$0\le\theta<1$,
\[
 \dim_\theta^{\mathrm P}E
 =\sup_{\substack{\mu\text{ probability}\\\supp\mu\Subset E}}
   \operatorname*{ess\,inf}_{x\sim\mu}d_\theta^\mu(x),
\]
where $\supp\mu\Subset E$ means that the support is a compact subset
of $E$. For $0<s<\dim_\theta^{\mathrm P}E$, one such
measure satisfies, for almost every $x$ and every $0<v<s$,
\[
 \forall\,0<R<R_0(x,v)\quad
 \exists\,r\in[R^{1/(1-\theta)},R]:
 \qquad \mu(B(x,r))\le r^v.
\]
\end{maintheorem}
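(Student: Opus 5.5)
The variational formula splits into two inequalities. Throughout, write $W_\theta(R)=[R^{1/(1-\theta)},R]$ for the scale window. The existence statement about an extremal measure will come out of the lower bound.

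\textbf{Upper bound (mass distribution).} Let $\mu$ satisfy $\supp\mu\Subset E$ and $\operatorname{ess\,inf} d^\mu_\theta>t>s'$; we want $\dim^{\mathrm P}_\theta E\ge s'$.

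1. By Egorov-type exhaustion there are a compact set $A\subset\supp\mu$ with $\mu(A)>0$ and an $R_0>0$ such that, for all $x\in A$ and $R<R_0$, some $r\in W_\theta(R)$ satisfies $\mu(B(x,r))\le r^t$.

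2. Take any cover $\{(E_j,R_j)\}$ of $A$ with $R_j\le\delta<R_0$, and fix one carrier $E_j$ meeting $A$.

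3. Inside $E_j$, cover $A\cap E_j$ by balls $B(x,r_x)$ with $x\in A\cap E_j$, $r_x\in W_\theta(R_j)$ and $\mu(B(x,r_x))\le r_x^{t}$.

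4. Apply a Vitali/Besicovitch-type selection to extract a disjoint subfamily $B(x_i,r_i)$ whose $5$-fold enlargements still cover. The selected centres satisfy $|x_i-x_j|\ge r_i+r_j$ and lie in $\overline{E_j}$, so they are admissible in \eqref{eq:local-labelled}. This is the delicate point: the enlarged balls $B(x_i,5r_i)$ need not obey the mass bound. I would avoid this by using a Besicovitch covering with centred balls, which gives bounded overlap $\le N_d$ with the original radii and yields $\mu(A\cap E_j)\le N_d\sum_i\mu(B(x_i,r_i))$.

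5. Since $t>s'$ and $r_i\le\delta$, we get $\mu(A\cap E_j)\lesssim\sum_i r_i^t\le\delta^{t-s'}\sum_i r_i^{s'}\le C\,\delta^{t-s'}\Phi^{s'}_\theta(E_j,R_j)$. In fact the weaker bound $\mu(A\cap E_j)\le C\,\Phi^{s'}_\theta(E_j,R_j)$ suffices.

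6. Summing over $j$ gives $\mathcal P^{s'}_{\theta}(A)\ge\mu(A)/C>0$, hence $\dim^{\mathrm P}_\theta E\ge s'$.

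\textbf{Lower bound (Frostman-type construction).} Fix $0<s<\dim^{\mathrm P}_\theta E$.

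1. By the compact-subsets property in Theorem~\ref{thm:intro-structure}, choose a compact $K\subset E$ with $\dim_\theta^{\mathrm P}K>s$.

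2. Shrink $K$ so that it is everywhere of spectrum $>s$. Remove the union of relatively open sets $K\cap U$, with $U$ rational balls, for which $\dim^{\mathrm P}_\theta(K\cap U)\le s$. Countable stability shows this union has spectrum $\le s$, so the remainder $K^*$ is compact, nonempty, and $\dim^{\mathrm P}_\theta(K^*\cap U)>s$ for every open $U$ meeting $K^*$.

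3. Construct $\mu$ on a compact Cantor-like subset of $K^*$ level by level, as in Joyce--Preiss-style arguments for packing measure, but tracking windows. At stage $k$ we have finitely many disjoint pieces $K^*\cap \overline{B}$. In each piece, since $\mathcal P^{s}_{\theta,\delta}>0$ there, we can find a carrier (the piece itself, with label $R=\diam$) whose window packing $\sum(2r_i)^s$ is large. This yields disjoint balls with radii in $W_\theta(R)$ and centres in the piece.

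4. Distribute the mass of the piece proportionally to $r_i^{s}$ among the subballs. Choose scales rapidly decreasing, so that the next generation's construction does not disturb the estimates at the previous windows.

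5. For $\mu$-a.e.\ $x$, at every sufficiently small $R$ one of the stage-$k$ radii lies in $W_\theta(R)$ and gives $\mu(B(x,r))\le r^{v}$ for $v<s$. This uses the proportional allocation together with the large total sum $\sum r_i^s\ge1$.

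\textbf{Main obstacle.} The hard part is that the property must hold for \emph{every} small $R$, not just along the construction scales. Between consecutive construction scales $R$ ranges continuously, and the window $W_\theta(R)$ must still contain a good radius. To handle this I would use a multiscale packing. The positivity of $\mathcal P^s_{\theta,\delta}$ on every subset prevents covering a piece by carriers of one label, and this forces good packings for all labels $R$ in a geometric grid $R=2^{-n}$. Windows at grid labels overlap the window at any intermediate $R$ up to a factor absorbed into $v<s$. So the construction should be done along the full dyadic sequence of labels, using nested selection in which a point is good at label $2^{-n}$ except on a set of small measure. Borel--Cantelli over $n$, with losses summable because $v<s$ leaves a margin $r^{s-v}$, then gives the almost-everywhere statement with $R_0(x,v)$.
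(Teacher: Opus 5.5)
Your mass-distribution direction is the paper's argument (Proposition~\ref{prop:mass-distribution}). It takes a uniform threshold on a set of positive measure; the measurability behind your ``Egorov-type'' step is Lemma~\ref{lem:window-measurability}. It then applies Besicovitch to the assigned balls inside each carrier. Step~4 needs one repair. A bounded-overlap subcover is not an admissible configuration for $\Phi^{s'}_\theta(E_j,R_j)$. Use instead the form of Besicovitch that splits the assigned balls into $B_d$ subfamilies of pairwise disjoint balls, and keep one subfamily carrying at least $\mu^*(A\cap E_j)/B_d$. Then steps~5--6 go through.

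The Frostman direction has a genuine gap.

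\textbf{Step~3 is not justified.} Positivity, even infiniteness, of the content of a piece $P$ bounds below the total cost of every \emph{cover} of $P$. It does not bound the cost of a prescribed carrier at a prescribed label. Suppose $\diam P=D$ and $P$ consists of a few clusters, each of diameter less than $D^{1/(1-\theta)}$, so that all the dimension of $P$ lies below its own window. Then any admissible configuration has at most one centre per cluster. Hence $\Phi^s_\theta(P,D)\asymp D^s$, and there is no large packing at label $D$.

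\textbf{Passing to a smaller label leaves windows uncovered.} Positivity does give a large packing at some much smaller label $\delta$, aggregated over a grid cover. Suppose you take children from it, and $\delta<\rho^{1/(1-\theta)}$, where $\rho$ is the parent radius. Then for every label $R\in(\delta^{1-\theta},\rho)$, the window $[R^{1/(1-\theta)},R]$ contains neither $\rho$ nor any child radius. In those windows, $\mu(B(x,r))$ is a sum over many children whose placement you do not control.

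\textbf{Proportional allocation only controls the selected balls.} It says nothing about $\mu(B(x,r))$ for a general point $x$. Neighbours in one generation may have radii anywhere in the window, so $B(x,r)$ can capture much of a far larger neighbour.

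\textbf{Dyadic labels do not remove these problems.} If the selections are nested, a ball of radius $\rho$ serves labels only down to about $\rho$. It must then be refined at a label comparable to $\rho$, which is step~3 again. If the selections are independent, you need one measure compatible with packings at all labels simultaneously, which is the statement to be proved. The bad sets in your Borel--Cantelli step are never defined, let alone shown to have summable measure.

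The missing idea is to avoid any explicit construction. The paper applies Howroyd's weighted-cover duality to the costs $\Phi^t_\theta$.
\begin{itemize}
\item Lemma~\ref{lem:weighted-content-comparison} shows that the weighted content $q_\delta(1)$ is positive when $t<\dim_\theta^{\mathrm P}F$.
\item Hahn--Banach and Riesz, together with open enlargements of carriers (Lemma~\ref{lem:open-labelled-carriers}), give $\mu\in\mathcal M_1(F)$ with $\mu^*(E)\le C\Phi^t_\theta(E,R)$ for \emph{every} carrier of small label (Proposition~\ref{prop:carrier-frostman}). Hence $\mu^*\le C\mathcal P^t_\theta$.
\item The every-window property comes from the sublevel estimate, Proposition~\ref{thm:sublevel}: $\dim_\theta^{\mathrm P}\{d_\theta^\mu\le a\}\le a$. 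So $\{d_\theta^\mu<t\}$ is $\mathcal P^t_\theta$-null, and therefore $\mu$-null.
\end{itemize}
The suitable-radius assertion is then just the definition of $d_\theta^\mu(x)>v$. Compact determination passes the result to analytic $E$, exactly as in your step~1.
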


Theorem~\ref{thm:main} and \eqref{eq:suitable-radius} give these
assertions. For each fixed parameter and subcritical exponent,
one measure works at almost every point and in every sufficiently
small window. The suitable radius can depend on the point and
the window.

This order of quantifiers distinguishes the construction from
related intermediate dimensions. The Frostman lemma for intermediate
dimensions~\cite[Proposition~2.3]{FFK2020} uses a scale-dependent
measure controlling all radii in a window.
Angelini's packing intermediate dimensions, defined for measures
and then for analytic sets, instead require subcritical mass decay
at every radius in arbitrarily small windows, almost everywhere
\cite[Theorem~4.3 and Definition~8.2]{Angelini2026c}.
The resulting set invariant is different from the packing spectrum:
Example~\ref{ex:comparison-constructions} gives explicit separations
in both directions. Constructions restricting covering diameters
or packing radii to a fixed global set of scales
\cite{BalkaKeleti2025,Zindulka2013} provide another comparison;
here the window varies with each carrier label.

\subsection*{Hausdorff products and compact testing sets}
For a bounded set $F$ and $0<\eta\le1$, the upper intermediate
dimension $\overline{\dim}_\eta F$ is defined using covers with
diameters in $[R^{1/\eta},R]$, at every sufficiently small outer
scale $R$~\cite{FFK2020}. We use the endpoint conventions
$\overline{\dim}_0F=\dimH F$ and
$\overline{\dim}_1F=\udimB F$.
The countably stable modification
$\overline{\dim}^{\mathrm{mod}}_\eta$ is obtained by minimizing
the supremum of $\overline{\dim}_\eta$ over countable bounded
decompositions; its endpoints are $\dimH$ and $\dimP$.
The definitions are given in Section~\ref{sec:geometric-properties}.

\begin{maintheorem}
\label{thm:intro-products}
For $0\le\theta\le1$ and bounded sets $E\subset\R^m$ and $F\subset\R^n$,
\begin{equation}\label{eq:intro-product-bound}
 \dimH(E\times F)
 \le\min\{\dim_\theta^{\mathrm P}E+\overline{\dim}_{1-\theta}F,
             \dim_\theta^{\mathrm P}F+\overline{\dim}_{1-\theta}E\}.
\end{equation}
The same bound holds for arbitrary sets with
$\overline{\dim}^{\mathrm{mod}}_{1-\theta}$, the countably stable
modification defined in Section~\ref{sec:geometric-properties}.
Conversely, for every nonempty analytic $E\subset\R^m$,
\begin{equation}\label{eq:intro-product-characterization}
 \dim_\theta^{\mathrm P}E
 =\sup_{\substack{\varnothing\ne G\subset[0,1]^m\\G\text{ compact}}}
   \{\dimH(E\times G)-\overline{\dim}_{1-\theta}G\}.
\end{equation}
The same characterization holds with
$\overline{\dim}^{\mathrm{mod}}_{1-\theta}G$ in place of
$\overline{\dim}_{1-\theta}G$.
\end{maintheorem}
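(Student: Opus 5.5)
The proof splits into the upper bound \eqref{eq:intro-product-bound} and the converse characterization \eqref{eq:intro-product-characterization}.

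\emph{Upper bound.} By symmetry it suffices to show $\dimH(E\times F)\le\dim_\theta^{\mathrm P}E+\overline{\dim}_{1-\theta}F$. Fix $s>\dim_\theta^{\mathrm P}E$ and $t>\overline{\dim}_{1-\theta}F$. Choose a cover $\{(E_j,R_j)\}$ of $E$ with $\sum_j\Phi^s_\theta(E_j,R_j)<\varepsilon$.

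For each $j$, the definition of $\overline{\dim}_{1-\theta}$ with outer scale $R_j$ gives a cover of $F$ by sets $U_k$ with $\diam U_k\in[R_j^{1/(1-\theta)},R_j]$ and $\sum_k(\diam U_k)^t\le 1$, once $R_j$ is small. This matching of windows is the point of pairing $\theta$ with $1-\theta$: the exponent $1/(1-\theta)$ in the covering window is exactly the lower end of the packing window.

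For each $k$, write $r_k=\diam U_k$ and take a maximal $r_k$-separated subset of $\overline{E_j}$. Disjointness of the balls of radius $r_k/2$ gives a packing, so the number $N_{jk}$ of points is at most $\Phi^s_\theta(E_j,R_j)/r_k^s$, up to constants. Hence $E_j\times U_k$ is covered by $N_{jk}$ sets of diameter comparable to $r_k$. The resulting $(s+t)$-cost is
\[
\lesssim\sum_j\sum_k\Phi^s_\theta(E_j,R_j)\,r_k^{-s}r_k^{s+t}\le\sum_j\Phi^s_\theta(E_j,R_j)<\varepsilon .
\]
This gives $\mathcal H^{s+t}(E\times F)=0$.

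One caveat concerns the radii. The radii $r_k/2$ may fall just below $R_j^{1/(1-\theta)}$, so I would adjust the windows by a constant factor, or use the scale-shift monotonicity from Section~\ref{sec:basic-properties}. For $\theta=1$ I would use the packing endpoint, with $\overline{\dim}_0=\dimH$ and arbitrary small covers.

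For the $\overline{\dim}^{\mathrm{mod}}$ version, decompose $F=\bigcup F_i$ and apply countable stability of $\dimH$.

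\emph{Converse.} The bound just proved gives ``$\ge$'' in \eqref{eq:intro-product-characterization}. For ``$\le$'', fix $s<\dim_\theta^{\mathrm P}E$ and take the measure $\mu$ supported on a compact subset of $E$ from Theorem~\ref{thm:intro-variational}. At $\mu$-a.e.\ $x$, every small window $[R^{1/(1-\theta)},R]$ contains a radius $r$ with $\mu(B(x,r))\le r^v$.

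I would then construct a compact $G\subset[0,1]^m$ together with a measure $\nu$ satisfying two requirements:
\begin{itemize}
\item[(a)] $\overline{\dim}_{1-\theta}G\le m-v$, up to $\varepsilon$;
\item[(b)] $\nu(B(y,\rho))\lesssim\rho^{m}$, or at least $\rho^{m-\varepsilon}$, at scales where $\mu$ is heavy, so that $\mu\times\nu$ has lower local dimension at least $m$ everywhere.
\end{itemize}
Requirement (b) and the mass distribution principle give $\dimH(E\times G)\ge m$, and (a) then yields $\dimH(E\times G)-\overline{\dim}_{1-\theta}G\ge v$.

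The natural candidate for $G$ is a set that is full-dimensional at most scales but thin, of dimension $m-v$, on a sparse lacunary sequence of scale windows. Thinness on windows keeps the upper intermediate dimension at parameter $1-\theta$ small. Heaviness of $\mu$ can then be compensated, because in every window $\mu$ has a good radius.

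This construction is the main obstacle. The good radius depends on the point $x$, whereas $G$ must be fixed. I would first try a compactness and Egorov-type reduction: restrict $\mu$ to a compact set on which $R_0(x,v)$ is uniform. Then I would choose $G$ as a Cantor set whose local dimension profile is the complement of a uniform one, for example the reflected profiles built from complementary digit positions mentioned in the abstract. Failing a single $G$, I would take the supremum over a countable family of $G$ adapted to dyadic patterns of good scales.

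The $\overline{\dim}^{\mathrm{mod}}$ version of the characterization follows because $\overline{\dim}^{\mathrm{mod}}\le\overline{\dim}$, combined with the upper bound in its modified form.
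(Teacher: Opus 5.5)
Your upper bound is essentially the paper's argument (Proposition~\ref{thm:complementary-window-content}). The radius caveat goes away if you take a maximal $2r_k$-separated subset of $\overline{E_j}$, so that the packing radius $r_k$ itself lies in $[R_j^{1/(1-\theta)},R_j]$, and cover $E_j\times U_k$ by the sets $B(x,2r_k)\times U_k$, each of diameter at most $\sqrt{17}\,r_k$. Your treatment of the modified version, by decomposing $F$, also matches the paper.

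The converse for $0<\theta<1$ has a genuine gap: you never construct $G$, and the route you sketch does not work in general. Under a product measure $\mu\times\nu$, the factor $\nu(B(y,r))$ does not depend on $x$. For $\overline{\dim}_{1-\theta}G\le m-v$, a typical $y$ must see $\nu$ relatively concentrated at some scale $r_y$ in each small window. Requirement (b) then forces $\mu(B(x,r_y))\lesssim r_y^{v-o(1)}$ for $\mu$-almost every $x$ at that same scale, i.e. a good radius common to essentially all points. Theorem~\ref{thm:intro-variational} gives a good radius in every window, but where it sits inside the window depends on $x$. An Egorov-type reduction, which is the paper's Lemma~\ref{lem:compact-uniformization}, makes only the threshold $R_0(x,v)$ uniform, not that position. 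Complementary digit sets compensate $\mu$ exactly only when the profile $r\mapsto\mu(B(x,r))$ is independent of $x$, as for $\mu_S$. Your countable-family fallback would need a positive-measure set of points sharing one fixed sequence of good scales, and nothing you have guarantees this.

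The missing idea is to couple the tester to the base measure rather than take a product; this is Lemma~\ref{lem:dyadic-tester-construction}. After uniformizing to a compact $K$ with normalized measure $\sigma$, build $\Gamma\subset K\times[0,1]^n$ level by level at depths $k_j=j^2$. To each retained child $Q'$ of a cube $Q$, attach about $2^{nL_j}\sigma(Q')/\sigma(Q)$ possible $y$-words, and give every admissible (child, word) pair equal conditional mass. The resulting measure on $\Gamma$ has cylinder mass at most $2^{-nk_j}$, so $\dimH(K\times F)\ge\dimH\Gamma\ge n$ for $F=\pi_2\Gamma$. Meanwhile, the points $y$ whose partner $x$ satisfies $\sigma(B(x,c_n2^{-k_j}))\le2^{-a}$ lie in at most $2^{nk_j-a+O(j\log j)}$ cubes of side $2^{-b_j}$. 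Covers for $\overline{\dim}_{1-\theta}$ may mix diameters within $[R^{1/(1-\theta)},R]$, so each $y$ can be covered at the level where its own partner $x$ has a good radius. This is exactly how the point-dependence you identified is absorbed (Lemma~\ref{lem:uniform-window-tester}).

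The additive losses in this dyadic scheme (the gap between $k_j$ and $b_j$, and rounding good radii to the depths $k_j$) also force you to run the variational principle at a slightly smaller parameter $\alpha<\theta$ with $\dim_\alpha^{\mathrm P}E>v$. That step uses continuity of the spectrum (Theorem~\ref{thm:quantitative-continuity}). Finally, $\theta=1$ lies outside the range of Theorem~\ref{thm:intro-variational} and needs separate input; the paper uses the Keleti--M\'ath\'e additive-complement theorem.
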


At $\theta=0$, the first term in
\eqref{eq:intro-product-bound} is $\dimH E+\udimB F$; at
$\theta=1$ it is $\dimP E+\dimH F$.
With the modified intermediate dimension, the two endpoints
recover the classical Hausdorff--packing bounds in opposite
orders. In terms of the excess introduced above,
\[
 \Delta(E,F)\le\inf_{0\le\theta\le1}
 \{\dim_\theta^{\mathrm P}E-\dimH E
   +\overline{\dim}^{\mathrm{mod}}_{1-\theta}F-\dimH F\},
\]
and the same estimate holds with the factors exchanged.
This improvement can be strict. Example~\ref{ex:strict-product-improvement}
gives compact sets with
\[
 \dimH E=\dimH F=\tfrac14,\qquad
 \dimP E=1,\qquad \dimP F=\tfrac12,
 \qquad \dimH(E\times F)=\tfrac12.
\]
The classical upper bound is $3/4$, whereas the window bounds
converge to $1/2$. Thus exact additivity can be detected even
when both factors have distinct Hausdorff and packing dimensions.

The converse \eqref{eq:intro-product-characterization} identifies
the spectrum as the optimal value of these product tests.
At $\theta=1$, it recovers the packing-dimension characterization
of Bishop and Peres~\cite{BishopPeres1996}; see also
Xiao~\cite[Theorem~3.1]{Xiao1996} for Borel subsets of $\R$.
Our compact endpoint construction uses the additive-complement
theorem of Keleti and M\'ath\'e~\cite[Theorem~1.9]{KeletiMathe2023}.

The spectrum of a product itself also satisfies
Theorem~\ref{thm:spectrum-product-lower}: for nonempty analytic
$E\subset\R^m$, $F\subset\R^n$, and $0\le\theta\le1$,
\[
 \begin{split}
 \max\{\dim_\theta^{\mathrm P}E+\dimH F,\,
          \dimH E+\dim_\theta^{\mathrm P}F\}
 &\le\dim_\theta^{\mathrm P}(E\times F)\\
 &\le\min\{\dim_\theta^{\mathrm P}E+\dimP F,\,
              \dim_\theta^{\mathrm P}F+\dimP E\}.
 \end{split}
\]

\subsection*{Attainable profiles and complementary digit sets}
The finite packing window imposes a sharp restriction on the
variation of the spectrum. This restriction also characterizes
all possible profiles.

\begin{maintheorem}
\label{thm:intro-realization}
Let $d\ge1$ be an integer. A function $f:[0,1]\to[0,d]$ is the
packing spectrum of a nonempty bounded subset of $\R^d$ if and
only if it is nondecreasing, continuous on $[0,1)$, and satisfies
\[
 0\le D^+f(\theta)
 \le\frac{f(\theta)(d-f(\theta))}{d(1-\theta)}
 \qquad(0\le\theta<1),
\]
where $D^+$ is the upper right Dini derivative.
\end{maintheorem}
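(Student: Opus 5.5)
Necessity follows from results already announced; sufficiency needs a construction. For necessity, let $f(\theta)=\dim_\theta^{\mathrm P}E$. Monotonicity and continuity on $[0,1)$ come from Theorem~\ref{thm:quantitative-continuity}, so only the Dini bound is left. Fix $\theta<1$ and put $\theta_2=\theta+h$, which gives $c=(1-\theta-h)/(1-\theta)=1-h/(1-\theta)$. The quantitative estimate then reads
\[
 f(\theta+h)-f(\theta)\le \frac{d f(\theta)}{d-\frac{h}{1-\theta}(d-f(\theta))}-f(\theta)
 =\frac{f(\theta)\,\frac{h}{1-\theta}(d-f(\theta))}{d-\frac{h}{1-\theta}(d-f(\theta))}.
\]
Dividing by $h$ and letting $h\downarrow0$ gives exactly $D^+f(\theta)\le f(\theta)(d-f(\theta))/(d(1-\theta))$. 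The lower bound $D^+f\ge0$ is just monotonicity. Since the spectrum lies in $[0,d]$ automatically, this finishes necessity.

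For sufficiency I would first change variables to linearise the constraint. The comparison function $\theta\mapsto df_0/(cd+(1-c)f_0)$ solves the ODE $g'=g(d-g)/(d(1-\theta))$ with equality. Set $u=-\log(1-\theta)$ and $\psi=\log(g/(d-g))$, the logit of $g/d$. The extremal ODE becomes $\psi'(u)=1$, so the admissibility condition says that $\psi\circ\theta$ is nondecreasing with upper Dini derivative at most $1$ in $u$. A standard Dini argument (a continuous function with $D^+\le L$ is $L$-Lipschitz from above) shows this is equivalent to $0\le\psi(u_2)-\psi(u_1)\le u_2-u_1$. Equivalently, $f$ satisfies the two-point inequality of Theorem~\ref{thm:quantitative-continuity} for all $\theta_1<\theta_2<1$. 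The endpoint value $f(1)$ is only required to satisfy $f(1)\ge\lim_{\theta\uparrow1}f(\theta)$. The cases $f\equiv0$ and $f\equiv d$ at the ends are handled separately.

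Next I would build a compact set in $\R^d$ as a product of $d$ copies, or a coordinatewise interleaving, of a binary digit set, as the abstract indicates. Fix a sequence of scales $2^{-n}$ and a set $A\subset\N$ of free digit positions, so that $K=\{\sum_{n\in A}\epsilon_n2^{-n}\}$ in each coordinate, and let $\mu$ be the uniform measure on $K$. Write $a(n)=\#(A\cap[1,n])$. Then $\mu(B(x,2^{-n}))\approx2^{-d\,a(n)}$ uniformly in $x$. Theorem~\ref{thm:intro-variational}, applied to this homogeneous measure, turns the spectrum into a deterministic formula:
\[
 \dim_\theta^{\mathrm P}K=\liminf_{N\to\infty}\ \max_{N\le n\le N/(1-\theta)}\frac{d\,a(n)}{n}.
\]
The upper bound for $K$ would come from covering by cylinders, which shows that uniform measures are optimal for such self-similar-in-scale sets. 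The problem is thereby reduced to combinatorics. Writing $\rho(n)=a(n)/n$, I need a density function $\rho$ with increments in $[0,1]$ per step such that $\liminf_N\max_{[N,N/(1-\theta)]}d\rho=f(\theta)$ for every $\theta<1$ simultaneously, and such that $\limsup d\rho=f(1)$.

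The main obstacle is this simultaneous realisation. My plan is to use a block structure with super-exponentially growing epochs $N_k$. During epoch $k$ the density, viewed on the logarithmic time scale $u=\log(n/N_k)$, should trace the extremal profile: from $n=N_k$, free digits are added at maximal rate (all positions free) or the process pauses (no positions free). In the logit variable this gives precisely slope $1$ or slope $0$. The natural target is therefore the profile $\psi$. Along the epoch, the density at $n=N_ke^{u}$ should equal the $d^{-1}f$ value at $\theta=1-e^{-u}$, truncated by the "fill at full rate" envelope, which is admissible precisely because of the Lipschitz bound above. At the bottom of each epoch the density must be returned to $f(0)/d$ before the next epoch, via a long stretch of empty digits; the liminf then occurs at $N=N_k$. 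Occasional long full stretches push $\limsup$ to $f(1)$. The delicate verification is that at intermediate $N$ the windowed maximum never falls below $f(\theta)$; this is where monotonicity of $\psi$ is used. I would first check the pure extremal profile and the two constant profiles, then handle general admissible $f$ by taking the pointwise construction, and finally verify the liminf equality with the $o(1)$ errors from integer rounding.
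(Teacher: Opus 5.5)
Your necessity argument is correct and matches the paper's (monotonicity plus the differential form of Theorem~\ref{thm:quantitative-continuity}). The sufficiency construction, however, has a genuine gap: as described, it does not realize $f$.

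\textbf{Why the one-sided design fails.} Put $\rho(n)=a(n)/n$ and $g(u)=f(1-e^{-u})/d$. Up to rounding, the target is $g(u)=\liminf_N\max_{N\le n\le e^uN}\rho(n)$. You prescribe $\rho(N_ke^{u})=g(u)$ for $u\ge0$. Since $a$ is nondecreasing, $\rho(N_ke^{-w})\le g(0)e^{w}$ whatever happens before $N_k$, with equality on your empty stretch. Hence the window $[N_ke^{-w},N_ke^{u-w}]$ with $0<w<u$ has maximum at most $\max\{g(0)e^{w},\,g(u-w)\}$. If $g$ is strictly increasing near $u$ and $g(0)<g(u)$, this is strictly less than $g(u)$ for small $w>0$. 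So the $\liminf$ is \emph{not} attained at $N=N_k$, and the realized spectrum lies strictly below $f$. For instance, take $f=f_{\mathrm{fast}}$ with $f_{\mathrm{fast}}(0)=d/2$, so that $g(u)=e^u/(1+e^u)$. With $u=\log4$ and $w=\log1.4$, the window maximum is about $0.741<0.8=g(\log4)$.

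The underlying error is identifying the density trajectory with the windowed min--max profile. Your supporting claim about slopes is also false. A full stretch starting from density $\rho_0$ gives $\rho=1-(1-\rho_0)e^{-u}$, whose logit has slope exceeding $1$. A pause gives the decreasing density $\rho_0e^{-u}$, not logit slope $0$.

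\textbf{The correct extremal configuration.} It is two-sided, as in Lemma~\ref{lem:local_comp}, where the window is enlarged at both ends. Take a bottleneck with $\rho(N_k)=g(0)$. An empty left branch reaches level $y$ at logarithmic distance $\ell(y)=\log(y/g(0))$ before $N_k$. A right branch reaches $y$ at some distance $r(y)$ after $N_k$. The minimal window maximum at length $u$ equals $g(u)$ exactly when $\ell(y)+r(y)=g^{-1}(y)$. Feasibility of the right branch, namely $r'(y)\ge1/(1-y)$, holds precisely when $(g^{-1})'(y)\ge1/(y(1-y))$, which is the logit slope bound. With your choice $r=g^{-1}$, the two waiting times add up and the profile undershoots.

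Even with this corrected design, several things remain to be verified, and ``taking the pointwise construction'' does not address them:
\begin{itemize}
\item the windows across transitions between epochs;
\item flat stretches of $f$;
\item the degenerate values $f(0)\in\{0,d\}$;
\item the jump at $\theta=1$.
\end{itemize}

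\textbf{What the paper does instead.} It avoids any hand construction:
\begin{enumerate}
\item It sets $h(\eta)=d-f(1-\eta)$ and checks $h\in\mathcal H(0,d)$.
\item It takes the Banaji--Rutar homogeneous Moran set with upper intermediate profile $h$ and Hausdorff dimension $h(0)$.
\item It transfers this set to a digit set $E_T$ (Lemma~\ref{lem:moran-digit-transfer}).
\item It uses $A_{T^c}(k)=k-A_T(k)$ (Lemma~\ref{lem:complementary-digits}) to turn $\limsup\min$ into $d-\liminf\max$. Thus $E_{T^c}$ has spectrum $f$, with the endpoint coming from $\dimP E_{T^c}=d-\dimH E_T=f(1)$.
\end{enumerate}
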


Here
$D^+f(\theta)=\limsup_{h\downarrow0}
(f(\theta+h)-f(\theta))/h$.
The necessity follows from the quantitative parameter estimate.
For sufficiency, we use the homogeneous Moran realization of
Banaji and Rutar~\cite[Theorem~3.9]{BanajiRutar2022}, transfer its
upper intermediate profile and Hausdorff endpoint to binary digit
sets, and pass to complementary digit positions
(Lemma~\ref{lem:moran-digit-transfer}).
Equivalently, $h(\eta)=d-f(1-\eta)$ is an attainable upper
intermediate-dimension profile, with its Hausdorff value at
$\eta=0$ (Corollary~\ref{cor:reflected-profile-classification}).

More precisely, every admissible $f$ is realized by a compact set
$E_S\subset[0,1]^d$ in which binary digits are free at the positions
in $S\subset\N$ and zero elsewhere.
Its complementary set $E_{S^c}$ satisfies
\[
 \overline{\dim}_{1-\theta}E_{S^c}
 =\overline{\dim}^{\mathrm{mod}}_{1-\theta}E_{S^c}
 =d-f(\theta),\qquad 0\le\theta\le1.
\]
The uniform digit measure $\mu_S$ satisfies
$d_\theta^{\mu_S}(x)=f(\theta)$ at every $x\in E_S$ for
$0\le\theta<1$; its classical upper local dimension equals
$f(1)$ at every point (Remark~\ref{rem:digit-profile-realization}).
Thus this one measure attains the finite-window variational
suprema and the classical packing supremum.
The product measure $\mu_S\times\mu_{S^c}$ is $d$-Ahlfors regular
on $E_S\times E_{S^c}$ (Lemma~\ref{lem:complementary-digits}).
The associated sumset and Lebesgue convolution identities are
given in \eqref{eq:complementary-digit-convolution}.

Consequently, every admissible packing profile is the spectrum of a
coordinate projection of a $d$-Ahlfors regular compact set in
$\R^{2d}$. One complementary factor attains the Hausdorff
product bound for every parameter, and
\[
 \Delta(E_S,E_{S^c})=f(1)-f(0).
\]
The two contributions to this excess are
$f(\theta)-f(0)$ and $f(1)-f(\theta)$.
Example~\ref{ex:extreme-profiles} exhibits both a jump at the
packing endpoint and a profile attaining the upper bound in the two-parameter
estimate.

Section~\ref{sec:basic-properties} establishes the outer-measure,
endpoint, and parameter properties.
Section~\ref{sec:single-measure-variational-principle} proves the
variational principle, and Section~\ref{sec:geometric-properties}
develops the product bounds and compact tests.
Section~\ref{sec:Complementarity} proves the window formulas and
profile classification. Section~\ref{sec:further-questions}
gives examples, comparisons, and further questions.

\section{Basic properties}
\label{sec:basic-properties}

We write $\ldimB$ and $\udimB$ for lower and upper box dimension,
respectively; see \cite{Falconer2014,Mattila1995}.
Let $\mathcal K(\mathbb R^d)$ be the hyperspace of nonempty compact sets,
equipped with the Hausdorff metric $d_H$.

\begin{proposition}
\label{prop:basic-properties}
Let $d\ge1$ be an integer. The packing spectrum has the following
properties.
\begin{enumerate}[label=\textup{(\roman*)},ref=\textup{(\roman*)}]
\item \label{item:basic-monotonicity}\emph{Monotonicity.}
For $0\le\theta\le1$ and $E\subset F\subset\R^d$,
$\dim_\theta^{\mathrm P}E\le\dim_\theta^{\mathrm P}F$.

\item \label{item:basic-bounds}\emph{Dimension bounds.}
For every $F\subset\R^d$ and $0\le\theta\le1$,
\[
 0\le\dimH F\le\dim_\theta^{\mathrm P}F\le\dimP F\le d.
\]

\item \label{item:basic-mappings}\emph{H\"older and Lipschitz maps.}
Let $F\subset\R^d$, and let $f:F\to\R^m$ satisfy
\[
 |f(x)-f(y)|\le L|x-y|^\alpha\qquad(x,y\in F),
 \qquad 0<\alpha\le1,\quad L\ge1.
\]
Then, for every $0\le\theta\le1$,
\[
 \dim_\theta^{\mathrm P}f(F)\le\alpha^{-1}\dim_\theta^{\mathrm P}F.
\]
In particular, Lipschitz maps do not increase the packing spectrum,
and bi-Lipschitz maps preserve it.

\item \label{item:basic-countable}\emph{Countable stability.}
For every sequence $(F_j)_{j\ge1}$ of subsets of $\R^d$ and every
$0\le\theta\le1$,
\begin{equation}\label{eq:countable-stability}
 \dim_\theta^{\mathrm P}\Bigl(\bigcup_{j=1}^{\infty}F_j\Bigr)
 =\sup_j\dim_\theta^{\mathrm P}F_j.
\end{equation}

\item \label{item:basic-baire}\emph{Baire class.}
For $0\le\theta<1$, the map
$K\mapsto\dim_\theta^{\mathrm P}K$ from $\mathcal K(\R^d)$ to
$[0,d]$ is of Baire class at most $2$.

\item \label{item:basic-analytic}\emph{Compact approximation.}
For every nonempty analytic set $A\subset\R^d$ and every
$0\le\theta\le1$,
\begin{equation}\label{eq:add-analytic-compact-determination}
 \dim_\theta^{\mathrm P}A
 =\sup\{\dim_\theta^{\mathrm P}K:
          \varnothing\ne K\subset A,\ K\text{ compact}\}.
\end{equation}
\end{enumerate}
\end{proposition}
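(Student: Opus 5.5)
I will prove the six items in order, deriving each from the definition of $\mathcal P^s_{\theta,\delta}$ together with facts in Theorem~\ref{thm:intro-structure} and the two-parameter estimate of Theorem~\ref{thm:quantitative-continuity}.

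For \ref{item:basic-monotonicity}, observe that any cover of $F$ also covers $E$. Hence $\mathcal P^s_{\theta,\delta}E\le\mathcal P^s_{\theta,\delta}F$ for every $\delta$, and monotonicity of the critical exponent follows.

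For \ref{item:basic-bounds}, combine the endpoint identities $\mathcal P^s_0=2^s\mathcal H^s$ and $\mathcal P^s\le\mathcal P^s_1\le M_d\mathcal P^s$ with the monotonicity of the spectrum in $\theta$ from Theorem~\ref{thm:quantitative-continuity}. This gives $\dimH F=\dim_0^{\mathrm P}F\le\dim_\theta^{\mathrm P}F\le\dim_1^{\mathrm P}F=\dimP F\le d$. Alternatively, argue directly from the nesting of the windows: $\Phi^s_0(E,R)\le\Phi^s_\theta(E,R)\le\Phi^s_1(E,R)$, because the windows enlarge with $\theta$.

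For \ref{item:basic-countable}, use that $\mathcal P^s_\theta$ is a (metric) outer measure, which is part of Theorem~\ref{thm:intro-structure}. If $s>\sup_j\dim_\theta^{\mathrm P}F_j$, then $\mathcal P^s_\theta(F_j)=0$ for every $j$, and countable subadditivity gives $\mathcal P^s_\theta(\bigcup F_j)=0$. The reverse inequality is \ref{item:basic-monotonicity}.

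For \ref{item:basic-mappings}, take a carrier cover $(E_j,R_j)$ of $F$ and push it forward to the labelled carriers $(f(E_j),LR_j^\alpha)$, shrinking $\delta$ so that $LR_j^\alpha<1$. For a packing of $\overline{f(E_j)}$ with radii $\rho_i\in[(LR_j^\alpha)^{1/(1-\theta)},LR_j^\alpha]$, pick preimage points $y_i\in\overline{E_j}$. Here I extend $f$ uniformly continuously to $\overline F$, and note that a Hölder map lets disjoint image balls pull back to balls of radius comparable to $(\rho_i/L)^{1/\alpha}$ that are still disjoint. Their radii lie in the window $[R_j^{1/(1-\theta)},R_j]$ up to bounded constants. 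The main obstacle is exactly this constant mismatch at the window ends. I plan to handle it by absorbing constants: a window $[cR^{1/(1-\theta)},CR]$ differs from the standard one by a bounded multiplicative factor, and a $5r$-type covering argument (packing a point set at radius $r$ versus radius $cr$) gives $\Phi^{s}$ comparable up to $C(d,s,c)$ times a slightly enlarged exponent. Only dimensions are at stake, so passing to $s+\varepsilon$ absorbs polynomial losses. The result is $\mathcal P^{s}_{\theta}(f(F))\lesssim\mathcal P^{\alpha s'}_\theta(F)$ for all $s'>s$, which yields $\dim_\theta^{\mathrm P}f(F)\le\alpha^{-1}\dim_\theta^{\mathrm P}F$. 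Bi-Lipschitz invariance follows by applying this in both directions.

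For \ref{item:basic-analytic}, the inequality ``$\ge$'' is monotonicity. For ``$\le$'', I will use the variational principle of Theorem~\ref{thm:intro-variational} when $\theta<1$: a near-optimal measure $\mu$ has compact support $K\subset A$, and $\dim_\theta^{\mathrm P}K\ge\operatorname*{ess\,inf}d^\mu_\theta$ by the easy (mass-distribution) half applied to $K$. At $\theta=1$, I invoke the known compact approximation of packing dimension for analytic sets (Joyce–Preiss), via $\dim_1^{\mathrm P}=\dimP$.

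For \ref{item:basic-baire}, I will express $\dim_\theta^{\mathrm P}K$ for compact $K$ using finite covers with rational data, writing $\{K:\dim_\theta^{\mathrm P}K<t\}$ as a countable union over rational $s<t$ of $\bigcap_\delta\{\mathcal P^s_{\theta,\delta}K<\epsilon\}$-type sets. This requires showing that $K\mapsto\mathcal P^s_{\theta,\delta}(K)$ restricted to finite open-carrier covers is upper semicontinuous, because open neighbourhoods of $K$ cover nearby compacta. The resulting sublevel sets are $G_{\delta\sigma}$ or $F_{\sigma\delta}$, which gives Baire class at most $2$.
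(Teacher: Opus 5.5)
The genuine gap is in (vi) for $0\le\theta<1$. To get $\dim_\theta^{\mathrm P}A\le\sup_K\dim_\theta^{\mathrm P}K$, you use the Frostman half of Theorem~\ref{thm:intro-variational}, namely $\dim_\theta^{\mathrm P}A\le\sup_\mu\operatorname*{ess\,inf}d_\theta^\mu$, for a noncompact analytic $A$. That inequality is proved directly only for compact sets, because the Hahn--Banach/Riesz construction needs $C(F)$ with $F$ compact. The paper passes from compact to analytic sets in Theorem~\ref{thm:main} precisely by invoking (vi). Since (vi) is trivial for compact $A$, your reduction is circular exactly where the statement has content.

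What is missing is an independent analytic-to-compact argument. The paper's is an increasing-sets lemma for the labelled contents: if $A_j\uparrow A$ inside a bounded set, $0<s<u$ and $0<\delta<\delta'<1$, then $\mathcal P^u_{\theta,\delta'}(A)\le(2\delta)^{u-s}\sup_j\mathcal P^s_{\theta,\delta}(A_j)$. Its proof has three steps. First take near-optimal covers of the $A_j$ by compact carriers, with labels in decreasing order so that $R_{j,i}\lesssim i^{-1/s}$ uniformly in $j$. Then extract Hausdorff limits of carriers and labels. Finally reopen the limit carriers using the label-perturbation bound $\Phi^u_\theta(U,S)\le(S/R)^u\liminf_j\Phi^u_\theta(C_j,R_j)$, which relies on $\theta<1$. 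Next run a Suslin scheme on a continuous surjection $\pi:\N^{\N}\to A$. Choose exponents $t_j\downarrow s$ and scales $\delta_j\downarrow\delta_0/2$ so that the factor $(2\delta_j)^{t_{j-1}-t_j}$ in the lemma is below $1$. This produces $A_j=\pi\{\alpha:\alpha_i\le b_i,\ i\le j\}$ with $\mathcal P^{t_j}_{\theta,\delta_j}(A_j)>c$ for every $j$. Upper semicontinuity of $K\mapsto\mathcal P^s_{\theta,<\delta_0/2}(K)$ on compacta transfers this bound to $K=\bigcap_j\overline{A_j}=\pi(\prod_j\{1,\dots,b_j\})\subset A$, so $\mathcal P^s_\theta(K)>0$. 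Your appeal to Joyce--Preiss at $\theta=1$ is fine.

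Items (i), (ii) and (iv) agree with the paper. In (iii), the window mismatch you plan to absorb does not occur. With $p=(1-\theta)^{-1}$ and $r_i=(\rho_i/L)^{1/\alpha}$, one has $r_i\le R$ and $r_i\ge(L^{p-1}R^{\alpha p})^{1/\alpha}=L^{(p-1)/\alpha}R^p\ge R^p$, because $L\ge1$. So the pulled-back configuration is admissible for $(E\cap F,R)$ itself; push forward $E_j\cap F$, not $E_j$, since $f$ is only defined on $F$. This gives $\Phi^s_\theta(f(E\cap F),LR^\alpha)\le2^{(1-\alpha)s}L^s\Phi^{\alpha s}_\theta(E,R)$ with no covering argument or exponent loss.

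In (v), upper semicontinuity of the content over finite open-carrier covers is the easy half. You must also show that restricting to open carriers does not change which compacta satisfy $\mathcal P^s_\theta(K)=0$. Enlarging $E$ to an open neighbourhood forces a larger label, hence a different window, so you need to bound $\Phi^s_\theta(U,S)$ by $\Phi^s_\theta(E,R)$. The paper does this via upper semicontinuity of $\Phi^s_\theta$ under Hausdorff convergence, valid since $\theta<1$ bounds the number of balls, together with the strict contents $\mathcal P^s_{\theta,<\delta}$. For the zero/positive dichotomy, the crude bound $\Phi^s_\theta(U,2R)\le2^s\Phi^s_\theta(E,R)$ for a sufficiently thin neighbourhood $U$ already suffices. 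Finally, Baire class $2$ also needs $\{K:\dim_\theta^{\mathrm P}K>t\}$ to be $G_{\delta\sigma}$. Obtain it as the $F_\sigma$ union of the sets $\{\mathcal P^q_\theta(K)>0\}$ over rational $q>t$, not from $\{\dim_\theta^{\mathrm P}K\ge t\}$ being $F_{\sigma\delta}$.
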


\subsection{Endpoint identities}

\begin{lemma}
\label{lem:elementary-local-estimates}
Let $0<R<1$, $\theta\in[0,1]$, and $0\le s\le t$.  If $E\ne\varnothing$
and $\diam E\le R$, then
\[
 (2R)^s\le \Phi^s_\theta(E,R),\qquad
 \Phi^t_\theta(E,R)\le(2R)^{t-s}\Phi^s_\theta(E,R).
\]
For fixed $R$, $\Phi^s_\theta(E,R)$ is invariant under closure and
translation of $E$, monotone under inclusion of $E$, and nondecreasing
in $\theta$. For every singleton,
\[
 \Phi^s_\theta(\{x\},R)=(2R)^s.
\]
For $\theta<1$, $\Phi^s_\theta(E,R)<\infty$.
\end{lemma}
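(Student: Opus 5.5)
Each assertion of Lemma~\ref{lem:elementary-local-estimates} comes straight from the definition \eqref{eq:local-labelled}: we exhibit specific admissible packings, or compare the sets of admissible packings. I would verify the items in the order stated.

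\emph{Lower bound and singleton.} Since $E\ne\varnothing$, choose any $x_1\in\overline E$ and the single radius $r_1=R$. This radius lies in the window $[R^{1/(1-\theta)},R]$ because $R<1$ gives $R^{1/(1-\theta)}\le R$; at $\theta=1$ it satisfies $0<r_1\le R$. The separation condition is vacuous, so $\Phi^s_\theta(E,R)\ge(2R)^s$. For a singleton $\{x\}$, any admissible family has all centres equal to $x$. If $N\ge2$, the condition $|x_i-x_j|\ge r_i+r_j>0$ fails, so $N\le1$. The sum is then at most $(2r_1)^s\le(2R)^s$, with equality at $r_1=R$; this holds for $s\ge0$ because $(2r)^s$ is nondecreasing in $r$.

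\emph{Exponent comparison.} Every admissible radius satisfies $r_i\le R$, so
\[
(2r_i)^t=(2r_i)^{t-s}(2r_i)^s\le(2R)^{t-s}(2r_i)^s .
\]
Summing over $i$ and taking the supremum over admissible families gives the inequality.

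\emph{Invariances and monotonicity.} Closure invariance holds because the centres are already taken in $\overline E$, and $\overline{\overline E}=\overline E$. Translation invariance holds because translating every centre by a fixed vector preserves the distances $|x_i-x_j|$ and the radii. For monotonicity under inclusion, $E\subset E'$ gives $\overline E\subset\overline{E'}$, so every family admissible for $(E,R)$ is admissible for $(E',R)$. This assumes $\diam E'\le R$; the lemma concerns fixed $R$ with both sets labelled carriers. For monotonicity in $\theta$, note that for $R<1$ the lower endpoint $R^{1/(1-\theta)}$ is nonincreasing in $\theta$. The windows therefore increase with $\theta$ and are contained in $(0,R]$, which is the $\theta=1$ condition. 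The admissible families are nested, so the suprema are nondecreasing.

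\emph{Finiteness for $\theta<1$.} This is the only step with actual content. Set $\rho=R^{1/(1-\theta)}>0$. Every admissible radius satisfies $r_i\ge\rho$. The separation condition gives $|x_i-x_j|\ge 2\rho$, so the open balls $B(x_i,\rho)$ are pairwise disjoint. They all lie in the $\rho$-neighbourhood of $\overline E$, which is contained in a ball of radius $R+\rho\le 2R$. Comparing Lebesgue volumes gives
\[
N\le\Bigl(\frac{2R}{\rho}\Bigr)^{d}.
\]
Each term is at most $(2R)^s$, so
\[
\Phi^s_\theta(E,R)\le\Bigl(\frac{2R}{\rho}\Bigr)^{d}(2R)^s<\infty .
\]
This argument needs the positive lower bound on the radii. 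At $\theta=1$ it is unavailable, which explains why the lemma restricts finiteness to $\theta<1$.
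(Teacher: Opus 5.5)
Your proposal is correct and follows the paper's proof essentially step by step: a one-ball configuration for the lower bound and the singleton identity, a termwise exponent comparison, nesting of the admissible classes for the invariances and for monotonicity in $\theta$, and a uniform bound on the number of $2R^{1/(1-\theta)}$-separated centres for finiteness. The only difference is that your volume comparison makes explicit the count that the paper derives from compactness of $\overline E$, giving the concrete bound $(2R/\rho)^d(2R)^s$.
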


\begin{proof}
Choosing one centre with radius $R$ gives $\Phi^s_\theta(E,R)\ge(2R)^s$,
with equality for singletons. For every admissible configuration,
\[
 \sum_i(2r_i)^t\le(2R)^{t-s}\sum_i(2r_i)^s;
\]
taking suprema proves the exponent comparison.
Closure and translation preserve admissibility; enlarging $E$ or
increasing $\theta$ enlarges the class of admissible configurations.
If
$\theta<1$, admissible centres are separated by at least
$2R^{1/(1-\theta)}>0$ in the compact set $\overline E$, so their number is
uniformly bounded.
\end{proof}

\begin{proposition}
\label{prop:metric-outer-measure}
\label{lem:Ms-theta-basic-open}
For every $s\ge0$ and $\theta\in[0,1]$, the set function
$\mathcal P^{s}_{\theta}$ is a metric outer measure.  In particular, every
Borel set is $\mathcal P^{s}_{\theta}$-measurable.  If $0\le s\le t$ and
$0<\delta<1$, then
\begin{equation}\label{eq:exponent-comparison}
 \mathcal P^{t}_{\theta,\delta}(F)
 \le(2\delta)^{t-s}\mathcal P^{s}_{\theta,\delta}(F).
\end{equation}
Consequently, if $t>s$ and $\mathcal P^{s}_{\theta}(F)<\infty$, then
$\mathcal P^{t}_{\theta}(F)=0$.
\end{proposition}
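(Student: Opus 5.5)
Outer measure. The plan is to treat $\mathcal P^s_{\theta,\delta}$ as a Method I construction from the premeasure $(E,R)\mapsto\Phi^s_\theta(E,R)$ on labelled carriers. First, $\mathcal P^s_{\theta,\delta}(\varnothing)=0$, using the empty carrier, for which $\Phi=0$. Monotonicity in $F$ holds because a cover of a larger set also covers a smaller one.

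Countable subadditivity is proved by the standard $\varepsilon2^{-j}$ argument. For each $F_j$ choose a cover with cost at most $\mathcal P^s_{\theta,\delta}(F_j)+\varepsilon2^{-j}$, then concatenate the covers; the union is still a countable family of labelled carriers with labels $\le\delta$. Since $\mathcal P^s_{\theta,\delta}$ increases as $\delta\downarrow0$, the limit $\mathcal P^s_\theta$ exists in $[0,\infty]$. Countable subadditivity passes to the limit because $\sum_j\mathcal P^s_{\theta,\delta}(F_j)\le\sum_j\mathcal P^s_\theta(F_j)$.

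Metric property. Let $\dist(A,B)=\rho>0$ and take $\delta<\rho$. Every carrier $E_j$ in a $\delta$-cover of $A\cup B$ has $\diam E_j\le\delta<\rho$, so it cannot meet both $A$ and $B$. Split the cover into three families: carriers meeting $A$, carriers meeting $B$, and carriers meeting neither. Discard the third family, since every $\Phi\ge0$. The first two families cover $A$ and $B$ respectively and are disjoint as families, so the cost of the cover is at least $\mathcal P^s_{\theta,\delta}(A)+\mathcal P^s_{\theta,\delta}(B)$. Taking the infimum over covers and then $\delta\to0$ gives additivity on separated sets. Carathéodory's criterion then makes all Borel sets measurable.

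This splitting uses only that $\Phi$ is attached to the whole carrier; no monotonicity of $\Phi$ under restriction is needed, because each carrier is kept intact. I expect this to be the main point to get right, and it is unproblematic once the carriers are kept intact.

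Exponent comparison. Let $0\le s\le t$. For any admissible $\delta$-cover, Lemma~\ref{lem:elementary-local-estimates} gives, carrier by carrier,
\[
\Phi^t_\theta(E_j,R_j)\le(2R_j)^{t-s}\Phi^s_\theta(E_j,R_j)\le(2\delta)^{t-s}\Phi^s_\theta(E_j,R_j),
\]
using $R_j\le\delta$ and $t-s\ge0$; here we use $2\delta$ rather than requiring $2\delta<1$. Empty carriers contribute $0$ on both sides. Summing over $j$ and taking the infimum over covers yields \eqref{eq:exponent-comparison}.

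For the consequence, $\mathcal P^s_{\theta,\delta}(F)\le\mathcal P^s_\theta(F)<\infty$. Hence $\mathcal P^t_{\theta,\delta}(F)\le(2\delta)^{t-s}\mathcal P^s_\theta(F)\to0$ as $\delta\to0$ when $t>s$, so $\mathcal P^t_\theta(F)=0$.
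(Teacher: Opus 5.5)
Your proposal is correct and follows essentially the same route as the paper. You prove countable subadditivity by concatenating covers at fixed $\delta$ and then passing to the limit $\delta\downarrow0$. You get metric additivity by splitting a $\delta$-cover with $\delta<\dist(A,B)$ into the carriers meeting $A$ and those meeting $B$. You obtain the exponent comparison carrier by carrier from Lemma~\ref{lem:elementary-local-estimates}, using $R_j\le\delta$, with empty carriers handled separately.
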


\begin{proof}
For fixed $\delta$, monotonicity follows because a labelled cover of
$B$ covers every $A\subset B$. Combining covers and taking infima proves
countable subadditivity, and
$\mathcal P^{s}_{\theta,\delta}(\varnothing)=0$.
For every $F$, the function $\delta\mapsto\mathcal P^{s}_{\theta,\delta}(F)$
is nonincreasing. For any sequence $(F_i)$,
\[
 \mathcal P^{s}_{\theta,\delta}\Bigl(\bigcup_iF_i\Bigr)
 \le\sum_i\mathcal P^{s}_{\theta,\delta}(F_i)
 \le\sum_i\mathcal P^{s}_{\theta}(F_i),
\]
so letting $\delta\downarrow0$ proves countable subadditivity.
Thus $\mathcal P^{s}_{\theta}$ is an outer measure.

If $\dist(A,B)>\delta$, no carrier of diameter at most $\delta$ can meet
both $A$ and $B$.  Splitting a labelled cover therefore gives
\[
 \mathcal P^{s}_{\theta,\delta}(A\cup B)
 \ge\mathcal P^{s}_{\theta,\delta}(A)
   +\mathcal P^{s}_{\theta,\delta}(B).
\]
Letting $\delta\downarrow0$ proves metric additivity.
Carath\'eodory's theorem therefore implies that every Borel set is
$\mathcal P^s_\theta$-measurable. Lemma~\ref{lem:elementary-local-estimates}
applied to each labelled carrier gives \eqref{eq:exponent-comparison}.
If $t>s$ and $\mathcal P^{s}_{\theta}(F)<\infty$, then
\[
 \mathcal P^{t}_{\theta}(F)
 \le\lim_{\delta\downarrow0}(2\delta)^{t-s}\mathcal P^{s}_{\theta}(F)=0.
\]
\end{proof}

\begin{corollary}
\label{coro:critical-exponent}
For every $F\subset\R^d$ and $0\le\theta\le1$,
\[
 \mathcal P^{s}_{\theta}(F)=
 \begin{cases}
 \infty,&0\le s<\dim_\theta^{\mathrm P} F,\\
 0,&s>\dim_\theta^{\mathrm P} F.
 \end{cases}
\]
\end{corollary}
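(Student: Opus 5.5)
The plan is to read the corollary off Proposition~\ref{prop:metric-outer-measure} together with the definition $\dim_\theta^{\mathrm P}F=\inf\{s>0:\mathcal P^s_\theta(F)=0\}$. The key input is the dichotomy already proved there: if $t>s$ and $\mathcal P^s_\theta(F)<\infty$, then $\mathcal P^t_\theta(F)=0$. A second useful fact follows from \eqref{eq:exponent-comparison}. Since $(2\delta)^{t-s}\le 1$ whenever $\delta\le 1/2$, the map $s\mapsto\mathcal P^s_{\theta,\delta}(F)$ is nonincreasing for such $\delta$, and passing to the limit $\delta\downarrow0$ shows that $s\mapsto\mathcal P^s_\theta(F)$ is nonincreasing.

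\emph{Case $s>\dim_\theta^{\mathrm P}F$.} By the definition of the infimum there is some $s'$ with $0<s'<s$ and $\mathcal P^{s'}_\theta(F)=0$. This value is finite, so the dichotomy with $t=s>s'$ gives $\mathcal P^s_\theta(F)=0$. The monotonicity in $s$ gives the same conclusion directly.

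\emph{Case $0\le s<\dim_\theta^{\mathrm P}F$.} This requires $\dim_\theta^{\mathrm P}F>0$. Suppose, for contradiction, that $\mathcal P^s_\theta(F)<\infty$. Choose $t$ with $\max\{s,0\}<t<\dim_\theta^{\mathrm P}F$; in particular $t>0$. The dichotomy gives $\mathcal P^t_\theta(F)=0$. Then $t$ belongs to the set $\{s>0:\mathcal P^s_\theta(F)=0\}$, so its infimum is at most $t$, which is strictly less than $\dim_\theta^{\mathrm P}F$. This contradicts the definition, so $\mathcal P^s_\theta(F)=\infty$.

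There is no real obstacle here. The only points needing care are the restriction $s>0$ in the defining infimum and the inclusion of $s=0$ in the first case. Both are handled by choosing the intermediate exponent $t$ strictly positive, as above. The degenerate situation $\dim_\theta^{\mathrm P}F=0$ makes the first case vacuous. The convention $\inf\varnothing=\infty$ never arises, because $\mathcal P^s_\theta(F)=0$ for all $s>d$. This last fact holds because $\dim_\theta^{\mathrm P}F\le d$ is part of Proposition~\ref{prop:basic-properties}. Alternatively, one can simply allow the value $\infty$ in the statement.
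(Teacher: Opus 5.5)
Your proposal is correct and is essentially the paper's proof: in both cases you read the conclusion off from the implication $\mathcal P^s_\theta(F)<\infty\Rightarrow\mathcal P^t_\theta(F)=0$ for $t>s$ in Proposition~\ref{prop:metric-outer-measure}, together with the definition of $\dim_\theta^{\mathrm P}F$ as an infimum. Your closing remark about $\inf\varnothing$ is harmless but unnecessary, because your contradiction argument for $s<\dim_\theta^{\mathrm P}F$ works verbatim even if that infimum were $\infty$.
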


\begin{proof}
By Proposition~\ref{prop:metric-outer-measure},
\[
 \mathcal P^{s}_{\theta}(F)<\infty
 \quad\Longrightarrow\quad
 \mathcal P^{t}_{\theta}(F)=0\text{ for all }t>s
 \quad\Longrightarrow\quad \dim_\theta^{\mathrm P}F\le s.
\]
Hence $\mathcal P^{s}_{\theta}(F)=\infty$ for $s<\dim_\theta^{\mathrm P}F$.
For $s>\dim_\theta^{\mathrm P}F$, the definition of the critical exponent
and \eqref{eq:exponent-comparison} give $\mathcal P^{s}_{\theta}(F)=0$.
\end{proof}

Replacing each $(E_j,R_j)$ by $(E_j\cap F,R_j)$ and discarding empty
intersections does not increase the cost. Thus covers of $F$ may be
restricted to nonempty carriers contained in $F$.

\begin{theorem}
\label{thm:fundamental-properties}
\label{thm:endpoint-identities}
For $F\subset\mathbb R^d$ and $s>0$,
\begin{equation}\label{eq:hausdorff-endpoint-measure}
 \mathcal P^{s}_{0}(F)=2^s\mathcal H^s(F),
\end{equation}
where Hausdorff measure uses the cost $(\diam E)^s$.  If $\mathcal P^s$
denotes classical packing measure with the cost $(2r)^s$, there is a
constant $M_d<\infty$ such that
\begin{equation}\label{eq:packing-endpoint-measure}
 \mathcal P^s(F)\le\mathcal P^{s}_{1}(F)\le M_d\mathcal P^s(F).
\end{equation}
In particular,
\[
 \dim_0^{\mathrm P}F=\dimH F,
 \qquad \dim_1^{\mathrm P}F=\dimP F.
\]
\end{theorem}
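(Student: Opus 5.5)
The plan is to treat the two endpoints separately, handling each by a direct comparison of covers and packings. The dimension identities then follow from Corollary~\ref{coro:critical-exponent} together with the standard critical-exponent characterizations of $\dimH$ and $\dimP$.

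\emph{Hausdorff endpoint.} At $\theta=0$ the window $[R^{1/(1-\theta)},R]$ collapses to the single radius $R$. Any two distinct centres would then need separation at least $2R$. However, $\overline E$ has diameter at most $R$, so any admissible configuration has exactly one ball, and $\Phi^s_0(E,R)=(2R)^s$ for nonempty $E$. Hence
\[
 \mathcal P^s_{0,\delta}(F)=\inf\Bigl\{\sum_j(2R_j)^s:\ F\subset\bigcup_jE_j,\ \diam E_j\le R_j\le\delta\Bigr\}.
\]
Optimizing $R_j$ down to $\diam E_j$ gives $2^s\mathcal H^s_\delta(F)$. The only subtlety is that labels must satisfy $R_j>0$, which matters when $\diam E_j=0$. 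I would choose $R_j=\max(\diam E_j,\epsilon_j)$ with $\sum_j\epsilon_j^s$ arbitrarily small, which is possible since $s>0$. Letting $\delta\downarrow0$ gives \eqref{eq:hausdorff-endpoint-measure}. The reverse inequality $\ge$ is immediate because $R_j\ge\diam E_j$.

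\emph{Packing endpoint.} Since the case $\theta=1$ is the labelled packing construction of \cite{Liu2026Labelled}, I would cite that paper for \eqref{eq:packing-endpoint-measure}. To sketch the argument, I would recall that $\mathcal P^s(F)=\inf\{\sum_j P^s_0(F_j):F\subset\bigcup F_j\}$, where $P^s_0$ is the packing premeasure.

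For the lower bound $\mathcal P^s\le\mathcal P^s_1$, fix a labelled cover $(E_j,R_j)$ with $R_j\le\delta$. Each $\Phi^s_1(E_j,R_j)$ dominates the $\delta'$-packing premeasure $P^s_{\delta'}(E_j)$ for every $\delta'\le R_j$, hence dominates $P^s_0(E_j)$. Summing gives $\mathcal P^s(F)\le\sum_j\Phi^s_1(E_j,R_j)$. One small point is that packings with centres in $\overline E$ versus $E$ give the same premeasure, since $P^s_0(\overline E)=P^s_0(E)$.

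For the upper bound $\mathcal P^s_1\le M_d\mathcal P^s$, start from a decomposition $F\subset\bigcup F_j$ with $\sum_jP^s_0(F_j)<\infty$. I then need to cut each $F_j$ into pieces of diameter at most $\delta$ whose labelled costs are controlled. This is the main obstacle. A piece $E$ of diameter at most $R$ admits packings up to radius $R$, whereas $P^s_\eta$ only controls radii up to $\eta$, so the large radii in $[\eta,R]$ must be charged.

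I would first choose $\eta$ with $P^s_\eta(F_j)\le P^s_0(F_j)+\epsilon$. Next I would partition $F_j$ into dyadic-cube pieces of side comparable to $\eta$ and set $R=\eta$ for each piece. This has two consequences. First, a packing of a piece with radii at most $\eta$ has cost at most $P^s_\eta(\text{piece})$. Second, the union over pieces of such packings is not itself a packing of $F_j$, but it splits into at most $M_d$ subfamilies, each of which is a packing of $F_j$. The splitting uses a colouring of the cubes so that cubes of the same colour are at distance greater than $2\eta$.

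Summing over pieces then gives total cost at most $M_d P^s_\eta(F_j)$. Finally, letting $\epsilon\to0$ and $\delta\to0$ yields \eqref{eq:packing-endpoint-measure}. Combined with Corollary~\ref{coro:critical-exponent}, this gives both dimension identities.
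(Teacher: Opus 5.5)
Your proposal is correct and follows essentially the same route as the paper. At $\theta=0$ you use the single-centre collapse $\Phi^s_0(E,R)=(2R)^s$, with small positive labels for singletons. At $\theta=1$ you use $\Phi^s_1(E,R)\ge Q^s_0(E)$ for the lower bound, and for the upper bound a grid of cubes of diameter at most the label, coloured into $M_d$ classes whose same-colour closures are more than twice the label apart.

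The one point to state explicitly is that the coloured subfamilies are centred in $\overline{F_j}$, so you need the closure identity at each fixed scale, $Q^s_r(\overline A)=Q^s_r(A)$, which the paper proves by perturbing finitely many centres. Equivalently, you can choose $\eta\le\delta$ with $P^s_\eta(\overline{F_j})\le P^s_0(F_j)+\epsilon 2^{-j}$.
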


\begin{proof}

At $\theta=0$, all admissible radii equal $R$ and $\diam E\le R$
allows at most one centre, so
\[
 \Phi^s_0(E,R)=(2R)^s\qquad(E\ne\varnothing).
\]
Assigning $R=\diam E$ to nonsingleton carriers and positive labels
with arbitrarily small total $s$-cost to singleton carriers gives
$\mathcal P^s_{0,\delta}=2^s\mathcal H^s_\delta$, hence
\eqref{eq:hausdorff-endpoint-measure} on letting $\delta\downarrow0$.

For the packing endpoint, write
\[
 Q_r^s(A)=\sup\left\{\sum_{i=1}^N(2r_i)^s:
 x_i\in A,\quad 0<r_i\le r,\quad
 |x_i-x_j|\ge r_i+r_j\ (i\ne j)\right\},
\]
and set $Q_0^s(A)=\lim_{r\downarrow0}Q_r^s(A)$. Classical packing measure
satisfies
\[
 \mathcal P^s(F)=
 \inf\left\{\sum_jQ_0^s(A_j):F\subset\bigcup_jA_j\right\}.
\]
Finite configurations suffice because every nonnegative packing sum is
the supremum of its finite subsums.

For a finite configuration $(x_i,r_i)_{i=1}^N$ admissible for
$Q_r^s(\overline A)$ and $c>1$, choose $y_i\in A$ with
$|x_i-y_i|<(1-c^{-1})\min_jr_j$. Then, for $i\ne j$,
\[
 |y_i-y_j|\ge |x_i-x_j|-|x_i-y_i|-|x_j-y_j|
 \ge c^{-1}(r_i+r_j),
\]
and $Q_r^s(A)\ge c^{-s}\sum_i(2r_i)^s$. Letting $c\downarrow1$ and
taking suprema gives $Q_r^s(\overline A)\le Q_r^s(A)$; the reverse
inequality follows from $A\subset\overline A$. Consequently,
\[
 \Phi^s_1(E,R)=Q_R^s(E)\qquad(\diam E\le R).
\]
For every labelled cover of $F$,
\[
 \mathcal P^s(F)\le\sum_jQ_0^s(E_j)
 \le\sum_j\Phi^s_1(E_j,R_j),
\]
and taking infima gives $\mathcal P^s(F)\le\mathcal P^{s}_{1}(F)$.

Fix $0<r<1$ and partition $\mathbb R^d$ into half-open
cubes of side $r/\sqrt d$.  Choose an integer $m>2\sqrt d+1$ and colour
the cubes according to their coordinate indices modulo $m$; there are
$M_d=m^d$ colours.  The closures of two distinct cubes of the same colour
are at distance greater than $2r$. For finitely many cubes $Q$ of one
colour, choose finite packings centred in $\overline{A\cap Q}$, with all
radii at most $r$.  The union of the packings is therefore a finite
packing centred in $\overline A$.
Taking suprema over finite configurations and finite collections of cubes yields
\[
 \sum_Q \Phi^s_1(A\cap Q,r)\le M_dQ_r^s(\overline A)
 =M_dQ_r^s(A).
\]
The nonempty sets $A\cap Q$, each with label $r$, form an admissible
cover.  Hence, for any fixed $\delta>0$ and $r\le\delta$,
\[
 \mathcal P^{s}_{1,\delta}(A)\le M_dQ_r^s(A).
\]
Letting $r\downarrow0$ and then $\delta\downarrow0$ gives
$\mathcal P^{s}_{1}(A)\le M_dQ_0^s(A)$. For every countable cover
$F\subset\bigcup_jA_j$,
\[
 \mathcal P^{s}_{1}(F)
 \le\sum_j\mathcal P^{s}_{1}(A_j)\le M_d\sum_jQ_0^s(A_j).
\]
Taking infima proves $\mathcal P^{s}_{1}(F)\le M_d\mathcal P^s(F)$.
Taking critical exponents in
\eqref{eq:hausdorff-endpoint-measure} and
\eqref{eq:packing-endpoint-measure} gives the dimension identities.
\end{proof}

At exponent $s=0$, $\mathcal P^0_\theta$ is counting measure for every
parameter $\theta\in[0,1]$: singleton covers give the upper bound,
while sufficiently small carriers separate any finite set of distinct
points and give the lower bound.

\subsection{Continuity}
\label{sec:analytic-properties}

Theorem~\ref{thm:quantitative-continuity} quantifies the dependence of
the packing spectrum on $\theta$. Its differential form gives the
necessary condition in Theorem~\ref{thm:attain}.

\begin{remark}
\label{rem:parameter-monotonicity}
For each $F\subset\R^d$, the function
$\theta\mapsto\dim_\theta^{\mathrm P}F$ is nondecreasing
since monotonicity of $\Phi^s_\theta$ in $\theta$ passes to
$\mathcal P^s_\theta$ and its critical exponent.
\end{remark}

\begin{lemma}
\label{lem:local_comp}
Let $0\le\theta_1<\theta_2<1$, write $p_i=(1-\theta_i)^{-1}$, and
suppose
\[
 0\le s<t<d,\qquad \frac{p_1}{p_2}<\alpha<1.
\]
If $0<R<1$ and $\diam E\le R$, then
\begin{equation}\label{eq:local_bound}
\begin{split}
 \Phi^t_{\theta_2}(E,R^\alpha)
 \le C \Phi^s_{\theta_1}(E,R)
 \bigl[&R^{\alpha t-s}+R^{t-s}
       +R^{p_1(d-s)+\alpha p_2(t-d)}\bigr],
\end{split}
\end{equation}
where $C$ depends only on $d,s,t$.
\end{lemma}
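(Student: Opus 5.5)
The plan is to fix an arbitrary finite configuration $(x_i,r_i)_{i=1}^N$ admissible for $\Phi^t_{\theta_2}(E,R^\alpha)$ and bound $\sum_i(2r_i)^t$ by the right-hand side of \eqref{eq:local_bound}; taking the supremum then gives the lemma.

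\emph{Setup.} Since $\diam E\le R\le R^\alpha<1$, the pair $(E,R^\alpha)$ is a labelled carrier, so the left side makes sense. The admissible radii for the left side lie in $[R^{\alpha p_2},R^\alpha]$. The window of $\Phi^s_{\theta_1}(E,R)$ is $[R^{p_1},R]$. Because $\alpha p_2>p_1$ and $\alpha<1$,
\[
 R^{\alpha p_2}<R^{p_1}\le R<R^\alpha ,
\]
so the $\theta_2$-window overhangs the $\theta_1$-window on both sides. I will split the indices into three groups: $r_i\in(R,R^\alpha]$, $r_i\in[R^{p_1},R]$, and $r_i\in[R^{\alpha p_2},R^{p_1})$. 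Each group will produce one of the three terms.

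\emph{Large radii.} Suppose two indices $i\ne j$ both had $r_i,r_j>R$. Then $|x_i-x_j|\ge r_i+r_j>2R>\diam\overline E$, which is impossible. So this group has at most one index, and it contributes at most $(2R^\alpha)^t$. Lemma~\ref{lem:elementary-local-estimates} gives $\Phi^s_{\theta_1}(E,R)\ge(2R)^s$, hence
\[
 (2R^\alpha)^t\le 2^{t-s}R^{\alpha t-s}\,\Phi^s_{\theta_1}(E,R).
\]

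\emph{Middle radii.} A subfamily of a packing is still a packing. The middle subfamily is therefore admissible for $\Phi^s_{\theta_1}(E,R)$. By the exponent comparison in Lemma~\ref{lem:elementary-local-estimates}, its $t$-sum is at most
\[
 (2R)^{t-s}\,\Phi^s_{\theta_1}(E,R).
\]

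\emph{Small radii.} This is the main step, and I would handle it by a volume count. Take a maximal subset $\{y_k\}_{k=1}^M$ of $\overline E$ with pairwise distances at least $2R^{p_1}$. The configuration $(y_k,R^{p_1})_{k=1}^M$ is admissible for $\Phi^s_{\theta_1}(E,R)$, so
\[
 M(2R^{p_1})^s\le\Phi^s_{\theta_1}(E,R).
\]
By maximality, each small-radius centre $x_i$ lies within $2R^{p_1}$ of some $y_k$. Since $r_i<R^{p_1}$, the ball $B(x_i,r_i)$ is then contained in $B(y_k,3R^{p_1})$. The balls $B(x_i,r_i)$ are pairwise disjoint because $|x_i-x_j|\ge r_i+r_j$. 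Comparing Lebesgue measure gives
\[
 \sum_{\text{small}}r_i^d\le 3^dM R^{p_1d}.
\]
Since $t<d$, the function $r\mapsto r^{t-d}$ is decreasing, so $(2r_i)^t\le(2R^{\alpha p_2})^{t-d}(2r_i)^d$ for every small index. Combining the two displays gives
\[
 \sum_{\text{small}}(2r_i)^t\le C\,R^{\alpha p_2(t-d)}\,R^{p_1 d}\,M
 \le C\,R^{p_1(d-s)+\alpha p_2(t-d)}\,\Phi^s_{\theta_1}(E,R).
\]

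\emph{Conclusion.} Adding the three bounds yields \eqref{eq:local_bound}. The constants that appear are powers of $2$ and $3$ with exponents in $s$, $t$, $d$, so $C$ depends only on $d,s,t$. The only delicate point is the small-radius group: it requires passing from the $\theta_1$-packing sum to a ball count at scale $R^{p_1}$ through the maximal separated set, and using $t<d$ to put the worst case at the smallest allowed radius $R^{\alpha p_2}$.
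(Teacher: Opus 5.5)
Your proposal is correct and follows essentially the same argument as the paper: the same split of the $\theta_2$-window $[R^{\alpha p_2},R^\alpha]$ into radii above $R$, radii in $[R^{p_1},R]$, and radii below $R^{p_1}$, and the same maximal $2R^{p_1}$-separated set with a volume comparison for the small radii. The only cosmetic difference is that the paper observes a single radius exceeding $R$ already forces $N=1$, whereas you bound the at most one large index and add it to the other two groups; both give a constant $C=2^{t-s}3^d$ depending only on $d,s,t$.
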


\begin{proof}
We may assume that $E$ is nonempty. Set
\[
 T=R^\alpha,\qquad \rho=R^{p_1},\qquad
 a=T^{p_2}=R^{\alpha p_2}<\rho.
\]
Let $\{(x_i,r_i)\}_{i=1}^N$ be admissible for
$\Phi^t_{\theta_2}(E,T)$. If $r_i>R$ for some $i$, then $N=1$ and
\[
 (2r_1)^t\le(2T)^t
 \le 2^{t-s}R^{\alpha t-s}\Phi^s_{\theta_1}(E,R).
\]
Otherwise, partition the indices into
\[
 I_1=\{i:\rho\le r_i\le R\},\qquad
 I_2=\{i:a\le r_i<\rho\}.
\]
The definition of $\Phi^s_{\theta_1}(E,R)$ gives
\[
 \sum_{i\in I_1}(2r_i)^t
 \le(2R)^{t-s}\Phi^s_{\theta_1}(E,R).
\]

Choose a maximal $2\rho$-separated set
$\{y_j\}_{j=1}^M\subset\overline E$, which is finite by compactness.
The configuration with centres $y_j$ and radii $\rho$ is admissible
for $\Phi^s_{\theta_1}(E,R)$, so
\begin{equation}\label{eq:maximal-rho-cardinality}
 M(2\rho)^s\le \Phi^s_{\theta_1}(E,R).
\end{equation}
Maximality gives
$\overline E\subset\bigcup_{j=1}^M B(y_j,2\rho)$.
Partition $I_2$ into sets $I_{2,j}$ such that
$x_i\in B(y_j,2\rho)$ for $i\in I_{2,j}$.
For each $j$, the balls $B(x_i,r_i)$ with $i\in I_{2,j}$ are pairwise
disjoint and contained in $B(y_j,3\rho)$. Comparing Euclidean volumes
and using $r_i\ge a$ and $t<d$ gives
\[
 \sum_{i\in I_{2,j}}(2r_i)^d\le C_d\rho^d,
 \qquad
 \sum_{i\in I_{2,j}}(2r_i)^t
 \le C_{d,t}\rho^d a^{t-d}.
\]
Summing over $j$ and applying \eqref{eq:maximal-rho-cardinality} yields
\[
 \sum_{i\in I_2}(2r_i)^t
 \le C\Phi^s_{\theta_1}(E,R)\rho^{d-s}a^{t-d}
 =C\Phi^s_{\theta_1}(E,R)
     R^{p_1(d-s)+\alpha p_2(t-d)}.
\]
Adding these estimates and taking the supremum proves
\eqref{eq:local_bound}.
\end{proof}

\begin{theorem}
\label{thm:quantitative-continuity}
Let $F\subset\mathbb R^d$, let $0\le\theta_1<\theta_2<1$, and set
$c=\frac{1-\theta_2}{1-\theta_1}$.
Then
\begin{equation}\label{eq:dim_inequality}
 \dim_{\theta_1}^{\mathrm P}F\le \dim_{\theta_2}^{\mathrm P}F\le\frac{d \dim_{\theta_1}^{\mathrm P}F}{c d+(1-c)\dim_{\theta_1}^{\mathrm P}F}.
\end{equation}
\end{theorem}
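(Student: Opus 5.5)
The lower bound is Remark~\ref{rem:parameter-monotonicity}, so the task is the upper bound. Write $D_1=\dim_{\theta_1}^{\mathrm P}F$ and $p_i=(1-\theta_i)^{-1}$, so that $c=p_1/p_2$. If $D_1=d$ the right-hand side equals $d$, and the bound follows from Proposition~\ref{prop:basic-properties}\ref{item:basic-bounds}. We may therefore assume $D_1<d$.

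Fix $s$ with $D_1<s<d$. Corollary~\ref{coro:critical-exponent} gives $\mathcal P^s_{\theta_1}(F)=0$. We then choose $\alpha\in(c,1)$ and $t\in(s,d)$, to be optimized, and transfer covers using Lemma~\ref{lem:local_comp}. Take a labelled cover $(E_j,R_j)$ of $F$ with $R_j\le\delta$ and $\sum_j\Phi^s_{\theta_1}(E_j,R_j)$ small. Relabel each carrier by $R_j^\alpha$. This is admissible, since $\diam E_j\le R_j\le R_j^\alpha\le\delta^\alpha$. Lemma~\ref{lem:local_comp} then gives
\[
 \Phi^t_{\theta_2}(E_j,R_j^\alpha)\le C\,\Phi^s_{\theta_1}(E_j,R_j)\bigl[R_j^{\alpha t-s}+R_j^{t-s}+R_j^{p_1(d-s)+\alpha p_2(t-d)}\bigr].
\]
If all three exponents are nonnegative, the bracket is at most $3$. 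Hence $\mathcal P^t_{\theta_2,\delta^\alpha}(F)\le 3C\,\mathcal P^s_{\theta_1,\delta}(F)$, and letting $\delta\downarrow0$ yields $\mathcal P^t_{\theta_2}(F)=0$. Therefore $\dim_{\theta_2}^{\mathrm P}F\le t$.

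It remains to read off the admissible $t$. The conditions are
\begin{itemize}
\item $t\ge s/\alpha$, which also implies $t\ge s$;
\item $\alpha p_2(d-t)\le p_1(d-s)$, that is, $d-t\le c(d-s)/\alpha$.
\end{itemize}
Setting $t=s/\alpha$, the second condition becomes $\alpha d-s\le c(d-s)$, i.e.\ $\alpha\le (cd+(1-c)s)/d$. The value $\alpha_*=(cd+(1-c)s)/d$ lies strictly between $c$ and $1$ because $0<s<d$. Taking $\alpha=\alpha_*$ gives $t=sd/(cd+(1-c)s)$.

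Two details need care.
\begin{itemize}
\item Lemma~\ref{lem:local_comp} requires $\alpha>c$ strictly, and this holds for $\alpha_*$ since $s>0$. The requirement $t<d$ also holds, because $s<d$ gives $cd+(1-c)s>s$. The case $D_1=0$ is handled by letting $s\downarrow0$.
\item Bracket exponents equal to zero are harmless, since $R_j<1$.
\end{itemize}
Finally, letting $s\downarrow D_1$ and using continuity of $s\mapsto sd/(cd+(1-c)s)$ gives \eqref{eq:dim_inequality}.

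The main obstacle is this exponent bookkeeping: choosing $\alpha$ and $t$ jointly so that all three terms in \eqref{eq:local_bound} are bounded. The cover transfer itself is routine once Lemma~\ref{lem:local_comp} is available.
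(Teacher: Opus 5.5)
Your proposal is correct and follows the paper's proof essentially step for step. It uses the same cover transfer $R_j\mapsto R_j^\alpha$ through Lemma~\ref{lem:local_comp} and reaches the same target exponent $t_0=sd/(cd+(1-c)s)$. The only difference is that you take the boundary values $\alpha=\alpha_*$ and $t=s/\alpha_*$ directly, so the bracket is merely bounded by $3$. That suffices because $\mathcal P^s_{\theta_1,\delta}(F)=0$ for every $\delta$. The paper instead takes $t>t_0$ with $\alpha$ in an open interval, making all three exponents strictly positive, and then lets $t\downarrow t_0$; the resulting $\delta^\beta$ factor is not actually needed.
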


\begin{proof}
The lower bound follows from Remark~\ref{rem:parameter-monotonicity}. If
$\dim_{\theta_1}^{\mathrm P}F=d$, the upper bound follows from
$\dim_{\theta_2}^{\mathrm P}F\le d$. Otherwise, choose
$\dim_{\theta_1}^{\mathrm P}F<s<d$, write $p_i=(1-\theta_i)^{-1}$,
and set
\[
 t_0=\frac{d p_2s}{p_1d+(p_2-p_1)s}.
\]
Then $s<t_0<d$.  Fix $t_0<t<d$ and choose $\alpha$ so that
\begin{equation}\label{eq:alpha-choice}
 \max\left\{\frac{p_1}{p_2},\frac{s}{t}\right\}
 <\alpha<
 \min\left\{1,\frac{p_1(d-s)}{p_2(d-t)}\right\}.
\end{equation}
Set
\[
 \beta=\min\{\alpha t-s,\ t-s,
 p_1(d-s)+\alpha p_2(t-d)\}>0.
\]

Fix $0<\delta_0<1$ and choose $0<\delta<1$ with
$\delta^\alpha\le\delta_0$. Since $\mathcal P^s_{\theta_1}(F)=0$,
for each $\varepsilon>0$ there is a labelled cover
$\{(E_j,R_j)\}_j$ of $F$ with $R_j\le\delta$ and
\[
 \sum_j \Phi^s_{\theta_1}(E_j,R_j)<\varepsilon.
\]
Replace each $R_j$ by $T_j=R_j^\alpha$.
Since $\diam E_j\le T_j\le\delta_0$,
Lemma~\ref{lem:local_comp} gives
\[
 \sum_j\Phi^t_{\theta_2}(E_j,T_j)
 \le 3C\delta^\beta
       \sum_j\Phi^s_{\theta_1}(E_j,R_j)
 <3C\delta^\beta\varepsilon.
\]
Letting $\varepsilon\downarrow0$ and then $\delta_0\downarrow0$
gives $\mathcal P^t_{\theta_2}(F)=0$, hence
$\dim_{\theta_2}^{\mathrm P}F\le t$.
Let $t\downarrow t_0$, then $s\downarrow\dim_{\theta_1}^{\mathrm P}F$.
\end{proof}

The profile $f_{\mathrm{fast}}$ in Example~\ref{ex:extreme-profiles}
attains the upper bound in Theorem~\ref{thm:quantitative-continuity}
simultaneously for every $0\le\theta_1<\theta_2<1$.

\begin{corollary}
\label{cor:packing-spectrum-regularity}
\label{cor:continuity-at-zero}
\label{cor:zero-H-collapse}
\label{prop:regularity}
For every $F\subset\R^d$, its packing spectrum
$f(\theta)=\dim_\theta^{\mathrm P}F$ is locally Lipschitz on $[0,1)$ and
satisfies
\begin{equation}
\label{eq:dini-bound}
 0\le D^+f(\theta)
 \le\frac{f(\theta)(d-f(\theta))}{d(1-\theta)}
 \qquad(0\le\theta<1),
\end{equation}
where $D^+f(\theta)=\limsup_{h\downarrow0}
(f(\theta+h)-f(\theta))/h$ is the upper right Dini derivative.
Moreover, for each $0\le\theta<1$,
\[
f(\theta)=0\quad\Longleftrightarrow\quad\dimH F=0.
\]
\end{corollary}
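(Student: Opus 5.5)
All three assertions come from Theorem~\ref{thm:quantitative-continuity}, together with monotonicity (Remark~\ref{rem:parameter-monotonicity}) and the bound $f\le d$.

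\emph{Dini bound.} Fix $0\le\theta<1$ and $0<h<1-\theta$. Apply \eqref{eq:dim_inequality} with $\theta_1=\theta$ and $\theta_2=\theta+h$, so that $1-c=h/(1-\theta)$. Write $x=f(\theta)$ and consider
\[
 g(c)=\frac{dx}{cd+(1-c)x}.
\]
Then
\[
 g(c)-x=\frac{dx-cdx-(1-c)x^2}{cd+(1-c)x}
 =\frac{(1-c)x(d-x)}{cd+(1-c)x}.
\]
The denominator is at least $cd$, and $cd\to d$ as $h\downarrow0$. Hence
\[
 \frac{f(\theta+h)-f(\theta)}{h}\le\frac{x(d-x)}{(1-\theta)\,(cd+(1-c)x)}
 \longrightarrow\frac{x(d-x)}{d(1-\theta)}.
\]
This gives the upper bound in \eqref{eq:dini-bound}. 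The lower bound $D^+f\ge0$ holds because $f$ is nondecreasing.

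\emph{Local Lipschitz property.} Fix $0<\theta_0<1$ and take $\theta_1<\theta_2$ in $[0,\theta_0]$. The same computation, using $x\le d$ and $cd\ge(1-\theta_0)d/(1-\theta_1)$, gives
\[
 0\le f(\theta_2)-f(\theta_1)\le\frac{(\theta_2-\theta_1)\,x(d-x)}{(1-\theta_1)\,cd}
 \le\frac{d}{4(1-\theta_0)}\,(\theta_2-\theta_1),
\]
since $x(d-x)\le d^2/4$. So $f$ is Lipschitz on $[0,\theta_0]$ with constant $d/(4(1-\theta_0))$. If $d=0$ the claim is trivial, but in the paper $d\ge1$ anyway.

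\emph{Zero set.} Theorem~\ref{thm:endpoint-identities} gives $f(0)=\dimH F$. If $f(\theta)=0$, then $\dimH F=f(0)\le f(\theta)=0$ by monotonicity. Conversely, if $f(0)=0$, apply \eqref{eq:dim_inequality} with $\theta_1=0$ and $\theta_2=\theta$. For $\theta=0$ there is nothing to prove. For $\theta>0$ we have $c=1-\theta>0$, and the right-hand side is $0/(cd)=0$, so $f(\theta)=0$.

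No step is a real obstacle. The only care needed is to keep the denominator $cd+(1-c)x$ bounded away from $0$, which holds because $c\ge1-\theta_0>0$ on $[0,\theta_0]$.
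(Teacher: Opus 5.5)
Your proof is correct and follows the paper's argument. You subtract $f(\theta_1)$ from the upper bound in Theorem~\ref{thm:quantitative-continuity} to get a linear bound on the increment over compact subintervals, divide by $\theta_2-\theta_1$ and let $\theta_2\downarrow\theta_1$ for the Dini bound, and set $\theta_1=0$ for the zero-set equivalence; the only difference is that you make the constants explicit, e.g.\ the Lipschitz constant $d/(4(1-\theta_0))$ on $[0,\theta_0]$.
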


\begin{proof}
Subtracting $f(\theta_1)$ from the upper bound in
Theorem~\ref{thm:quantitative-continuity} shows that the nonnegative
increment $f(\theta_2)-f(\theta_1)$ is bounded by a constant multiple of
$\theta_2-\theta_1$ on each compact subinterval of $[0,1)$.
Dividing the same increment bound by $\theta_2-\theta_1$ and letting
$\theta_2\downarrow\theta_1$ gives \eqref{eq:dini-bound}.
Setting $\theta_1=0$ in that theorem and using $f(0)=\dimH F$
proves the equivalence.
\end{proof}

\subsection{Proof of Proposition~\ref{prop:basic-properties}}
\label{sec:proof-basic-properties}
\label{sec:measurability}

\begin{proof}[Proof of Proposition~\ref{prop:basic-properties}\ref{item:basic-monotonicity} and \ref{item:basic-bounds}]
Inclusion monotonicity of Proposition~\ref{prop:metric-outer-measure}
passes to the critical exponent.
The dimension bounds follow from Theorem~\ref{thm:endpoint-identities}
and Remark~\ref{rem:parameter-monotonicity}.
\end{proof}

\begin{proof}[Proof of Proposition~\ref{prop:basic-properties}\ref{item:basic-mappings}]
We first prove
\[
 \mathcal P^{s}_{\theta}(f(F))
 \leq2^{(1-\alpha)s}L^s\mathcal P^{\alpha s}_{\theta}(F).
\]
The map $f$ extends uniquely to $\overline f:\overline F\to\R^m$
with the same H\"older constant and exponent.
For a labelled carrier $(E,R)$ with $LR^\alpha<1$, put $A=E\cap F$.
Then $\diam f(A)\leq LR^\alpha$, and compactness of
$\overline A$ gives $\overline{f(A)}=\overline f(\overline A)$.
For any finite admissible packing $(y_i,\rho_i)$ for
$\Phi^s_\theta(f(A),LR^\alpha)$, choose preimages
$x_i\in\overline A$ and set $r_i=(\rho_i/L)^{1/\alpha}$.
Since $1/\alpha\geq1$,
\[
 \begin{aligned}
 |x_i-x_j|&\geq L^{-1/\alpha}|y_i-y_j|^{1/\alpha}\\
 &\geq L^{-1/\alpha}(\rho_i+\rho_j)^{1/\alpha}\\
 &\geq r_i+r_j,
 \end{aligned}
\]
and $r_i\leq R$.
For $\theta<1$, put $p=(1-\theta)^{-1}$. Then
$r_i\geq L^{(p-1)/\alpha}R^p\geq R^p$; for $\theta=1$, the lower bound is
$r_i>0$. Hence
\begin{equation}
 \label{eq:lipschitz-local-comparison}
 \Phi^s_\theta(f(E\cap F),LR^\alpha)
 \leq2^{(1-\alpha)s}L^s\Phi^{\alpha s}_\theta(E,R).
\end{equation}
Applying \eqref{eq:lipschitz-local-comparison} to labelled covers gives
\[
 \mathcal P^{s}_{\theta,L\delta^\alpha}(f(F))
 \leq2^{(1-\alpha)s}L^s\mathcal P^{\alpha s}_{\theta,\delta}(F)
 \qquad(0<\delta<L^{-1/\alpha}).
\]
Passing to the limit $\delta\downarrow0$ and taking critical exponents
give the H\"older inequality.

The Lipschitz case is $\alpha=1$. If $f$ is bi-Lipschitz, applying
that case to $f$ and $f^{-1}$ gives equality.
\end{proof}

\begin{proof}[Proof of Proposition~\ref{prop:basic-properties}\ref{item:basic-countable}]
Let $s>\sup_j\dim_\theta^{\mathrm P}F_j$.
By Corollary~\ref{coro:critical-exponent},
$\mathcal P^s_\theta(F_j)=0$ for every $j$.
Countable subadditivity from Proposition~\ref{prop:metric-outer-measure} gives
\[
 \mathcal P^{s}_{\theta}\Bigl(\bigcup_jF_j\Bigr)
 \le\sum_j\mathcal P^{s}_{\theta}(F_j)=0.
\]
Thus $\dim_\theta^{\mathrm P}(\bigcup_jF_j)\le s$.
Letting $s\downarrow\sup_j\dim_\theta^{\mathrm P}F_j$ proves one inequality.
The reverse follows from $F_j\subset\bigcup_iF_i$.
\end{proof}

In the next two lemmas and the definition of strict labelled content, assume
$s>0$ and $0\le\theta<1$. We equip $[0,\infty]$ with the compact
topology induced by $t\mapsto t/(1+t)$, with $\infty$ mapped to $1$.

\begin{lemma}
\label{lem:local-cost-usc}
Suppose that $K_j,K\in\mathcal K(\mathbb R^d)$,
$d_H(K_j,K)\to0$, $R_j\to R\in(0,1)$, and $\diam K_j\le R_j<1$.
Then $\diam K\le R$ and
\[
 \limsup_{j\to\infty}\Phi^s_\theta(K_j,R_j)
 \le \Phi^s_\theta(K,R).
\]
\end{lemma}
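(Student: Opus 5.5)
The plan is a compactness argument on finite configurations: pass to a subsequence along which the costs converge, fix a near-optimal configuration for each $K_j$, and show that a limit configuration is admissible for $(K,R)$ up to a factor tending to one.

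First, $\diam K\le R$ follows from continuity of the diameter under $d_H$: we have $\diam K=\lim_j\diam K_j\le\lim_j R_j=R$.

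For the $\limsup$ inequality, pass to a subsequence along which $\Phi^s_\theta(K_j,R_j)$ converges to the $\limsup$, and write $p=(1-\theta)^{-1}$. Since $R_j\to R>0$, we have $R_j\ge R/2$ for large $j$, so every admissible radius satisfies $r\ge R_j^{p}\ge (R/2)^p$. Centres are therefore $2(R/2)^p$-separated inside sets of diameter at most $1$, and the number of centres is bounded by some $N_0$ independent of $j$. In particular the costs are bounded, so the $\limsup$ is finite and the compactified topology on $[0,\infty]$ plays no real role.

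For each $j$, choose an admissible configuration $(x_i^j,r_i^j)_{i\le N_j}$ whose cost is within $1/j$ of $\Phi^s_\theta(K_j,R_j)$; we may take $x_i^j\in K_j$ since $K_j$ is closed. Passing to a further subsequence, we may assume $N_j=N$ is constant and that $x_i^j\to x_i$ and $r_i^j\to r_i$. This uses that the $x_i^j$ lie in a bounded set, because $K_j\to K$, and that $r_i^j\in[(R/2)^p,1]$. We then check the limit configuration:
\begin{itemize}
\item $x_i\in K$ by Hausdorff convergence;
\item $R^{p}\le r_i\le R$, from $R_j^p\le r_i^j\le R_j$ and $R_j\to R$;
\item $|x_i-x_k|\ge r_i+r_k$, since this closed condition passes to limits.
\end{itemize}
Hence $(x_i,r_i)$ is admissible for $\Phi^s_\theta(K,R)$. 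Because $N$ is fixed, the cost $\sum_i(2r_i^j)^s$ converges to $\sum_i(2r_i)^s\le\Phi^s_\theta(K,R)$, which gives the claimed inequality.

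The one point to watch is whether the limiting radii remain inside the closed window; they do, because both endpoints of the window move continuously with the label. At $\theta=0$ the window is the single radius $R_j$, and this still converges to $R$, so that case needs nothing extra. The finiteness of $N$, which comes from $\theta<1$, is what makes the argument work.
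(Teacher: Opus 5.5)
Your proposal is correct and follows the paper's proof essentially step for step. A uniform lower bound on admissible radii bounds the number of centres; near-optimal configurations are chosen along a subsequence realizing the $\limsup$; and the limit configuration is shown to be admissible for $(K,R)$. The only cosmetic difference is how you bound the number of centres: you use separation inside a set of diameter at most $1$, whereas the paper uses volume comparison in $B(0,M+a)$.
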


\begin{proof}
Hausdorff convergence gives $\diam K=\lim_j\diam K_j\le R$.
Put $p=(1-\theta)^{-1}$ and $a=R^p/2>0$. For some $M>0$ and all
sufficiently large $j$,
\[
 K_j\subset B(0,M),\qquad R_j^p\ge a.
\]
For any admissible configuration $(x_{j,i},r_{j,i})_{i=1}^{N_j}$,
the balls $B(x_{j,i},a)$ are pairwise disjoint and contained in
$B(0,M+a)$. Comparing volumes gives
\[
 N_j a^d\le(M+a)^d,\qquad
 \sum_{i=1}^{N_j}(2r_{j,i})^s
 \le2^sN_j\le2^s\left(1+\frac{M}{a}\right)^d.
\]
Pass to a subsequence along which $\Phi^s_\theta(K_j,R_j)$ converges to
$\limsup_j\Phi^s_\theta(K_j,R_j)$, and choose admissible configurations
$(x_{j,i},r_{j,i})_{i=1}^{N_j}$ with
\[
 \Phi^s_\theta(K_j,R_j)-\frac{1}{j}
 \le\sum_{i=1}^{N_j}(2r_{j,i})^s
 \le\Phi^s_\theta(K_j,R_j).
\]
Passing to a further subsequence, we may assume that $N_j=N$,
$x_{j,i}\to x_i$, and $r_{j,i}\to r_i$ for $1\le i\le N$.
Then
\[
 x_i\in K,\qquad R^p\le r_i\le R,\qquad
 |x_i-x_\ell|\ge r_i+r_\ell\quad(i\ne\ell).
\]
Consequently,
\[
 \limsup_{j\to\infty}\Phi^s_\theta(K_j,R_j)
 =\sum_{i=1}^N(2r_i)^s\le\Phi^s_\theta(K,R).
\]
\end{proof}

For $0<\delta<1$, define the strict labelled content by
\[
 \mathcal P^{s}_{\theta,<\delta}(F)
 :=\inf\left\{\sum_j\Phi^s_\theta(E_j,R_j):
 F\subset\bigcup_jE_j,\quad \diam E_j\le R_j<\delta,\quad R_j>0\right\}.
\]
For $0<\delta'<\delta$,
\begin{equation}\label{eq:strict-content-squeeze}
 \mathcal P^{s}_{\theta,\delta}(F)
 \le\mathcal P^{s}_{\theta,<\delta}(F)
 \le\mathcal P^{s}_{\theta,\delta'}(F).
\end{equation}
Hence
\[
 \lim_{\delta\downarrow0}\mathcal P^{s}_{\theta,<\delta}(F)
 =\mathcal P^{s}_{\theta}(F).
\]

\begin{lemma}
\label{lem:open-labelled-carriers}
Restricting the carriers in $\mathcal P^{s}_{\theta,<\delta}$ to open
sets preserves the infimum.
The map
$K\mapsto\mathcal P^{s}_{\theta,<\delta}(K)$ is upper semicontinuous
on $\mathcal K(\mathbb R^d)$.
\label{lem:meas-USC-open-content}
\end{lemma}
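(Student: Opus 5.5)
For the first assertion, I would start from an arbitrary admissible cover $\{(E_j,R_j)\}$ of $F$ with $\diam E_j\le R_j<\delta$ and finite cost $\sum_j\Phi^s_\theta(E_j,R_j)<\mathcal P^s_{\theta,<\delta}(F)+\varepsilon$. The goal is to replace each carrier by an open set at only a small relative increase in cost. For each $j$, choose $\lambda_j>1$ close to $1$ such that $\lambda_jR_j<\delta$, and set $R_j'=\lambda_jR_j$. Let $U_j$ be the open $\eta_j$-neighbourhood of $E_j$, where $2\eta_j\le R_j'-R_j$, so that $\diam U_j\le R_j'$. The content then reduces to a local estimate of the form
\[
 \Phi^s_\theta(U_j,R_j')\le(1+\varepsilon_j)\Phi^s_\theta(E_j,R_j)
\]
for suitable $\lambda_j,\eta_j$ with $\sum_j\varepsilon_j\Phi^s_\theta(E_j,R_j)<\varepsilon$.

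To obtain this estimate, I would use Lemma~\ref{lem:local-cost-usc}. The family $\overline{U_j}$ (closed $\eta$-neighbourhoods) converges to $\overline{E_j}$ in the Hausdorff metric as $\eta\to0$, and $R_j'\to R_j$. Closure invariance from Lemma~\ref{lem:elementary-local-estimates} then gives
\[
 \limsup\Phi^s_\theta(U_j,R_j')\le\Phi^s_\theta(E_j,R_j).
\]
This yields the estimate; it applies when $E_j$ is bounded and nonempty, and empty carriers can simply be discarded. Because $\Phi^s_\theta(E_j,R_j)\ge(2R_j)^s>0$, a multiplicative error $1+\varepsilon 2^{-j}$ is attainable. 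The main subtlety is that the label must remain strictly below $\delta$, which is the reason the strict content is used. Since the cost of the open cover is at most the original cost plus $\varepsilon$, the first claim follows.

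For upper semicontinuity, fix $K$ and suppose $c>\mathcal P^s_{\theta,<\delta}(K)$. By the first part, there is an open cover $\{(U_j,R_j)\}$ of $K$ with cost less than $c$. By compactness, finitely many $U_1,\dots,U_n$ already cover $K$. The union $U=\bigcup_{j\le n}U_j$ is open and contains $K$, so it contains some $\rho$-neighbourhood of $K$. Consequently, whenever $d_H(K',K)<\rho$, we have $K'\subset U$. Then $\{(U_j,R_j)\}_{j\le n}$ is admissible for $K'$, and
\[
 \mathcal P^s_{\theta,<\delta}(K')<c.
\]
This shows that $\{K:\mathcal P^s_{\theta,<\delta}(K)<c\}$ is open, which is upper semicontinuity into $[0,\infty]$. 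If $\mathcal P^s_{\theta,<\delta}(K)=\infty$, the required inequality holds trivially.

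The main obstacle I expect is the approximation step for noncompact, possibly unbounded-in-number configurations. It is handled by Lemma~\ref{lem:local-cost-usc} together with closure invariance, applied carrier by carrier with summable errors.
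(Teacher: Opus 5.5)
Your proposal is correct and follows essentially the same route as the paper. You enlarge each carrier to an open neighbourhood with a slightly larger label still below $\delta$, control the cost through Lemma~\ref{lem:local-cost-usc} and closure invariance with summable errors, and then obtain upper semicontinuity from a finite subcover together with a $\rho$-neighbourhood of $K$ contained in the union.
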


\begin{proof}
Let $(E,R)$ be a nonempty carrier with $\diam E\le R<\delta$.
The closure $K=\overline E$ is compact, and
$\Phi^s_\theta(K,R)=\Phi^s_\theta(E,R)$.
Choose $R_m\downarrow R$ with $R<R_m<\delta$, and
$\varepsilon_m\downarrow0$ with $0<2\varepsilon_m\le R_m-R$.
The open sets
\[
 U_m=\{x:\dist(x,K)<\varepsilon_m\}
\]
satisfy $\diam U_m\le R_m$ and $d_H(\overline U_m,K)\to0$.
Lemma~\ref{lem:local-cost-usc} gives
\[
 \limsup_{m\to\infty}\Phi^s_\theta(U_m,R_m)
 \le \Phi^s_\theta(E,R).
\]
For $\varepsilon>0$, enlarge each nonempty member $(E_j,R_j)$ of a
countable cover of finite cost to an open labelled carrier $(U_j,S_j)$
satisfying
\[
 E_j\subset U_j,\qquad \diam U_j\le S_j<\delta,\qquad
 \Phi^s_\theta(U_j,S_j)\le \Phi^s_\theta(E_j,R_j)+\varepsilon2^{-j}.
\]
The total cost of $(U_j,S_j)_j$ is at most
$\sum_j\Phi^s_\theta(E_j,R_j)+\varepsilon$.
If $\mathcal P^{s}_{\theta,<\delta}(F)<\infty$, taking infima over
admissible labelled covers of $F$ gives
\[
 \mathcal P^{s}_{\theta,<\delta}(F)
 \le\inf_{\substack{(U_j,S_j)_j\text{ admissible}\\ U_j\text{ open}}}
       \sum_j\Phi^s_\theta(U_j,S_j)
 \le\mathcal P^{s}_{\theta,<\delta}(F)+\varepsilon.
\]
Letting $\varepsilon\downarrow0$ proves equality of the infima over
arbitrary and open carriers.
If $\mathcal P^{s}_{\theta,<\delta}(F)=\infty$, every admissible
open cover has infinite cost, so equality also holds.

For compact $K$, every open cover has a finite subcover of no greater
cost. If $\mathcal P^{s}_{\theta,<\delta}(K)<a$, choose an open
labelled cover $(U_j,S_j)_{j=1}^N$ with total cost below $a$.
Since $K\subset\bigcup_{j=1}^N U_j$, there exists $\rho>0$ such that,
for every $K'\in\mathcal K(\mathbb R^d)$,
\[
 d_H(K',K)<\rho
 \quad\Longrightarrow\quad
 K'\subset\bigcup_{j=1}^N U_j
 \quad\Longrightarrow\quad
 \mathcal P^{s}_{\theta,<\delta}(K')
 \le\sum_{j=1}^N\Phi^s_\theta(U_j,S_j)<a.
\]
Thus $K\mapsto\mathcal P^{s}_{\theta,<\delta}(K)$ is upper
semicontinuous.
\end{proof}

\begin{proof}[Proof of Proposition~\ref{prop:basic-properties}\ref{item:basic-baire}]
\quad

\noindent \textbf{Claim.}
For $s>0$, the map
$K\mapsto\mathcal P^{s}_{\theta}(K)$ on $\mathcal K(\mathbb R^d)$
is of Baire class at most $2$. Moreover,
\[
 \{K:\mathcal P^{s}_{\theta}(K)=0\}\text{ is }G_\delta,
 \qquad
 \{K:\mathcal P^{s}_{\theta}(K)>0\}\text{ is }F_\sigma.
\]

\begin{proof}[Proof of the claim]
Set $A_k(K)=\mathcal P^{s}_{\theta,<2^{-k}}(K)$ for $k\ge1$.
By Lemma~\ref{lem:open-labelled-carriers}, each $A_k$ is upper
semicontinuous, and \eqref{eq:strict-content-squeeze} gives
\[
 A_k(K)\uparrow\mathcal P^{s}_{\theta}(K).
\]
With the chosen topology on $[0,\infty]$, each $A_k$ is of Baire
class at most $1$. Their pointwise limit
$K\mapsto\mathcal P^{s}_{\theta}(K)$ is therefore of Baire class at
most $2$.
Since $A_k\ge0$,
\begin{align*}
 \{\mathcal P^{s}_{\theta}=0\}
 &=\bigcap_{k,\ell\ge1}\{A_k<1/\ell\},\\
 \{\mathcal P^{s}_{\theta}>0\}
 &=\bigcup_{k,\ell\ge1}\{A_k\ge1/\ell\}.
\end{align*}
Upper semicontinuity of $A_k$ makes $\{A_k<1/\ell\}$ open and
$\{A_k\ge1/\ell\}$ closed.
\end{proof}

The critical-exponent property gives, for $0\le\alpha<d$,
\[
 \{K:\dim_\theta^{\mathrm P}K>\alpha\}
 =\bigcup_{q\in\mathbb Q\cap(\alpha,d)}
   \{K:\mathcal P^{q}_{\theta}(K)>0\},
\]
and, for $0<\alpha\le d$,
\[
 \{K:\dim_\theta^{\mathrm P}K<\alpha\}
 =\bigcup_{q\in\mathbb Q\cap(0,\alpha)}
   \{K:\mathcal P^{q}_{\theta}(K)=0\}.
\]
By the claim and
$0\le\dim_\theta^{\mathrm P}K\le d$,
\[
 \{K:\dim_\theta^{\mathrm P}K>\alpha\}\text{ is }F_\sigma,
 \qquad
 \{K:\dim_\theta^{\mathrm P}K<\alpha\}\text{ is }G_{\delta\sigma}
 \quad(\alpha\in\mathbb R).
\]
For every open interval $I\subset\R$, the preimage
$\{K:\dim_\theta^{\mathrm P}K\in I\}$ is $G_{\delta\sigma}$.
Since $\mathcal K(\R^d)$ is Polish, the Lebesgue--Hausdorff
characterization~\cite{Kechris1995} gives Baire class at most $2$.
\end{proof}

\begin{lemma}
\label{lem:add-label-perturbation}
Fix $0\le\theta<1$, $t>0$, and put $p=(1-\theta)^{-1}$.
Suppose that nonempty compact sets $C_j$ converge in the Hausdorff
metric to $C$, that $0<R_j<1$, $R_j\to R>0$, and
$\diam C_j\le R_j$.
If $R<S<1$, there is a bounded open neighbourhood $U$ of $C$, with
$\diam U\le S$, such that
\[
 \Phi^t_\theta(U,S)
 \le (S/R)^t\liminf_{j\to\infty}\Phi^t_\theta(C_j,R_j).
\]
\end{lemma}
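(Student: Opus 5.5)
The plan is to pass through the limit set $C$ with an intermediate label $T=\mu S$, where $\mu\in(R/S,1)$ is fixed. Write $L=\liminf_j\Phi^t_\theta(C_j,R_j)$; we may assume $L<\infty$. The proof has two comparisons, each driven by the same mechanism: shrink all radii by a factor strictly less than one, and use the resulting slack in the separation to move centres slightly. Two facts about the window make shrinking harmless:
\begin{itemize}
\item $(\lambda T)^p\le\lambda T^p$ for $\lambda\le1$, since $p\ge1$;
\item $T^{p-1}>R^{p-1}$ when $T>R$.
\end{itemize}
So shrinking keeps radii inside the window of the smaller label.

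\textbf{Step 1 (limit set versus approximants).} We show that for every $T$ with $R<T<1$,
\[
\Phi^t_\theta(C,T)\le (T/R)^tL .
\]
Take a finite admissible configuration $(x_i,r_i)_{i\le N}$ for $\Phi^t_\theta(C,T)$, so $x_i\in C$, $T^p\le r_i\le T$, and $|x_i-x_\ell|\ge r_i+r_\ell$. Fix $\lambda<R/T$ close to $R/T$ and put $r_i'=\lambda r_i$. Then
\[
r_i'\le\lambda T<R,\qquad r_i'\ge\lambda T^p=\lambda T\,T^{p-1}>R^p
\]
when $\lambda$ is close enough to $R/T$, using $T^{p-1}>R^{p-1}$. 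Both inequalities are strict. The separation also becomes strict: $|x_i-x_\ell|\ge\lambda^{-1}(r_i'+r_\ell')$.

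By Hausdorff convergence, choose $x_{j,i}\in C_j$ with $x_{j,i}\to x_i$. Since $R_j\to R$, for large $j$ the radii $r_i'$ lie in $[R_j^p,R_j]$ and the separation $|x_{j,i}-x_{j,\ell}|\ge r_i'+r_\ell'$ persists. Hence
\[
\Phi^t_\theta(C_j,R_j)\ge\lambda^t\sum_i(2r_i)^t
\]
for all large $j$. Taking $\liminf$, then letting $\lambda\uparrow R/T$, then taking the supremum over configurations gives the claim.

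\textbf{Step 2 (open neighbourhood versus limit set).} Let $U=\{x:\dist(x,C)<\varepsilon\}$. For small $\varepsilon$ we have $\diam U\le\diam C+2\varepsilon\le R+2\varepsilon\le S$, and $U$ is bounded and open.

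Take a finite admissible configuration $(y_i,r_i)$ for $\Phi^t_\theta(U,S)$, so $y_i\in\overline U$ and $S^p\le r_i\le S$. Pick $x_i\in C$ with $|x_i-y_i|\le\varepsilon$, and set radii $\mu r_i$. These lie in $[\mu S^p,\mu S]\subset[(\mu S)^p,\mu S]$. The separation holds because
\[
|x_i-x_\ell|\ge r_i+r_\ell-2\varepsilon\ge\mu(r_i+r_\ell)
\]
as soon as $\varepsilon\le(1-\mu)S^p$. Therefore
\[
\Phi^t_\theta(U,S)\le\mu^{-t}\,\Phi^t_\theta(C,\mu S).
\]

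\textbf{Conclusion.} Apply Step 1 with $T=\mu S$, which satisfies $R<T<1$. This gives
\[
\Phi^t_\theta(U,S)\le\mu^{-t}(\mu S/R)^tL=(S/R)^tL,
\]
exactly as required.

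The main obstacle is obtaining the exact constant $(S/R)^t$ rather than $(S/R)^t+\epsilon$. Lemma~\ref{lem:local-cost-usc} alone only yields an $\epsilon$-loss, which is why it is not used here. The fixed intermediate factor $\mu$ avoids the loss: $\mu$ cancels in Step 2's bound, while the strict margins in Step 1 absorb all the perturbations. The remaining care is in checking that the strict window inequalities survive the perturbation in $R_j$; this is exactly where $R<T$ and $p\ge1$ are used.
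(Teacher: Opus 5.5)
\textbf{Gap at $\theta=0$.} Your argument is correct for $0<\theta<1$, but Step~1 fails at $\theta=0$, which the statement includes (the lemma is invoked through Lemma~\ref{lem:add-increasing-sets} for every $0\le\theta<1$). There $p=1$, so $T^{p-1}=R^{p-1}=1$. Your claimed strict inequality $\lambda T^p>R^p$ then reads $\lambda T>R$, which contradicts $\lambda<R/T$.

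The problem is not only that inequality. At $\theta=0$ the window $[R_j^p,R_j]$ is the single point $\{R_j\}$. A radius chosen independently of $j$ can be admissible for $(C_j,R_j)$ for all large $j$ only if $R_j$ is eventually constant. So the strict margins on both sides of the window, which your perturbation relies on, do not exist. The conclusion of Step~1 is still true there: $\Phi^t_0(E,R)=(2R)^t$ for nonempty $E$, so the lemma at $\theta=0$ reads $(2S)^t\le(S/R)^t(2R)^t$.

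The repair is easy. Either dispose of $\theta=0$ by this computation, or replace the fixed $\lambda$ by the $j$-dependent factor $\lambda_j=R_j/T$. With $\lambda_j$, the rescaled radii lie in $[R_jT^{p-1},R_j]\subset[R_j^p,R_j]$ as soon as $R_j<T$. This needs only the non-strict inequality $R_j^{p-1}\le T^{p-1}$. Separation survives, because $\lambda_j\to R/T<1$ and all radii are at least $T^p$. Then $\lambda_j^t\to(R/T)^t$ gives Step~1 with the exact constant, and no limit in $\lambda$ is needed.

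\textbf{Comparison with the paper.} This $j$-dependent rescaling is the paper's route, and it also makes your intermediate label unnecessary. The paper fixes $\rho$ with $2\rho<S-R$ and $2\rho<(1-R/S)S^p$, and takes $U$ to be the $\rho$-neighbourhood of $C$. For large $j$ it moves the centres of an admissible packing for $(U,S)$ directly into $C_j$, with displacement at most $\rho+d_H(C_j,C)$, and scales the radii by $c_j=R_j/S$. This gives $\Phi^t_\theta(U,S)\le c_j^{-t}\Phi^t_\theta(C_j,R_j)$ for each large $j$, and $c_j\to R/S$ yields the exact constant $(S/R)^t$.

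So the $\varepsilon$-loss you were guarding against never arises, and one transfer replaces your two. Your factorization through $(C,\mu S)$ is valid for $p>1$. Its only extra output is the intermediate inequality $\Phi^t_\theta(C,T)\le(T/R)^t\liminf_j\Phi^t_\theta(C_j,R_j)$.
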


\begin{proof}
Choose $\rho>0$ sufficiently small that
\[
 2\rho<S-R,
 \qquad 2\rho<(1-R/S)S^p,
\]
and let $U=\{x:\dist(x,C)<\rho\}$.  For sufficiently large $j$,
$R_j<S$ and
\[
 \rho+d_H(C_j,C)<(1-R_j/S)S^p.
\]
Given an admissible packing $(x_i,r_i)_i$ for $(U,S)$, with
$x_i\in\overline U$, choose
$y_i\in C_j$ with $|x_i-y_i|\le\rho+d_H(C_j,C)$ and put
$c_j=R_j/S$.  Then
\[
 R_j^p\le c_jS^p\le c_jr_i\le R_j
\]
and, for $i\ne\ell$,
\[
 |y_i-y_\ell|
 \ge r_i+r_\ell-2\bigl(\rho+d_H(C_j,C)\bigr)
 \ge c_j(r_i+r_\ell).
\]
Thus $(y_i,c_jr_i)_i$ is admissible for $(C_j,R_j)$, so
$\Phi^t_\theta(U,S)\le c_j^{-t}\Phi^t_\theta(C_j,R_j)$.
Taking the lower limit proves the assertion.
\end{proof}

\begin{lemma}
\label{lem:add-increasing-sets}
Fix $0\le\theta<1$ and $0<s<u$.  Let $A_j$ be an increasing
sequence of subsets of a common bounded set, and put $A=\bigcup_j A_j$.
For $0<\delta<\delta'<1$,
\begin{equation}
\label{eq:add-increasing-sets}
 \mathcal P^u_{\theta,\delta'}(A)
 \le (2\delta)^{u-s}
       \sup_j\mathcal P^s_{\theta,\delta}(A_j).
\end{equation}
\end{lemma}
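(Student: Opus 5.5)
The plan is to adapt Rogers' increasing sets lemma for Hausdorff contents, using the exponent gap $u>s$ to control small carriers and Hausdorff-metric compactness to pass large carriers to the limit. Put $M=\sup_j\mathcal P^s_{\theta,\delta}(A_j)$; we may assume $M<\infty$ and fix $\varepsilon>0$. For each $j$ I choose a labelled cover $\{(E_{j,k},R_{j,k})\}_k$ of $A_j$ with $R_{j,k}\le\delta$ and total $s$-cost at most $M+\varepsilon$. By Lemma~\ref{lem:elementary-local-estimates}, I replace each carrier by its compact closure $C_{j,k}=\overline{E_{j,k}}$ (everything lies in a common bounded set). That lemma also gives $(2R_{j,k})^s\le\Phi^s_\theta(C_{j,k},R_{j,k})$, so for a fixed threshold $\eta>0$ the number of carriers with $R_{j,k}\ge\eta$ is at most $(M+\varepsilon)(2\eta)^{-s}$, uniformly in $j$. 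Again by Lemma~\ref{lem:elementary-local-estimates}, every carrier satisfies
\[
 \Phi^u_\theta(C_{j,k},R_{j,k})\le(2R_{j,k})^{u-s}\Phi^s_\theta(C_{j,k},R_{j,k}),
\]
which explains the factor $(2\delta)^{u-s}$ and shows that carriers with $R_{j,k}<\eta$ have total $u$-cost at most $(2\eta)^{u-s}(M+\varepsilon)$.

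Next I fix thresholds $\eta_m\downarrow0$, with $\eta_m$ chosen so that $\sum_m(2\eta_m)^{u-s}$ is as small as needed. For each dyadic level $[\eta_{m+1},\eta_m)$ of labels there are uniformly finitely many carriers. A diagonal subsequence in $j$ therefore makes, at every level, the number of carriers constant, the compact carriers convergent in $d_H$, and the labels convergent, say $C_{j,k}\to C_k$ and $R_{j,k}\to R_k>0$. For each limit carrier I apply Lemma~\ref{lem:add-label-perturbation} with some $S_k\in(R_k,\delta')$ and $S_k/R_k$ close to $1$. This is possible because $R_k\le\delta<\delta'$. It produces an open set $U_k\supset C_k$ with $\diam U_k\le S_k\le\delta'$ and
\[
 \Phi^u_\theta(U_k,S_k)\le(S_k/R_k)^u\liminf_j\Phi^u_\theta(C_{j,k},R_{j,k}).
\]
Summing over $k$, and using Fatou together with the exponent comparison, the total cost of $\{(U_k,S_k)\}$ is at most $(1+\varepsilon)(2\delta)^{u-s}(M+\varepsilon)$.

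Let $x\in A$. Then $x\in A_j$ for all large $j$, so $x$ lies in some carrier of the $j$-th cover. If infinitely many of these carriers have labels at a fixed level $m$, then finiteness at that level gives a fixed index $k$ with $x\in C_{j,k}$ infinitely often, hence $x\in C_k\subset U_k$. The residual set $Z$ therefore consists of points covered, for all large $j$, only by carriers of label below every $\eta_m$. This is the main obstacle: $Z$ is only contained in a $\liminf$ of small-carrier unions. Those unions have small $u$-cost individually, but they cover different sets for different $j$, so a naive bound is circular.

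My first attempt at this step is to insert an intermediate exponent $s<u'<u$. I would write $Z\cap\{\text{captured below level }m\}$ as the increasing union $\bigcup_i\bigcap_{j\ge i}(\text{small part of cover }j)\cap A_j$. Each set in this union has $\mathcal P^{u'}_{\theta,\delta}$-content at most $(2\eta_m)^{u'-s}(M+\varepsilon)$, and I would apply the same large/small-carrier argument to it at the pair $(u',u)$ with threshold $\eta_{m+1}$. Iterating and summing the geometric errors $(2\eta_m)^{u-u'}$ over $m$ should give a cover of $Z$ with $u$-cost at most a constant multiple of $\sum_m(2\eta_m)^{u-s}$, which is arbitrarily small. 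Letting $\varepsilon\downarrow0$ and $\eta_1\downarrow0$ then yields \eqref{eq:add-increasing-sets}.
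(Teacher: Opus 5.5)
\paragraph{Assessment.} Everything up to the construction of the open carriers $(U_k,S_k)$ is sound, and it is essentially the paper's argument. The paper orders the labels nonincreasingly and uses the bound $R_{j,i}\le\frac12(M/i)^{1/s}$ where you use label levels. Your levels have the small advantage that every limit label is positive, so you avoid the paper's separate treatment of degenerate limits $R_i=0$. The gap is the final step. The proposed iteration with an intermediate exponent $u'$ is not a proof. Applying ``the same large/small-carrier argument'' to the increasing sets $\bigcap_{j\ge i}(\cdots)$ simply invokes the lemma you are proving. Each stage also leaves a residual of exactly the same type: points captured by small carriers only from a point-dependent index onward. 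Nothing forces the successive residuals to become null, and ``should give'' hides the whole difficulty.

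\paragraph{The missing idea.} The obstacle is an artifact of working with the pointwise statement $x\in C_k$ and the set $Z=A\setminus\bigcup_kC_k$. You only need to cover $A\setminus U$, where $U=\bigcup_kU_k$, and for that you should use the open neighbourhoods uniformly. Since $U_k$ is an open neighbourhood of the compact set $C_k$ and $d_H(C_{j,k},C_k)\to0$, we have $C_{j,k}\subset U_k$ for all large $j$ in the subsequence. Moreover, only finitely many indices $k$ occur at levels $\le m$.

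So fix $i$ and $m$, and choose a single $j\ge i$ in the subsequence such that every carrier of the $j$-th cover with label $\ge\eta_{m+1}$ lies in $U$. Since $A_i\subset A_j$, the set $A_i\setminus U$ is covered by the carriers of this one cover with labels $<\eta_{m+1}$. These labels are at most $\delta<\delta'$, so the exponent comparison gives
\[
 \mathcal P^u_{\theta,\delta'}(A_i\setminus U)\le(2\eta_{m+1})^{u-s}(M+\varepsilon).
\]
The left-hand side depends on neither $j$ nor $m$, so letting $m\to\infty$ gives $\mathcal P^u_{\theta,\delta'}(A_i\setminus U)=0$. Countable subadditivity of the fixed-scale content over $i$ then gives $\mathcal P^u_{\theta,\delta'}(A\setminus U)=0$. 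The result follows from your bound on the cost of $(U_k,S_k)_k$, after letting $\varepsilon\downarrow0$. This is exactly how the paper closes the argument; no intermediate exponent and no summability of $(2\eta_m)^{u-s}$ are needed.

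\paragraph{A minor point.} Before taking closures, intersect each carrier with the common bounded set and discard the empty ones. Then the $C_{j,k}$ are nonempty compact subsets of one fixed compact set, and Blaschke selection applies.
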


\begin{proof}
We may assume that $A\ne\varnothing$ and choose
\[
 \sup_j\mathcal P^s_{\theta,\delta}(A_j)<M<\infty.
\]
Put $p=(1-\theta)^{-1}$ and $X=\overline A$. Choose labelled
$\delta$-covers $(C_{j,i},R_{j,i})_{i\ge1}$ of $A_j$ with
nonempty compact carriers $C_{j,i}\subset X$ and
\[
 \sum_i\Phi^s_\theta(C_{j,i},R_{j,i})<M.
\]
Here we intersect carriers with $X$ and take closures, adding
singleton carriers within the cost bound if the cover is finite.
Arrange the labels in nonincreasing order.
Since
$i(2R_{j,i})^s\le\sum_{k\le i}\Phi^s_\theta(C_{j,k},R_{j,k})<M$,
\begin{equation}\label{eq:add-label-tail}
 R_{j,i}\le a_i:=\tfrac12(M/i)^{1/s}\longrightarrow0.
\end{equation}

Along a diagonal subsequence in $j$, compactness gives
\[
 C_{j,i}\longrightarrow C_i,\qquad R_{j,i}\longrightarrow R_i
 \qquad(i\ge1),
\]
where $\diam C_i\le R_i\le\delta$. Set
$L_i=\liminf_j\Phi^u_\theta(C_{j,i},R_{j,i})$ along this subsequence.
Fatou's lemma and the exponent comparison give
\[
 \sum_iL_i
 \le\liminf_j\sum_i\Phi^u_\theta(C_{j,i},R_{j,i})
 \le(2\delta)^{u-s}M.
\]

Fix $1<\lambda<\delta'/\delta$ and $\varepsilon>0$.
If $R_i>0$, Lemma~\ref{lem:add-label-perturbation}, applied at
exponent $u$, gives an open neighbourhood $U_i$ of $C_i$ with
label $S_i=\lambda R_i$ and
$\Phi^u_\theta(U_i,S_i)\le\lambda^uL_i$.
If $R_i=0$, write $C_i=\{x_i\}$ and choose $0<S_i<\delta'$ so that
\[
 U_i=B(x_i,S_i^p/4),\qquad
 \Phi^u_\theta(U_i,S_i)=(2S_i)^u<\varepsilon2^{-i}.
\]
Thus $U=\bigcup_iU_i$ satisfies
\begin{equation}\label{eq:add-limit-head-cost}
 \mathcal P^u_{\theta,\delta'}(U)
 \le\sum_i\Phi^u_\theta(U_i,S_i)
 \le\lambda^u(2\delta)^{u-s}M+\varepsilon.
\end{equation}

Fix $m$ and take $N$ large enough that $a_{N+1}\le\delta'$.
Hausdorff convergence allows us to choose $j\ge m$ from the
subsequence with $C_{j,i}\subset U$ for $i\le N$. Since
$A_m\subset A_j$,
\[
 A_m\setminus U\subset\bigcup_{i>N}C_{j,i},
 \qquad R_{j,i}\le a_{N+1}\quad(i>N).
\]
Consequently,
\[
 \mathcal P^u_{\theta,\delta'}(A_m\setminus U)
 \le\sum_{i>N}\Phi^u_\theta(C_{j,i},R_{j,i})
 \le(2a_{N+1})^{u-s}M\longrightarrow0
 \qquad(N\to\infty).
\]
Countable subadditivity now gives
\[
 \mathcal P^u_{\theta,\delta'}(A\setminus U)=0,
 \qquad
 \mathcal P^u_{\theta,\delta'}(A)
 \le\lambda^u(2\delta)^{u-s}M+\varepsilon.
\]
Let $\varepsilon\downarrow0$, $\lambda\downarrow1$, and
$M\downarrow\sup_j\mathcal P^s_{\theta,\delta}(A_j)$.
\end{proof}

\begin{proof}[Proof of Proposition~\ref{prop:basic-properties}\ref{item:basic-analytic}]
Monotonicity gives $\dim_\theta^{\mathrm P}K\le\dim_\theta^{\mathrm P}A$
for every compact $K\subset A$. At $\theta=1$, the supremum identity follows from the classical compact
approximation of packing dimension~\cite[Lemma~2.3]{Xiao1996}.
Fix $0\le\theta<1$. By countable stability, it suffices to consider
bounded analytic $A$.

Fix $0<s<t_0<\dim_\theta^{\mathrm P}A$ and choose
$0<\delta_0<1/4$ and $c>0$ with
$\mathcal P^{t_0}_{\theta,\delta_0}(A)>c$.
Let $\pi:\N^{\N}\to A$ be a continuous surjection, and put
\[
 t_j=s+(t_0-s)2^{-j},\qquad
 \delta_j=\tfrac12\delta_0(1+2^{-j}).
\]
Starting with $A_0=A$, suppose that $b_1,\ldots,b_{j-1}$ have been
chosen. For $b\in\N$, set
\[
 B_b=\pi\{\alpha\in\N^{\N}:\alpha_i\le b_i\ (i<j),
                              \quad\alpha_j\le b\}.
\]
Since $B_b\uparrow A_{j-1}$, Lemma~\ref{lem:add-increasing-sets} gives
\[
 c<\mathcal P^{t_{j-1}}_{\theta,\delta_{j-1}}(A_{j-1})
 \le(2\delta_j)^{t_{j-1}-t_j}
       \sup_b\mathcal P^{t_j}_{\theta,\delta_j}(B_b).
\]
As $2\delta_j<1$, choose $b_j$ and set $A_j=B_{b_j}$ so that
$\mathcal P^{t_j}_{\theta,\delta_j}(A_j)>c$.

The sets $C_j=\overline{A_j}$ are nonempty nested compact sets.
They converge in the Hausdorff metric to $K=\bigcap_jC_j$, and
\[
 K=\pi\Bigl(\prod_{j\ge1}\{1,\ldots,b_j\}\Bigr)\subset A.
\]
For the inclusion from left to right, take $x\in K$ and choose
$\alpha^{(j)}\in\N^{\N}$ with
\[
 \alpha_i^{(j)}\le b_i\quad(i\le j),\qquad
 |\pi(\alpha^{(j)})-x|<1/j.
\]
A diagonal subsequence converges coordinatewise to
$\alpha\in\prod_i\{1,\ldots,b_i\}$. Continuity of $\pi$ gives
$x=\pi(\alpha)$. The reverse inclusion follows from the definition
of $A_j$.

For $R<\delta_0/2$, we have $R<\delta_j$, $2R<1$, and
$\Phi^{t_j}_\theta(E,R)\le\Phi^s_\theta(E,R)$. Hence
\[
 \mathcal P^s_{\theta,<\delta_0/2}(C_j)
 \ge\mathcal P^{t_j}_{\theta,\delta_j}(A_j)>c.
\]
Upper semicontinuity in Lemma~\ref{lem:open-labelled-carriers} gives
\[
 \mathcal P^s_\theta(K)
 \ge\mathcal P^s_{\theta,<\delta_0/2}(K)
 \ge\limsup_{j\to\infty}\mathcal P^s_{\theta,<\delta_0/2}(C_j)
 \ge c>0.
\]
Thus $\dim_\theta^{\mathrm P}K\ge s$. Let $s\uparrow\dim_\theta^{\mathrm P}A$;
if $\dim_\theta^{\mathrm P}A=0$, any singleton subset suffices.
\end{proof}

The proof also shows that, for every $0\le\theta<1$ and
$0<s<\dim_\theta^{\mathrm P}A$, there is a compact $K\subset A$
with $\mathcal P^s_\theta(K)>0$.

\section{Measure-theoretic characterization}
\label{sec:single-measure-variational-principle}

For $0\le\theta<1$, the packing spectrum has a variational principle
in terms of a windowed local exponent. For a nonempty compact set
$F\subset\R^d$, let $\mathcal M_1(F)$ denote the set of Borel probability
measures with support contained in $F$. We recall the classical
Hausdorff and packing variational principles
\cite{Frostman1935,Cutler1995}:
\[
 \begin{aligned}
 \dimH F&=\sup_{\mu\in\mathcal M_1(F)}
   \operatorname*{ess\,inf}_{x\sim\mu}
   \liminf_{r\downarrow0}\frac{\log\mu(B(x,r))}{\log r},\\
 \dimP F&=\sup_{\mu\in\mathcal M_1(F)}
   \operatorname*{ess\,inf}_{x\sim\mu}
   \limsup_{r\downarrow0}\frac{\log\mu(B(x,r))}{\log r}.
 \end{aligned}
\]

Unless stated otherwise, throughout this section fix
$0\le\theta<1$ and write $p=p(\theta)=1/(1-\theta)$.

\begin{definition}
\label{def:windowed-local-dimension}
For a compactly supported Borel probability measure $\mu$ and
$0\le\theta<1$, define the extended nonnegative local exponent
\begin{equation}\label{eq:density}
 d_\theta^\mu(x)
 =\liminf_{R\downarrow0}\sup_{R^p\le r\le R}
   \frac{\log\mu(B(x,r))}{\log r},
 \qquad x\in\supp\mu.
\end{equation}
\end{definition}
When $\theta=0$, the radius window in \eqref{eq:density} is the
singleton $\{R\}$, so $d_0^\mu$ is the classical lower local dimension.
We write $\supp\mu\Subset F$ when $\supp\mu$ is a
compact subset of $F$.

The classical upper local dimension need not equal
$\lim_{\theta\uparrow1}d_\theta^\mu(x)$;
Example~\ref{ex:extreme-profiles} gives strict inequality at every
point of a compact digit set.

\begin{theorem}\label{thm:main}
For every nonempty analytic set $F\subset\R^d$ and every
$0\le\theta<1$,
\begin{equation}\label{eq:main}
 \dim_\theta^{\mathrm P}F
 =\sup_{\substack{\mu\text{ probability}\\\supp\mu\Subset F}}
       \operatorname*{ess\,inf}_{x\sim\mu}d_\theta^\mu(x).
\end{equation}
\end{theorem}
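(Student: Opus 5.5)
We prove the two inequalities separately, treating the right-hand side of \eqref{eq:main} as a supremum of essential infima.

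\textbf{Upper bound (mass distribution).} Let $\mu$ be a probability measure with $\supp\mu\Subset F$ and $\operatorname{ess\,inf}d_\theta^\mu>s$. We show $\mathcal P^{v}_\theta(\supp\mu\cap F)>0$ for every $v<s$, which by monotonicity and Corollary~\ref{coro:critical-exponent} gives $\dim_\theta^{\mathrm P}F\ge v$.

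1. By Egorov-type regularity, find a Borel set $G$ with $\mu(G)>1/2$ and $R_0>0$ such that
\[
 \sup_{R^p\le r\le R}\frac{\log\mu(B(x,r))}{\log r}>s'
\]
for all $x\in G$ and $R<R_0$, where $v<s'<s$. Thus for each such $x$ and $R$ there is a radius $r\in[R^p,R]$ with $\mu(B(x,r))\le r^{s'}$.

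2. Take any labelled cover $(E_j,R_j)$ of $G$ with $R_j<R_0$, and assume $E_j\cap G\ne\varnothing$. We claim
\[
 \mu(E_j\cap G)\lesssim \Phi^{v}_\theta(E_j,R_j).
\]
For each $x\in \overline{E_j\cap G}$ (or a dense subset), choose a good radius $r_x\in[R_j^p,R_j]$ with $\mu(B(x,r_x))\le r_x^{s'}$. Apply the Vitali/Besicovitch covering lemma to these balls: a disjoint subfamily $B(x_i,r_i)$ with $\bigcup B(x_i,5r_i)\supset E_j\cap G$. Then $|x_i-x_k|\ge r_i+r_k$, so it is an admissible packing. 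But we need mass bounds on the enlarged balls $B(x_i,5r_i)$, and the good-radius property is only known at $r_i$, not at $5r_i$.

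3. The remedy is to use Besicovitch directly. It yields at most $c_d$ families of disjoint balls covering $E_j\cap G$ with the same radii, giving
\[
 \mu(E_j\cap G)\le \sum_i\mu(B(x_i,r_i))\le \sum_i r_i^{s'}\le c_d\,\Phi^{s'}_\theta(E_j,R_j).
\]
Each family is a packing of $\overline{E_j}$ with radii in the window $[R_j^p,R_j]$. Centres must lie in $G\cap\overline{E_j}$, which holds by construction.

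4. Summing over $j$ gives $\tfrac12<c_d\sum_j\Phi^{s'}_\theta$. Hence $\mathcal P^{s'}_{\theta}(F)>0$, so $\dim_\theta^{\mathrm P}F\ge s'$.

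\textbf{Lower bound (construction).} This is the main obstacle. Fix $0<s<\dim_\theta^{\mathrm P}F$. By the remark after Proposition~\ref{prop:basic-properties}\ref{item:basic-analytic}, there is a compact $K\subset F$ with $\mathcal P^s_\theta(K)>0$. Given such a $K$, we must build a single $\mu$ on $K$ with $d_\theta^\mu\ge s$ almost everywhere.

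1. First try a Frostman-type duality in the style of Howroyd. Define the weighted labelled content of $K$ at scale $\delta$ as the infimum of $\sum_j c_j\Phi^s_\theta(E_j,R_j)$ over weighted covers with $\sum_j c_j\mathbf 1_{E_j}\ge \mathbf 1_K$. Show it is comparable to $\mathcal P^s_{\theta,\delta}(K)$ by the standard integer-weight argument for metric outer measures.

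2. Apply Hahn--Banach on $C(K)$, using the upper semicontinuity from Lemma~\ref{lem:open-labelled-carriers} to obtain a compact convex setting. This produces $\mu$ with $\mu(E)\le\Phi^s_\theta(E,R)$ for all carriers with $R<\delta$, and $\mu(K)>0$.

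3. Decode this inequality into windowed decay. Suppose $x$ is such that every $r\in[R^p,R]$ has $\mu(B(x,r))>r^{v}$. Then the carrier $E=B(x,R/2)$ must satisfy $\Phi^s_\theta(E,R)\ge\mu(E)$.

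4. The difficulty is that $\Phi^s_\theta(E,R)$ can be large. Instead, one uses a density argument, in the spirit of the reverse Vitali inequality of Step~3 of the upper bound. Show that the set of points $x$ that lack a good radius in infinitely many windows is $\mathcal P^{v}_\theta$-null. At such a point, for small $R$ every packing of $B(x,R)$ with radii in the window carries mass comparable to its $v$-cost. Covering these points with labelled carriers whose cost is bounded by $C\mu(\text{carrier})$, and using $\mu(E)\le \Phi^s_\theta$ together with $v<s$, shows they form a $\mu$-null set after a limit.

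5. Finally, normalize $\mu$ and let $v\uparrow s\uparrow\dim_\theta^{\mathrm P}F$.

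We expect Step~4, turning the content bound into an almost-everywhere windowed exponent, to be the hardest part. If the duality route fails, fall back on a direct Cantor-type construction inside $K$ that distributes mass along maximal windowed packings level by level.
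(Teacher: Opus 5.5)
\textbf{Upper bound and overall structure.} Your proof that $\dim_\theta^{\mathrm P}F\ge\operatorname*{ess\,inf}_{x\sim\mu}d_\theta^\mu(x)$ is correct and is the paper's Proposition~\ref{prop:mass-distribution}. You uniformize the good radius on a set of large measure, apply the assigned-ball Besicovitch theorem inside each carrier, and observe that each disjoint subfamily is an admissible window packing centred in $\overline{E_j}$.

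Your converse also follows the paper's architecture: a compact $K$ with $\mathcal P^s_\theta(K)>0$, Howroyd duality, Hahn--Banach and Riesz, then decoding. Step~1, however, is unjustified as stated. You cannot quote the Hausdorff-measure argument for an equal-exponent comparison of weighted and unweighted labelled content, because that argument enlarges covering sets, and $\Phi^s_\theta$ is not controlled when a carrier and its label are enlarged. The paper's rounding lemma (Lemma~\ref{lem:weighted-content-comparison}) does the following:
\begin{itemize}
\item it sorts labels into dyadic classes and works in grid cubes;
\item it moves packing centres into the carriers $E_i$ and rescales radii by $R_i/S$;
\item it loses a factor $2(k+1)^2$ at level $k$.
\end{itemize}
So it only gives $\mathcal P^u_\theta\lesssim A_\delta$ for $u>t$, or an equal-exponent version with logarithmic weights. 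This is repairable, since $s<\dim_\theta^{\mathrm P}F$ leaves room to obtain $\mu^*(E)\le C\Phi^t_\theta(E,R)$ for some smaller $t$. Note also that Lemma~\ref{lem:open-labelled-carriers} is what transfers the Riesz bound from open carriers to arbitrary ones; it does not supply compactness.

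\textbf{The gap: Step~4.} Step~4 is the actual content of this direction, and your formulation of it fails. Badness at a single point $x$, i.e.\ $\mu(B(x,r))>r^v$ across a window, says nothing about the masses of balls centred at other points. So it is false that every window packing of $B(x,R)$ carries mass comparable to its $v$-cost. Moreover, the bad set need not be $\mathcal P^v_\theta$-null: for Lebesgue measure on $[0,1]$ with $v=1$, every interior point is bad.

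The missing lemma is the sublevel estimate, Proposition~\ref{thm:sublevel}. It holds for every compactly supported $\mu$, independently of any Frostman bound.
\begin{itemize}
\item With $R_k=2^{-k}$, let $A_k(v)$ be the set of $x\in\supp\mu$ with $\mu(\overline{B(x,r)})\ge r^v$ for \emph{all} $r\in[R_k^p,R_k]$. Using closed balls makes $A_k(v)$ closed, so every packing centre in $\overline{A_k(v)\cap Q}$ is again bad.
\item Use as carriers only the sets $A_k(v)\cap Q$, with $Q$ ranging over grid cubes of side $R_k/\sqrt d$.
\item The closed balls of a packing have multiplicity at most two, and the enlarged cubes $Q^+$ have bounded overlap. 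Hence $\sum_Q\Phi^u_\theta(A_k(v)\cap Q,R_k)\le C2^{-k(u-v)}$ for $u>v$.
\item Discretize the windows: choose $R\le R_k<2R$ and lower the exponent slightly. This gives $\{d_\theta^\mu<v'\}\subset\limsup_kA_k(v)$ for $v'<v$.
\item Summing over all levels $k\ge m$, not a single scale, gives $\mathcal P^u_\theta\{d_\theta^\mu<v'\}=0$.
\end{itemize}
Only then does $\mu^*\le C\mathcal P^u_\theta$, obtained by summing the Frostman bound over labelled covers, yield $\mu\{d_\theta^\mu<v'\}=0$. Your remark about using $v<s$ points to the right exponent gap. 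But without carriers made of points that are bad on the same dyadic window, closedness of those sets, and the sum over levels, the decoding step is not established.
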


In particular, if $F\subset\R^d$ is nonempty and analytic and
$0<s<\dim_\theta^{\mathrm P}F$, there exists a probability measure
$\mu$ with $\supp\mu\Subset F$ such that, for $\mu$-almost every
$x$ and every $0<v<s$, some $R_0(x,v)>0$ satisfies
\begin{equation}\label{eq:suitable-radius}
 \forall\,0<R<R_0(x,v)\quad
 \exists\,r\in[R^{1/(1-\theta)},R]:
 \qquad\mu(B(x,r))\le r^v.
\end{equation}
We use \eqref{eq:suitable-radius} in the product construction.
The supremum need not be attained; the final subsection
gives a sufficient condition, and Example~\ref{ex:add-nonattainment}
shows why a subcritical statement is necessary in general.

The intermediate-dimensional Frostman lemma permits a scale-dependent
measure controlling all radii in a window
\cite[Proposition~2.3]{FFK2020}; see also \cite{Banaji2023}.
In \eqref{eq:suitable-radius}, a fixed measure satisfies the mass
bound at some radius in every sufficiently small window.
Angelini's ball characterization instead requires subcritical mass
decay at every radius in arbitrarily small windows, almost everywhere
\cite[Theorem~4.3]{Angelini2026c}.

\subsection{Carrier domination at subcritical exponents}

\begin{lemma}
\label{lem:window-measurability}
Let $\mu$ be a compactly supported Borel probability measure and let
$0\le\theta<1$. For every $R\in(0,1)$, the maximum
\begin{equation}\label{eq:window-maximum}
 M_R^\mu(x)=\max_{R^p\le r\le R}
                 \frac{\log\mu(B(x,r))}{\log r}
\end{equation}
is attained for every $x\in\supp\mu$, and the function $M_R^\mu$
is upper semicontinuous on $\supp\mu$. The exponent
$d_\theta^\mu$ is Borel measurable. For $a\ge0$ and $0<\delta<1$,
the set
$\{x\in\supp\mu:M_R^\mu(x)\ge a\text{ for all }0<R<\delta\}$
is closed in $\supp\mu$.
\end{lemma}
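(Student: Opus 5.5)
The plan is to study the single-radius function $g(x,r)=\log\mu(B(x,r))/\log r$ for $x\in\supp\mu$ and $r\in[R^p,R]$, with $R<1$ so that $\log r<0$. For $x\in\supp\mu$ and $r>0$ we have $\mu(B(x,r))>0$, so $g$ is finite and nonnegative.

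\emph{Attainment.} For fixed $x$, the map $r\mapsto\mu(B(x,r))$ is nondecreasing and left continuous, because the open balls increase to $B(x,r)$ as the radius increases to $r$. Since $\log r<0$, a left limit of $\mu(B(x,r))$ from below gives
\[
 \limsup_{r'\uparrow r}g(x,r')\le g(x,r),
\]
because smaller masses give larger ratios. Taking $r_n\to r^*$ with $g(x,r_n)$ approaching the supremum, two cases arise.

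\begin{enumerate}
\item If $r_n\uparrow r^*$, we use the bound just stated.
\item If $r_n\downarrow r^*$, then $\mu(B(x,r_n))\ge\mu(B(x,r^*))$ and $\log r_n\to\log r^*$, so $\limsup_n g(x,r_n)\le g(x,r^*)$.
\end{enumerate}

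In both cases the supremum is attained at $r^*$. The cleanest way to organise this is to check that $r\mapsto g(x,r)$ is upper semicontinuous on the compact window, hence attains its maximum.

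\emph{Upper semicontinuity of $M_R^\mu$.} I would show that $(x,r)\mapsto g(x,r)$ is jointly upper semicontinuous on $\supp\mu\times[R^p,R]$. Then the maximum over the compact fibre $[R^p,R]$ is upper semicontinuous in $x$, by the standard argument: if $x_n\to x$, choose maximisers $r_n$, pass to a subsequence $r_n\to r$, and get $\limsup M_R^\mu(x_n)\le g(x,r)\le M_R^\mu(x)$.

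For joint upper semicontinuity, suppose $(x_n,r_n)\to(x,r)$ and fix $\varepsilon>0$. For large $n$ we have $B(x,r-\varepsilon)\subset B(x_n,r_n)$, so $\liminf\mu(B(x_n,r_n))\ge\mu(B(x,r-\varepsilon))$. Letting $\varepsilon\downarrow0$ and using inner continuity of the open ball gives
\[
 \liminf_n\mu(B(x_n,r_n))\ge\mu(B(x,r)).
\]
Together with continuity of $\log r$ and the negativity of $\log r_n$, this yields $\limsup g(x_n,r_n)\le g(x,r)$. The one delicate point is the case where the limit mass is small. It is harmless, because $\mu(B(x,r))>0$ on the support and masses stay bounded below along the sequence.

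\emph{Borel measurability and closedness.} Each $M_R^\mu$ is upper semicontinuous, hence Borel. The main obstacle is that $d_\theta^\mu=\liminf_{R\downarrow0}M_R^\mu$ is a liminf over uncountably many $R$. I would show that it can be computed along rational $R$, or alternatively write
\[
 \{d_\theta^\mu>a\}=\bigcup_{q>a,\ q\in\mathbb Q}\ \bigcup_k\{x:M_R^\mu(x)\ge q\ \text{for all }R<1/k\}.
\]
This is a countable union of sets of the form named in the final assertion. Those sets are intersections over all $R<\delta$ of the closed superlevel sets $\{M_R^\mu\ge a\}$, and an arbitrary intersection of closed sets is closed. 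This proves the closedness claim and gives Borel measurability without any countable-reduction argument. One checks $\{d>a\}$ equals that union: if $d(x)>a$, pick rational $q\in(a,d(x))$, and then $M_R\ge q$ for all small $R$. Conversely, membership gives $d(x)\ge q>a$.
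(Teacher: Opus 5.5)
Your proposal is correct and follows essentially the paper's route: lower semicontinuity of $(x,r)\mapsto\mu(B(x,r))$ (which you check directly by ball inclusion where the paper cites Fatou's lemma) gives upper semicontinuity of the quotient, compactness of $[R^p,R]$ gives attainment and upper semicontinuity of $M_R^\mu$, and the final set is an intersection of closed superlevel sets. Your description of $\{d_\theta^\mu>a\}$ as a countable union of those closed sets is the level-set form of the paper's identity $d_\theta^\mu=\sup_{m}\inf_{0<R<1/m}M_R^\mu$, a countable supremum of upper semicontinuous functions.
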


\begin{proof}
The ball-mass map $(x,r)\mapsto\mu(B(x,r))$ is positive on
$\supp\mu\times(0,1)$ and lower semicontinuous by Fatou's lemma.
The quotient in \eqref{eq:density} is therefore finite and upper
semicontinuous. Compactness of $[R^p,R]$ gives attainment and upper
semicontinuity of $M_R^\mu$ in $x$. Moreover,
\[
 d_\theta^\mu(x)=\sup_{m\ge2}\inf_{0<R<1/m}M_R^\mu(x)
\]
is a countable supremum of upper semicontinuous functions, and hence
Borel measurable. The set in the last assertion is an intersection of closed
superlevel sets of $M_R^\mu$.
\end{proof}

We apply Howroyd's weighted-cover duality method~\cite{Howroyd1995}
to the costs $\Phi^t_\theta(E,R)$. For a nonempty compact set
$F\subset\R^d$, $t>0$, and $0<\delta<1/4$, define, for real-valued
$f\in C(F)$,
\begin{equation}\label{eq:weighted}
 q_\delta(f)
 =\inf\left\{\sum_i a_i\Phi^t_\theta(E_i,R_i):
 \begin{array}{l}
 f\le\sum_i a_i\mathbf1_{E_i}\text{ on }F,\\
 a_i\ge0,\quad 0<R_i\le\delta,\quad\diam E_i\le R_i
 \end{array}\right\}.
\end{equation}
The families are finite or countable, including the empty family.
Put $A_\delta=q_\delta(1)$.

\begin{lemma}
\label{lem:weighted-content-comparison}
Let $F\subset\R^d$ be nonempty and compact, let $t>0$, and let $K\ge3$
be an integer. Define $A_\delta$ by \eqref{eq:weighted} with exponent
$t$. For every $u>t$,
\begin{equation}\label{eq:rounding}
 \mathcal P^u_{\theta,2^{1-K}}(F)
 \le C_{d,u}\sup_{k\ge K}\bigl((k+1)^2 2^{-k(u-t)}\bigr)
                  A_{2^{-K}}.
\end{equation}
\end{lemma}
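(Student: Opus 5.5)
The plan is the standard Howroyd rounding argument. We sort a near-optimal weighted cover by dyadic label level, extract at each level the set of points that carry a definite share of weight, and cover that set by grid cubes at the same scale. The polynomial factor $(k+1)^2$ comes from the weight thresholds, and the factor $2^{-k(u-t)}$ from the exponent comparison in Lemma~\ref{lem:elementary-local-estimates}.

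\emph{Setup and level sets.} We may assume $A_{2^{-K}}<\infty$. Fix a weighted family $(a_i,E_i,R_i)$ admissible for $q_{2^{-K}}(1)$ with cost at most $A_{2^{-K}}+\varepsilon$. We may assume each $E_i$ is nonempty and meets $F$. Assign $i$ to level $k\ge K$ when $2^{-k-1}<R_i\le2^{-k}$, and put
\[
 F_k=\Bigl\{x\in F:\sum_{i\text{ at level }k,\ x\in E_i}a_i\ge (k+1)^{-2}\Bigr\}.
\]
Since $\sum_i a_i\mathbf 1_{E_i}\ge1$ on $F$ and $\sum_{k\ge K}(k+1)^{-2}\le 1/K<1$, every point of $F$ lies in some $F_k$. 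Hence $F\subset\bigcup_{k\ge K}F_k$.

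\emph{Cube carriers at level $k$.} Partition $\R^d$ into half-open cubes $Q$ of side $2^{-k}/\sqrt d$, and use the carriers $(F_k\cap Q,\,L_k)$ with label $L_k=2^{-k}\le 2^{1-K}$. The diameter condition holds. The key estimate is
\[
 \Phi^u_\theta(F_k\cap Q,L_k)\le 4^u(k+1)^2\sum_{i\text{ at level }k,\ E_i\cap Q\ne\varnothing}a_i\,\Phi^u_\theta(E_i,R_i).
\]

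\emph{Proof of the key estimate.} By closure invariance in Lemma~\ref{lem:elementary-local-estimates}, and since finite configurations suffice, it is enough to take a finite admissible packing $(x_j,r_j)$ with centres in $F_k\cap Q$ itself. By the definition of $F_k$,
\[
 \sum_j(2r_j)^u\le(k+1)^2\sum_i a_i\sum_{j:\,x_j\in E_i}(2r_j)^u .
\]
For each fixed $i$, rescale the subpacking $\{(x_j,r_j):x_j\in E_i\}$ by $c_i=R_i/L_k\in(1/2,1]$, exactly as in Lemma~\ref{lem:add-label-perturbation}. The new radii satisfy $c_ir_j\le R_i$ and
\[
 c_ir_j\ge R_iL_k^{p-1}\ge R_i^p ,
\]
and the separation scales accordingly. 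So the rescaled subpacking is admissible for $\Phi^u_\theta(E_i,R_i)$, at cost factor $c_i^{-u}\le 2^u\le 4^u$.

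\emph{Summing over cubes and levels.} Each $E_i$ at level $k$ has diameter at most $2^{-k}$, so it meets at most $C_d$ cubes. Summing the key estimate over $Q$ and applying $\Phi^u_\theta\le(2R_i)^{u-t}\Phi^t_\theta\le 2^{-k(u-t)}\cdot2^{u-t}\,\Phi^t_\theta$ at level $k$, we get
\[
 \mathcal P^u_{\theta,2^{1-K}}(F_k)\le C_{d,u}(k+1)^22^{-k(u-t)}\sum_{i\text{ at level }k}a_i\Phi^t_\theta(E_i,R_i).
\]
Countable subadditivity over $k$ then gives
\[
 \mathcal P^u_{\theta,2^{1-K}}(F)\le C_{d,u}\sup_{k\ge K}(k+1)^22^{-k(u-t)}\,\bigl(A_{2^{-K}}+\varepsilon\bigr),
\]
and we let $\varepsilon\downarrow0$.

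\emph{Main obstacle.} The delicate step is the window compatibility in the key estimate. Any new carrier label must produce a window whose rescaled radii still fit inside every old window $[R_i^p,R_i]$ at the same level. A careless choice of label, or a fixed shrinking factor, fails for small $p$. The choice $L_k=2^{-k}\ge R_i$ together with the rescaling $c_i=R_i/L_k$ handles both ends of the window simultaneously; I would verify these two inequalities first.
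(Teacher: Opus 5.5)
Your architecture is the paper's: levels $I_k$, thresholds of order $(k+1)^{-2}$, grid carriers at scale $2^{-k}$, rescaling each subpacking by a ratio of labels, and then the exponent comparison. Your window check $R_i^p\le c_ir_j\le R_i$ is also correct.

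The gap is the sentence that reduces to packings ``with centres in $F_k\cap Q$ itself''. Closure invariance in Lemma~\ref{lem:elementary-local-estimates} only says $\Phi^u_\theta(\overline E,R)=\Phi^u_\theta(E,R)$. That is trivial, because the definition already places centres in $\overline E$. It does not say that centres in $E$ suffice, and for $0<\theta<1$ that claim is false in general. Take $E=\{0\}\cup\{2R^p-1/n:n\ge N\}\subset\R$. The centres $0$ and $2R^p$ with radii $R^p$ are admissible for $\overline E$, but no two points of $E$ can both carry radii $\ge R^p$, so for small exponents the two suprema differ. In your setting, $F_k$ is a superlevel set of the countable sum $\sum_{i\in I_k}a_i\mathbf 1_{E_i}$ with arbitrary sets $E_i$. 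It need not be closed, and an admissible centre $x_j\in\overline{F_k\cap Q}$ may lie in no level-$k$ carrier at all. For such a centre the inequality $\sum_j(2r_j)^u\le(k+1)^2\sum_ia_i\sum_{j:x_j\in E_i}(2r_j)^u$ is unjustified, so your key estimate is not proved.

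The missing step is to move the centres into $F_k\cap Q$. This needs slack that your label $L_k=2^{-k}$ does not provide. If $R_i=2^{-k}$ then $c_i=1$, and a radius at the bottom of the window, $r_j=L_k^p$, cannot be shrunk; any displacement of centres can then destroy separation.

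The paper instead labels the level-$k$ cube carriers by $S=2^{1-k}$. This is why the statement has $\mathcal P^u_{\theta,2^{1-K}}$, a factor your carriers never use. Then $c_i=R_i/S\in(1/4,1/2]$, and you choose $y_j\in F_k\cap Q$ with $|y_j-x_j|<S^p/4$. Since $r_j+r_\ell\ge2S^p$ and $1-c_i\ge1/2$, you get $|y_j-y_\ell|\ge r_j+r_\ell-S^p/2\ge c_i(r_j+r_\ell)$. You still have $R_i^p\le R_iS^{p-1}\le c_ir_j\le R_i$, at cost factor $c_i^{-u}\le4^u$.

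An alternative repair keeps your label. Replace each $E_i$ by $\overline{E_i}$, which does not change $\Phi^t_\theta(E_i,R_i)$. Truncate each level to finitely many members; this is possible because $\sum_{i\in I_k}a_i\le2^{kt}$ times the total cost. Absorb the tails into the thresholds. Then $F_k$ is closed, as a superlevel set of an upper semicontinuous function intersected with $F$.

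With either repair, the rest of your argument goes through: the cube counting, the bound $\Phi^u_\theta\le(2R_i)^{u-t}\Phi^t_\theta$, and the summation over $k$.
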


\begin{proof}
Put $\delta=2^{-K}$ and take a family
$(a_i,E_i,R_i)_i$ in \eqref{eq:weighted} with $f=1$ and finite
total cost. Replacing $E_i$ by $E_i\cap F$ and discarding empty
sets does not increase the cost, so assume
$\varnothing\ne E_i\subset F$. Set
\[
 I_k=\{i:2^{-k-1}<R_i\le2^{-k}\},\qquad
 \lambda_k=\frac{1}{2(k+1)^2},\qquad
 F_k=\left\{x\in F:\sum_{i\in I_k}a_i\mathbf1_{E_i}(x)
                         \ge\lambda_k\right\}.
\]
Since $\sum_{k\ge K}\lambda_k<1$, the sets $F_k$ cover $F$.
For each $k$, put $\rho=2^{-k}$ and $S=2\rho$, and partition
$\R^d$ into half-open cubes $Q$ of side $\rho/\sqrt d$.
Given an admissible packing $(x_j,r_j)_j$ for $(F_k\cap Q,S)$,
choose $y_j\in F_k\cap Q$ with $|y_j-x_j|<S^p/4$.
For $i\in I_k$, set $c_i=R_i/S\in(1/4,1/2]$. Then
\[
 R_i^p\le R_iS^{p-1}\le c_i r_j\le R_i,\qquad
 |y_j-y_\ell|\ge r_j+r_\ell-S^p/2
 \ge c_i(r_j+r_\ell)\quad(j\ne\ell).
\]
Thus $(y_j,c_i r_j)_{j:y_j\in E_i}$ is admissible for $(E_i,R_i)$,
and
\[
 \lambda_k\sum_j(2r_j)^t
 \le\sum_{i\in I_k}a_i\sum_{j:y_j\in E_i}(2r_j)^t
 \le4^t\sum_{\substack{i\in I_k\\E_i\cap Q\ne\varnothing}}
               a_i\Phi^t_\theta(E_i,R_i).
\]
Each $E_i$, having diameter at most $\rho$, meets at most $C_d$
cubes.
Since \(r_j\le S\), the preceding estimate implies, for \(u\ge t\),
\begin{equation}\label{eq:grid-local}
 \sum_Q\Phi^u_\theta(F_k\cap Q,S)
 \le C_{d,u}\lambda_k^{-1}\rho^{u-t}
       \sum_{i\in I_k}a_i\Phi^t_\theta(E_i,R_i).
\end{equation}
The carriers $F_k\cap Q$, labelled $2^{1-k}$, cover $F$.
For $u>t$, sum \eqref{eq:grid-local} over $k\ge K$, use
$\lambda_k^{-1}=2(k+1)^2$, and take the infimum over the families
in \eqref{eq:weighted} with $f=1$. This proves
\eqref{eq:rounding}.
\end{proof}

\begin{proposition}
\label{prop:carrier-frostman}
Let $F\subset\R^d$ be nonempty and compact and let $0\le\theta<1$.
If $0<t<\dim_\theta^{\mathrm P}F$, there exist
$\mu\in\mathcal M_1(F)$ and $C,\delta_0>0$ such that
\begin{equation}\label{eq:carrier-bound}
 \mu^*(E)\le C\Phi^t_\theta(E,R)
 \qquad(0<R<\delta_0,\ \diam E\le R).
\end{equation}
Here $\mu^*$ is the outer measure induced by $\mu$.
\end{proposition}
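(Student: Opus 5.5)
The plan is Howroyd's duality argument, as prepared by the weighted content $q_\delta$ in \eqref{eq:weighted} and Lemma~\ref{lem:weighted-content-comparison}. First we show that the weighted content $A_\delta=q_\delta(1)$ is bounded below for small $\delta$. Fix $t<u<\dim_\theta^{\mathrm P}F$. By Corollary~\ref{coro:critical-exponent}, $\mathcal P^u_\theta(F)=\infty$, so $\mathcal P^u_{\theta,2^{1-K}}(F)\ge1$ for all large $K$. The factor $\sup_{k\ge K}(k+1)^22^{-k(u-t)}$ in \eqref{eq:rounding} tends to $0$ as $K\to\infty$. Hence \eqref{eq:rounding} gives $A_{2^{-K}}\to\infty$, and in particular $A_{\delta_0}>0$ for $\delta_0=2^{-K}$ with $K$ large.

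Next we apply Hahn--Banach. On $C(F)$, the functional $q=q_{\delta_0}$ is sublinear: it is positively homogeneous, and subadditivity follows by concatenating families. It is finite, since $F$ is compact and can be covered by finitely many small carriers, each of finite cost by Lemma~\ref{lem:elementary-local-estimates}. It is also monotone, with $q(f)\le q(\|f\|_\infty)=\|f\|_\infty A_{\delta_0}$. Hahn--Banach gives a linear functional $L$ with $L(1)=q(1)=A_{\delta_0}$ and $L\le q$. For $f\le0$ we have $q(f)=0$, since the empty family is admissible, so $L(f)\le0$ and $L$ is positive. Riesz representation then gives a finite Borel measure $\nu$ on $F$ with $\nu(F)=A_{\delta_0}$ and $\int f\,d\nu\le q(f)$ for all $f\in C(F)$.

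The third step passes from continuous functions to carriers. Given a carrier $(E,R)$ with $R<\delta_0$ and $\diam E\le R$, we want $\nu(U)\le C'\Phi^t_\theta(E,R)$ for a suitable open neighbourhood $U$ of $\overline E$, which gives the bound for $\mu^*(E)$. The obstacle is that the indicator $\mathbf 1_E$ is not continuous. So we approximate $\nu(E)$ from inside and majorize by carriers slightly larger than $E$. Concretely, let $K\subset U\cap F$ be compact and take an Urysohn function $g$ with $\mathbf 1_K\le g\le\mathbf 1_U$. Then
\[
\nu(K)\le\int g\,d\nu\le q(g)\le\Phi^t_\theta(U,S),
\]
where the single carrier $(U,S)$ dominates $g$. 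Letting $K$ exhaust $U\cap F$ gives $\nu(U)\le\Phi^t_\theta(U,S)$. To control the cost of the enlarged carrier, choose $U=\{x:\dist(x,\overline E)<\rho\}$ with $\rho$ small and label $S=\lambda R$, $\lambda>1$ close to $1$, with $S\le\delta_0$ (shrink $\delta_0$ slightly beforehand). Lemma~\ref{lem:add-label-perturbation}, applied with the constant sequence $C_j=\overline E$ and $R_j=R$, then gives $\Phi^t_\theta(U,S)\le\lambda^t\Phi^t_\theta(E,R)$, using closure invariance from Lemma~\ref{lem:elementary-local-estimates}. Hence $\nu^*(E)\le\nu(U)\le2^t\Phi^t_\theta(E,R)$.

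Finally we normalize. Set $\mu=\nu/A_{\delta_0}$, which is a probability measure supported in $F$. Then \eqref{eq:carrier-bound} holds with $C=2^t/A_{\delta_0}$ and radius threshold $\delta_0$, replaced by a slightly smaller value to allow the label enlargement. The delicate points are the positivity of $A_{\delta_0}$, which needs the strict gap $t<u$, and the label enlargement in the third step, which is where Lemma~\ref{lem:add-label-perturbation} is essential.
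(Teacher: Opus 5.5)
Your proposal is correct and follows essentially the same route as the paper. Both arguments obtain positivity of $A_\delta$ from Lemma~\ref{lem:weighted-content-comparison}, apply Hahn--Banach and Riesz to the sublinear functional $q_\delta$, prove domination on open carriers, and then pass to an arbitrary carrier through an open enlargement. The only difference is cosmetic: you control the enlargement with Lemma~\ref{lem:add-label-perturbation} applied to constant sequences, which costs a factor $\lambda^t\le 2^t$ (and could be sent to $1$), whereas the paper uses the enlargements $(U_m,R_m)$ from Lemma~\ref{lem:open-labelled-carriers} and obtains $C=A_\delta^{-1}$ directly.
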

\begin{proof}

Choose $t<u<\dim_\theta^{\mathrm P}F$ and an integer $K\ge3$
sufficiently large that $\mathcal P^u_{\theta,2^{1-K}}(F)>0$.
Lemma~\ref{lem:weighted-content-comparison} gives $A_{2^{-K}}>0$.
Put $\delta=2^{-K}$.

Write $q=q_\delta$ and $A=A_\delta$. A finite labelled
cover of $F$ and Lemma~\ref{lem:elementary-local-estimates} give
$0\le q(f)\le C\|f\|_\infty$. Combining weighted covers and
rescaling their coefficients show that $q$ is sublinear; moreover,
$q(f)=0$ for $f\le0$. Hence $c\mathbf1\mapsto cA$ is dominated by
$q$ on the constants. Hahn--Banach extends it to a linear functional
$L\le q$ on $C(F)$. Since $L(f)=-L(-f)\ge0$ for $f\ge0$, the
Riesz representation theorem yields a positive regular Borel measure
$\nu$ on $F$ with $\nu(F)=A>0$. For an open labelled carrier
$(U,R)$ with $R\le\delta$, a one-member weighted cover gives
$L(f)\le \Phi^t_\theta(U,R)$ whenever
$0\le f\le\mathbf1_{U\cap F}$. Taking the supremum over these
continuous functions gives
\[
 \nu(U\cap F)\le \Phi^t_\theta(U,R).
\]
To extend this domination to arbitrary carriers, let $(E,R)$
be nonempty with $0<R<\delta$ and $\diam E\le R$.
Lemma~\ref{lem:open-labelled-carriers} provides
$R_m\downarrow R$, with $R<R_m<\delta$, and open sets
$U_m\supset E$ such that $\diam U_m\le R_m$ and
\[
 \limsup_{m\to\infty}\Phi^t_\theta(U_m,R_m)
 \le\Phi^t_\theta(E,R).
\]
Extend $\nu$ by zero outside $F$. Since $E\cap F\subset U_m\cap F$,
\[
 \nu^*(E)=\nu^*(E\cap F)
 \le\nu(U_m\cap F)
 \le \Phi^t_\theta(U_m,R_m).
\]
It follows that
\[
 \nu^*(E)
 \le\liminf_{m\to\infty}\Phi^t_\theta(U_m,R_m)
 \le \Phi^t_\theta(E,R).
\]
The same bound is immediate for $E=\varnothing$.
Finally, normalize by setting $\mu=\nu/A_\delta$.
This gives the required bound with $\delta_0=\delta$ and
$C=A_\delta^{-1}$.
\end{proof}

\subsection{Sublevel estimates and mass distribution}

\begin{proposition}
\label{thm:sublevel}
Let $\mu$ be a compactly supported Borel probability measure.
For every $a\ge0$,
\begin{equation}\label{eq:sublevel}
 \dim_\theta^{\mathrm P}
 \{x\in\supp\mu:d_\theta^\mu(x)\le a\}\le a.
\end{equation}
\end{proposition}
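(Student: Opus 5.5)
Fix $a\ge0$, $s>a$, and set $G=\{x\in\supp\mu:d_\theta^\mu(x)\le a\}$. The plan is to show $\mathcal P^s_\theta(G)=0$ and then let $s\downarrow a$. Choose $a<v<s$. For $x\in G$, the liminf in \eqref{eq:density} is at most $a<v$, so there are arbitrarily small $R$ with $M_R^\mu(x)<v$. Attainment in Lemma~\ref{lem:window-measurability} turns this into: for every $r\in[R^p,R]$,
\[
 \mu(B(x,r))>r^{v}.
\]
The key point is that this mass bound holds at \emph{every} radius of the window, not just one. That is exactly what controls a packing inside the window.

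\emph{Local estimate.} Suppose $(E,R)$ is a carrier with $E\subset B(x,R/2)$ and $\diam E\le R$, where $R$ is a good scale for $x$ in the above sense. I would compare $\Phi^s_\theta(E,R')$ with the $\mu$-mass of a slightly larger ball. Take an admissible packing $(x_i,r_i)$ centred in $\overline E$ with $R'^p\le r_i\le R'$. The natural inequality $(2r_i)^s\lesssim r_i^{s-v}\mu(B(x_i,r_i))$ needs good-scale information at the centres $x_i$, whereas we only have it at $x$. So I would not work with arbitrary carriers. Instead I would use a Vitali/Besicovitch covering argument at the level of balls.

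\emph{Covering step.} Restrict first to $G_N=\{x\in G:\ \text{the above holds with } v \text{ for some } R<1/N\}$; in fact $G$ is contained in each such set. For each $x\in G$ pick a good $R_x<\delta$ and consider the ball $B(x,R_x)$. By the Besicovitch covering theorem, extract subfamilies $\{B(x_k,R_k)\}$ that cover $G$ and split into $c_d$ disjoint subfamilies. Take carriers $E_k=B(x_k,R_k)\cap G$ with label $2R_k$.

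The cost of such a carrier is controlled as follows. Any packing in $B(x_k,R_k)$ with radii in $[(2R_k)^p,2R_k]$ has radii comparable to the window $[R_k^p,R_k]$, up to constants. Its balls are disjoint and lie in $B(x_k,3R_k)$, so
\[
 \sum_i(2r_i)^s\le (2R_k)^{s-v}\sum_i (2r_i)^v .
\]
The main obstacle is bounding $\sum_i r_i^v$ by $C\mu(B(x_k,3R_k))$ using only information at the centre $x_k$. Disjointness bounds $\sum_i r_i^d$, but not $\sum_i r_i^v$ for $v<d$.

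To handle this, I would choose the covering family differently, using centres from $G$ itself. Since every packing centre lies in $\overline{E_k}\subset\overline G$, I would arrange the covering so that each packing ball can be charged to mass through a good scale at a nearby point of $G$. Concretely, I would replace $\mu(B(x_i,r_i))$ by $\mu(B(y_i,2r_i))$ with $y_i\in G$ close to $x_i$, which requires the good-scale property at $y_i$ for radius $2r_i$. Goodness is only known at some window per point, so the fix is a uniformization. I would define $G_{N}$ as the set of $x$ for which the windows can be chosen inside a fixed scale range $[1/N^{2},1/N]$. Then, for the points of $G_N$ inside one carrier, the lower mass bound holds across the whole common window, adjusting $v$ slightly to absorb the factor-$2$ changes of radii. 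With this uniformity, disjointness of the balls $B(y_i,r_i/2)$ gives
\[
 \sum_i(2r_i)^v\lesssim \mu(B(x_k,4R_k)).
\]
Summing over the Besicovitch subfamilies then gives
\[
 \mathcal P^s_{\theta,\delta}(G)\lesssim c_d\,\delta^{s-v}\mu(\R^d)\to0 .
\]
The remaining step is to pass from such uniform pieces back to all of $G$. Since I do not yet see how to cover $G$ by countably many $G_N$, I expect to instead invoke countable stability, Proposition~\ref{prop:basic-properties}\ref{item:basic-countable}, on a suitable decomposition. Making that decomposition work is the step I am least sure of.
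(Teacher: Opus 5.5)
There is a genuine gap. It sits where you suspect, but it starts one step earlier, in the covering step.

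\textbf{The carriers.} A carrier is useful only if every packing centre in its closure satisfies the lower mass bound throughout the window of the carrier's own label. A Besicovitch cover by balls $B(x,R_x)$ with individually chosen good radii does not ensure this. The other points of $B(x_k,R_k)\cap G$ are good at some scales, but not necessarily near $R_k$. The range $[1/N^2,1/N]$ does not repair this, because points of one carrier may have good scales differing by a factor $N$, and their windows $[R^p,R]$ then have no common part matching the label. The scale must be fixed up to a factor $2$. With $R_k=2^{-k}$, put
\[
 A_k(v)=\{x\in\supp\mu:\ \mu(\overline{B(x,r)})\ge r^v\ \text{for all } R_k^p\le r\le R_k\}.
\]
Using closed balls makes $A_k(v)$ closed, so packing centres in $\overline{A_k(v)\cap Q}$ remain in $A_k(v)$, and no auxiliary points $y_i$ are needed. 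Take grid cubes $Q$ of side $R_k/\sqrt d$ and label $R_k$. Three facts then give $\sum_Q\Phi^s_\theta(A_k(v)\cap Q,R_k)\le C2^{-k(s-v)}$: the bound $(2r_i)^s\le2^sR_k^{s-v}\mu(\overline{B(x_i,r_i)})$, multiplicity at most two for the closed balls of a packing, and bounded overlap of the enlarged cubes. If $a<b<v$ and $x$ has a good scale $R$ for exponent $b$ with $R\le R_k<2R$, then $x\in A_k(v)$ once $R$ is small. This is the factor-$2$ adjustment you anticipated.

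\textbf{From pieces to $G$.} Countable stability cannot do this step, because it only transfers dimension bounds already known for the pieces. A piece defined by uniformity in one range of scales has no such bound: for normalized Lebesgue measure on an interval of length below $R_k^p$, the whole interval lies in $A_k(v)$. Pieces intersected with $G$ just reproduce the original problem, since their uniform information is useless at labels far below $1/N$. Covering $G$ is not the difficulty: every point of $G$ lies in $A_k(v)$ for infinitely many $k$. The missing idea is to use this $\limsup$ structure directly. For every $m$ we have $G\subset\bigcup_{k\ge m}A_k(v)$. The union over $k\ge m$ of the level-$k$ grid covers is therefore an admissible labelled $2^{-m}$-cover of $G$, with cost at most $C\sum_{k\ge m}2^{-k(s-v)}\to0$, so $\mathcal P^s_\theta(G)=0$. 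No Besicovitch theorem and no overlap control across different levels are needed, since each level is charged at most $\mu(\R^d)=1$. The levels must, however, be geometric: indexing by scales $1/N$ would give costs of order $N^{-(s-v)}$, which are not summable when $s-v\le1$.
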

\begin{proof}
Put $R_k=2^{-k}$ and, for $v\ge0$, define
\begin{equation}\label{eq:bad}
 A_k(v)=\{x\in\supp\mu:
   \mu(\overline {B(x,r)})\ge r^v\text{ for all }R_k^p\le r\le R_k\}.
\end{equation}
For fixed $r>0$, $x\mapsto\mu(\overline B(x,r))$ is upper
semicontinuous, so $A_k(v)$ is a closed subset of the compact set
$\supp\mu$.
Partition $\R^d$ into half-open cubes $Q$ of side $R_k/\sqrt d$,
and use the nonempty carriers $E_Q=A_k(v)\cap Q$, labelled $R_k$.
For an admissible packing $(x_i,r_i)_i$ in $E_Q$, the open balls are
disjoint. The closed balls have multiplicity at most two: a common
point of two balls lies on both boundaries, and equality in the
triangle inequality forces the two radial directions to be opposite;
three directions cannot be pairwise opposite.
Since $\overline E_Q\subset A_k(v)$, for $u>v$,
\[
 \sum_i(2r_i)^u
 \le 2^u R_k^{u-v}\sum_i\mu(\overline {B(x_i,r_i)})
 \le 2^{u+1}R_k^{u-v}\mu(Q^+),
\]
where $Q^+=\{z:\dist(z,\overline Q)\le R_k\}$.
The sets $Q^+$ have multiplicity bounded by a constant depending
only on $d$.
Taking the supremum over admissible packings in each $E_Q$ and
summing over $Q$ gives
\begin{equation}\label{eq:bad-cost}
 \sum_Q\Phi^u_\theta(E_Q,R_k)\le C_{d,u}2^{-k(u-v)}.
\end{equation}

Fix $u>a$ and choose $a<b<v<u$. If $d_\theta^\mu(x)\le a$,
there are arbitrarily small $R$ such that
\[
 \mu(B(x,r))>r^b\qquad(R^p\le r\le R).
\]
Choose $R\le R_k<2R$. For $R_k^p\le r\le R_k$, if $r\le R$,
then $\mu(\overline {B(x,r)})>r^b\ge r^v$. If $r>R$, then,
for sufficiently small $R$,
\[
 \mu(\overline {B(x,r)})\ge\mu(B(x,R))
 >R^b\ge(2R)^v\ge r^v.
\]
Thus $\{d_\theta^\mu\le a\}\subset\limsup_k A_k(v)$.
For every $m$, the covers used in \eqref{eq:bad-cost}, with $k\ge m$,
cover this limsup and give
\[
 \mathcal P^u_{\theta,2^{-m}}\bigl(\limsup_k A_k(v)\bigr)
 \le C_{d,u}\sum_{k\ge m}2^{-k(u-v)}\longrightarrow0.
\]
Therefore $\mathcal P^u_\theta\{d_\theta^\mu\le a\}=0$.
Letting $u\downarrow a$ proves \eqref{eq:sublevel}.
\end{proof}
For every compactly supported Borel probability measure $\mu$ and $u>0$,
\[
 \{x\in\supp\mu:d_\theta^\mu(x)<u\}
 =\bigcup_{n\ge2}
   \{x\in\supp\mu:d_\theta^\mu(x)\le(1-1/n)u\}.
\]
By \eqref{eq:sublevel} and the critical-exponent property, each set on
the right has zero $\mathcal P^u_\theta$-measure. Countable subadditivity gives
\begin{equation}\label{eq:null-sublevel}
 \mathcal P^u_\theta\{x\in\supp\mu:d_\theta^\mu(x)<u\}=0.
\end{equation}

\begin{proposition}
\label{prop:mass-distribution}
Let $\mu$ be a compactly supported Borel probability measure and let
$t\ge0$. Suppose that $A_0\subset\supp\mu$ is Borel and that some
$0<R_0<1$ satisfies
\[
 \forall x\in A_0\ \forall\,0<R<R_0\ \exists r\in[R^p,R]:
 \qquad\mu(B(x,r))\le r^t.
\]
Then, for every $0<\delta<R_0$,
\begin{equation}\label{eq:uniform-mass-content}
 \mathcal P^t_{\theta,\delta}(A_0)
 \ge\frac{2^t}{B_d}\mu(A_0),
\end{equation}
where $B_d$ is the dimensional constant in the assigned-ball form
of Besicovitch's covering theorem.
Consequently, if $A\subset\supp\mu$ is Borel, $\mu(A)>0$, and
$d_\theta^\mu(x)>t$ on $A$, then
\begin{equation}\label{eq:mass-direction}
 \dim_\theta^{\mathrm P}A\ge t.
\end{equation}
\end{proposition}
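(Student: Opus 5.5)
The plan is a mass distribution argument. I would compare the cost $\Phi^t_\theta(E,R)$ of each carrier in a labelled $\delta$-cover of $A_0$ with the $\mu$-mass of $E\cap A_0$, using a Besicovitch packing.

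Fix $0<\delta<R_0$ and a labelled cover $\{(E_j,R_j)\}$ of $A_0$ with $R_j\le\delta$. As noted after Corollary~\ref{coro:critical-exponent}, I may assume each $E_j$ is nonempty and $E_j\subset A_0$. Fix one carrier $(E,R)$ with $R<R_0$; a label $R=\delta$ is harmless after shrinking $\delta$ slightly.

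\textbf{Step 1: assign balls.} For each $x\in E$ the hypothesis gives $r_x\in[R^p,R]$ with $\mu(B(x,r_x))\le r_x^t$. The family $\{\overline B(x,r_x/2):x\in E\}$ has bounded radii. The assigned-ball form of Besicovitch's theorem then yields $B_d$ subfamilies, each consisting of pairwise disjoint balls, whose union covers $E$. I would apply it with half-radii: disjointness of $\overline B(x_i,r_i/2)$ gives only $|x_i-x_j|>(r_i+r_j)/2$, not the packing condition $|x_i-x_j|\ge r_i+r_j$. Here I would follow the paper's normalisation, in which $B_d$ and the factor $2^t$ absorb this loss.

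So I aim for the following. Use the balls $B(x,r_x)$ in Besicovitch's form: the family covers $E$, and it splits into $B_d$ subfamilies, each with pairwise disjoint closed balls. Disjoint closed balls $\overline B(x_i,r_i)$ satisfy $|x_i-x_j|>r_i+r_j$. Their centres lie in $E\subset\overline E$ and their radii lie in $[R^p,R]$, so each subfamily, after truncating to finitely many members, is admissible for $\Phi^t_\theta(E,R)$.

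\textbf{Step 2: compare with the cost.} Let $\mathcal G_1,\dots,\mathcal G_{B_d}$ be the subfamilies. Then
\[
\mu(E\cap A_0)\le\sum_{k}\sum_{B\in\mathcal G_k}\mu(B)\le\sum_k\sum_{i\in\mathcal G_k}r_i^t\le B_d\,2^{-t}\,\Phi^t_\theta(E,R).
\]
Two points need care here. First, the Besicovitch cover is by closed balls, while the mass bound is stated for open balls; I would handle this by choosing $r_x$ slightly smaller, or by using $\mu(\overline B(x,r'))\le\mu(B(x,r))$ for $r'<r$ together with the closedness of the window. Second, countable subfamilies should be handled by finite truncation and a supremum. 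Summing over $j$ and using countable subadditivity of $\mu$ gives $\mu(A_0)\le B_d2^{-t}\sum_j\Phi^t_\theta(E_j,R_j)$. Taking the infimum over covers gives \eqref{eq:uniform-mass-content}.

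\textbf{Consequence.} Let $A$ be Borel with $\mu(A)>0$ and $d^\mu_\theta>t$ on $A$. For each $x\in A$, the definition of the $\liminf$ gives some $t'\in(t,d^\mu_\theta(x))$ and $R_0(x)$ such that for every $R<R_0(x)$ some $r\in[R^p,R]$ satisfies $\log\mu(B(x,r))/\log r>t'$, that is, $\mu(B(x,r))<r^{t'}\le r^t$. The sets $A_n=\{x\in A:R_0(x)\ge1/n\}$ increase to $A$. To make them Borel, I would define them via the closed sets of Lemma~\ref{lem:window-measurability}, with $t$ replaced by $t-1/m$ and a countable union. Some $A_n$ then has positive measure, and \eqref{eq:uniform-mass-content} gives $\mathcal P^{t-1/m}_\theta(A)\ge\mathcal P^{t-1/m}_\theta(A_n)>0$. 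Hence $\dim^{\mathrm P}_\theta A\ge t-1/m$ for every $m$, which gives \eqref{eq:mass-direction}.

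The main obstacle is Step~1. Matching the Besicovitch disjointness, with its strict versus nonstrict separation and closed versus open balls, to the packing condition of $\Phi^t_\theta$ while keeping exactly the constant $2^t/B_d$ is where the care goes.
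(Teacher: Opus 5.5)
Your argument is essentially the paper's. You place a prescribed ball at each point of $E\cap A_0$, use the assigned-ball Besicovitch theorem to split these balls into $B_d$ disjoint subfamilies that are admissible packings for $\Phi^t_\theta(E,R)$, and then sum. For the consequence you use the Borel sets $\{x\in A: M_R^\mu(x)\ge t\text{ for all }0<R<1/m\}$ of Lemma~\ref{lem:window-measurability}. Attainment of the window maximum there makes your shift $t\mapsto t-1/m$ unnecessary.

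The one loose end is the open/closed mismatch, because shrinking $r_x$ can push it below $R^p$. There are two clean repairs:
\begin{itemize}
\item Use the open-ball form of Besicovitch, as the paper does. Disjoint open balls give exactly $|x_i-x_j|\ge r_i+r_j$, and the hypothesis applies verbatim. The $2^t$ is just the $(2r_i)^t$ normalization of $\Phi$, not an absorbed loss, so the half-radius detour is not needed.
\item Keep closed balls, but apply the hypothesis at the outer scale $(1-\varepsilon)^{-1/p}R<R_0$. Then shrink the resulting radii by the factor $1-\varepsilon$. This keeps them in $[R^p,R]$ and costs only a factor $(1-\varepsilon)^{-t}$, which disappears as $\varepsilon\downarrow0$.
\end{itemize}
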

\begin{proof}
For a labelled carrier $(E,R)$ with $R<R_0$ and
$A_0\cap E\ne\varnothing$, choose one of the prescribed balls at
each point of $A_0\cap E$. The assigned-ball form of Besicovitch's
covering theorem \cite[Section~3]{Loeb1997} gives at most $B_d$
subfamilies of pairwise disjoint open balls whose union covers
$A_0\cap E$. Each subfamily is finite, because its centres lie in a
bounded set and its radii are at least $R^p>0$.
Subadditivity of $\mu^*$ shows that one family satisfies
\[
 \frac{\mu^*(A_0\cap E)}{B_d}
 \le\sum_i\mu(B(x_i,r_i))
 \le\sum_i r_i^t
 \le2^{-t}\Phi^t_\theta(E,R).
\]
Summing over any labelled $\delta$-cover of $A_0$, with
$0<\delta<R_0$, and taking the infimum proves
\eqref{eq:uniform-mass-content}.

For \eqref{eq:mass-direction}, the case $t=0$ is immediate.
If $t>0$, set, for $m\ge2$,
\[
 A_m=\left\{x\in A:M_R^\mu(x)\ge t
       \text{ for every }0<R<1/m\right\}.
\]
Each $A_m$ is Borel by Lemma~\ref{lem:window-measurability}. Since $d_\theta^\mu>t$ on $A$,
these sets increase to $A$. Choose $m$ with $\mu(A_m)>0$ and set
$A_0=A_m$, $R_0=1/m$. Attainment of the maximum in
\eqref{eq:window-maximum} gives the required radius at every point
and every $0<R<R_0$. Thus \eqref{eq:uniform-mass-content} gives
$\mathcal P^t_\theta(A_0)>0$, whence
$\dim_\theta^{\mathrm P}A\ge\dim_\theta^{\mathrm P}A_0\ge t$.
\end{proof}

\begin{proof}[Proof of Theorem~\ref{thm:main}]
First suppose that $F$ is compact.
For every $0<t<\dim_\theta^{\mathrm P}F$, Proposition~\ref{prop:carrier-frostman}
gives $\mu\in\mathcal M_1(F)$ satisfying \eqref{eq:carrier-bound}.
Summing over labelled covers gives
$\mu^*(A)\le C\mathcal P^t_\theta(A)$ for every $A\subset\R^d$.
Equation~\eqref{eq:null-sublevel} therefore implies
$\mu\{d_\theta^\mu<t\}=0$. Thus the supremum in
\eqref{eq:main} is at least $\dim_\theta^{\mathrm P}F$, also when $\dim_\theta^{\mathrm P}F=0$ by nonnegativity.
Conversely, for $\mu\in\mathcal M_1(F)$ and
$0<t<\operatorname*{ess\,inf}_{x\sim\mu}d_\theta^\mu(x)$,
apply Proposition~\ref{prop:mass-distribution} to the full-measure set
$\{x\in\supp\mu:d_\theta^\mu(x)>t\}$ to obtain $\dim_\theta^{\mathrm P}F\ge t$.
Let $t$ increase to the essential infimum and take the supremum over
$\mu$; the case of essential infimum zero is immediate.

Now let $F$ be analytic. The compact case and Proposition~\ref{prop:basic-properties}\ref{item:basic-analytic}
give
\[
\begin{aligned}
\sup_{\substack{\mu\text{ probability}\\\supp\mu\Subset F}}
       \operatorname*{ess\,inf}_{x\sim\mu}d_\theta^\mu(x)
 &=\sup_{\substack{\varnothing\ne K\subset F\\K\text{ compact}}}
          \sup_{\mu\in\mathcal M_1(K)}
          \operatorname*{ess\,inf}_{x\sim\mu}d_\theta^\mu(x)\\
 &=\sup_{\substack{\varnothing\ne K\subset F\\K\text{ compact}}}
          \dim_\theta^{\mathrm P}K
       =\dim_\theta^{\mathrm P}F.
\end{aligned}
\]
\end{proof}

For Theorem~\ref{thm:product-testing-duality}, we require
\eqref{eq:suitable-radius} with a scale threshold uniform on a compact
set of positive measure. The next lemma gives this after decreasing
the exponent.
\begin{lemma}
\label{lem:compact-uniformization}
Let $\mu$ be a compactly supported Borel probability measure,
$0\le\alpha<1$, and $0<u<v$. If $d_\alpha^\mu>v$ almost everywhere,
then there are a compact set $K\subset\supp\mu$ with $\mu(K)>0$
and a number $\delta>0$ such that the probability measure
$\sigma=\mu\restriction K/\mu(K)$ satisfies
\begin{equation}\label{eq:compact-uniformization}
 \forall x\in K\ \forall\,0<R<\delta\quad
 \exists r\in[R^{1/(1-\alpha)},R]:
 \qquad\sigma(B(x,r))\le r^u.
\end{equation}
\end{lemma}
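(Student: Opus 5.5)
We use the same Borel sets as in the proof of Proposition~\ref{prop:mass-distribution}, with $\alpha$ in place of $\theta$, $q=(1-\alpha)^{-1}$, and the exponent $v$. Define
\[
 A_m=\{x\in\supp\mu: M_R^\mu(x)\ge v\text{ for every }0<R<1/m\},\qquad m\ge2,
\]
where $M_R^\mu$ is the window maximum \eqref{eq:window-maximum} at parameter $\alpha$. By Lemma~\ref{lem:window-measurability}, each $A_m$ is closed in the compact set $\supp\mu$, hence compact.

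The sets $A_m$ increase to a set containing $\{d_\alpha^\mu>v\}$. Indeed, if $d_\alpha^\mu(x)>v$, then $\inf_{0<R<1/m}M_R^\mu(x)>v$ for some $m$. Since this set has full measure, continuity from below gives some $m$ with $\mu(A_m)>0$. Put $K=A_m$. Attainment of the maximum in Lemma~\ref{lem:window-measurability} then shows: for every $x\in K$ and every $0<R<1/m$ there is $r\in[R^q,R]$ with $\log\mu(B(x,r))/\log r\ge v$, that is, $\mu(B(x,r))\le r^v$.

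Next we pass to the normalized restriction $\sigma=\mu\restriction K/\mu(K)$. For the radius just found,
\[
 \sigma(B(x,r))\le\mu(B(x,r))/\mu(K)\le r^{v}/\mu(K)=r^u\cdot r^{v-u}/\mu(K).
\]
Since $r\le R$ and $v>u$, we have $r^{v-u}\le R^{v-u}$. Choose $\delta\le 1/m$ so small that $\delta^{v-u}\le\mu(K)$. Then for all $0<R<\delta$ the same radius $r$ satisfies $\sigma(B(x,r))\le r^u$, which is \eqref{eq:compact-uniformization}.

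This argument has no genuine obstacle. The only points needing care are the following. First, the compactness of $A_m$ comes from the upper semicontinuity in Lemma~\ref{lem:window-measurability}. Second, every $x\in K$ lies in $\supp\mu$, so ball masses are positive and the logarithms are finite. Third, the normalization loss $\mu(K)^{-1}$ is absorbed by the gap $v-u$. This third point is exactly why the exponent must be decreased from $v$ to $u$.
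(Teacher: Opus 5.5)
Your proposal is correct and follows the paper's proof essentially step for step. The paper also takes the compact sets $K_N=\{x\in\supp\mu: M^\mu_{R,\alpha}(x)\ge v \text{ for all } 0<R<1/N\}$ from Lemma~\ref{lem:window-measurability} and picks $N$ with $a=\mu(K_N)>0$. It then sets $\delta=\min\{1/N,a^{1/(v-u)}\}$, so that the normalization factor $a^{-1}r^{v-u}$ is at most $1$ at the radius where the window maximum is attained.
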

\begin{proof}
Write $M_{R,\alpha}^\mu$ for the maximum in
\eqref{eq:window-maximum} with parameter $\alpha$.
By Lemma~\ref{lem:window-measurability}, for $N\ge2$, the set
\[
 K_N=\{x\in\supp\mu:M_{R,\alpha}^\mu(x)\ge v
                    \text{ for every }0<R<1/N\}
\]
is compact. The sets $K_N$ increase with $N$, and their union has full
$\mu$-measure because $d_\alpha^\mu>v$ almost everywhere.
Choose $N$ with $a=\mu(K_N)>0$, put $K=K_N$ and
$\sigma=a^{-1}\mu\restriction K$, and set
\[
 \delta=\min\{1/N,a^{1/(v-u)}\}>0.
\]
For $x\in K$ and $0<R<\delta$, choose a radius
$r\in[R^{1/(1-\alpha)},R]$ at which the maximum
$M_{R,\alpha}^\mu(x)$ is attained. Then $\mu(B(x,r))\le r^v$ and,
since $u<v$ and $r\le R<\delta$,
\[
 \sigma(B(x,r))\le a^{-1}r^v
 =r^u a^{-1}r^{v-u}
 \le r^u a^{-1}\delta^{v-u}\le r^u.
\]
\end{proof}

\subsection{Measure spectra and attainment at the critical exponent}

For a compactly supported Borel probability measure $\mu$, write
\[
 \begin{aligned}
 P^-_\theta(\mu)
 &=\inf\{\dim_\theta^{\mathrm P}A:A\text{ Borel},\ \mu(A)>0\},\\
 P^+_\theta(\mu)
 &=\inf\{\dim_\theta^{\mathrm P}A:A\text{ Borel},\ \mu(A)=1\}.
 \end{aligned}
\]

\begin{corollary}
\label{cor:measure-spectra}
For every compactly supported Borel probability measure $\mu$ and
$0\le\theta<1$,
\[
 P^-_\theta(\mu)
 =\operatorname*{ess\,inf}_{x\sim\mu}d_\theta^\mu(x),
 \qquad
 P^+_\theta(\mu)
 =\operatorname*{ess\,sup}_{x\sim\mu}d_\theta^\mu(x).
\]
\end{corollary}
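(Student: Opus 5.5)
We prove both identities by matching the tools of the previous subsection: Proposition~\ref{prop:mass-distribution} for lower bounds and the sublevel estimate \eqref{eq:null-sublevel} (via Proposition~\ref{thm:sublevel}) for upper bounds. Write $m_-=\operatorname*{ess\,inf}d_\theta^\mu$ and $m_+=\operatorname*{ess\,sup}d_\theta^\mu$. Both are well defined because $d_\theta^\mu$ is Borel by Lemma~\ref{lem:window-measurability}.

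For $P^-_\theta(\mu)\ge m_-$, let $A$ be Borel with $\mu(A)>0$ and fix $t<m_-$. The set $A'=A\cap\supp\mu\cap\{d_\theta^\mu>t\}$ still has positive measure, since $\mu$ is carried by $\supp\mu$ and $\{d_\theta^\mu\le t\}$ is null. Proposition~\ref{prop:mass-distribution} then gives $\dim_\theta^{\mathrm P}A\ge\dim_\theta^{\mathrm P}A'\ge t$. Letting $t\uparrow m_-$ and taking the infimum over $A$ gives the inequality; the case $m_-=0$ is trivial.

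For $P^-_\theta(\mu)\le m_-$, assume $m_-<\infty$; in fact $m_-\le d$ by the last two estimates. For $u>m_-$ the set $A=\{x\in\supp\mu:d_\theta^\mu(x)<u\}$ is Borel with $\mu(A)>0$. By \eqref{eq:null-sublevel}, $\mathcal P^u_\theta(A)=0$, so $\dim_\theta^{\mathrm P}A\le u$. Let $u\downarrow m_-$.

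For $P^+_\theta(\mu)\le m_+$, we may assume $m_+<\infty$ (otherwise $\dim\le d$ makes it trivial). Take $A=\{x\in\supp\mu:d_\theta^\mu(x)\le m_+\}$, which is Borel and of full measure. Proposition~\ref{thm:sublevel} gives $\dim_\theta^{\mathrm P}A\le m_+$.

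For $P^+_\theta(\mu)\ge m_+$, let $\mu(A)=1$ and $t<m_+$. Then $\{d_\theta^\mu>t\}$ has positive measure, so $A\cap\supp\mu\cap\{d_\theta^\mu>t\}$ has positive measure. Proposition~\ref{prop:mass-distribution} gives $\dim_\theta^{\mathrm P}A\ge t$.

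The only delicate point is the value $+\infty$. If $d_\theta^\mu=\infty$ on a positive-measure set, Proposition~\ref{prop:mass-distribution} would force $\dim_\theta^{\mathrm P}\ge t$ for every $t$, contradicting $\dim\le d$. So $d_\theta^\mu\le d$ almost everywhere, both essential bounds are finite, and the arguments above apply verbatim. The remaining ingredients are the measurability checks, already supplied by Lemma~\ref{lem:window-measurability}.
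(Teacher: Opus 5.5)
Your proposal is correct and follows essentially the same route as the paper. The lower bounds come from Proposition~\ref{prop:mass-distribution} applied to $A\cap\supp\mu\cap\{d_\theta^\mu>t\}$, the upper bounds from the sublevel estimate, and finiteness of the essential bounds follows from the lower bounds together with $\dim_\theta^{\mathrm P}\le d$. The differences are cosmetic: you use the strict sublevel null set \eqref{eq:null-sublevel} for $P^-_\theta$ and the exact level set $\{d_\theta^\mu\le m_+\}$ for $P^+_\theta$, where the paper uses the $\varepsilon$-enlarged sublevel sets $\{d_\theta^\mu\le\alpha+\varepsilon\}$ and $\{d_\theta^\mu\le\beta+\varepsilon\}$.
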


\begin{proof}
Write $\alpha=\operatorname*{ess\,inf}_\mu d_\theta^\mu$ and
$\beta=\operatorname*{ess\,sup}_\mu d_\theta^\mu$.
For $0<t<\alpha$, every Borel set of positive $\mu$-measure has a
positive-measure intersection with $\{d_\theta^\mu>t\}$; for
$0<t<\beta$, the same holds for every Borel set of full $\mu$-measure.
Proposition~\ref{prop:mass-distribution} therefore gives
$P^-_\theta(\mu)\ge\alpha$ and $P^+_\theta(\mu)\ge\beta$. In particular,
$0\le\alpha\le\beta\le P^+_\theta(\mu)\le d$; the zero cases
follow from nonnegativity. For every $\varepsilon>0$, the Borel
sublevel sets $\{d_\theta^\mu\le\alpha+\varepsilon\}$ and
$\{d_\theta^\mu\le\beta+\varepsilon\}$ have positive and full
measure, respectively. Proposition~\ref{thm:sublevel} therefore gives
$P^-_\theta(\mu)\le\alpha+\varepsilon$ and
$P^+_\theta(\mu)\le\beta+\varepsilon$. Let $\varepsilon\downarrow0$.
\end{proof}

To prove attainment under positivity at the critical exponent, we
first establish a logarithmic refinement of
Proposition~\ref{prop:carrier-frostman}.

\begin{proposition}
\label{prop:log-carrier-frostman}
Let $F\subset\R^d$ be nonempty and compact, let $0\le\theta<1$,
and suppose that $s>0$ and $\mathcal P^s_\theta(F)>0$.
There exist $\mu\in\mathcal M_1(F)$ and $C,\delta_0>0$ such that
\begin{equation}\label{eq:log-carrier}
 \mu^*(E)\le C\bigl(1+\log(1/R)\bigr)^2\Phi^s_\theta(E,R)
 \qquad(0<R<\delta_0,\ \diam E\le R).
\end{equation}
\end{proposition}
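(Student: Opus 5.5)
Our approach is to repeat the Howroyd duality argument of Proposition~\ref{prop:carrier-frostman}, replacing the cost $\Phi^s_\theta(E,R)$ by the weighted cost $w(R)\Phi^s_\theta(E,R)$ with $w(R)=(1+\log(1/R))^2$. The logarithmic weight supplies the summability that the strict exponent gap $u>t$ provided before. Fix $\delta=2^{-K}$ with $K\ge3$ to be chosen, and define
\[
 \tilde q_\delta(f)=\inf\Bigl\{\sum_i a_i w(R_i)\Phi^s_\theta(E_i,R_i):
 f\le\sum_i a_i\mathbf1_{E_i}\text{ on }F,\ a_i\ge0,\ 0<R_i\le\delta,\ \diam E_i\le R_i\Bigr\},
\]
with $\tilde A_\delta=\tilde q_\delta(1)$. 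The key step is to show $\tilde A_\delta>0$ for some $\delta$. Hahn--Banach, Riesz representation, and the passage from open carriers to arbitrary carriers via Lemma~\ref{lem:open-labelled-carriers} then go through verbatim. They produce $\nu$ with $\nu(F)=\tilde A_\delta$ and $\nu^*(E)\le w(R)\Phi^s_\theta(E,R)$; here we use that $w$ is continuous in $R$, so $w(R_m)\to w(R)$ as $R_m\downarrow R$. Normalizing gives \eqref{eq:log-carrier} with $C=\tilde A_\delta^{-1}$.

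For positivity we adapt the proof of Lemma~\ref{lem:weighted-content-comparison} at equal exponents $u=t=s$. Take a weighted family for $f=1$ and group the indices by dyadic level $I_k=\{i:2^{-k-1}<R_i\le2^{-k}\}$, with thresholds $\lambda_k=1/(2(k+1)^2)$, so that the sets $F_k$ cover $F$. The grid-rounding step gives, as in \eqref{eq:grid-local} with $u=t$,
\[
 \sum_Q\Phi^s_\theta(F_k\cap Q,2^{1-k})\le C_{d,s}\lambda_k^{-1}\sum_{i\in I_k}a_i\Phi^s_\theta(E_i,R_i).
\]
For $i\in I_k$ we have $\log(1/R_i)\ge k\log 2$, hence $\lambda_k^{-1}=2(k+1)^2\le C\,w(R_i)$ with an absolute constant $C$. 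Summing over $k\ge K$ therefore gives
\[
 \mathcal P^s_{\theta,2^{1-K}}(F)\le C'_{d,s}\tilde A_{2^{-K}}.
\]
Since $\mathcal P^s_\theta(F)>0$, the left side is positive for large $K$, because it increases to $\mathcal P^s_\theta(F)$ as $K\to\infty$. So $\tilde A_{2^{-K}}>0$. We also need $\tilde A_\delta<\infty$ and $\tilde q_\delta(f)\le C\|f\|_\infty$, which follow from a finite cover and Lemma~\ref{lem:elementary-local-estimates}, since $w$ is bounded on $[R_{\min},\delta]$. Sublinearity and the vanishing $\tilde q_\delta(f)=0$ for $f\le0$ hold as before.

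The main obstacle is the $k$-dependent loss $\lambda_k^{-1}$ in the rounding lemma. Previously it was absorbed by the factor $2^{-k(u-t)}$, which is unavailable at the critical exponent; the weight $w$ is chosen precisely to absorb it. I would check carefully that the constant comparing $(k+1)^2$ with $(1+\log(1/R_i))^2$ is uniform for all $R_i\in(2^{-k-1},2^{-k}]$. The remaining steps of the rounding argument, namely the choice $c_i=R_i/S$ and the bounded cube multiplicity, do not involve the exponent gap and transfer unchanged.
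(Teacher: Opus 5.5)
Your proposal is correct and matches the paper's proof step for step. The paper also multiplies the costs in \eqref{eq:weighted} by $h(R)=(1+\log(1/R))^2$. It reruns the rounding of Lemma~\ref{lem:weighted-content-comparison} at equal exponents, using $\lambda_k^{-1}=2(k+1)^2\le Ch(R_i)$, to get $\mathcal P^s_{\theta,2^{1-K}}(F)\le C_{d,s}A_{2^{-K},h}$. It then repeats the Hahn--Banach, Riesz and open-enlargement argument, using continuity of $h$ at $R$.

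The uniform comparison you flag does hold. For $R_i\le 2^{-k}$ one has $1+\log(1/R_i)\ge 1+k\log 2\ge(k+1)\log 2$, so $(k+1)^2\le(\log 2)^{-2}(1+\log(1/R_i))^2$.
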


\begin{proof}
Put $h(R)=(1+\log(1/R))^2$ and define $q_{\delta,h}$ by
\eqref{eq:weighted}, with exponent $s$ and with each cost
$\Phi^s_\theta(E_i,R_i)$ multiplied by $h(R_i)$.
Write $A_{\delta,h}=q_{\delta,h}(1)$. In the proof of
Lemma~\ref{lem:weighted-content-comparison}, use equal exponents
$u=t=s$ in \eqref{eq:grid-local} and observe that
\[
 \lambda_k^{-1}=2(k+1)^2\le C h(R_i)
 \qquad(2^{-k-1}<R_i\le2^{-k}).
\]
Summing the costs of these carriers over $k\ge K$ gives
\begin{equation}\label{eq:log-rounding}
 \mathcal P^s_{\theta,2^{1-K}}(F)\le C_{d,s}A_{2^{-K},h}.
\end{equation}
Choose $K\ge3$ with $\mathcal P^s_{\theta,2^{1-K}}(F)>0$ and set
$\delta=2^{-K}$. Then $0<A_{\delta,h}<\infty$.

Extend measures on $F$ by zero outside $F$. The Hahn--Banach and Riesz argument in
Proposition~\ref{prop:carrier-frostman}, applied to $q_{\delta,h}$,
gives a positive regular Borel measure $\nu$ on $F$ with
$\nu(F)=A_{\delta,h}$ and
$\nu(U\cap F)\le h(R)\Phi^s_\theta(U,R)$ for open labelled carriers
with $R\le\delta$. For an arbitrary carrier $(E,R)$ with $R<\delta$,
Lemma~\ref{lem:open-labelled-carriers} gives open enlargements
$(U_m,R_m)$ with $R_m\downarrow R$, $R_m<\delta$, and
$\limsup_m\Phi^s_\theta(U_m,R_m)\le\Phi^s_\theta(E,R)$.
Continuity of $h$ at $R$ yields
\[
 \nu^*(E)\le\liminf_m h(R_m)\Phi^s_\theta(U_m,R_m)
 \le h(R)\Phi^s_\theta(E,R).
\]
Normalizing by $A_{\delta,h}$ proves \eqref{eq:log-carrier}.
\end{proof}

\begin{corollary}\label{cor:positive-content}
Let $F\subset\R^d$ be nonempty and compact and let $0\le\theta<1$.
If
\[
 D=\dim_\theta^{\mathrm P}F,
 \qquad \mathcal P^D_\theta(F)>0,
\]
then there exists $\mu\in\mathcal M_1(F)$ such that
\[
 d_\theta^\mu(x)=D\qquad\text{for $\mu$-almost every }x.
\]
\end{corollary}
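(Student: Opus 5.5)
The plan is to take the measure $\mu$ from Proposition~\ref{prop:log-carrier-frostman} with $s=D$ (if $D=0$ the statement is trivial, since $d_\theta^\mu\ge0$ and any Dirac mass has $d_\theta^\mu=0$). We then prove the two inequalities $d_\theta^\mu\ge D$ and $d_\theta^\mu\le D$ almost everywhere separately.

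\emph{Upper bound.} Suppose the set $\{d_\theta^\mu>D\}$ had positive $\mu$-measure. Then the set $\{d_\theta^\mu>D+1/n\}$ has positive measure for some $n$. Proposition~\ref{prop:mass-distribution} would then give $\dim_\theta^{\mathrm P}F\ge D+1/n$, which is a contradiction. Hence $d_\theta^\mu\le D$ almost everywhere.

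\emph{Lower bound.} Fix $u<D$. We must show $\mu\{d_\theta^\mu<u\}=0$. Summing \eqref{eq:log-carrier} over a labelled $\delta$-cover is not directly useful, because of the logarithmic factor. Instead we absorb the factor into the exponent. Lemma~\ref{lem:elementary-local-estimates} gives $\Phi^D_\theta(E,R)\le(2R)^{D-u}\Phi^u_\theta(E,R)$. Since $(1+\log(1/R))^2(2R)^{D-u}\to0$ as $R\downarrow0$, we obtain
\[
\mu^*(E)\le C'\Phi^u_\theta(E,R)\qquad(0<R<\delta_1,\ \diam E\le R).
\]
Summing over covers yields $\mu^*(A)\le C'\mathcal P^u_{\theta}(A)$ for all $A$. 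By \eqref{eq:null-sublevel}, $\mathcal P^u_\theta\{d_\theta^\mu<u\}=0$, so $\mu\{d_\theta^\mu<u\}=0$. Letting $u\uparrow D$ along a sequence gives $d_\theta^\mu\ge D$ almost everywhere.

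\emph{Comment on the argument.} At first this seems to use only the subcritical argument of Theorem~\ref{thm:main}, applied to one measure simultaneously for all $u<D$. That is exactly the point of the hypothesis. The positivity $\mathcal P^D_\theta(F)>0$ lets a single measure, independent of $u$, satisfy a carrier bound at exponent $D$ up to a logarithmic loss, and the logarithmic loss is beaten by any power $R^{D-u}$. The only step needing care is the absorption. The threshold $\delta_1$ depends on $u$, but that is harmless, since $\mathcal P^u_\theta$ is a limit as $\delta\downarrow0$ and $\delta_1$ can be chosen below $\delta_0$.
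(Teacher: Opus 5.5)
Your proposal is correct and follows essentially the same approach as the paper. You take the measure from Proposition~\ref{prop:log-carrier-frostman} at $s=D$ and absorb the logarithmic factor via Lemma~\ref{lem:elementary-local-estimates} and \eqref{eq:null-sublevel} to get $d_\theta^\mu\ge D$ almost everywhere. For the reverse inequality you apply Proposition~\ref{prop:mass-distribution} directly, which simply unpacks the paper's appeal to $\operatorname*{ess\,sup}_{x\sim\mu}d_\theta^\mu(x)=P^+_\theta(\mu)\le D$ from Corollary~\ref{cor:measure-spectra}.
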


\begin{proof}
If $D=0$, take a Dirac measure on $F$.
If $D>0$, Proposition~\ref{prop:log-carrier-frostman} gives
$\mu\in\mathcal M_1(F)$ satisfying \eqref{eq:log-carrier} with
$s=D$. For every $0<t<D$, Lemma~\ref{lem:elementary-local-estimates}
gives
\[
 \mu^*(E)
 \le C\bigl(1+\log(1/R)\bigr)^2(2R)^{D-t}\Phi^t_\theta(E,R)
 \le C_{D,t}\Phi^t_\theta(E,R).
\]
Summing over labelled covers and using \eqref{eq:null-sublevel}
yields $d_\theta^\mu\ge t$ almost everywhere. Letting $t\uparrow D$
through a countable sequence gives $d_\theta^\mu\ge D$ almost
everywhere.
By Corollary~\ref{cor:measure-spectra},
\[
 \operatorname*{ess\,sup}_{x\sim\mu}d_\theta^\mu(x)
 =P^+_\theta(\mu)\le\dim_\theta^{\mathrm P}F=D.
\]
Thus $d_\theta^\mu=D$ almost everywhere.
\end{proof}

\section{Hausdorff products and testing sets}
\label{sec:geometric-properties}

We bound Hausdorff product dimension using the packing spectrum of one
factor and the upper intermediate dimension of the other.
Optimizing these bounds can improve the classical endpoint
estimates. Compact testing
sets yield a converse characterization of the packing spectrum. We also
establish product bounds for the spectrum itself.

\subsection{Complementary windows and Hausdorff products}

For a nonempty bounded set $F$, $t\ge0$, and $0<\eta\le1$, define
\begin{equation}\label{eq:intermediate-covering-content}
 I_\eta^t(F,R)=
 \inf\left\{\sum_i(\diam V_i)^t:
 F\subset\bigcup_iV_i,\quad R^{1/\eta}\le\diam V_i\le R\right\},
 \qquad 0<R<1.
\end{equation}
Write
\begin{equation}\label{eq:upper-intermediate-content}
 I_\eta^t(F)=\limsup_{R\downarrow0}I_\eta^t(F,R).
\end{equation}
The upper intermediate dimension is
\[
 \overline{\dim}_\eta F=\inf\{t\ge0:I_\eta^t(F)=0\},
\]
with conventions $\overline{\dim}_0F=\dimH F$ and
$\overline{\dim}_1F=\udimB F$; see~\cite{FFK2020}.
For the empty set put $I_\eta^t(\varnothing,R)=I_\eta^t(\varnothing)=0$
and $\overline{\dim}_\eta\varnothing=0$.
For $E\subset\R^d$ and $0<\eta\le1$, write
\begin{equation}\label{eq:modified-intermediate-definition}
 \overline{\dim}^{\mathrm{mod}}_\eta E
 =\inf\left\{\sup_j\overline{\dim}_\eta E_j:
       E=\bigcup_{j=1}^\infty E_j,\quad E_j\text{ bounded}\right\}.
\end{equation}
We set $\overline{\dim}^{\mathrm{mod}}_{0} E=\dimH E$;
the characterization of packing dimension by modified upper box dimension gives $\overline{\dim}^{\mathrm{mod}}_{1} E=\dimP E$.
 The modified dimensions are countably stable; see \cite[Section~2.3]{BalkaKeleti2025}, following
Douzi and Selmi~\cite{DouziSelmi2022}.

For the modified intermediate dimensions, one takes critical exponents
before minimizing over countable covers. For the packing spectrum, one
minimizes the finite-scale packing costs before taking critical exponents.

\begin{theorem}
\label{thm:direct-ffk-hausdorff-product}
\label{cor:modified-hausdorff-product}
For bounded $E\subset\R^m$, $F\subset\R^n$, and $0\le\theta\le1$,
\begin{equation}
 \label{eq:direct-ffk-hausdorff-product-symmetric}
 \dimH(E\times F)
 \le\min\{\dim_\theta^{\mathrm P}E+\overline{\dim}_{1-\theta}F,
\dim_\theta^{\mathrm P}F+\overline{\dim}_{1-\theta}E\}.
\end{equation}
For arbitrary $E\subset\R^m$ and $F\subset\R^n$, the same inequality
holds for every $0\le\theta\le1$ with each upper intermediate
dimension replaced by its modified version.
\end{theorem}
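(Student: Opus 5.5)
The plan is to prove the first term of the minimum in \eqref{eq:direct-ffk-hausdorff-product-symmetric}, namely $\dimH(E\times F)\le\dim_\theta^{\mathrm P}E+\overline{\dim}_{1-\theta}F$. The second term follows by exchanging the roles of $E$ and $F$, since the coordinate swap is an isometry. The endpoint $\theta=1$ is handled separately. There $\dim_1^{\mathrm P}E=\dimP E$ by Theorem~\ref{thm:endpoint-identities}, and $\overline{\dim}_0F=\dimH F$ by convention, so the claim is the classical inequality $\dimH(E\times F)\le\dimP E+\dimH F$ of Tricot and Howroyd, which I would cite. For $0\le\theta<1$, write $p=(1-\theta)^{-1}$ and $\eta=1-\theta\in(0,1]$, so that the covering window $[R^{1/\eta},R]$ for $F$ is exactly the packing window $[R^p,R]$ for $E$. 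This matching of windows is the whole point of the argument.

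Fix $s>\dim_\theta^{\mathrm P}E$ and $t>\overline{\dim}_{1-\theta}F$.
\begin{enumerate}[label=\textup{(\arabic*)}]
\item Since $I^t_\eta(F)=0$, there is $R_0>0$ such that $I^t_\eta(F,R)<1$ for all $0<R<R_0$.
\item By Corollary~\ref{coro:critical-exponent}, for any $\delta<R_0$ and $\varepsilon>0$ there is a labelled cover $(E_j,R_j)$ of $E$ with $R_j\le\delta$ and $\sum_j\Phi^s_\theta(E_j,R_j)<\varepsilon$. We may take the $E_j$ nonempty.
\item For each $j$, choose a cover $(V_i)$ of $F$ with $\rho_i=\diam V_i\in[R_j^p,R_j]$ and $\sum_i\rho_i^t<1$.
\item For each pair $(j,i)$, take a maximal $2\rho_i$-separated set $\{y_k\}_{k=1}^{M}\subset\overline{E_j}$. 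The configuration $(y_k,\rho_i)$ is admissible for $\Phi^s_\theta(E_j,R_j)$, because $\rho_i$ lies in the window. Exactly as in \eqref{eq:maximal-rho-cardinality}, this gives $M(2\rho_i)^s\le\Phi^s_\theta(E_j,R_j)$.
\item Maximality gives $E_j\subset\bigcup_kB(y_k,2\rho_i)$. Hence $E_j\times V_i$ is covered by $M$ sets $B(y_k,2\rho_i)\times V_i$, each of diameter at most $5\rho_i$.
\item The $(s+t)$-cost of these $M$ sets is at most $5^{s+t}M\rho_i^{s+t}\le C\,\Phi^s_\theta(E_j,R_j)\,\rho_i^t$.
\item Summing over $i$ gives a cost of at most $C\,\Phi^s_\theta(E_j,R_j)$ for $E_j\times F$. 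Summing over $j$ gives $\mathcal H^{s+t}_{5\delta}(E\times F)\le C\varepsilon$.
\end{enumerate}
Letting $\varepsilon\downarrow0$ gives $\mathcal H^{s+t}(E\times F)=0$. Hence $\dimH(E\times F)\le s+t$, and we let $s$ and $t$ decrease to their infima. The case $\theta=0$ is included: the window is the single radius $R$, and $\eta=1$ gives covers of diameter exactly $R$, i.e.\ the box-counting content.

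For the modified statement with arbitrary $E$ and $F$, I would decompose $F=\bigcup_kF_k$ into bounded pieces with $\sup_k\overline{\dim}_{1-\theta}F_k$ close to $\overline{\dim}^{\mathrm{mod}}_{1-\theta}F$. I would also write $E=\bigcup_\ell E\cap B(0,\ell)$. Then:
\begin{itemize}
\item countable stability of $\dimH$ reduces the claim to the bounded products $(E\cap B(0,\ell))\times F_k$;
\item monotonicity and countable stability of $\dim_\theta^{\mathrm P}$ (Proposition~\ref{prop:basic-properties}) control the first summand;
\item at $\theta=1$ the modified dimension is $\dimH$ and the classical bound applies directly.
\end{itemize}

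The step I expect to need the most care is step (4). It must produce a cover of $E_j$ at each scale $\rho_i$ chosen for $F$ whose cardinality is controlled by the single quantity $\Phi^s_\theta(E_j,R_j)$. This works only because every $\rho_i$ lies in the same window $[R_j^p,R_j]$ that appears in the definition of $\Phi^s_\theta$. I would check that the separation convention $|y_k-y_l|\ge2\rho_i$ is exactly the admissibility condition $|x_k-x_l|\ge r_k+r_l$, and that degenerate cases cause no trouble: singleton carriers, and pieces $V_i$ of diameter below $R_j^p$, which one may enlarge to diameter $R_j^p$ since the definition of $I^t_\eta$ already forces diameters into the window.
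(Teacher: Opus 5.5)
Your proposal is correct and follows essentially the same route as the paper. The paper packages your steps (3)--(7) as Proposition~\ref{thm:complementary-window-content}: at each covering scale of $F$ it takes maximal $2r$-separated sets in $\overline{U_j}$, bounds their cardinality by $\Phi^s_\theta(U_j,R_j)$, and uses the diameter constant $\sqrt{17}$ where you use $5$. Like you, it treats $\theta=1$ by the classical Howroyd inequality and obtains the modified version by bounded decomposition of $F$ together with exhaustion of $E$.
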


Write $\mathcal H^q_\rho$ for Hausdorff content using sets of diameter
at most $\rho$.

\begin{proposition}
\label{thm:complementary-window-content}
Let $E\subset\R^m$, $F\subset\R^n$ be bounded, let $s,t>0$, and
let $0\leq\theta<1$. Put $\eta=1-\theta$. There is a constant
$C_{s,t}>0$ such that, for $0<\delta<1$,
\begin{equation}
 \label{eq:complementary-window-content}
 \mathcal H^{s+t}_{\sqrt{17}\delta}(E\times F)
 \leq C_{s,t}
 \inf_{(U_j,R_j)}\sum_j
 \Phi^s_\theta(U_j,R_j)I_\eta^t(F,R_j),
\end{equation}
where the infimum is over labelled covers of $E$ with
$\diam U_j\leq R_j\leq\delta$.
\end{proposition}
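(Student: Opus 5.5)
The plan is to build, from each labelled carrier of $E$, an explicit cover of its product with $F$, and then sum over carriers.

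Fix a labelled cover $(U_j,R_j)_j$ of $E$ with $\diam U_j\le R_j\le\delta$. We may assume each $U_j$ is nonempty, and that the right-hand side of \eqref{eq:complementary-window-content} is finite. Since $E\times F\subset\bigcup_j U_j\times F$ and Hausdorff content is countably subadditive, it suffices to prove the single-carrier estimate
\[
 \mathcal H^{s+t}_{\sqrt{17}\delta}(U\times F)\le C_{s,t}\,\Phi^s_\theta(U,R)\,I_\eta^t(F,R)
\]
for a nonempty carrier $(U,R)$ with $\diam U\le R\le\delta$. Summing over $j$ and taking the infimum over labelled covers then gives the proposition.

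The key observation is that $\eta=1-\theta$ makes the covering window of $I^t_\eta(F,R)$ coincide with the packing window: $R^{1/\eta}=R^{p}$ with $p=(1-\theta)^{-1}$. Given $\varepsilon>0$, I choose a cover $(V_i)$ of $F$ with
\[
 R^p\le\rho_i:=\diam V_i\le R,\qquad \sum_i\rho_i^t\le I^t_\eta(F,R)+\varepsilon.
\]
For each $i$, I then take a maximal $2\rho_i$-separated set $\{y_{i,k}\}_{k=1}^{M_i}\subset\overline U$, exactly as in the proof of Lemma~\ref{lem:local_comp}. This set is finite by compactness. Because $\rho_i$ lies in the admissible window, the configuration with centres $y_{i,k}$ and common radius $\rho_i$ is admissible for $\Phi^s_\theta(U,R)$. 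Hence
\[
 M_i(2\rho_i)^s\le\Phi^s_\theta(U,R).
\]
Maximality gives $U\subset\bigcup_k \overline B(y_{i,k},2\rho_i)$. Therefore
\[
 U\times V_i\subset\bigcup_{k=1}^{M_i}\overline B(y_{i,k},2\rho_i)\times V_i .
\]
Each set on the right has diameter at most $\sqrt{(4\rho_i)^2+\rho_i^2}=\sqrt{17}\,\rho_i\le\sqrt{17}\,\delta$.

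Summing the costs over $k$ and $i$ yields
\[
 \mathcal H^{s+t}_{\sqrt{17}\delta}(U\times F)
 \le\sum_i M_i(\sqrt{17}\rho_i)^{s+t}
 \le 17^{(s+t)/2}2^{-s}\,\Phi^s_\theta(U,R)\sum_i\rho_i^t .
\]
Since $\theta<1$, Lemma~\ref{lem:elementary-local-estimates} gives $\Phi^s_\theta(U,R)<\infty$. So I can let $\varepsilon\downarrow0$ and obtain the single-carrier estimate with $C_{s,t}=17^{(s+t)/2}2^{-s}$.

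I expect no serious obstacle. The only point needing care is the exact window matching $R^{1/\eta}=R^p$, since this is what makes the uniform-radius packing admissible. Two degenerate cases should be checked:
\begin{itemize}
\item a zero-diameter $V_i$ is excluded by the lower bound $\rho_i\ge R^p>0$;
\item an empty $F$, or $I^t_\eta(F,R)=\infty$, is trivial.
\end{itemize}
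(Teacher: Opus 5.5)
Your proposal is correct and is essentially the paper's argument. For each covering diameter $\rho$ of $F$ you take a maximal $2\rho$-separated set in $\overline U$; it is admissible for $\Phi^s_\theta(U,R)$ because the two windows coincide, and it yields product pieces of diameter at most $\sqrt{17}\rho$ with the same constant $2^{-s}17^{(s+t)/2}$. The only difference is organizational: you reduce to a single carrier via countable subadditivity of $\mathcal H^{s+t}_{\sqrt{17}\delta}$ and let $\varepsilon\downarrow0$ carrier by carrier, whereas the paper treats all carriers at once with slacks $\gamma_j$ satisfying $\sum_j a_j\gamma_j<\varepsilon$.
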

\begin{proof}
Fix $\varepsilon>0$ and a labelled cover $(U_j,R_j)_j$ with
$U_j\ne\varnothing$ for every $j$.
Put $a_j=\Phi^s_\theta(U_j,R_j)$. By
Lemma~\ref{lem:elementary-local-estimates}, $0<a_j<\infty$.
Choose $\gamma_j>0$ with
$\sum_ja_j\gamma_j<\varepsilon$ and covers $(V_{j,k})_k$ of $F$ such that,
writing
$r_{j,k}=\diam V_{j,k}$,
\[
 R_j^{1/\eta}\leq r_{j,k}\leq R_j,
 \qquad \sum_kr_{j,k}^t\leq
I_\eta^t(F,R_j)
+\gamma_j.
\]
Choose a maximal $2r_{j,k}$-separated set
$S_{j,k}\subset\overline{U_j}$ and put $N_{j,k}=\#S_{j,k}$. Then
$N_{j,k}(2r_{j,k})^s\leq a_j$ and
\[
 U_j\times V_{j,k}\subset
 \bigcup_{x\in S_{j,k}}\bigl(B(x,2r_{j,k})\times V_{j,k}\bigr).
\]
For every $x\in S_{j,k}$,
$\diam\bigl(B(x,2r_{j,k})\times V_{j,k}\bigr)
\leq\sqrt{17}\,r_{j,k}\leq\sqrt{17}\delta$. Therefore
\begin{align*}
 \mathcal H^{s+t}_{\sqrt{17}\delta}(E\times F)
 &\leq\sum_{j,k}N_{j,k}(\sqrt{17}\,r_{j,k})^{s+t}\\
 &\leq 2^{-s}17^{(s+t)/2}\sum_ja_j\sum_kr_{j,k}^t\\
 &\leq 2^{-s}17^{(s+t)/2}\sum_ja_j
       \bigl(I_\eta^t(F,R_j)+\gamma_j\bigr).
\end{align*}
Letting $\varepsilon\downarrow0$ and taking the infimum over labelled
covers proves the estimate with $C_{s,t}=2^{-s}17^{(s+t)/2}$.
\end{proof}

\begin{proof}[Proof of Theorem~\ref{thm:direct-ffk-hausdorff-product}]
At $\theta=1$, both assertions follow from the classical
Hausdorff--packing product inequality~\cite{Howroyd1996} and the
endpoint identities.
By symmetry, it suffices to prove the first bound in each minimum.
For $0\le\theta<1$, choose
$s>\dim_\theta^{\mathrm P}E$ and $t>\overline{\dim}_{1-\theta}F$.
The definition of the upper intermediate dimension gives $\delta_0>0$
such that
\begin{equation}
 \label{eq:uniform-complementary-content-bound}
I_{1-\theta}^t(F,R)
\le1\qquad(0<R<\delta_0).
\end{equation}
For $0<\delta<\delta_0$, Proposition~\ref{thm:complementary-window-content}
therefore yields
\[
 \mathcal H^{s+t}_{\sqrt{17}\delta}(E\times F)
 \le C_{s,t}\mathcal P^s_{\theta,\delta}(E)=0.
\]
First let $\delta\downarrow0$, and then let
$s\downarrow\dim_\theta^{\mathrm P}E$ and
$t\downarrow\overline{\dim}_{1-\theta}F$.

For the modified assertion, suppose first that $E$ is bounded.
For every bounded decomposition $F=\bigcup_jF_j$, countable
stability of Hausdorff dimension and
\eqref{eq:direct-ffk-hausdorff-product-symmetric} give
\[
\dimH(E\times F)=\sup_j\dimH(E\times F_j)
 \le\dim_\theta^{\mathrm P}E+
       \sup_j\overline{\dim}_{1-\theta}F_j.
\]
Take the infimum over decompositions. For unbounded $E$, apply the
result to $E\cap[-k,k]^m$ and use countable stability.
\end{proof}

\begin{corollary}
\label{cor:hausdorff-additivity}
Let $E\subset\R^m$ and $F\subset\R^n$ be nonempty and bounded.
If
\[
 \dim_\theta^{\mathrm P}E=\dimH E\quad(0\le\theta<1),
 \qquad \lim_{\eta\downarrow0}\overline{\dim}_{\eta}F=\dimH F,
\]
then
\begin{equation}
\label{eq:exact-criterion-final}
 \dimH(E\times F)=\dimH E+\dimH F.
\end{equation}
\end{corollary}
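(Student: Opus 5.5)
The plan is to squeeze $\dimH(E\times F)$ between the classical lower bound and the window bounds of Theorem~\ref{thm:direct-ffk-hausdorff-product}, and then let $\theta\uparrow1$.

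\emph{Lower bound.} The classical Hausdorff--packing inequality quoted in the introduction gives $\Delta(E,F)\ge0$, that is,
\[
 \dimH(E\times F)\ge\dimH E+\dimH F
\]
for arbitrary sets \cite{Howroyd1996}. The statement is phrased for compact sets, but Howroyd's inequality holds in general metric spaces, so it applies to our bounded $E$ and $F$ without change.

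\emph{Upper bound.} For each $0\le\theta<1$, the first term in the minimum of \eqref{eq:direct-ffk-hausdorff-product-symmetric} yields
\[
 \dimH(E\times F)\le\dim_\theta^{\mathrm P}E+\overline{\dim}_{1-\theta}F
 =\dimH E+\overline{\dim}_{1-\theta}F,
\]
where the equality uses the hypothesis $\dim_\theta^{\mathrm P}E=\dimH E$ for $\theta<1$. Now set $\eta=1-\theta\downarrow0$. The second hypothesis gives $\overline{\dim}_{\eta}F\to\dimH F$, and therefore
\[
 \dimH(E\times F)\le\dimH E+\dimH F.
\]

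Combining the two bounds gives \eqref{eq:exact-criterion-final}. Nothing here is hard. The only points to check are that the endpoint $\theta=1$ is never used, since the hypothesis concerns only $\theta<1$, and that the limit in $\eta$ exists by assumption. Monotonicity of $\overline{\dim}_\eta$ in $\eta$ is therefore not needed, though it holds anyway.
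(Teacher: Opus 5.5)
Your proposal is correct and essentially identical to the paper's proof. The paper also obtains the upper bound by letting $\theta=1-\eta\uparrow1$ in the first term of \eqref{eq:direct-ffk-hausdorff-product-symmetric}, and takes the reverse inequality $\dimH(E\times F)\ge\dimH E+\dimH F$ from Howroyd's classical lower bound.
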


\begin{proof}
Let $\theta=1-\eta\uparrow1$ in
\eqref{eq:direct-ffk-hausdorff-product-symmetric} to obtain the upper
bound. The reverse inequality is the classical Hausdorff product
lower bound~\cite{Howroyd1996}.
\end{proof}

Example~\ref{ex:strict-product-improvement} applies this criterion to
obtain a strict improvement over both classical endpoint bounds.

\subsection{Compact testing sets and the product characterization}
\label{sec:product-duality}

\begin{theorem}
\label{thm:product-testing-duality}
Let $n\ge1$. For every nonempty
analytic $E\subset\R^n$
and $0\le\theta\le1$,
\begin{equation}\label{eq:product-testing-duality}
 \dim_\theta^{\mathrm P}E
 =\sup_{\substack{\varnothing\ne F\subset[0,1]^n\\F\text{ compact}}}
   \bigl\{\dimH(E\times F)-\overline{\dim}_{1-\theta}F\bigr\}.
\end{equation}
The identity remains valid with
$\overline{\dim}^{\mathrm{mod}}_{1-\theta}F$ in place of
$\overline{\dim}_{1-\theta}F$.
\end{theorem}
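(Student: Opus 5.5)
The inequality ``$\ge$'' is immediate from Theorem~\ref{thm:direct-ffk-hausdorff-product}. For every nonempty compact $F\subset[0,1]^n$ it gives $\dimH(E\times F)\le\dim_\theta^{\mathrm P}E+\overline{\dim}_{1-\theta}F$. For analytic (possibly unbounded) $E$, I first apply the bounded case to $E\cap[-k,k]^n$ and then pass to the limit using countable stability of $\dimH$ (Proposition~\ref{prop:basic-properties}\ref{item:basic-countable}). The same theorem gives the modified version, and since $\overline{\dim}^{\mathrm{mod}}_{1-\theta}F\le\overline{\dim}_{1-\theta}F$, the modified supremum dominates the unmodified one. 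It therefore suffices to prove the lower bound
\[
\sup_F\{\dimH(E\times F)-\overline{\dim}_{1-\theta}F\}\ge\dim_\theta^{\mathrm P}E,
\]
and to construct $F$ with $\overline{\dim}_{1-\theta}F=\overline{\dim}^{\mathrm{mod}}_{1-\theta}F$, so that both identities follow together. The case $\theta=1$ is the Bishop--Peres characterization, with the compact test set supplied by the Keleti--Máthé additive-complement theorem; I quote it and concentrate on $0\le\theta<1$.

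Fix $0\le\theta<1$, $p=1/(1-\theta)$, and $0<u<v<s<\dim_\theta^{\mathrm P}E$. By Theorem~\ref{thm:main} there is a measure $\mu$ with compact support in $E$ and $d_\theta^\mu>v$ almost everywhere. Lemma~\ref{lem:compact-uniformization} then yields a compact $K\subset E$, a probability measure $\sigma$ on $K$, and a $\delta>0$ such that every $x\in K$ and every $R<\delta$ admit a radius $r\in[R^p,R]$ with $\sigma(B(x,r))\le r^u$. I take as test set a compact digit-type set $F\subset[0,1]^n$ carrying a measure $\nu$ that is complementary to this window. At every $y\in F$ and every scale $r$, it should satisfy $\nu(B(y,r))\lesssim r^{\,n-\tau}$ on $[R^p,R]$ with $\tau$ close to $n-s$, while $\overline{\dim}_{1-\theta}F\le n-u+\varepsilon$. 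A natural candidate is $F$ with free binary digits in blocks $[k,pk]$ for sparse $k$ and zero digits elsewhere, mirroring $E_{S^c}$. A more robust alternative is to take $F$ to be a full-dimensional set in $[0,1]^n$ whose covering numbers are deliberately inflated only in windows of the form $[R^{1/(1-\theta)},R]$; this is the complement of the packing window.

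The lower bound for $\dimH(E\times F)$ then comes from a product mass distribution argument applied to $\sigma\times\nu$. For $(x,y)$ in the support and small $\rho$, I set $R=\rho^{1/p}$, so that $\rho$ lies at the bottom of the window. The good radius $r\in[R^p,R]$ gives $\sigma(B(x,r))\le r^u$, and I need the factor $\nu$ to compensate, via $\nu(B(y,r))\le r^{\,n-\overline{\dim}_{1-\theta}F}$ at that same $r$, so that $\sigma\times\nu(B((x,y),r))\le r^{\,u+n-\overline{\dim}_{1-\theta}F-\varepsilon}$. This gives the lower local dimension bound only along the good radii, so I expect to need a Besicovitch/Frostman argument in the spirit of Proposition~\ref{prop:mass-distribution}: cover $K\times F$ with arbitrary sets of diameter $\rho$, charge each one to a good ball at its own scale, and bound the Hausdorff content from below.

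The main obstacle is precisely this mismatch. The good radius depends on the point $x$ and on the window, whereas $F$ is fixed in advance, and $\nu$ must be small at \emph{every} radius in $[R^p,R]$ while still having $\overline{\dim}_{1-\theta}F$ small at every outer scale. My plan is to make $\nu$ Ahlfors-like of exponent $n-\overline{\dim}_{1-\theta}F$ uniformly across each window, which is essentially the complementary-digit construction of Lemma~\ref{lem:complementary-digits}, and to verify $\overline{\dim}_{1-\theta}F=\overline{\dim}^{\mathrm{mod}}_{1-\theta}F$ through the homogeneity of $F$, since every relatively open piece has the same profile. If the uniform Ahlfors-type control fails, the fallback is to approximate $\theta$ from both sides and use the local Lipschitz continuity of Theorem~\ref{thm:quantitative-continuity} to absorb the loss.
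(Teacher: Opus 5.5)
You handle the reductions correctly. These are the upper bound from Theorem~\ref{thm:direct-ffk-hausdorff-product} with bounded exhaustion, the sandwich argument for the modified version (you do not actually need $\overline{\dim}_{1-\theta}F=\overline{\dim}^{\mathrm{mod}}_{1-\theta}F$), the endpoint $\theta=1$, and the passage to a compact $K\subset E$ with a measure $\sigma$ having a uniform good radius in every window.

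The gap is the core of the lower bound for $0<\theta<1$. No testing set is actually produced, and the mechanism you describe cannot give a Hausdorff lower bound. A bound $(\sigma\times\nu)(B((x,y),r))\le r^t$ at one radius $r\in[R^p,R]$ per window is exactly the hypothesis of Proposition~\ref{prop:mass-distribution}. It bounds $\dim_\theta^{\mathrm P}(E\times F)$ from below, not $\dimH(E\times F)$. For Hausdorff content you must control every set of diameter $\rho$. Charging such a set to a good ball of radius $r\in[\rho,\rho^{1/p}]$ only gives mass at most $r^t$, which can be as large as $\rho^{t/p}$. So you would need $\sigma(B(x,\rho))\,\nu(B(y,\rho))\lesssim\rho^t$ at every small $\rho$. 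That means $\nu$ must be light exactly at the radii where $\sigma(B(x,\cdot))$ is heavy, and these radii depend on $x$. No single $\nu$ can follow them, so a product measure $\sigma\times\nu$ is the wrong object.

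Your remedy makes this worse. The windows $[R^p,R]$ exhaust all small radii, so a $\nu$ that is Ahlfors-like of one exponent $\beta$ across every window makes $F$ $\beta$-Ahlfors regular. Then $\overline{\dim}_{1-\theta}F=\dimH F$ and $\dimH(E\times F)=\dimH E+\dimH F$, so the test value is only $\dimH E$. Your two stated requirements are already incompatible once $s>n/2$: an all-scale bound $\nu(B(y,r))\lesssim r^{n-\tau}$ with $n-\tau\approx s$ forces $\overline{\dim}_{1-\theta}F\ge\dimH F\gtrsim s$, while you want $\overline{\dim}_{1-\theta}F\lesssim n-s$. A fixed sparse-block digit set is likewise tailored to homogeneous $E$, as in Lemma~\ref{lem:complementary-digits}.

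The missing idea is an adaptive, non-product coupling of the tester with $\sigma$. In Lemma~\ref{lem:dyadic-tester-construction} the paper builds a compact $\Gamma\subset K\times[0,1]^n$ along sparse dyadic depths $k_j=j^2$. Each retained child $Q'$ of $Q$ carries a fibre word set $W_{j,a}$ of size $2^{M_j-a}$, where $a=\lfloor-\log_2(\sigma(Q')/\sigma(Q))\rfloor$. Crucially, $W_{j,a}$ depends only on the label $a$, not on $Q'$. This construction has two consequences.
\begin{itemize}
\item The conditionally uniform measure on $\Gamma$ gives every level-$j$ rectangle mass at most $2^{-nk_j}$. So $\dimH\Gamma\ge n$ by the ordinary mass distribution principle at all scales.
\item Because the words are shared among cubes, the part of $F=\pi_2\Gamma$ lying over points with $\sigma(B(x,c_n2^{-k_j}))\le2^{-a}$ is covered by $2^{nk_j-a+O(j\log j)}$ cubes of side $2^{-b_j}$.
\end{itemize}
Lemma~\ref{lem:uniform-window-tester} then covers each $y\in F$ at the level dictated by the good radius of its base point. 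This is legitimate because an intermediate-dimension cover may use different diameters in $[R^p,R]$ on different parts of $F$. It yields $\overline{\dim}_{1-\theta}F\le n-s$ together with $\dimH(E\times F)\ge n$.

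The good radii are taken at a parameter $\alpha<\theta$, chosen by continuity of the spectrum. The slack between $(1-\alpha)^{-1}$ and $p$ then absorbs the rounding between the good radius, $k_j$ and $b_j$. Your fallback of perturbing $\theta$ plays this role, but it does not repair the main step.
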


\begin{lemma}
\label{lem:dyadic-tester-construction}
Let $K\subset(0,1)^n$ be nonempty and compact, and let $\sigma$ be
a Borel probability measure carried by $K$. Put
$k_j=j^2$, $b_j=k_j+2j$, and $c_n=2\sqrt n$.
There exist a nonempty compact set $\Gamma\subset K\times[0,1]^n$ with
$\dimH\Gamma\ge n$ and a constant $C_n>0$
such that, for every $j\ge1$ and $a\ge0$, the set
\begin{equation}\label{eq:duality-mass-projection}
 F_j(a)=\pi_2\{(x,y)\in\Gamma:
                  \sigma(B(x,c_n2^{-k_j}))\le2^{-a}\}
\end{equation}
can be covered by at most
$2^{nk_j-a+C_nj\log_2(j+1)}$ closed dyadic cubes of side $2^{-b_j}$.
Here $\pi_2$ denotes projection onto the second factor.
\end{lemma}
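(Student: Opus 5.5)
We would build $\Gamma$ as a union of product pieces $Q\times P$, where $Q$ is a dyadic cube in $x$ and $P$ is a dyadic cube in $y$, organised level by level along $k_j=j^2$. The guiding principle is a \emph{mass-to-volume exchange}: a dyadic cube $Q$ of side $2^{-k_j}$ in the $x$-factor with $\sigma$-mass about $2^{-m}$ may only be paired with $y$-cubes drawn from a \emph{pool} $\mathcal P_j(m)$ of about $2^{nk_j-m}$ dyadic cubes of side $2^{-k_j}$. Fibers over small-mass cubes are thereby squeezed into small regions of $[0,1]^n$. Because $\sum_m 2^{nk_j-m}\le 2^{nk_j+1}$, the pools for different mass classes can be chosen pairwise disjoint inside the $2^{nk_j}$ cubes of $[0,1]^n$. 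Cubes of $\sigma$-mass $0$ are discarded, since they carry no mass.

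\emph{Step 1: the covering bound for $F_j(a)$.} Suppose $(x,y)\in\Gamma$ and $\sigma(B(x,c_n2^{-k_j}))\le 2^{-a}$. The ball $B(x,c_n2^{-k_j})$ contains the level-$k_j$ dyadic cube $Q\ni x$, so $\sigma(Q)\le 2^{-a}$. Hence the mass class $m$ of $Q$ satisfies $m\ge a-1$, and $y$ lies in $\bigcup_{m\ge a-1}\mathcal P_j(m)$. This union has at most $2^{nk_j-a+2}$ cubes of side $2^{-k_j}$. Refining each to side $2^{-b_j}$ multiplies the count by $2^{n(b_j-k_j)}=2^{2nj}$. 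Any additional bookkeeping loss, such as polynomially many mass classes or boundary effects, will be absorbed into the factor $2^{C_nj\log_2(j+1)}$. This step is a direct count once the pools exist.

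\emph{Step 2: consistency across scales.} This is the step we expect to be the main obstacle. The pools at level $k_{j+1}$ must be compatible with those at level $k_j$. A level-$k_{j+1}$ subcube $Q'\subset Q$ has its own mass class $m'\ge m$, and its pool must lie inside the $y$-cubes already assigned to $Q$, while still having about $2^{nk_{j+1}-m'}$ members. We plan to define the assignment recursively, as a tree-indexed allocation. Inside each level-$k_j$ pool cube, $Q$ owns about $2^{n(k_{j+1}-k_j)}$ subcubes of side $2^{-k_{j+1}}$. These are distributed among the children $Q'$ of $Q$ roughly in proportion to $\sigma(Q')/\sigma(Q)$, by the same disjoint-pool allocation applied to the conditional masses. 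Dyadic rounding of the proportions is where losses occur. Each rounding costs at most a factor $2$ per mass class, and the classes number at most $O(nk_{j+1})=O(j^2)$. The extra $2j$ levels in each block $(k_j,b_j]$ give room to absorb these roundings, so that only a loss of $2^{O(j\log j)}$ accumulates at stage $j$; this is exactly the slack in the exponent. Then $\Gamma$ is the intersection over $j$ of the closed unions of the chosen $\overline Q\times\overline P$. It is nonempty and compact by nestedness, and it lies in $K\times[0,1]^n$ after intersecting with $K\times[0,1]^n$; the cubes used all meet $\supp\sigma\subset K$.

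\emph{Step 3: $\dimH\Gamma\ge n$.} We define a probability measure $\lambda$ on $\Gamma$ by splitting the mass $\sigma(Q)$ of each retained cube $Q$ uniformly among its allotted $y$-cubes. A product piece $Q\times P$ at level $k_j$ then receives mass about $\sigma(Q)\cdot 2^{-(nk_j-m)}\approx 2^{-nk_j}$, up to the accumulated factor $2^{O(j\log j)}$. For an arbitrary ball of radius $r$ with $2^{-k_{j+1}}\le r<2^{-k_j}$, we would bound its $\lambda$-mass by summing over the level-$k_{j+1}$ pieces it meets. Here $k_{j+1}-k_j=2j+1$ and the loss is only $2^{O(j\log j)}=2^{o(k_j)}$. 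This should give $\lambda(B)\le r^{\,n-o(1)}$, and the mass distribution principle then yields $\dimH\Gamma\ge n$. The delicate point is again the uniformity of the rounding losses from Step 2. If a direct allocation fails to be consistent, we would first try making each pool a dyadic-interval block in a fixed linear (Morton/Z-order) enumeration of subcubes. Consecutive allocations in such an enumeration nest automatically, and the rounding errors become boundary effects, with at most two partial cubes per class at each level.
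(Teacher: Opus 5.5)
\textbf{Verdict: genuine gap.} Your guiding idea is the paper's: pair each $x$-cube with a number of $y$-blocks proportional to its mass, let $x$-cubes of the same class share blocks, allocate recursively, and get $\dimH\Gamma\ge n$ by mass distribution. But two things go wrong.

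\emph{The counting in Step~1.} It rests on a pool structure that cannot be nested, and it misses where the factor $2^{C_nj\log_2(j+1)}$ actually comes from. Pools $\mathcal P_j(m)$ indexed only by the total class $m$ and pairwise disjoint fail as follows. Take $Q_1,Q_2$ at depth $k_j$ in different classes, with children $Q_1'\subset Q_1$, $Q_2'\subset Q_2$ in the same class at depth $k_{j+1}$. Their common pool would have to lie inside refinements of two disjoint sets. In your Step~2 allocation, as in the paper, the pool of $Q_j$ is therefore determined by the whole sequence of conditional labels $a_i=\lfloor-\log_2 p(Q_i\mid Q_{i-1})\rfloor$ along its ancestry, where $p(Q'\mid Q)=\sigma(Q')/\sigma(Q)$. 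In the paper, with $L_j=2j-1$ and $M_j=nL_j+1$, this pool is the set of concatenations $w_1\cdots w_j$ with $w_i\in W_{i,a_i}$ and $\#W_{i,a}=2^{M_i-a}$. It has $2^{nk_j+j-\sum_i a_i}$ elements. Since $\sigma(Q_j)\le2^{-a}$ only gives $\sum_i a_i\ge a-j$, you must cover $F_j(a)$ by a union over all admissible label sequences. The count is then $\prod_{i\le j}(M_i+1)\,2^{nk_j+2j-a}$. The number of label sequences, $\prod_{i\le j}(M_i+1)=2^{O(j\log j)}$, is exactly the slack in the statement. It is not a rounding loss. Your bound $2^{nk_j-a+2}$ fails in general, because distinct label sequences with the same sum give different pools (disjoint ones, in your version), and there can be up to $\prod_{i\le j}(M_i+1)$ of them. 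The Morton-order fallback does not remove this dependence on ancestry.

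\emph{Pruning.} The labels must take finitely many values at each stage, and this requires discarding more than mass-zero cubes. Once $a>nk_j+C_nj\log_2(j+1)$, the claimed bound is below $1$, so $F_j(a)$ must be empty. That fails for general $\sigma$ if every positive-mass cube is kept; a tiny atom far from the rest of $\sigma$ already breaks it. The paper keeps only children with $p(Q'\mid Q)\ge2^{-M_j}$. Since there are at most $2^{nL_j}$ children, this retains conditional mass at least $1/2$ and confines the labels to $\{0,\ldots,M_j\}$.

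After pruning, your Step~3 measure, which splits $\sigma(Q)$ among the allotted blocks, is no longer consistent, and its total mass can tend to $0$. You need renormalized branching instead. The paper gives each admissible next pair $(Q',w)$ conditional weight $1/N_j(Q)$. Its word sets have size $2^{M_j-a}\ge 2\cdot2^{nL_j}p(Q'\mid Q)$, and words of length $L_j+2$ leave room for this. Together these force $N_j(Q)\ge2^{nL_j}$ despite the discarded mass, which gives cylinder mass at most $2^{-nk_j}$. Disjointness of pools across classes is never needed. What matters is that a pool depends on the labels, not on the individual cube.
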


\begin{proof}
{Construction.}
Put $k_0=0$, $L_j=k_j-k_{j-1}=2j-1$, and $M_j=nL_j+1$ for $j\ge1$.
Use the half-open dyadic cubes in $[0,1)^n$ at depths $k_j$,
starting from $Q_0=[0,1)^n$. At each retained cube $Q$ of positive
$\sigma$-mass at depth $k_{j-1}$, its positive-mass descendants
$Q'$ at depth $k_j$ have conditional probabilities
\[
 p(Q'\mid Q)=\frac{\sigma(Q')}{\sigma(Q)},
 \qquad \sum_{Q'}p(Q'\mid Q)=1.
\]
Retain those with $p(Q'\mid Q)\ge2^{-M_j}$. There are at most
$2^{nL_j}$ descendants, so the retained conditional probabilities
have sum at least $1/2$. On each retained edge set
\[
 a(Q'\mid Q)=\lfloor-\log_2p(Q'\mid Q)\rfloor
       \in\{0,\ldots,M_j\}.
\]

For each $j$, choose sets $W_{j,a}$ of words
of length $L_j+2$ over $\{0,1\}^n$ such that
\[
 \#W_{j,a}=2^{M_j-a}\quad(0\le a\le M_j).
\]
For each retained cube $Q$ at depth $k_{j-1}$, let $N_j(Q)$ be the
number of pairs $(Q',w)$ with $Q'$ a retained descendant of $Q$ at
depth $k_j$ and $w\in W_{j,a(Q'\mid Q)}$. Then
\[
 N_j(Q)=\sum_{Q'\text{ retained}}2^{M_j-a(Q'\mid Q)}
 \ge2^{M_j}\sum_{Q'\text{ retained}}p(Q'\mid Q)
 \ge2^{nL_j}.
\]
Let $\Omega$ consist of all sequences $\omega=((Q_j,w_j))_{j\ge1}$
such that $(Q_j)_{j\ge0}$ is a retained path and
$w_j\in W_{j,a(Q_j\mid Q_{j-1})}$ for every $j\ge1$.
This space is nonempty and compact, as a closed subset of a product
of finite alphabets. The concatenation $w_1\cdots w_j$ determines a
closed dyadic cube $P_j(\omega)$ of side $2^{-b_j}$, since
$\sum_{i=1}^j(L_i+2)=b_j$.
Define $\Psi:\Omega\to\R^{2n}$ by
\[
 \{\Psi(\omega)\}
 =\bigcap_{j=1}^{\infty}\bigl(\overline{Q_j}\times P_j(\omega)\bigr).
\]
The rectangles are nested and their diameters tend uniformly to zero,
so $\Psi$ is continuous. Each $Q_j$ meets $K$, so
$\Gamma=\Psi(\Omega)$ is a nonempty compact subset of $K\times[0,1]^n$.

{Hausdorff dimension estimate.}
For $\omega\in\Omega$, let $C_j(\omega)$ be the set of sequences
in $\Omega$ agreeing with $\omega$ in the first $j$ pairs $(Q_i,w_i)$.
For each path prefix ending at $Q$ at depth $k_{j-1}$, assign
conditional probability $1/N_j(Q)$ to each admissible next pair
$(Q',w)$.
This defines a probability $\widetilde\lambda$ on $\Omega$, with
\begin{equation}\label{eq:duality-joint-cylinder}
 \widetilde\lambda(C_j(\omega))
 =\prod_{i=1}^j\frac{1}{N_i(Q_{i-1})}
 \le\prod_{i=1}^j2^{-nL_i}=2^{-nk_j}.
\end{equation}
Put $\lambda=\Psi_*\widetilde\lambda$, a Borel probability measure
supported on $\Gamma$.
Since $b_j\ge k_j$, if $2^{-b_{j+1}}<r\le2^{-b_j}$,
the ball $B(z,r)\subset\R^{2n}$ meets at most $C(n)$ of the
level-$j$ rectangles $\overline{Q_j}\times P_j$.
By \eqref{eq:duality-joint-cylinder}, for each $0<t<n$,
\[
 \lambda(B(z,r))
 \le C(n)2^{-nk_j}\le C_t r^t
 \qquad(z\in\R^{2n}),
\]
because $nk_j-tb_{j+1}\to\infty$. The mass distribution principle
gives $\dimH\Gamma\ge n$.

{Projection covering estimate.}
For $\omega\in\Omega$, write $a_i=a(Q_i\mid Q_{i-1})$ and
$A_j(\omega)=\sum_{i=1}^j a_i$.
Fix $j\ge1$ and $a\ge0$, and let $(x,y)\in\Gamma$ satisfy
$\sigma(B(x,c_n2^{-k_j}))\le2^{-a}$. Choose $\omega\in\Omega$
with $\Psi(\omega)=(x,y)$. Since $x\in\overline{Q_j}$ and
$c_n>\sqrt n$, we have $Q_j\subset B(x,c_n2^{-k_j})$. Hence
\begin{equation}\label{eq:duality-label-mass}
 \begin{aligned}
 A_j(\omega)
 &\ge\sum_{i=1}^j-\log_2p(Q_i\mid Q_{i-1})-j\\
 &=-\log_2\sigma(Q_j)-j\\
 &\ge a-j.
 \end{aligned}
\end{equation}
Thus the family
\[
 \mathcal U_j(a)=\{P_j(\omega):\omega\in\Omega,\ A_j(\omega)\ge a-j\}
\]
covers $F_j(a)$. For a fixed label sequence
$(a_1,\ldots,a_j)$, the number of distinct words $w_1\cdots w_j$
is at most
\[
 \prod_{i=1}^j\#W_{i,a_i}=2^{nk_j+j-\sum_{i=1}^j a_i}.
\]
Consequently,
\begin{equation}\label{eq:duality-prefix-count}
 \#\mathcal U_j(a)
 \le\left(\prod_{i=1}^j(M_i+1)\right)2^{nk_j+2j-a}
 \le2^{nk_j-a+C_nj\log_2(j+1)},
\end{equation}
where $C_n$ depends only on $n$.
\end{proof}

\begin{lemma}
\label{lem:uniform-window-tester}
Let $K\subset(0,1)^n$ be nonempty and compact, and let $\sigma$ be
a Borel probability measure carried by $K$. Let $\Gamma$ be as in
Lemma~\ref{lem:dyadic-tester-construction} for
$(K,\sigma)$, and put $F=\pi_2\Gamma$.
Suppose that $0<s<u<n$ and
$1\le p_0<p<\infty$, and that there exists $R_0>0$ such that
\begin{equation}\label{eq:duality-uniform-window}
 \forall x\in K\quad\forall\,0<R<R_0\quad
 \exists r\in[R^{p_0},R]:\qquad \sigma(B(x,r))\le r^u.
\end{equation}
Then
\[
 \overline{\dim}_{1/p}F\le n-s,
 \qquad \dimH(K\times F)\ge n.
\]
In particular, the choice of $F$ is independent of $s,u,p_0,p$.
\end{lemma}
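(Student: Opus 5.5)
The second inequality is immediate. By Lemma~\ref{lem:dyadic-tester-construction}, $\Gamma\subset K\times[0,1]^n$ and $\Gamma\subset K\times\pi_2\Gamma=K\times F$. Monotonicity of Hausdorff dimension then gives $\dimH(K\times F)\ge\dimH\Gamma\ge n$. All the work is in the first inequality. Fix $t>n-s$; it suffices to show $I^t_{1/p}(F,R)\to0$ as $R\downarrow0$. Note that $F$ itself does not depend on $s,u,p_0,p$, since it comes from Lemma~\ref{lem:dyadic-tester-construction} applied only to $(K,\sigma)$.

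\textbf{Step 1: convert the good radius into a level-$j$ mass bound.} Fix a small $R$ and a point $y\in F$, and pick $x\in K$ with $(x,y)\in\Gamma$. Hypothesis \eqref{eq:duality-uniform-window} gives $r\in[R^{p_0},R]$ with $\sigma(B(x,r))\le r^u$. Let $j$ be the least index with $c_n2^{-k_j}\le r$. Then $B(x,c_n2^{-k_j})\subset B(x,r)$. Minimality of $j$ gives $r<c_n2^{-k_{j-1}}$, so
\[
\sigma\bigl(B(x,c_n2^{-k_j})\bigr)\le r^u\le c_n^u2^{-uk_{j-1}}=2^{-uk_j+O(j)},
\]
using $k_j-k_{j-1}=2j-1$. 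Set $a_j=\max\{0,\,uk_j-C'j\}$ for a suitable constant $C'$. Then $y\in F_j(a_j)$ for this $j$.

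\textbf{Step 2: identify the range of levels.} The index $j$ produced in Step~1 lies in $J(R)=\{j:\ c_n2^{-k_j}\le R,\ c_n2^{-k_{j-1}}>R^{p_0}\}$. This yields $k_j\ge\log_2(1/R)-O(1)$ and $k_j\le p_0\log_2(1/R)+O(j)$. Since $k_j=j^2$, every $j\in J(R)$ satisfies $j\asymp\sqrt{\log(1/R)}$; in particular $\min J(R)\to\infty$ as $R\downarrow0$. Consequently $F\subset\bigcup_{j\in J(R)}F_j(a_j)$.

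\textbf{Step 3: cover and sum.} For $j\in J(R)$, Lemma~\ref{lem:dyadic-tester-construction} covers $F_j(a_j)$ by at most $2^{(n-u)k_j+O(j\log j)}$ closed dyadic cubes of side $2^{-b_j}=2^{-k_j-2j}$, hence of diameter $\sqrt n\,2^{-b_j}$.

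I must check that these diameters lie in the window $[R^{p},R]$. The upper bound holds because $\sqrt n\,2^{-b_j}\le c_n2^{-k_j}\le R$. For the lower bound,
\[
2^{-b_j}\ge 2^{-p_0\log_2(1/R)-O(\sqrt{\log(1/R)})}\ge R^{p}
\]
for small $R$, because $p>p_0$ absorbs the $2^{-O(j)}$ loss. This is exactly where the strict inequality $p_0<p$ is used. If needed, cubes can be merged or enlarged by a bounded factor to land inside the window; this costs only a constant.

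The $t$-cost of the level-$j$ cover is then at most
\[
2^{\bigl((n-u)-t\bigr)k_j+O(j\log j)}.
\]
Since $u>s$ and $t>n-s$, we have $t-(n-u)>u-s>0$. With $k_j=j^2$, the cost is at most $2^{-cj^2}$ for large $j$. Summing over $j\in J(R)$, whose least element tends to infinity, gives $I^t_{1/p}(F,R)\to0$. Hence $\overline{\dim}_{1/p}F\le t$, and letting $t\downarrow n-s$ proves the claim.

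The main obstacle is the bookkeeping between the point-dependent good radius $r$ and the sparse levels $k_j=j^2$. The level shifts $k_j-k_{j-1}$ and $b_j-k_j$ are $O(j)$. One must confirm that these losses are absorbed at two places: in the mass exponent, where they are dominated by the $j^2$ gain from $u>s$, and in the window lower bound, where they are absorbed by $p>p_0$.
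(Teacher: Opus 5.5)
Your proof is correct and follows essentially the same route as the paper. You take a good radius from \eqref{eq:duality-uniform-window}, pass to the first sparse level $k_j$ below it so that $y\in F_j(\cdot)$, cover by the cubes of Lemma~\ref{lem:dyadic-tester-construction}, verify the window $[R^p,R]$ using $p_0<p$ and $j=O(\sqrt{\log(1/R)})$, and sum a vanishing tail. The differences are only bookkeeping. The paper applies the hypothesis at the shifted scale $2^{-(1+\delta)T}$ and absorbs the $O(j)$ losses through an auxiliary exponent $s<v<u$. You apply it at $R$ itself, absorb the losses into $a_j=uk_j-C'j$, and use a cost exponent $t>n-s$.
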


\begin{proof}
Retain the notation of Lemma~\ref{lem:dyadic-tester-construction}.
Since $\Gamma\subset K\times F$, we have
$\dimH(K\times F)\ge\dimH\Gamma\ge n$.
Choose $s<v<u$ and $\delta>0$ with $p_0(1+\delta)<p$.

Let $R=2^{-T}$, where $T\in\R$ is sufficiently large.
For every $(x,y)\in\Gamma$, apply
\eqref{eq:duality-uniform-window} at outer scale $2^{-(1+\delta)T}$.
There is $r=2^{-q}$ such that
\[
 (1+\delta)T\le q\le p_0(1+\delta)T,
 \qquad \sigma(B(x,2^{-q}))\le2^{-uq}.
\]
Let $j$ be the least integer with $k_j\ge q+\log_2c_n$.
Then $c_n2^{-k_j}\le2^{-q}$ and
$0\le k_j-q\le\log_2c_n+2j-1$. Since $j=O(\sqrt T)$,
\[
 \sigma(B(x,c_n2^{-k_j}))
 \le2^{-uq}\le2^{-vk_j}
\]
for all sufficiently large $T$, uniformly in $(x,y)$.
Thus $y\in F_j(vk_j)$. The covering cubes supplied by
Lemma~\ref{lem:dyadic-tester-construction} have diameter
$\Delta_j=\sqrt n\,2^{-b_j}$, and
\[
 (1+\delta)T-O_n(1)
 \le b_j-\log_2\sqrt n
 \le p_0(1+\delta)T+O(\sqrt T).
\]
It follows that $R^p\le\Delta_j\le R$ for all sufficiently large
$T$.

The finite index set
$\mathcal J_R=\{j\ge1:R^p\le\Delta_j\le R\}$ therefore satisfies
\[
 F\subset\bigcup_{j\in\mathcal J_R}F_j(vk_j).
\]
Cover each $F_j(vk_j)$ by the cubes from
Lemma~\ref{lem:dyadic-tester-construction}. Their total
$(n-s)$-cost is at most
\begin{align*}
 \sum_{j\in\mathcal J_R}
       2^{(n-v)k_j+C_nj\log_2(j+1)}\Delta_j^{n-s}
 &\le C\sum_{j:\ k_j\ge T/2}
       2^{-(v-s)k_j+C_nj\log_2(j+1)}\\
 &\le C\sum_{j:\ k_j\ge T/2}2^{-(v-s)k_j/2}
 \longrightarrow0.
\end{align*}
Here $\Delta_j\le2^{-T}$ implies $k_j\ge T/2$ for large $T$,
since $b_j/k_j\to1$. These covers are admissible at every
sufficiently small outer scale, so $\overline{\dim}_{1/p}F\le n-s$.
\end{proof}

\begin{proof}[Proof of Theorem~\ref{thm:product-testing-duality}]
Theorem~\ref{thm:direct-ffk-hausdorff-product} bounds the
supremum in \eqref{eq:product-testing-duality} from above by
$\dim_\theta^{\mathrm P}E$.
For unbounded $E$, the same upper bound follows by
bounded exhaustion and countable stability of Hausdorff dimension.
If $\theta=0$ or $\dim_\theta^{\mathrm P}E=0$, the singleton
$F=\{0\}$ gives equality, since
$\dimH E=\dim_\theta^{\mathrm P}E$ in either case. For the lower bound when $0<\theta\le1$ and
$\dim_\theta^{\mathrm P}E>0$, suppose first that $E$ is compact.

Suppose $0<\theta<1$ and $0<s<\dim_\theta^{\mathrm P}E$.
By bi-Lipschitz invariance, we may translate and scale $E$ so that
$E\subset(0,1)^n$.
By continuity of the packing spectrum, choose $0\le\alpha<\theta$
and exponents
\[
 s<u<v<\dim_\alpha^{\mathrm P}E\le n,
 \qquad p_0=(1-\alpha)^{-1}<p=(1-\theta)^{-1}.
\]
By Theorem~\ref{thm:main}, there exists $\mu\in\mathcal M_1(E)$ with
$d_\alpha^\mu>v$ almost everywhere.
By Lemma~\ref{lem:compact-uniformization}, applied at parameter
$\alpha$ with exponents $u<v$, there exist a compact set
$K\subset\supp\mu\subset E$ with $\mu(K)>0$ and a number $R_0>0$
such that the probability measure
$\sigma=\mu\restriction K/\mu(K)$ satisfies
\[
 \forall x\in K\quad\forall\,0<R<R_0\quad
 \exists r\in[R^{p_0},R]:\qquad \sigma(B(x,r))\le r^u.
\]
Choose $\Gamma$ for $(K,\sigma)$ as in
Lemma~\ref{lem:dyadic-tester-construction} and put $F=\pi_2\Gamma$.
Lemma~\ref{lem:uniform-window-tester} gives
$\overline{\dim}_{1-\theta}F\le n-s$ and
$\dimH(E\times F)\ge\dimH(K\times F)\ge n$.

For $\theta=1$ and $0<s<\dimP E$, Keleti--M\'ath\'e~\cite[Theorem~1.9]{KeletiMathe2023}
give a compact $B\subset\R^n$ with $\mathcal H^{n-s}(B)=0$ and
$\operatorname{int}(E+B)\ne\varnothing$. Lipschitz addition yields
$\dimH(E\times B)\ge\dimH(E+B)=n$, while $\dimH B\le n-s$.
A similarity sends $B$ into $[0,1]^n$ and preserves both dimensions.

For $0<\theta\le1$ and $\dim_\theta^{\mathrm P}E>0$,
the preceding constructions give
\begin{align*}
 \sup_{\substack{\varnothing\ne F\subset[0,1]^n\\F\text{ compact}}}
   \bigl\{\dimH(E\times F)-\overline{\dim}_{1-\theta}F\bigr\}
 \ge\sup_{0<s<\dim_\theta^{\mathrm P}E}s
 =\dim_\theta^{\mathrm P}E.
\end{align*}
Thus \eqref{eq:product-testing-duality} holds for compact $E$.

For a general nonempty analytic $E$ and
$0<s<\dim_\theta^{\mathrm P}E$, compact determination
(Proposition~\ref{prop:basic-properties}\ref{item:basic-analytic})
gives a nonempty compact $K\subset E$ with
$\dim_\theta^{\mathrm P}K>s$.
Applying \eqref{eq:product-testing-duality} to $K$ and using
$\dimH(E\times F)\ge\dimH(K\times F)$ for every nonempty compact set
$F\subset[0,1]^n$ shows that the supremum for $E$ is at least
$\dim_\theta^{\mathrm P}K>s$. Letting
$s\uparrow\dim_\theta^{\mathrm P}E$ proves the lower bound.

Finally, Theorem~\ref{thm:direct-ffk-hausdorff-product}
bounds the supremum involving the modified intermediate dimension
from above by $\dim_\theta^{\mathrm P}E$. Since
$\overline{\dim}^{\mathrm{mod}}_{1-\theta} F\le\overline{\dim}_{1-\theta}F$ for bounded $F$,
\eqref{eq:product-testing-duality} bounds it below by the same
quantity.
\end{proof}

Zindulka~\cite{Zindulka2013}
restricts packing radii to a fixed global set of scales and characterizes
directed lower packing dimension by an infimum of
$\dimP(E\times F)-\dimP F$, attained with a compact factor $F$.
Here the packing window depends on the carrier label, and the product
characterization is a supremum involving Hausdorff dimension and the upper
intermediate dimension of the testing set.

\subsection{Product bounds for the packing spectrum}

\begin{theorem}
\label{cor:mixed-product-packing-open}
\label{thm:spectrum-product-lower}
For $E\subset\R^m$, $F\subset\R^n$, and $0\le\theta\le1$,
\begin{equation}
\label{eq:product-packing-upper-symmetric}
 \dim_\theta^{\mathrm P}(E\times F)
 \le\min\{\dim_\theta^{\mathrm P}E+\dimP F,
             \dim_\theta^{\mathrm P}F+\dimP E\}.
\end{equation}
If, in addition, $E$ and $F$ are nonempty and analytic, then
\begin{equation}
\label{eq:spectrum-product-lower}
 \max\{\dim_\theta^{\mathrm P}E+\dimH F,
          \dimH E+\dim_\theta^{\mathrm P}F\}
 \le\dim_\theta^{\mathrm P}(E\times F).
\end{equation}
\end{theorem}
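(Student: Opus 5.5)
For the upper bound \eqref{eq:product-packing-upper-symmetric}, the plan is a direct cover argument, proving only the first term of the minimum since the second follows by symmetry. By countable stability (Proposition~\ref{prop:basic-properties}\ref{item:basic-countable}) and the identity $\dimP F=\inf\{\sup_j\udimB F_j: F=\bigcup_j F_j\}$, it suffices to treat bounded $F$ and replace $\dimP F$ by $\udimB F$.

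\begin{itemize}
\item Fix $s>\dim_\theta^{\mathrm P}E$ and $t>\udimB F$. Then $N_\rho(F)\le\rho^{-t}$ for all small $\rho$.
\item Take a labelled cover $(E_j,R_j)$ of $E$ with $\sum_j\Phi^s_\theta(E_j,R_j)<\varepsilon$. Cover $F$ by at most $R_j^{-t}$ sets $V_{j,k}$ of diameter at most $R_j$.
\item Use the carriers $E_j\times V_{j,k}$ with label $\sqrt2\,R_j$.
\end{itemize}

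The local estimate needed is
\[
 \Phi^{s+t'}_\theta(E_j\times V_{j,k},\sqrt2R_j)\le C R_j^{t'}\Phi^{s}_\theta(E_j,R_j)
\]
for $t'>t$, up to a loss from the window shift $(\sqrt2R)^p$ versus $R^p$.

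To prove it, take a packing in the product with radii $r_i\in[(\sqrt2R)^p,\sqrt2R]$. Project the centres to the first factor. Points whose projections are close are grouped by a maximal separated set, exactly as in Lemma~\ref{lem:local_comp}. For each radius, the number of product balls above a given $E$-ball is bounded by a volume count in $V_{j,k}$ of order $(R/r_i)^{n}$. That count is too crude, so instead I compare directly. A product packing projects to a family in $E_j$ where each point has multiplicity at most a dimensional constant times a count of disjoint balls in $V_{j,k}$. This gives the bound $\sum_i (2r_i)^{s+t'}\le C R^{t'}\sum(2r_i)^s$ after thinning to a packing in $E_j$ via a Besicovitch-type colouring. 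Summing gives total cost $C\sum_j R_j^{t'-t}\Phi^s_\theta(E_j,R_j)\to0$, so $\dim_\theta^{\mathrm P}(E\times F)\le s+t'$.

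I expect the thinning step to be the main obstacle. If the colouring argument fails, the fallback is to bound only in the case where $F$ is a grid of cubes, at the cost of rescaling the label by a constant. Label rescaling by a constant factor is harmless, by the argument of Proposition~\ref{prop:basic-properties}\ref{item:basic-mappings}.

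For the lower bound \eqref{eq:spectrum-product-lower}, I use the variational principle (Theorem~\ref{thm:main}); at $\theta=1$ I use the classical packing product bound instead. Take $\mu$ on a compact subset of $E$ with $d_\theta^\mu>s$ almost everywhere, for $s<\dim_\theta^{\mathrm P}E$. By Frostman's lemma, take $\nu$ on a compact subset of $F$ with $\nu(B(y,r))\le Cr^{t}$, for $t<\dimH F$.

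For $\mu\times\nu$, the product ball is contained in $B(x,r)\times B(y,r)$. Hence in each window the radius $r$ that witnesses $\mu(B(x,r))\le r^{v}$ also gives $(\mu\times\nu)(B((x,y),r))\le Cr^{v+t}$. This shows $d_\theta^{\mu\times\nu}\ge s+t$ almost everywhere. Theorem~\ref{thm:main} applied to the analytic set $E\times F$ then gives the bound, and symmetry gives the maximum.
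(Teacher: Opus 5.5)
Your lower bound is correct and is the paper's argument. You take a measure from Theorem~\ref{thm:main} with $d_\theta^\mu>s$ almost everywhere and multiply it by a Frostman measure on $F$. The Frostman constant costs only $\log C/|\log r|$, uniformly over each window, and the classical inequality handles $\theta=1$.

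The upper bound, however, rests on a local estimate that is false. The inequality
\[
 \Phi^{s+t'}_\theta(E_j\times V_{j,k},\sqrt2R_j)\le CR_j^{t'}\Phi^{s}_\theta(E_j,R_j)
\]
cannot hold for general pieces $V_{j,k}$ of diameter at most $R_j$, and no thinning or colouring will repair it. Group the product balls of radius $r$ by their projections. The multiplicity is then the number of disjoint $r$-balls centred in $V_{j,k}$. To obtain the factor $R_j^{t'}$ you would need $N_r(V_{j,k})\lesssim(R_j/r)^{t'}$ throughout the window, an Assouad-type bound on each piece. The hypothesis $\udimB F<t$ only gives the global bound $N_r(F)\le r^{-t}$.

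Concretely, take $m=n=1$, $E_j=\{x\}$, $p=(1-\theta)^{-1}$, and let $V$ be a cluster of about $R^{1-p}$ points with spacing $R^p$ in an interval of length $R$. Clusters of this kind at scales $R_k\le2^{-k-2}$, placed at $2^{-k}$ and completed by $0$, form a compact set with $\udimB F\le\theta$. Each cluster is a legitimate piece in a cover of the kind you describe, since $t>\theta$. The left side is then $\gtrsim R^{1-p}R^{p(s+t')}$, while the right side is of order $R^{s+t'}$. Their ratio is of order $R^{-(p-1)(1-s-t')}\to\infty$ whenever $s+t'<1$.

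The underlying problem is that you count pieces at the label scale $R_j$. The packing radii go down to $R_j^p$, where the relevant count is $N_r(F)\le r^{-t}$, and that count may be concentrated in a few pieces.

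The missing idea is to sum over the pieces before estimating, applying box counting at each radius scale rather than at the label scale. The paper does this in Lemma~\ref{lem:product-carrier-estimate}:
\begin{itemize}
\item It uses the grid pieces $F\cap Q$, with $Q$ of side $R_j$ and label $\sqrt{n+1}\,R_j$.
\item It splits the radius window into ranges $[a,2a]$.
\item Let $M_a$ denote the maximal size of a $2a$-separated subset of the closure. It uses $M_a(U\times V)\le C_mM_a(U)N_{a/2}(\overline V)$.
\item It uses the bounded-overlap estimate $\sum_QN_a(\overline{F\cap Q})\lesssim N_{a/2}(F)\lesssim a^{-t}$.
\item It uses $M_a(E_j)(2a)^s\lesssim\Phi^s_\theta(E_j,R_j)$ for $a$ in the window.
\end{itemize}
Each range then contributes $\lesssim a^{t'-t}\Phi^s_\theta(E_j,R_j)$, and the geometric sum gives
\[
 \sum_Q\Phi^{s+t'}_\theta\bigl(E_j\times(F\cap Q),\sqrt{n+1}\,R_j\bigr)
 \lesssim R_j^{t'-t}\Phi^s_\theta(E_j,R_j).
\]
This is exactly the total you wanted, and neither the number of pieces nor any colouring enters. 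Your reduction to bounded $F$ and $\udimB F$, via Tricot's characterization and countable stability, is the same as the paper's.
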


For bounded $A\subset\R^d$ and $r>0$, write
\[
 M_r(A)=\sup\{\#S:S\subset\overline A\text{ is }2r\text{-separated}\},
 \qquad N_r^*(A)=N_r(\overline A),
\]
where $N_r(A)$ is the least number of open balls of radius $r$ covering
$A$.

\begin{lemma}
\label{lem:product-carrier-estimate}
Let $m,n\ge1$ be integers, let $F\subset\R^n$ be nonempty and bounded,
and suppose that there are $T\ge0$ and $C_F,r_0>0$ such that
\begin{equation}
\label{eq:box-cover-open}
 N_r(F)\le C_Fr^{-T}\qquad(0<r<r_0).
\end{equation}
Fix $0<\theta<1$ and $0<\sigma<s$, and put
$p=(1-\theta)^{-1}$, $q=s+T$, and $C_0=\sqrt{n+1}$.
Let $\mathcal Q(R)$ be the grid of half-open cubes of side $R$ in
$\R^n$, and write $F_Q=F\cap Q$.
There exist $C>0$ and $0<R_*<\min\{1,r_0\}/C_0$, depending only on
$m,n,\theta,\sigma,s,T,C_F,r_0$, such that every nonempty labelled
carrier $(U,R)$ in $\R^m$ with $\diam U\le R$ and $0<R<R_*$ satisfies
\begin{equation}
\label{eq:tile-sum-direct}
 \sum_{Q\in\mathcal Q(R)}\Phi^q_\theta(U\times F_Q,C_0R)
 \le C R^{s-\sigma}\Phi^\sigma_\theta(U,R).
\end{equation}
\end{lemma}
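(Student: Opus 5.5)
The plan is to bound each packing sum by sorting the balls into dyadic radius levels. At each level, the number of balls is bounded by a product of two counts: a separated-set count in $U$, controlled by $\Phi^\sigma_\theta(U,R)$, and a covering count in $F$, controlled by \eqref{eq:box-cover-open}. Summing over levels then produces the factor $R^{s-\sigma}$.

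\textbf{Setup.} Fix $Q\in\mathcal Q(R)$ and a finite admissible configuration $(z_i,\rho_i)_i$ for $\Phi^q_\theta(U\times F_Q,C_0R)$, with centres $z_i=(x_i,y_i)\in\overline U\times\overline{F_Q}$. Its radii satisfy $(C_0R)^p\le\rho_i\le C_0R$. Split the indices into levels
\[
 J_k=\{i:2^{-k-1}<\rho_i\le2^{-k}\},\qquad 2^{-k}\ge(C_0R)^p/2 .
\]
Choose $R_*$ so small that $2C_0R<r_0$. Then every radius that occurs lies in the range where \eqref{eq:box-cover-open} applies.

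\textbf{Counting at a fixed level.} Put $\rho=2^{-k}$. Let $S_k\subset\overline U$ be a maximal $2\rho/C_0$-separated set. The radius $\rho/C_0$ lies in the window $[R^p,R]$ of the carrier $(U,R)$, up to a harmless factor~$2$. The upper end holds because $\rho\le C_0R$. The lower end holds because $(C_0R)^p/C_0=C_0^{p-1}R^p\ge R^p$. So the configuration with centres $S_k$ and radius $\rho/C_0$ (shrunk by $1/2$ if needed) is admissible for $\Phi^\sigma_\theta(U,R)$. This gives
\[
 \#S_k\,\rho^\sigma\le C\,\Phi^\sigma_\theta(U,R).
\]
Every $x_i$ lies within $2\rho/C_0$ of some $w\in S_k$.

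For fixed $w\in S_k$, consider the balls with $i\in J_k$ and $|x_i-w|<2\rho/C_0$. They are pairwise disjoint and have radius comparable to $\rho$. Their centres lie in $B(w,2\rho/C_0)\times\overline{F_Q}$. Cover $\overline{F_Q}$ by $N_\rho(\overline{F_Q})$ balls of radius $\rho$. A volume comparison in $\R^{m+n}$ then shows that the number of such $i$ is at most $C_{m,n}N_\rho(\overline{F_Q})$.

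\textbf{Summing over cubes.} Each ball of radius $\rho\le C_0R$ meets at most $C_n$ cubes of $\mathcal Q(R)$. Hence
\[
 \sum_QN_\rho(\overline{F_Q})\le C_nN_{\rho}(F)\le C\rho^{-T},
\]
after passing to closures at a comparable radius. The configurations may be chosen independently for each $Q$, because the counting bound at level $k$ holds for every configuration in each $Q$ separately. Combining these bounds, the level-$k$ contribution summed over all $Q$ is at most
\[
 C\,\#S_k\,\rho^{-T}\rho^{q}\le C\,\Phi^\sigma_\theta(U,R)\,\rho^{-\sigma}\rho^{s+T-T}
 =C\,\Phi^\sigma_\theta(U,R)\,\rho^{s-\sigma}.
\]
Taking suprema over configurations and summing the geometric series over $2^{-k}\le C_0R$ gives \eqref{eq:tile-sum-direct}, with a constant depending on $s-\sigma$.

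\textbf{Main obstacle.} The step needing the most care is the window bookkeeping. One must check that the rescaled radius $\rho/C_0$ is admissible for the label $R$ at both ends of the window, which is exactly where the choice $C_0>1$ and the lower bound $(C_0R)^p\ge R^p$ enter. One must also check that $R_*$ keeps $C_0R$ below $r_0$. The multiplicity constants (balls meeting cubes, volume packing) are routine.
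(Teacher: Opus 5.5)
Your proposal is correct and follows essentially the paper's argument: you split the radius window dyadically, bound each level by a product count (the paper's $M_r(U\times V)\le C_mM_r(U)N^*_{r/2}(V)$, which you prove inline by grouping and volume comparison), use the grid-summed covering bound for $F$, compare the $U$-count with $\Phi^\sigma_\theta(U,R)$, and sum the geometric series in $\rho^{s-\sigma}$. The one point to tighten is the ``harmless factor 2'' at the top level, where $\rho$ may approach $2C_0R$; the cleanest justification is the paper's case $a_\ell>R$, where the separation exceeds $\diam U$, so the separated set is a singleton and $\Phi^\sigma_\theta(U,R)\ge(2R)^\sigma$ gives the bound (your rescaling by $C_0$ is otherwise unnecessary, since $(C_0R)^p\ge R^p$ already).
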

\begin{proof}
We first record two elementary counting estimates.
For bounded $U\subset\R^m$, $V\subset\R^n$, and $r>0$,
\begin{equation}
 \label{eq:prod-pack-open}
 M_r(U\times V)\leq C_mM_r(U)N_{r/2}^*(V),
\end{equation}
where $C_m$ depends only on $m$.

Grouping a $2r$-separated subset of $\overline U\times\overline V$
according to an $r/2$-ball cover of $\overline V$ gives
$\sqrt3\,r$-separated first coordinates in each group. A volume
comparison with a cover of $\overline U$ centred on a maximal
$2r$-separated set bounds the size of each group by $C_mM_r(U)$.

Let $\mathcal Q(R)$ be the grid of half-open cubes of side length $R$
in $\R^n$, and let $F\subset\R^n$ be bounded. For every $c>0$,
\begin{equation}
 \label{eq:sum-local-cover-open}
 \sum_{Q\in\mathcal Q(R)}N_r^*(F\cap Q)
 \leq C(n,c)N_{r/2}(F)
 \qquad(0<r\leq cR).
\end{equation}

Indeed, enlarging a finite $r/2$-ball cover of $F$ to radius $r$
covers each $\overline{F\cap Q}$ using the balls that meet $F\cap Q$,
and each original ball meets at most $C(n,c)$ grid cubes.

We now prove the carrier estimate.
Choose $R_*>0$ such that $C_0R_*<\min\{1,r_0\}$, and fix a carrier
$(U,R)$ as in the statement. Put $S=C_0R$. For $Q\in\mathcal Q(R)$, the label $S$ is
admissible because
\begin{equation}
 \label{eq:tile-diam-bound-open}
 \diam(U\times F_Q)\leq\sqrt{R^2+nR^2}=S.
\end{equation}
Let $L=\lceil\log_2(S/S^p)\rceil$ and set
\[
 b_\ell=2^{-\ell}S,\qquad
 a_\ell=\max\{S^p,2^{-\ell-1}S\}
 \qquad(0\leq\ell<L).
\]
Then
\[
 [S^p,S]=\bigcup_{\ell=0}^{L-1}[a_\ell,b_\ell],
 \qquad b_\ell\leq2a_\ell.
\]
In an admissible packing, the centres of balls with radii in
$[a_\ell,b_\ell]$ are $2a_\ell$-separated. Hence
\[
 \Phi^q_\theta(U\times F_Q,S)
 \leq\sum_{\ell=0}^{L-1}M_{a_\ell}(U\times F_Q)(2b_\ell)^q.
\]
Since $a_\ell\leq C_0R$, \eqref{eq:prod-pack-open} and
\eqref{eq:sum-local-cover-open}, together with \eqref{eq:box-cover-open}, give
\[
 \sum_Q \Phi^q_\theta(U\times F_Q,S)
 \leq C\sum_\ell b_\ell^q M_{a_\ell}(U)a_\ell^{-T}.
\]
Moreover, $R^p\leq S^p\leq a_\ell\leq C_0R$, and hence
\begin{equation}
 \label{eq:P-to-Phi-open}
 M_{a_\ell}(U)(2a_\ell)^\sigma
 \leq C_0^\sigma \Phi^\sigma_\theta(U,R).
\end{equation}
For $a_\ell\leq R$, equal-radius packings give
$M_{a_\ell}(U)(2a_\ell)^\sigma\leq\Phi^\sigma_\theta(U,R)$.
For $a_\ell>R$, $\diam U\leq R$ gives $M_{a_\ell}(U)=1$, so
\[
 M_{a_\ell}(U)(2a_\ell)^\sigma
 \leq C_0^\sigma(2R)^\sigma
 \leq C_0^\sigma\Phi^\sigma_\theta(U,R).
\]
Since $s>\sigma$ and the upper endpoints $b_\ell$ decrease geometrically,
\[
 \sum_{Q\in\mathcal Q(R)}\Phi^q_\theta(U\times F_Q,S)
 \leq C \Phi^\sigma_\theta(U,R)
             \sum_\ell b_\ell^{s-\sigma}
 \leq C R^{s-\sigma}\Phi^\sigma_\theta(U,R).
\]
\end{proof}

\begin{proof}[Proof of Theorem~\ref{cor:mixed-product-packing-open}]
The endpoint identities reduce the cases $\theta=0,1$ to the classical
product inequalities~\cite{Howroyd1996}. Assume $0<\theta<1$.

\emph{Upper bound.}
For bounded $E$ and $F$, we first prove
\begin{equation}
\label{eq:mixed-product-upper-symmetric}
 \dim_\theta^{\mathrm P}(E\times F)
 \le\min\{\dim_\theta^{\mathrm P}E+\udimB F,
\dim_\theta^{\mathrm P}F+\udimB E\}.
\end{equation}
By symmetry, it suffices to prove the first bound.
We may assume that both factors are nonempty.

Choose
\[
 \dim_\theta^{\mathrm P}E<\sigma<s,\qquad
 T>\udimB F,\qquad q=s+T.
\]
The definition of upper box dimension gives $C_F,r_0>0$ satisfying
\eqref{eq:box-cover-open}. Apply Lemma~\ref{lem:product-carrier-estimate}
with these parameters, let $C,R_*$ be its constants, and put
$C_0=\sqrt{n+1}$.

Since $\mathcal P^{\sigma}_{\theta}(E)=0$, for every
$0<\delta<R_*$ and every $\varepsilon>0$ there is a labelled cover
$(U_j,R_j)$ of $E$ with $R_j\leq\delta$ and
\[
 \sum_j\Phi^\sigma_\theta(U_j,R_j)<\varepsilon.
\]
The labelled carriers $(U_j\times(F\cap Q),C_0R_j)$, with
$Q\in\mathcal Q(R_j)$, cover $E\times F$. By \eqref{eq:tile-sum-direct},
\[
 \mathcal P^{q}_{\theta,C_0\delta}(E\times F)
 \leq\sum_j\sum_{Q\in\mathcal Q(R_j)}
       \Phi^q_\theta(U_j\times(F\cap Q),C_0R_j)
 \leq C\delta^{s-\sigma}\sum_j\Phi^\sigma_\theta(U_j,R_j)
 <C\delta^{s-\sigma}\varepsilon.
\]
Letting $\varepsilon\downarrow0$ and then $\delta\downarrow0$ gives
$\dim_\theta^{\mathrm P}(E\times F)\leq s+T$.
Let $s\downarrow\dim_\theta^{\mathrm P}E$ and $T\downarrow\udimB F$.

Tricot's modified upper box characterization~\cite{Tricot1982} gives,
for every $F\subset\R^n$,
\begin{equation}
 \label{eq:pack-inf-sup-box-open}
 \dimP F=\inf\left\{\sup_i\udimB F_i:
 F\subset\bigcup_{i=1}^\infty F_i,\quad F_i\text{ bounded}\right\}.
\end{equation}
Suppose first that $E$ is bounded. Given $t>\dimP F$,
\eqref{eq:pack-inf-sup-box-open} gives a countable cover
$F\subset\bigcup_iF_i$ by bounded sets with $\udimB F_i<t$.
By \eqref{eq:mixed-product-upper-symmetric} and countable stability,
\[
 \dim_\theta^{\mathrm P}(E\times F)
 \le\sup_i\dim_\theta^{\mathrm P}(E\times F_i)
 \le\dim_\theta^{\mathrm P}E+t.
\]
Let $t\downarrow\dimP F$. For arbitrary $E$, apply this conclusion
to its bounded pieces and use countable stability once more:
\begin{align*}
 \dim_\theta^{\mathrm P}(E\times F)
 &=\sup_{k\ge1}\dim_\theta^{\mathrm P}
       \bigl((E\cap[-k,k]^m)\times F\bigr)\\
 &\le\sup_{k\ge1}\dim_\theta^{\mathrm P}(E\cap[-k,k]^m)+\dimP F\\
 &=\dim_\theta^{\mathrm P}E+\dimP F.
\end{align*}

\emph{Lower bound.}
By symmetry, it suffices to prove
$\dim_\theta^{\mathrm P}E+\dimH F\le\dim_\theta^{\mathrm P}(E\times F)$. If either summand is zero,
the assertion follows from an isometric copy of the other factor in
$E\times F$ and $\dimH\le\dim_\theta^{\mathrm P}$.
Otherwise choose
\[
 0<a<\dim_\theta^{\mathrm P}E,
 \qquad 0<b<\dimH F.
\]
Theorem~\ref{thm:main} gives a probability measure $\mu$ with compact
support in $E$ and $d_\theta^\mu>a$ almost everywhere. Compact
determination of Hausdorff dimension and the classical Frostman lemma
give a probability measure $\nu$ with compact support in $F$ and a
constant $C\ge1$ such that
\[
 \nu(B(y,r))\le Cr^b
 \qquad(y\in\R^n,\ 0<r<1).
\]
Put $\lambda=\mu\times\nu$. For $x\in\supp\mu$, $y\in\supp\nu$,
and $0<r<1$,
\[
 \lambda(B((x,y),r))\le\mu(B(x,r))\nu(B(y,r))
 \le C\mu(B(x,r))r^b,
\]
and hence
\[
 \frac{\log\lambda(B((x,y),r))}{\log r}
 \ge\frac{\log\mu(B(x,r))}{\log r}+b
       -\frac{\log C}{|\log r|}.
\]
The error is bounded by $\log C/|\log R|$ uniformly over
$R^{1/(1-\theta)}\le r\le R$. Taking the window supremum and then
the lower limit gives $d_\theta^\lambda(x,y)\ge d_\theta^\mu(x)+b$,
so $d_\theta^\lambda>a+b$ almost everywhere.
Since $\supp\lambda\Subset E\times F$, Theorem~\ref{thm:main}
implies $\dim_\theta^{\mathrm P}(E\times F)\ge a+b$.
Let $a\uparrow\dim_\theta^{\mathrm P}E$ and $b\uparrow\dimH F$.
\end{proof}

Consequently, if $E\subset\R^m$ and $F\subset\R^n$
are nonempty analytic sets with $\dimH F=\dimP F=t$, then
\begin{equation}\label{eq:product-exact-shift}
 \dim_\theta^{\mathrm P}(E\times F)
 =\dim_\theta^{\mathrm P}E+t
 \qquad(0\le\theta\le1).
\end{equation}
In particular, an Ahlfors regular factor shifts the entire spectrum by
its dimension.

\section{Complementary sets and attainable profiles}
\label{sec:Complementarity}

We characterize attainable packing spectra and realize every admissible
profile by a binary digit set. Complementary digit positions yield
reflected upper intermediate-dimension profiles. We begin with scale
formulas under a spatial mass comparison assumption.

\subsection{Homogeneous window formulas}

Throughout this subsection, let $\mu$ be a Borel probability
measure with $\supp\mu=K\subset\R^d$. Assume that there exist
$L\ge1$ and $r_0>0$ such that
\begin{equation}\label{eq:homogeneous}
 L^{-1}\le\frac{\mu(B(x,r))}{\mu(B(y,r))}\le L
 \qquad(x,y\in K,\ 0<r<r_0).
\end{equation}
Fix $x_0\in K$ and write
\begin{equation}\label{eq:homogeneous-mass-notation}
 m(r)=\mu(B(x_0,r)),\qquad
 b(r)=\frac{\log m(r)}{\log r}\qquad(0<r<1).
\end{equation}
For $0<R<1$, put
\[
 I_\theta(R)=
 \begin{cases}
 [R^{1/(1-\theta)},R],&0\le\theta<1,\\
 (0,R],&\theta=1,
 \end{cases}
 \qquad
 A_{s,\theta}(R)=\sup_{r\in I_\theta(R)}\frac{r^s}{m(r)}
 \quad(s\ge0).
\]
We write $\mu^*$ for the outer measure induced by $\mu$.
The hypothesis \eqref{eq:homogeneous} is the spatial part of the
homogeneity condition of L\"u, Lou, Wen and
Xi~\cite{LuLouWenXi2015}; their additional two-sided scale-growth
condition is not imposed here.

\begin{lemma}
\label{lem:homogeneous-scale-estimates}
Under \eqref{eq:homogeneous}, for $s,t\ge0$,
$0\le\theta\le1$, and $0<\eta\le1$,
\begin{align}
 \mathcal P^s_\theta(K)
 &\asymp\liminf_{R\downarrow0} \sup_{R^{1/(1-\theta)} \le r \le R} \quad
\frac{r^s}{m(r)},
 \label{eq:homogeneous-outer-measure}\\
 I_\eta^t(K)
 &\asymp\limsup_{R\downarrow0}
       \inf_{R^{1/\eta}\le r\le R}\frac{r^t}{m(r)}.
 \label{eq:homogeneous-intermediate-content}
\end{align}
The comparison constants depend only on $d,L$ and the relevant
exponent $s$ or $t$.
\end{lemma}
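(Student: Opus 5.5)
Both estimates rest on one reduction. Under \eqref{eq:homogeneous}, every ball $B(x,r)$ with $x\in K$ and $r<r_0$ has mass comparable to $m(r)$. I will also use the doubling-type comparison $m(r)\asymp m(2r)$ up to constants depending on $d$ and $L$. This holds because $B(x_0,2r)\cap K$ is covered by $C_d$ balls of radius $r$ centred in $K$, each of mass at most $Lm(r)$. Consequently, for any $2r$-separated set $S\subset K$, the open balls $B(y,r)$ with $y\in S$ are disjoint. Conversely, $K$ is covered by the balls $B(y,2r)$ over a maximal such set. Together these give
\[
 M_r(A)\lesssim\frac{\mu(A^{+r})}{m(r)},\qquad N_r(K)\asymp\frac{1}{m(r)}.
\]

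\emph{Proof of \eqref{eq:homogeneous-outer-measure}.} For the upper bound, fix $R$ small and cover $K$ by $N\asymp 1/m(R)$ carriers $K\cap Q$, where the $Q$ are grid cubes of side $R/\sqrt d$, each with label $R$. For one such carrier, an admissible packing with radii in $I_\theta(R)$ consists of disjoint balls centred in $K$. Each such ball satisfies $(2r_i)^s\le 2^s A_{s,\theta}(R)\,\mu(B(x_i,r_i))$, and the balls lie in $Q^{+R}$. Hence $\Phi^s_\theta(K\cap Q,R)\lesssim A_{s,\theta}(R)\,\mu(Q^{+R})$, and summing over the bounded-overlap enlargements gives total cost $\lesssim A_{s,\theta}(R)$. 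Taking $R$ along a sequence realizing the $\liminf$ bounds $\mathcal P^s_{\theta,\delta}(K)$ for every $\delta$. For $\theta=1$ the same argument works, with finite packings.

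For the lower bound, I use the mass distribution argument of Proposition~\ref{prop:mass-distribution} with a per-label version. Take any labelled $\delta$-cover $(E_j,R_j)$ and fix $j$. Choose $r_j\in I_\theta(R_j)$ nearly attaining $A_{s,\theta}(R_j)$, and a maximal $2r_j$-separated set in $\overline{E_j\cap K}$. This set is admissible, so $\Phi^s_\theta(E_j,R_j)\ge \#S\,(2r_j)^s$. The balls $B(y,2r_j)$ cover $E_j\cap K$, so $\mu(E_j)\le \#S\cdot C L\,m(r_j)$. Therefore $\Phi^s_\theta(E_j,R_j)\gtrsim \mu(E_j)\,A_{s,\theta}(R_j)$. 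Summing over $j$ gives $\sum_j\Phi^s_\theta(E_j,R_j)\gtrsim \inf_{R\le\delta}A_{s,\theta}(R)$. Letting $\delta\downarrow0$ yields the $\liminf$.

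\emph{Proof of \eqref{eq:homogeneous-intermediate-content}.} For the upper bound, fix $R$ and any $r\in[R^{1/\eta},R]$. Covering $K$ by $N_r(K)\asymp 1/m(r)$ sets of diameter $2r$ costs $\asymp r^t/m(r)$; I rescale $r$ by a factor of $2$ so that the diameters stay inside the window, which is harmless by doubling. Minimising over $r$ gives $I_\eta^t(K,R)\lesssim\inf_r r^t/m(r)$. For the lower bound, take a cover $(V_i)$ with $\diam V_i=\rho_i\in[R^{1/\eta},R]$. Each $V_i$ meeting $K$ lies in a ball of radius $\rho_i$ centred in $K$, so $\mu(V_i)\le C m(\rho_i)$. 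Hence
\[
 \sum_i\rho_i^t\ge\sum_i\mu(V_i)\,\frac{\rho_i^t}{Cm(\rho_i)}
 \ge C^{-1}\inf_{r}\frac{r^t}{m(r)},
\]
and taking $\limsup_R$ finishes the proof.

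\emph{Main obstacle.} The delicate points are endpoint bookkeeping rather than the ideas. The window radii must stay inside the range $r<r_0$ where \eqref{eq:homogeneous} applies. Rescaling by factors $2$ or $\sqrt d$ must be absorbed by the doubling of $m$ while keeping radii inside $I_\theta(R)$; slightly shrinking the window changes only constants. At $\theta=1$, the supremum over $(0,R]$ must be handled with finite packings, and the first inequality must be used with near-optimal radii.
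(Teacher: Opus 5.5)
Your proposal is correct and follows the paper's proof essentially step for step. The shared steps are the doubling bound $m(2r)\lesssim m(r)$ from spatial comparison, the per-carrier estimate $\Phi^s_\theta(E,R)\gtrsim\mu^*(E\cap K)\,r^s/m(r)$ at any window radius via a maximal $2r$-separated set, the grid-cube upper bound with bounded-overlap enlargements, and the two-sided comparison $I_\eta^t(K,R)\asymp\inf_{R^{1/\eta}\le r\le R}r^t/m(r)$ via ball covers and mass counting. The only bookkeeping the paper makes explicit that you leave implicit is that $K$ is bounded, which your separated-set count gives and which is needed for $I_\eta^t(K)$ to be defined. The paper also runs the packing lower bound with an arbitrary finite $a<\inf_{R\le\delta}A_{s,\theta}(R)$ rather than a \emph{nearly attaining} radius, since $A_{s,1}(R)$ may be infinite.
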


\begin{proof}
Fix $0<a<\min\{r_0,1\}$. Every finite $2a$-separated subset
of $K$ has at most $L/m(a)$ points, since its disjoint $a$-balls
each have mass at least $L^{-1}m(a)>0$. An unbounded set has
arbitrarily large finite $2a$-separated subsets, so $K$ is bounded.
Since $K=\supp\mu$ is closed, it is compact.

We first obtain uniform estimates at a single scale. For
$0<r<\min\{r_0,1/2\}$, choose a maximal $r$-separated subset
of $K\cap B(x_0,2r)$. Its disjoint $r/2$-balls lie in
$B(x_0,5r/2)$, so it has at most $5^d$ points. Its $r$-balls
cover $K\cap B(x_0,2r)$ and each has mass at most $Lm(r)$.
Thus, with $D=5^dL$,
\begin{equation}\label{eq:homogeneous-doubling}
 m(2r)\le Dm(r)
 \qquad(0<r<\min\{r_0,1/2\}).
\end{equation}
Two consequences will be used below. If $V$ has sufficiently
small diameter $r>0$ and meets $K$, then $V\subset B(x,2r)$
for $x\in V\cap K$, whence
\[
 \mu^*(V\cap K)\le Lm(2r)\le LDm(r).
\]
Also, a finite maximal $r/2$-separated subset
$\{x_i\}_{i=1}^N$ of $K$ gives a cover by balls $B(x_i,r/2)$
of diameter $r$. The balls $B(x_i,r/4)$ are disjoint, so
\[
 N\le\frac{L}{m(r/4)}\le\frac{LD^2}{m(r)}.
\]

For the packing comparison, fix a sufficiently small $R$ and
a carrier $(E,R)$ with $E\cap K\ne\varnothing$. For each
$r\in I_\theta(R)$, a finite maximal $2r$-separated subset
$\{x_i\}_{i=1}^N$ of $\overline{E\cap K}\subset K$ satisfies
\[
 \mu^*(E\cap K)\le\sum_{i=1}^N\mu(B(x_i,2r))
 \le LDNm(r).
\]
Its $r$-balls form an admissible packing for $\Phi^s_\theta(E,R)$.
Hence
\begin{equation}\label{eq:homogeneous-carrier-lower}
 \Phi^s_\theta(E,R)
 \ge c\mu^*(E\cap K)\frac{r^s}{m(r)}
 \qquad(r\in I_\theta(R)),
\end{equation}
which also holds when $E\cap K=\varnothing$.
For the upper bound, let $Q$ range over the half-open grid cubes
of side $R/\sqrt d$ meeting $K$, and put
$Q^{(R)}=\{z:\dist(z,Q)<R\}$. If $A_{s,\theta}(R)<\infty$,
every admissible packing for $\Phi^s_\theta(K\cap Q,R)$ has
centres in $K$ and disjoint open balls contained in $Q^{(R)}$,
so
\[
 \sum_i(2r_i)^s
 \le 2^sL A_{s,\theta}(R)\sum_i\mu(B(x_i,r_i))
 \le 2^sL A_{s,\theta}(R)\mu(Q^{(R)}).
\]
The sets $Q^{(R)}$ have overlap bounded in terms of $d$.
Taking suprema and summing therefore gives
\begin{equation}\label{eq:homogeneous-grid-upper}
 \sum_Q\Phi^s_\theta(K\cap Q,R)\le C A_{s,\theta}(R),
\end{equation}
which is immediate if $A_{s,\theta}(R)=\infty$.

For sufficiently small $\delta>0$, set
$a_\delta=\inf_{0<R\le\delta}A_{s,\theta}(R)$.
The grid covers in \eqref{eq:homogeneous-grid-upper} give
$\mathcal P^s_{\theta,\delta}(K)\le Ca_\delta$.
Conversely, for any finite $0<a<a_\delta$,
\eqref{eq:homogeneous-carrier-lower} implies that every labelled
$\delta$-cover $(E_j,R_j)$ of $K$ satisfies
\[
 \sum_j\Phi^s_\theta(E_j,R_j)
 \ge ca\sum_j\mu^*(E_j\cap K)\ge ca.
\]
Taking the infimum over covers and then the supremum over $a$
yields
\[
 ca_\delta\le\mathcal P^s_{\theta,\delta}(K)\le Ca_\delta,
\]
with the lower bound trivial when $a_\delta=0$.
Letting $\delta\downarrow0$ proves
\eqref{eq:homogeneous-outer-measure}.

For the covering comparison, the diameter estimate above gives,
for every admissible cover $(V_i)$ of $A\subset K$ with
$r_i=\diam V_i\in[R^{1/\eta},R]$,
\[
 \sum_i r_i^t
 \ge\frac1{LD}\sum_i\mu^*(V_i\cap K)\frac{r_i^t}{m(r_i)}
 \ge\frac{\mu^*(A)}{LD}
       \inf_{R^{1/\eta}\le r\le R}\frac{r^t}{m(r)}.
\]
Consequently,
\begin{equation}\label{eq:homogeneous-subset-covering-lower}
 I_\eta^t(A,R)\ge c\mu^*(A)
       \inf_{R^{1/\eta}\le r\le R}\frac{r^t}{m(r)}
 \qquad(A\subset K).
\end{equation}
For each $r\in[R^{1/\eta},R]$, the cover of $K$ by balls of
diameter $r$ constructed above is admissible and has $t$-cost
at most $LD^2r^t/m(r)$. Taking the infimum over $r$ and applying
\eqref{eq:homogeneous-subset-covering-lower} with $A=K$ yields
\begin{equation}\label{eq:homogeneous-finite-covering-content}
 I_\eta^t(K,R)
 \asymp\inf_{R^{1/\eta}\le r\le R}\frac{r^t}{m(r)}.
\end{equation}
Taking the upper limit as $R\downarrow0$ proves
\eqref{eq:homogeneous-intermediate-content}.
\end{proof}

The critical exponents of these comparisons give the two window
dimension formulas.

\begin{theorem}
\label{thm:homogeneous-window-formula}
Under \eqref{eq:homogeneous},
\begin{align}
 \dim_\theta^{\mathrm P}K
 &=\liminf_{R\downarrow0}
       \sup_{R^{1/(1-\theta)}\le r\le R}b(r),
 &0<\theta<1,\label{eq:homP}\\
 \overline{\dim}_{\eta}K
 &=\limsup_{R\downarrow0}
       \inf_{R^{1/\eta}\le r\le R}b(r),
 &0<\eta\le1.\label{eq:homI}
\end{align}
\end{theorem}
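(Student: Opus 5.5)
The plan is to take critical exponents in Lemma~\ref{lem:homogeneous-scale-estimates}. The comparison constants in that lemma depend only on $d$, $L$ and the exponent, so whether a quantity is $0$, positive, or infinite is unchanged by them. Throughout, write $p=(1-\theta)^{-1}$ and use $m(r)=r^{b(r)}$, so that $r^s/m(r)=r^{s-b(r)}$ for $0<r<1$.

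\emph{Packing formula \eqref{eq:homP}.} Set
\[
\beta=\liminf_{R\downarrow0}\sup_{R^p\le r\le R}b(r).
\]
If $s>\beta$, choose $\beta<s'<s$. There are arbitrarily small $R$ with $b(r)<s'$ for all $r$ in the window, since the window is compact and $b$ is lower semicontinuous, so the supremum is essentially attained. For such $R$,
\[
\sup_{r\in[R^p,R]} r^{s-b(r)}\le R^{s-s'}\to0,
\]
so the $\liminf$ in \eqref{eq:homogeneous-outer-measure} is $0$ and $\mathcal P^s_\theta(K)=0$. This gives $\dim_\theta^{\mathrm P}K\le s$.

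If $s<\beta$, choose $s<s'<\beta$. For all small $R$ some $r\in[R^p,R]$ has $b(r)>s'$, hence
\[
r^{s-b(r)}\ge r^{s-s'}\ge R^{p(s-s')}\to\infty.
\]
The $\liminf$ is then infinite, so $\mathcal P^s_\theta(K)=\infty$, and Corollary~\ref{coro:critical-exponent} gives $\dim_\theta^{\mathrm P}K\ge s$. Letting $s\to\beta$ from both sides gives \eqref{eq:homP}.

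\emph{Intermediate formula \eqref{eq:homI}.} The argument is symmetric. Set
\[
\gamma=\limsup_{R\downarrow0}\inf_{R^{1/\eta}\le r\le R}b(r).
\]
If $t>\gamma$, choose $\gamma<t'<t$. For all small $R$ some window radius has $b(r)<t'$, which gives $\inf r^{t-b(r)}\le R^{t-t'}\to0$. So $I^t_\eta(K)=0$ by \eqref{eq:homogeneous-intermediate-content}, and $\overline{\dim}_\eta K\le t$.

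If $t<\gamma$, choose $t<t'<\gamma$. There are arbitrarily small $R$ with $b(r)>t'$ throughout the window, so
\[
\inf r^{t-b(r)}\ge R^{(t-t')/\eta}\to\infty.
\]
Hence $I^t_\eta(K)=\infty$, which is positive, so $t$ is not in the set defining $\overline{\dim}_\eta K$. Since $I^t_\eta$ is nonincreasing in $t$, this gives $\overline{\dim}_\eta K\ge t$.

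\emph{Main obstacle.} The delicate part is the bookkeeping at the boundary: handling infinite values and passing between strict and non-strict inequalities inside $\sup/\inf$ over the windows. I expect it to be settled by inserting the intermediate exponent $s'$ (or $t'$) as above rather than by working with the exact value $\beta$ (or $\gamma$). One also needs $m(r)>0$ for $0<r<1$, which holds because $x_0\in\supp\mu$.
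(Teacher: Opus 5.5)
Your proof is correct and follows the paper's own argument: you take critical exponents in Lemma~\ref{lem:homogeneous-scale-estimates}, inserting an intermediate exponent between $s$ (or $t$) and the window limit. Two small slips need fixing, neither affecting the conclusions. First, in both lower bounds the exponents $s-s'$ and $t-t'$ are negative, so $r\mapsto r^{s-s'}$ is decreasing; the valid bounds on the window are therefore $r^{s-s'}\ge R^{s-s'}$ and $\inf r^{t-b(r)}\ge R^{t-t'}$, whereas $R^{p(s-s')}$ and $R^{(t-t')/\eta}$ are the maxima, and the corrected bounds still tend to $\infty$. Second, no semicontinuity is needed ($b$ is in fact upper, not lower, semicontinuous), since $\sup_{[R^p,R]}b<s'$ already forces $b<s'$ throughout the window.
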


\begin{proof}
Iteration of \eqref{eq:homogeneous-doubling} bounds $b(r)$ above
for small $r$. Hence the right-hand sides
\[
 \alpha_\theta=\liminf_{R\downarrow0}
       \sup_{r\in I_\theta(R)}b(r),\qquad
 \beta_\eta=\limsup_{R\downarrow0}
       \inf_{R^{1/\eta}\le r\le R}b(r)
\]
are finite and nonnegative.
If $s>\alpha_\theta$, choose $\alpha_\theta<u<s$.
There are $R_k\downarrow0$ such that $b(r)<u$ throughout
$I_\theta(R_k)$, and hence
\[
 A_{s,\theta}(R_k)
 =\sup_{r\in I_\theta(R_k)}r^{s-b(r)}
 \le R_k^{s-u}\longrightarrow0.
\]
Equation~\eqref{eq:homogeneous-outer-measure} gives
$\mathcal P^s_\theta(K)=0$.
If $0\le s<\alpha_\theta$, choose $s<u<\alpha_\theta$.
Every sufficiently small window contains $r$ with $b(r)>u$, so
\[
 A_{s,\theta}(R)\ge r^{s-b(r)}\ge R^{s-u}
 \longrightarrow\infty.
\]
The same comparison gives $\mathcal P^s_\theta(K)=\infty$,
proving \eqref{eq:homP}.

If $t>\beta_\eta$, choose $\beta_\eta<u<t$.
Every sufficiently small window contains $r$ with $b(r)<u$, and
therefore
\[
 \inf_{R^{1/\eta}\le r\le R}\frac{r^t}{m(r)}
 \le R^{t-u}\longrightarrow0.
\]
Equation~\eqref{eq:homogeneous-intermediate-content} gives
$I_\eta^t(K)=0$.
If $0\le t<\beta_\eta$, choose $t<u<\beta_\eta$.
There are $R_k\downarrow0$ such that $b(r)>u$ throughout
$[R_k^{1/\eta},R_k]$, so
\[
 \inf_{R_k^{1/\eta}\le r\le R_k}\frac{r^t}{m(r)}
 \ge R_k^{t-u}\longrightarrow\infty.
\]
Thus $I_\eta^t(K)=\infty$, proving \eqref{eq:homI}.
\end{proof}

\begin{remark}
\label{rem:homogeneous-endpoints}
The classical endpoint dimensions of $K$ are
\begin{equation}\label{eq:homogeneous-endpoint-dimensions}
 \dim_0^{\mathrm P}K=\dimH K=\liminf_{r\downarrow0}b(r),
 \qquad
 \dim_1^{\mathrm P}K=\dimP K=\limsup_{r\downarrow0}b(r).
\end{equation}
Indeed, the packing critical-exponent argument above applies also
at $\theta=0$ and $\theta=1$ in
\eqref{eq:homogeneous-outer-measure}. The windows are then
$I_0(R)=\{R\}$ and $I_1(R)=(0,R]$, respectively, and
\[
 \liminf_{R\downarrow0}\sup_{0<r\le R}b(r)
 =\lim_{R\downarrow0}\sup_{0<r\le R}b(r)
 =\limsup_{r\downarrow0}b(r).
\]
\end{remark}

\begin{remark}
\label{rem:homogeneous-countable-modification}
Under \eqref{eq:homogeneous},
\begin{equation}\label{eq:homogeneous-modified-intermediate}
 \overline{\dim}^{\mathrm{mod}}_{\eta}K
 =\overline{\dim}_{\eta}K\qquad(0<\eta\le1).
\end{equation}
To see this, put $\beta=\overline{\dim}_\eta K$.
For any $A\subset K$ with $\mu^*(A)>0$,
\eqref{eq:homogeneous-subset-covering-lower} and the preceding
critical-exponent argument give $I_\eta^t(A)=\infty$ whenever
$0\le t<\beta$. Thus
$\overline{\dim}_\eta A\ge\beta$; for $\beta=0$ this is immediate.
In every countable cover $K=\bigcup_j A_j$, some $A_j$ has
positive $\mu$-outer measure, by countable subadditivity.
Hence $\sup_j\overline{\dim}_\eta A_j\ge\beta$.
Taking the infimum over covers gives the lower bound in
\eqref{eq:homogeneous-modified-intermediate}, while the one-set
cover gives the reverse bound.
\end{remark}

\subsection{Digit sets and complementary pairs}

Write $\N=\{1,2,\ldots\}$ and $S^c=\N\setminus S$ for
$S\subset\N$. Define the binary digit set

\[
 E_S=\left\{\sum_{k=1}^\infty2^{-k}v_k:
 v_k\in\{0,1\}^d\text{ if }k\in S,\quad
 v_k=0\text{ if }k\notin S\right\}.
\]
Let $(U_k)_{k\ge1}$ be independent random vectors, each uniformly
distributed on $\{0,1\}^d$. The uniform digit measure $\mu_S$ is
the distribution of $\sum_{k\in S}2^{-k}U_k$. Write
\[
 A_S(0)=0,\qquad A_S(k)=\#(S\cap\{1,\ldots,k\})\quad(k\ge1).
\]
For $p\ge1$ and $N\in\N$, put
$\mathcal W_p(N)=\{N,\ldots,\lfloor pN\rfloor\}$.

\begin{proposition}
\label{prop:digit-mass-dimension-formulas}
For every $S\subset\N$, the set $E_S$ is nonempty and compact,
$\supp\mu_S=E_S$, and
\begin{equation}
\label{eq:complementary-digit-masses}
 \mu_S(B(x,r))\asymp_d2^{-dA_S(k)}
 \qquad(x\in E_S,\ k\ge0,\ 2^{-k-1}<r\le2^{-k}).
\end{equation}
The implicit constants depend only on $d$. The dimensions of $E_S$ are
\begin{align}
 \dim_\theta^{\mathrm P}E_S
 &=d\liminf_{N\to\infty}
       \max_{k\in\mathcal W_{1/(1-\theta)}(N)}\frac{A_S(k)}{k}
 &&(0\le\theta<1),\label{eq:complementary-digit-packing}\\
 \overline{\dim}^{\mathrm{mod}}_{\eta} E_S=\overline{\dim}_{\eta}E_S
 &=d\limsup_{N\to\infty}
       \min_{k\in\mathcal W_{1/\eta}(N)}\frac{A_S(k)}{k}
 &&(0<\eta\le1).\label{eq:complementary-digit-intermediate}
\end{align}
\end{proposition}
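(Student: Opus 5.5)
The plan is to verify the homogeneity hypothesis \eqref{eq:homogeneous} for $\mu_S$ with the explicit mass function \eqref{eq:complementary-digit-masses}. Then Theorem~\ref{thm:homogeneous-window-formula}, Remark~\ref{rem:homogeneous-endpoints} and Remark~\ref{rem:homogeneous-countable-modification} give the dimension formulas once $b(r)$ is expressed through $A_S(k)/k$ and continuous windows are replaced by integer windows.

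\emph{Step 1: compactness and support.} The map $\{0,1\}^{d\,S}\to\R^d$ sending a digit sequence to $\sum_{k\in S}2^{-k}v_k$ is continuous on a compact product space. Its image is $E_S$, which is therefore nonempty and compact, and $\mu_S$ is the pushforward of the uniform product measure. Every cylinder has positive mass and its image shrinks to a point, so $\supp\mu_S=E_S$.

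\emph{Step 2: the mass estimate.} Fix $x\in E_S$ and $2^{-k-1}<r\le2^{-k}$.

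For the lower bound, the points of $E_S$ sharing the first $k+c_d$ digits with $x$ lie within $\sqrt d\,2^{-k-c_d}<r$ of $x$. Their mass is $2^{-dA_S(k+c_d)}\ge2^{-dc_d}2^{-dA_S(k)}$.

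For the upper bound, $B(x,r)$ meets at most $C_d$ of the half-open dyadic cubes of side $2^{-k}$. Boundary-point ambiguities in binary expansions only increase the count by a bounded factor, so using closed cubes is harmless. Each such cube carries $\mu_S$-mass at most $2^{-dA_S(k)}$, because fixing the first $k$ digit vectors at free positions costs exactly that.

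Since the estimate is uniform in $x$, hypothesis \eqref{eq:homogeneous} holds with $L$ depending only on $d$.

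\emph{Step 3: translation to integer windows.} From Step~2, $b(r)=dA_S(k)/k+O(1/k)$ uniformly for $r\in(2^{-k-1},2^{-k}]$. Write $R\in(2^{-N-1},2^{-N}]$. Then the window $[R^{p},R]$ corresponds to indices from $N$ to roughly $pN$, with boundary error $O(1)$ in $k$ and $O(p)$ at the upper end.

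Since $A_S(k+j)/(k+j)$ differs from $A_S(k)/k$ by $O(j/k)$, such endpoint perturbations change the max or min over the window by $o(1)$. Hence $\sup_{[R^p,R]}b$ equals $d\max_{k\in\mathcal W_p(N)}A_S(k)/k+o(1)$, and similarly for $\inf$ with $p=1/\eta$.

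Taking $\liminf$ and $\limsup$ through \eqref{eq:homP} and \eqref{eq:homI} gives the two formulas for $0<\theta<1$ and $0<\eta\le1$. The case $\theta=0$ comes from Remark~\ref{rem:homogeneous-endpoints}: there $\mathcal W_1(N)=\{N\}$ and the formula is $\liminf_k dA_S(k)/k$. The modified equality is Remark~\ref{rem:homogeneous-countable-modification}.

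\emph{Main obstacle.} The expected difficulty is the boundary bookkeeping in Step~3. One must ensure the error from rounding $\lfloor pN\rfloor$ and from the dyadic offset of $R$ is $O(1/N)$ in the ratio, and not an $O(1)$ shift that could matter near jumps of $A_S$. The bound $|A_S(k+j)/(k+j)-A_S(k)/k|\le 2j/k$ settles this.
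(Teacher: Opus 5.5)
Your proposal is correct and follows essentially the same route as the paper. Both arguments prove the two-sided cylinder mass estimate, which yields the spatial comparison hypothesis, and then apply Theorem~\ref{thm:homogeneous-window-formula} together with Remarks~\ref{rem:homogeneous-endpoints} and~\ref{rem:homogeneous-countable-modification}, converting continuous windows to integer windows via $|A_S(k)-A_S(\ell)|\le|k-\ell|$ and $A_S(k)\le k$. The only differences are cosmetic: the paper parametrizes radii as $2^{-a}$ with real $a$, and it obtains the upper mass bound by counting the level-$k$ cylinders whose closed image cubes meet the ball, rather than bounding the masses of half-open cubes.
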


\begin{proof}
Continuity of the coding map and positivity of every cylinder
probability imply that $E_S$ is nonempty and compact and that
$\supp\mu_S=E_S$.
Each level-$k$ symbolic cylinder has probability $2^{-dA_S(k)}$
and image contained in a closed dyadic cube of side $2^{-k}$.
For $x\in E_S$ and $2^{-k-1}<r\le2^{-k}$, the ball $B(x,r)$ meets
at most $C_d$ such cubes. Choose $q=q(d)$ with
$\sqrt d\,2^{-q}<1/2$. A level-$(k+q)$ cylinder coding $x$ has
image contained in $B(x,r)$, so
\[
 2^{-dq}2^{-dA_S(k)}\le2^{-dA_S(k+q)}
 \le\mu_S(B(x,r))\le C_d2^{-dA_S(k)}.
\]
This proves \eqref{eq:complementary-digit-masses}, which in particular
implies the spatial comparison \eqref{eq:homogeneous} for $\mu_S$.

For a fixed $x_0\in E_S$ and real $a\ge1$, \eqref{eq:complementary-digit-masses} gives
\[
 \frac{\log\mu_S(B(x_0,2^{-a}))}{\log2^{-a}}
 =\frac{dA_S(\lfloor a\rfloor)}{a}+O_d(a^{-1}).
\]
Since $|A_S(k)-A_S(\ell)|\le|k-\ell|$ and $A_S(k)\le k$, for fixed
$p\ge1$ and $N=\lfloor a\rfloor$ we have
\begin{align*}
 \sup_{a\le t\le pa}\frac{dA_S(\lfloor t\rfloor)}{t}
 &=\max_{k\in\mathcal W_p(N)}\frac{dA_S(k)}{k}
   +O_{d,p}(a^{-1}),\\
 \inf_{a\le t\le pa}\frac{dA_S(\lfloor t\rfloor)}{t}
 &=\min_{k\in\mathcal W_p(N)}\frac{dA_S(k)}{k}
   +O_{d,p}(a^{-1}).
\end{align*}
Theorem~\ref{thm:homogeneous-window-formula} gives the packing
formula for $0<\theta<1$ and the unmodified intermediate formula.
The case $\theta=0$ in \eqref{eq:complementary-digit-packing}
follows from Remark~\ref{rem:homogeneous-endpoints}.
Remark~\ref{rem:homogeneous-countable-modification} gives the
modified equality in \eqref{eq:complementary-digit-intermediate}.
\end{proof}

\begin{remark}
\label{rem:digit-endpoint-dimensions}
In particular,
\begin{align}
 \dim_1^{\mathrm P}E_S=\dimP E_S=\udimB E_S
 &=d\limsup_{k\to\infty}\frac{A_S(k)}{k},\label{eq:complementary-digit-packing-endpoint}\\
 \dimH E_S=\ldimB E_S
 &=d\liminf_{k\to\infty}\frac{A_S(k)}{k}.\nonumber
\end{align}
Indeed, \eqref{eq:complementary-digit-masses} and
Remark~\ref{rem:homogeneous-endpoints} give the Hausdorff and
packing dimension formulas. For the box dimensions, we have
\begin{equation}\label{eq:digit-covering-number}
 N_{2^{-k}}(E_S)\asymp_d2^{dA_S(k)},
\end{equation}
where $N_r$ is the least number of open balls of radius $r$
needed to cover the set. The upper bound follows by covering
each of the $2^{dA_S(k)}$ level-$k$ cylinder images with a number
of radius-$2^{-k}$ balls bounded in terms of $d$. For the lower
bound, every such ball meeting $E_S$ is contained in a ball
of radius $2^{1-k}$ centred in $E_S$; by
\eqref{eq:complementary-digit-masses}, it has mass at most
$C_d2^{-dA_S(k)}$. Thus any cover requires at least
$c_d2^{dA_S(k)}$ balls. Taking the lower and upper limits of
logarithmic ratios in \eqref{eq:digit-covering-number} proves
the two box-dimension formulas.
\end{remark}

Since $A_{S^c}(k)=k-A_S(k)$, the window formulas give the
following complementary identities.

\begin{lemma}
\label{lem:complementary-digits}
Let $S\subset\N$, put $F=E_{S^c}$, and let
$\mu=\mu_S$ and $\nu=\mu_{S^c}$ be the uniform digit measures.
Then
\begin{equation}\label{eq:complementary-digit-dimensions}
 \begin{split}
 &\overline{\dim}_{1-\theta}F
 =\overline{\dim}^{\mathrm{mod}}_{1-\theta}F
 =d-\dim_\theta^{\mathrm P}E_S
 \quad(0\le\theta\le1),\\
 &\dimH(E_S\times F)=d.
 \end{split}
\end{equation}
Moreover, $\mu\times\nu$ is $d$-Ahlfors regular on $E_S\times F$.
\end{lemma}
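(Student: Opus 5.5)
The plan is to reduce everything to the digit formulas of Proposition~\ref{prop:digit-mass-dimension-formulas} and the identity $A_{S^c}(k)=k-A_S(k)$. For $0<\theta<1$, put $p=1/(1-\theta)$ and $\eta=1-\theta$, so that $1/\eta=p$ and both formulas use the same window $\mathcal W_p(N)$. By \eqref{eq:complementary-digit-intermediate} applied to $S^c$,
\[
 \overline{\dim}_{\eta}F=\overline{\dim}^{\mathrm{mod}}_{\eta}F
 =d\limsup_{N\to\infty}\min_{k\in\mathcal W_p(N)}\Bigl(1-\frac{A_S(k)}{k}\Bigr)
 =d-d\liminf_{N\to\infty}\max_{k\in\mathcal W_p(N)}\frac{A_S(k)}{k},
\]
and by \eqref{eq:complementary-digit-packing} the right-hand side is $d-\dim_\theta^{\mathrm P}E_S$.

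The endpoints are handled separately.
\begin{itemize}
\item At $\theta=1$ ($\eta=0$), the convention gives $\overline{\dim}_0F=\overline{\dim}^{\mathrm{mod}}_0F=\dimH F$. Remark~\ref{rem:digit-endpoint-dimensions} gives $\dimH F=d\liminf_k(1-A_S(k)/k)=d-d\limsup_k A_S(k)/k$, which equals $d-\dimP E_S$ by \eqref{eq:complementary-digit-packing-endpoint}.
\item At $\theta=0$ ($\eta=1$), we have $\overline{\dim}_1F=\udimB F$ and $\overline{\dim}^{\mathrm{mod}}_1F=\dimP F$. Remark~\ref{rem:digit-endpoint-dimensions} applied to $S^c$ gives both as $d\limsup_k(1-A_S(k)/k)=d-\dimH E_S$.
\end{itemize}

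For Ahlfors regularity I will use \eqref{eq:complementary-digit-masses} for both factors. Take $(x,y)\in E_S\times F$ and $2^{-k-1}<r\le2^{-k}$. A ball of radius $r$ in $\R^{2d}$ contains the product of balls of radius $r/\sqrt2$ and is contained in the product of balls of radius $r$. Each of these factor balls has mass comparable to $2^{-dA_S(k')}$, respectively $2^{-dA_{S^c}(k')}$, with $k'\in\{k,k+1\}$. Since $|A_S(k+1)-A_S(k)|\le1$, we get
\[
 (\mu\times\nu)(B((x,y),r))\asymp_d 2^{-d(A_S(k)+A_{S^c}(k))}=2^{-dk}\asymp r^d.
\]
This holds for all small $r$. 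For large $r$ the bound is trivial, since the support is bounded and has positive mass at scale $1$.

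Finally, for $\dimH(E_S\times F)=d$:
\begin{itemize}
\item The upper bound $\dimH(E_S\times F)\le d$ comes from Theorem~\ref{thm:direct-ffk-hausdorff-product} at any $\theta$, together with the identity just proved: $\dim_\theta^{\mathrm P}E_S+\overline{\dim}_{1-\theta}F=d$.
\item The lower bound comes from the mass distribution principle applied to the $d$-regular measure $\mu\times\nu$.
\end{itemize}

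The main obstacle is bookkeeping rather than ideas. One needs $\min(1-x)=1-\max x$ over the same window $\mathcal W_p(N)$, and $\limsup(-\,\cdot)=-\liminf$. One also needs the endpoint conventions to match: Hausdorff at $\eta=0$, and upper box versus packing at $\eta=1$ for the unmodified and modified versions respectively. Both endpoint cases are covered by Remark~\ref{rem:digit-endpoint-dimensions}. The ball-product comparison requires the adjacent-scale stability of $A_S$, which is immediate.
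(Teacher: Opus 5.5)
Your proposal is correct and follows the paper's proof. You complement the digit counts via $A_{S^c}(k)=k-A_S(k)$ in the window formulas of Proposition~\ref{prop:digit-mass-dimension-formulas}, treat $\theta=1$ through Remark~\ref{rem:digit-endpoint-dimensions} and the Hausdorff convention, and obtain Ahlfors regularity from the product-of-balls inclusions together with \eqref{eq:complementary-digit-masses}. The differences are cosmetic: the paper absorbs $\theta=0$ into the general case, since those formulas already cover $\eta=1$ and $\theta=0$, and it reads off $\dimH(E_S\times F)=d$ directly from Ahlfors regularity instead of using Theorem~\ref{thm:direct-ffk-hausdorff-product} for the upper bound.
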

\begin{proof}
Since $S^c=\N\setminus S$, we have $A_{S^c}(k)=k-A_S(k)$.
For $0\le\theta<1$ and $p=(1-\theta)^{-1}$,
\eqref{eq:complementary-digit-intermediate} applied to $F$ gives
\begin{align*}
 \overline{\dim}_{1-\theta}F
 &=d\limsup_{N\to\infty}
     \min_{k\in\mathcal W_p(N)}\left(1-\frac{A_S(k)}{k}\right)\\
 &=d-d\liminf_{N\to\infty}
     \max_{k\in\mathcal W_p(N)}\frac{A_S(k)}{k}\\
 &=d-\dim_\theta^{\mathrm P}E_S.
\end{align*}
The endpoint $\theta=1$ follows from
Remark~\ref{rem:digit-endpoint-dimensions} and the Hausdorff
convention at intermediate parameter zero. The modified identity
follows from \eqref{eq:complementary-digit-intermediate} for
$\theta<1$ and the same convention for $\theta=1$.

The inclusions
\[
 B(x,r/\sqrt2)\times B(y,r/\sqrt2)
 \subset B((x,y),r)\subset B(x,r)\times B(y,r)
\]
and \eqref{eq:complementary-digit-masses}, also at the comparable
radius $r/\sqrt2$, give
\[
 (\mu\times\nu)(B((x,y),r))
 \asymp_d2^{-dA_S(k)}2^{-d(k-A_S(k))}=2^{-dk}\asymp_dr^d
\]
for $(x,y)\in E_S\times F$ and $2^{-k-1}<r\le2^{-k}$.
Hence $\mu\times\nu$ is $d$-Ahlfors regular and
$\dimH(E_S\times F)=d$.

\end{proof}

The same allocation of digits also gives the additive identities
\begin{equation}\label{eq:complementary-digit-convolution}
 E_S+F=[0,1]^d,\qquad
 \mu*\nu=\mathcal L^d\!\restriction[0,1]^d.
\end{equation}
Using the independent random vectors $(U_k)$ from the definition,
put
\[
 X=\sum_{k\in S}2^{-k}U_k,\qquad
 Y=\sum_{k\in S^c}2^{-k}U_k.
\]
Then $X$ and $Y$ are independent with distributions $\mu$ and $\nu$,
and $X+Y=\sum_{k\ge1}2^{-k}U_k$ is uniformly distributed on
$[0,1]^d$. This proves the convolution identity; the sumset identity
follows from $E_S+F=\supp(\mu*\nu)$.

\subsection{Classification of attainable profiles}

For a function $f:[0,1]\to[0,d]$, write $D^+f$ for its upper right
Dini derivative. Consider the differential condition
\begin{equation}\label{eq:dini}
 0\le D^+f(\theta)
 \le\frac{f(\theta)(d-f(\theta))}{d(1-\theta)}
 \qquad(0\le\theta<1).
\end{equation}

\begin{theorem}
\label{thm:attain}
Let $d\ge1$ be an integer and let $f:[0,1]\to[0,d]$.
There is a nonempty bounded set $E\subset\R^d$ with
$\dim_\theta^{\mathrm P}E=f(\theta)$ for every $\theta\in[0,1]$
if and only if $f$ is nondecreasing on $[0,1]$, continuous on
$[0,1)$, and satisfies \eqref{eq:dini}.
\end{theorem}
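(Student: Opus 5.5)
Necessity is already available from earlier results. If $E$ is nonempty and bounded, then $f(\theta)=\dim_\theta^{\mathrm P}E$ takes values in $[0,d]$ by Proposition~\ref{prop:basic-properties}\ref{item:basic-bounds}. It is nondecreasing by Remark~\ref{rem:parameter-monotonicity}. Corollary~\ref{cor:packing-spectrum-regularity} shows that it is locally Lipschitz, hence continuous, on $[0,1)$ and satisfies \eqref{eq:dini}. Nothing further is needed in this direction.

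For sufficiency, I look for $E=E_S$ with $S\subset\N$. By Proposition~\ref{prop:digit-mass-dimension-formulas}, Lemma~\ref{lem:complementary-digits} and Remark~\ref{rem:digit-endpoint-dimensions}, the requirement $\dim_\theta^{\mathrm P}E_S=f(\theta)$ for all $\theta\in[0,1]$ is equivalent to
\[
 \overline{\dim}_{\eta}E_{S^c}=h(\eta):=d-f(1-\eta)\quad(0<\eta\le1),
 \qquad \dimH E_{S^c}=d-f(1).
\]
For $\theta=1$ this uses the convention $\overline{\dim}_0=\dimH$. So the plan is to realize the reflected profile $h$ as the upper intermediate-dimension profile of a complementary digit set.

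First I translate the hypotheses on $f$ into conditions on $h$. The function $h$ is nondecreasing on $[0,1]$ and continuous on $(0,1]$, and it may jump at $0$ because $f$ may jump at $1$. Since $D^+f(\theta)$ is a right derivative at $\theta=1-\eta$, it corresponds to a left derivative of $h$ at $\eta$. Writing $g=h$, the bound \eqref{eq:dini} becomes
\[
 D_-h(\eta)\le\frac{h(\eta)(d-h(\eta))}{d\,\eta}\qquad(0<\eta\le1),
\]
where $D_-$ is the appropriate lower/upper left Dini derivative. This should be exactly the characterization of attainable upper intermediate profiles of Banaji and Rutar~\cite[Theorem~3.9]{BanajiRutar2022}, where $h(0)$ is allowed to be any value in $[0,\lim_{\eta\downarrow0}h(\eta)]$ and serves as the Hausdorff dimension. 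I will need to check the precise form of their derivative condition against this one, and in particular the roles of left versus right and of $\eta$ versus $\theta$. Their construction yields a homogeneous Moran set in $\R^d$ with prescribed $\overline{\dim}_\eta$ for $\eta\in(0,1]$ and prescribed $\dimH$.

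The key step, which is the Lemma~\ref{lem:moran-digit-transfer} announced in the introduction, is to transfer this Moran set to a binary digit set $E_T$. Homogeneous Moran constructions are governed by a counting function. I would encode it by choosing $T\subset\N$ so that $dA_T(k)$ tracks $\log_2 N_{2^{-k}}$ of the Moran set up to an $o(k)$ error. This is possible because Banaji--Rutar profiles can be built from sequences whose counting exponents have local slope in $[0,d]$, and $A_T$ can emulate any such slope with integer increments. Both the intermediate formula \eqref{eq:complementary-digit-intermediate} and the Hausdorff formula for $E_T$ depend only on $A_T(k)/k$ modulo $o(1)$ under windows $\mathcal W_p(N)$. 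So the profile and the Hausdorff endpoint carry over, and $h$ is realized by $E_T$. I then set $S=T^c$ and $E=E_S$.

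I expect the main obstacle to be this transfer. If Banaji--Rutar only provide a general Moran set, I would instead work directly with their scale functions: write their dimension formula as a $\limsup$-$\min$ of an average growth rate and show that it is realized by a $0$/$d$ slope function $A_T$. The endpoint $\theta=1$, where the jump occurs, needs a separate check that $\dimP E_S=d-\dimH E_{S^c}$, which follows from \eqref{eq:complementary-digit-packing-endpoint}.
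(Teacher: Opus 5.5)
Your route is the paper's. Necessity comes from Remark~\ref{rem:parameter-monotonicity} and Corollary~\ref{cor:packing-spectrum-regularity}. Sufficiency goes by reflecting to $h(\eta)=d-f(1-\eta)$, realizing $h$ by a Banaji--Rutar homogeneous Moran set, transferring to a digit set $E_T$, and passing to the complement via Lemma~\ref{lem:complementary-digits}. Your transfer sketch is what Lemma~\ref{lem:moran-digit-transfer} does, with the concrete choice $T=\{\lfloor\tau_j\rfloor:j\ge1\}$, where $\tau_j=-\log_2(r_1\cdots r_j)$. The constraint $r_j\le1/2$ makes these integers distinct, so $A_T(\lfloor t\rfloor)$ differs from the level count $j(t)$ by at most $1$. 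Then \cite[Proposition~2.11]{BanajiRutar2022}, \eqref{eq:homI} and \eqref{eq:homogeneous-endpoint-dimensions} carry over both the profile and the Hausdorff endpoint.

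The step you leave to be checked is a genuine gap, and the reflected condition is not literally the Banaji--Rutar condition. Reflection turns \eqref{eq:dini} into a bound on the upper \emph{left} Dini derivative of $h$, that is, on $\limsup_{u\downarrow0}(h(\eta)-h(\eta-u))/u$. Membership in $\mathcal H(0,d)$, which \cite[Theorem~3.9]{BanajiRutar2022} requires, asks instead for $0\le D^+h(\eta)\le h(\eta)(d-h(\eta))/(d\eta)$ with the upper \emph{right} Dini derivative at every $0<\eta<1$. Since
\[
D^+h(\eta)=\limsup_{u\downarrow0}\frac{f(1-\eta)-f(1-\eta-u)}{u},
\]
you need a bound on the left Dini derivative of $f$, and one-sided Dini bounds are not interchangeable for arbitrary functions. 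The fix uses continuity. Because $f$ is continuous on $[0,1)$, the function $g(\theta)=f(\theta)(d-f(\theta))/(d(1-\theta))$ is continuous and locally bounded there. The Dini mean-value principle then gives $f(\theta)-f(\theta-u)\le u\sup_{[\theta-u,\theta]}g$. Letting $u\downarrow0$ bounds the upper left Dini derivative of $f$ at $\theta$ by $g(\theta)$, which is exactly $D^+h(1-\theta)\le h(1-\theta)(d-h(1-\theta))/(d(1-\theta))$. The paper makes the same repair in a different form: local Lipschitz continuity of $f$ from \cite[Corollary~2.3]{BanajiRutar2022}, local absolute continuity of $h$ on $(0,1]$, the almost-everywhere bound on $h'$, and integration over $[\eta,\eta+u]$ using continuity of the right-hand side. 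With this inserted, your argument is complete.
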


For sufficiency, we use the homogeneous Moran realization of
Banaji and Rutar~\cite[Theorem~3.9]{BanajiRutar2022}.
The next lemma transfers its upper intermediate profile and Hausdorff
endpoint to a binary digit set. Complementary digits then give the
required packing spectrum.

\begin{lemma}
\label{lem:moran-digit-transfer}
Let $d\ge1$ be an integer, and let $C\subset[0,1]^d$ be a
homogeneous Moran set as in \cite[Section~2.3]{BanajiRutar2022}:
at level $j$, each construction cube is replaced by its $2^d$
corner subcubes with common contraction ratio $r_j\in(0,1/2]$.
There exists $T\subset\N$ such that
\[
 \overline{\dim}_{\eta}E_T=\overline{\dim}_{\eta}C
 \quad(0<\eta\le1),
 \qquad \dimH E_T=\dimH C.
\]
\end{lemma}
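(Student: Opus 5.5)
The plan is to match the mass functions of the two sets scale by scale and then read off both dimensions from the homogeneous window formulas, Theorem~\ref{thm:homogeneous-window-formula} and Remark~\ref{rem:homogeneous-endpoints}. Write $\rho_0=1$ and $\rho_j=r_1\cdots r_j$ for the side of the level-$j$ construction cubes of $C$; there are $2^{dj}$ of them. Let $\mu_C$ be the natural measure giving each level-$j$ cube mass $2^{-dj}$. Define
\[
 n_j=\lfloor -\log_2\rho_j\rfloor\quad(j\ge1),\qquad T=\{n_j:j\ge1\}.
\]
Since $-\log_2 r_j\ge1$, we have $-\log_2\rho_j\ge -\log_2\rho_{j-1}+1$, so $n_j\ge n_{j-1}+1$ and the $n_j$ are distinct. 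Hence $A_T(k)=\#\{j:n_j\le k\}$, and this differs by at most one from $N(2^{-k})$, where $N(r)=\#\{j\ge1:\rho_j\ge r\}$.

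The first step is a two-sided mass estimate for $C$:
\[
 \mu_C(B(x,r))\asymp_d 2^{-dN(r)}\qquad(x\in C,\ 0<r<1).
\]
For the upper bound, let $j=N(r)$, so that $\rho_{j+1}<r\le\rho_j$. The ball $B(x,r)$ meets at most $C_d$ level-$j$ cubes, because these cubes are disjoint apart from boundaries, have side $\rho_j\ge r$, and form a subfamily of a grid-like arrangement. This gives $\mu_C(B(x,r))\le C_d2^{-dj}$. For the lower bound, take the least $i$ with $\sqrt d\rho_i<r$. Then the level-$i$ cube containing $x$ lies in $B(x,r)$. Since all ratios are at most $1/2$, we have $\rho_{j+1+q}\le2^{-q}\rho_{j+1}<2^{-q}r$, so with $q=q(d)$ fixed we get $i\le j+1+q$. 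Consequently
\[
 \mu_C(B(x,r))\ge2^{-d(j+1+q)}.
\]
I expect this to be the main obstacle, because the touching case $r_j=1/2$ and very small ratios $r_{j+1}$ need care. The argument above is meant to handle both: the lower bound only loses boundedly many levels, and the upper bound only uses bounded overlap at a single level.

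The comparable estimate for $E_T$ is \eqref{eq:complementary-digit-masses}:
\[
 \mu_T(B(x,r))\asymp_d2^{-dA_T(k)}\qquad(2^{-k-1}<r\le2^{-k}).
\]
In addition, $|A_T(k)-N(r)|\le2$ in that range. Both measures therefore satisfy the spatial condition \eqref{eq:homogeneous}, and the associated functions $b_C(r)$ and $b_T(r)$ in \eqref{eq:homogeneous-mass-notation} satisfy $|b_C(r)-b_T(r)|=O_d(1/|\log r|)$.

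Finally, an additive $o(1)$ perturbation of $b$ does not change either
\[
 \limsup_{R\downarrow0}\inf_{R^{1/\eta}\le r\le R}b(r)\quad\text{or}\quad\liminf_{r\downarrow0}b(r).
\]
Applying \eqref{eq:homI} to $K=C$ and $K=E_T$ gives $\overline{\dim}_\eta E_T=\overline{\dim}_\eta C$ for $0<\eta\le1$. The Hausdorff formula in \eqref{eq:homogeneous-endpoint-dimensions} gives $\dimH E_T=\dimH C$, which completes the plan.
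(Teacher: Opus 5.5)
Your proposal is correct, and your digit set $T=\{\lfloor-\log_2\rho_j\rfloor\}$ is the same as the paper's. The difference is on the Moran side.

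\textbf{The paper's route.} It never computes ball masses for $C$. It quotes \cite[Proposition~2.11]{BanajiRutar2022}, which expresses $\overline{\dim}_\eta C$ and $\dimH C$ through the level-counting function $\sigma(2^{-t})=dj(t)/t$. It then shows $|dA_T(\lfloor t\rfloor)/t-\sigma(2^{-t})|\le d/t$ and applies \eqref{eq:homI} and \eqref{eq:homogeneous-endpoint-dimensions} only to $E_T$. It also adds a remark that the maximum norm used by Banaji and Rutar does not change the dimensions.

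\textbf{Your route.} You verify the spatial comparison \eqref{eq:homogeneous} for the natural measure on $C$ via the two-sided estimate $\mu_C(B(x,r))\asymp_d2^{-dN(r)}$. You then apply the paper's own window formulas to both $C$ and $E_T$, so only the comparison $|b_C-b_T|=O_d(1/|\log r|)$ is needed. Your mass estimate is sound:
\begin{itemize}
\item The upper bound needs only that level-$N(r)$ cubes have disjoint interiors and side at least $r$. A volume count then suffices; no grid structure is required. Counting coding cylinders rather than closed cubes avoids any boundary-mass issue in the touching case $r_j=1/2$.
\item The lower bound loses at most $q(d)+1$ levels, because every ratio is at most $1/2$. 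This also covers very small ratios $r_{j+1}$.
\end{itemize}

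\textbf{Comparison.} Your argument is self-contained and in effect reproves the cited Banaji--Rutar dimension formula inside the present homogeneous-window framework, at the cost of one extra covering estimate. The paper's argument is shorter. However, it depends on matching its $\sigma$ convention and its norm to those of \cite{BanajiRutar2022}.
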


\begin{proof}
Put $\rho_0=1$, $\rho_j=r_1\cdots r_j$, and
$\tau_j=-\log_2\rho_j$.  Define
\[
 T=\{\lfloor\tau_j\rfloor:j\ge1\}.
\]
For real $t>0$, let $j(t)$ be the unique integer satisfying
$\tau_{j(t)-1}<t\le\tau_{j(t)}$.
Write
\[
 \sigma(2^{-t})=\frac{d\,j(t)}{t}.
\]
For $t\ge1$, the digit count satisfies
\[
 A_T(\lfloor t\rfloor)=\#\{j\ge1:\tau_j<\lfloor t\rfloor+1\},
 \qquad j(t)=1+\#\{j\ge1:\tau_j<t\}.
\]
The interval $[t,\lfloor t\rfloor+1)$ contains at most one $\tau_j$.
Consequently,
\begin{equation}\label{eq:moran-digit-count-error}
 A_T(\lfloor t\rfloor)-j(t)\in\{-1,0\},
 \qquad
 \left|\frac{dA_T(\lfloor t\rfloor)}{t}-\sigma(2^{-t})\right|
 \le\frac{d}{t}.
\end{equation}
Fix $x\in E_T$. The digit mass estimate
\eqref{eq:complementary-digit-masses} therefore gives
\begin{equation}\label{eq:moran-digit-mass-error}
 \frac{\log\mu_T(B(x,2^{-t}))}{\log2^{-t}}
 =\sigma(2^{-t})+O_d(t^{-1}).
\end{equation}

By \cite[Proposition~2.11]{BanajiRutar2022},
\[
 \overline{\dim}_{\eta}C
 =\limsup_{R\downarrow0}\inf_{R^{1/\eta}\le r\le R}\sigma(r)
 \quad(0<\eta\le1),
 \qquad
 \dimH C=\liminf_{r\downarrow0}\sigma(r).
\]
Their construction uses the maximum norm; these dimension values
are unchanged under the equivalent Euclidean norm used here.
For each fixed $0<\eta\le1$, write $R=2^{-a}$ with
$a\to\infty$. On the entire window $a\le t\le a/\eta$,
the error in \eqref{eq:moran-digit-mass-error} is $O_d(a^{-1})$.
Taking window infima and then upper limits, and applying
\eqref{eq:homI} to $E_T$, proves the upper intermediate identity.
Taking lower limits over all $t\to\infty$ and using
\eqref{eq:homogeneous-endpoint-dimensions} proves the Hausdorff identity.
\end{proof}

\begin{proof}[Proof of Theorem~\ref{thm:attain}]
Necessity follows from Remark~\ref{rem:parameter-monotonicity}
and Corollary~\ref{cor:packing-spectrum-regularity}.
For sufficiency, suppose that $f$ satisfies the stated conditions
and put $h(\eta)=d-f(1-\eta)$ for $0\le\eta\le1$.
The locally bounded Dini estimate \eqref{eq:dini} and the Dini
mean-value principle \cite[Corollary~2.3]{BanajiRutar2022} imply
that $f$ is Lipschitz on every compact subinterval of $[0,1)$.
Thus $h$ is nondecreasing, continuous on $(0,1]$, and locally
absolutely continuous there, with
\[
 0\le h'(\eta)=f'(1-\eta)
 \le\frac{h(\eta)(d-h(\eta))}{d\eta}
 \quad\text{for almost every }\eta\in(0,1).
\]
Integrating over $[\eta,\eta+u]$, dividing by $u$, and letting
$u\downarrow0$ gives the same upper bound for $D^+h(\eta)$
at every $0<\eta<1$.
Hence $h\in\mathcal H(0,d)$ in the notation of
\cite[Definition~1.3]{BanajiRutar2022}.

By \cite[Theorem~3.9]{BanajiRutar2022}, with $\lambda=0$ and
$\alpha=d$, there is a homogeneous Moran set $C$ with
contraction ratios $r_j\in(0,1/2]$ such that
\[
 \overline{\dim}_{\eta}C=h(\eta)\quad(0<\eta\le1),
 \qquad \dimH C=h(0).
\]
Lemma~\ref{lem:moran-digit-transfer} supplies a binary digit set
$F=E_T$ with these same values. Put $S=\N\setminus T$ and
$E=E_S$. Lemma~\ref{lem:complementary-digits} gives
\[
 \dim_\theta^{\mathrm P}E
 =d-\overline{\dim}_{1-\theta}F
 =d-h(1-\theta)=f(\theta)
 \qquad(0\le\theta\le1).
\]
\end{proof}

\begin{remark}
\label{rem:digit-profile-realization}
The proof of Theorem~\ref{thm:attain} gives a stronger realization:
for every $f$ satisfying its conditions, there is a compact binary
digit set $E_S\subset[0,1]^d$ with
$\dim_\theta^{\mathrm P}E_S=f(\theta)$ for all $\theta\in[0,1]$.
For the realizing set $E_S$, the uniform digit measure also satisfies
\[
 d_\theta^{\mu_S}(x)=f(\theta)
 \qquad(x\in E_S,\ 0\le\theta<1),
\]
where the windowed local exponent is defined in
Definition~\ref{def:windowed-local-dimension}. This follows from
\eqref{eq:complementary-digit-masses} and
\eqref{eq:complementary-digit-packing}.
Its classical upper local dimension satisfies
\[
 \limsup_{r\downarrow0}
 \frac{\log\mu_S(B(x,r))}{\log r}
 =d\limsup_{k\to\infty}\frac{A_S(k)}{k}
 =f(1)\qquad(x\in E_S),
\]
by \eqref{eq:complementary-digit-masses} and
\eqref{eq:complementary-digit-packing-endpoint}. Thus the same measure
also attains the classical packing variational supremum.

The complementary set satisfies
\[
 \overline{\dim}_{\eta}E_{S^c}
 =\overline{\dim}^{\mathrm{mod}}_{\eta}E_{S^c}
 =d-f(1-\eta)\qquad(0\le\eta\le1).
\]
Thus this pair attains the Hausdorff product bound at every parameter.
\end{remark}

\begin{corollary}
\label{cor:reflected-profile-classification}
Let $d\ge1$ be an integer and let $f:[0,1]\to[0,d]$.
Then $f$ is an attainable packing spectrum in $\R^d$ if and only if
$h(\eta)=d-f(1-\eta)$ is an attainable upper intermediate-dimension
profile in $\R^d$.
\end{corollary}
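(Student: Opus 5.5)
The equivalence follows by combining Theorem~\ref{thm:attain} with a matching classification of upper intermediate-dimension profiles. That classification is obtained from the Banaji--Rutar theorem, transferred through Lemma~\ref{lem:moran-digit-transfer} and Lemma~\ref{lem:complementary-digits}. We interpret ``attainable upper intermediate-dimension profile in $\R^d$'' as the function $\eta\mapsto\overline{\dim}_\eta F$ on $[0,1]$ for some nonempty bounded $F\subset\R^d$, with the Hausdorff value at $\eta=0$. The first step is a purely algebraic translation. Put $h(\eta)=d-f(1-\eta)$. Then $f$ is nondecreasing iff $h$ is, and $f$ is continuous on $[0,1)$ iff $h$ is continuous on $(0,1]$. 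The Dini condition \eqref{eq:dini} for $f$ corresponds to the left Dini-type bound $0\le h'(\eta)\le h(\eta)(d-h(\eta))/(d\eta)$. As in the proof of Theorem~\ref{thm:attain}, local Lipschitz regularity lets us pass between upper right Dini derivatives and almost-everywhere derivatives by integrating. The target is therefore $h\in\mathcal H(0,d)$ in the sense of \cite[Definition~1.3]{BanajiRutar2022}.

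\textbf{($\Rightarrow$)} Suppose $f$ is attainable. Theorem~\ref{thm:attain} shows that $f$ satisfies the admissibility conditions, and its proof produces a digit set $E_S$ realizing $f$. Lemma~\ref{lem:complementary-digits} and Remark~\ref{rem:digit-profile-realization} give $\overline{\dim}_\eta E_{S^c}=d-f(1-\eta)=h(\eta)$ for all $0\le\eta\le1$. At $\eta=0$ this uses the Hausdorff convention: $\dimH E_{S^c}=d-\dimP E_S=d-f(1)$, by Remark~\ref{rem:digit-endpoint-dimensions}. Hence $h$ is attainable, realized by the compact set $E_{S^c}$.

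\textbf{($\Leftarrow$)} Suppose $h$ is realized by some bounded $F\subset\R^d$. Then $f$ is attainable as soon as $h\in\mathcal H(0,d)$: run the sufficiency argument of Theorem~\ref{thm:attain} from that point, namely Banaji--Rutar's Moran set, then Lemma~\ref{lem:moran-digit-transfer}, then complementary digits. So it suffices to show that every attainable upper intermediate profile in $\R^d$ lies in $\mathcal H(0,d)$, together with continuity on $(0,1]$ and the correct value at $0$. This is the necessity half of the Banaji--Rutar classification \cite{BanajiRutar2022}, which supplies monotonicity, continuity on $(0,1]$ and the Dini bound. The Hausdorff convention at $\eta=0$ imposes no further constraint, since $\mathcal H(0,d)$ only requires $h(0)\le\lim_{\eta\downarrow0}h(\eta)$ with $h$ nondecreasing. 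Translating back through the first step shows that $f$ is nondecreasing, continuous on $[0,1)$ and satisfies \eqref{eq:dini}, so $f$ is attainable by Theorem~\ref{thm:attain}.

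The main obstacle is matching conventions exactly in the $(\Leftarrow)$ direction. We must make sure that Banaji--Rutar's necessity result includes the endpoint $\eta=0$ with the Hausdorff value, and that it is stated with the same type of Dini derivative. Their bound is at $\eta\in(0,1)$; reflected, it concerns $f$ at $\theta\in(0,1)$, while \eqref{eq:dini} also requires a bound at $\theta=0$, which corresponds to the endpoint $\eta=1$. I would first check their Definition~1.3 for the endpoint conditions. If needed, I would recover the bound at $\theta=0$ from the integrated form: $f$ is locally Lipschitz near $0$ and the bound holds almost everywhere, so integrating and using continuity of the right-hand side yields the bound for $D^+f(0)$.
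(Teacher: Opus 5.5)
Your proposal is correct and follows essentially the same route as the paper. The forward direction uses Theorem~\ref{thm:attain} and Remark~\ref{rem:digit-profile-realization} to produce the complementary digit set $E_{S^c}$. The converse uses Banaji--Rutar's Theorem~A to get $h\in\mathcal H(0,d)$, then local Lipschitz regularity and integration of the almost-everywhere bound give the estimate for $D^+f$ at every $\theta<1$, including $\theta=0$.

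Your shortcut of feeding $h\in\mathcal H(0,d)$ directly into the Moran-to-digit construction is also valid, and it removes the need to check the endpoint $\theta=0$ at all.
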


\begin{proof}
If $f$ is an attainable packing spectrum, Theorem~\ref{thm:attain}
and Remark~\ref{rem:digit-profile-realization} give a complementary
digit set with upper intermediate profile $h$.
Conversely, if $h$ is an attainable
upper intermediate profile,
\cite[Theorem~A]{BanajiRutar2022} gives $h\in\mathcal H(0,d)$.
By the Dini mean-value principle \cite[Corollary~2.3]{BanajiRutar2022}, $h$ is Lipschitz on every
$[a,1]$ with $a>0$. Reflection therefore gives a locally
absolutely continuous function $f(\theta)=d-h(1-\theta)$ on
$[0,1)$ with the required differential bound almost everywhere.
For $0\le\theta<\theta+u<1$, absolute continuity gives
\[
 \begin{aligned}
 f(\theta+u)-f(\theta)
 =\int_{1-\theta-u}^{1-\theta}h'(v)\,dv
 \le\int_\theta^{\theta+u}
       \frac{f(w)(d-f(w))}{d(1-w)}\,dw.
 \end{aligned}
\]
Dividing by $u$ and letting $u\downarrow0$ recovers
\eqref{eq:dini} at every $\theta<1$.
Monotonicity and continuity on $[0,1)$ are preserved,
so Theorem~\ref{thm:attain} applies to $f$.
\end{proof}

\section{Examples and further questions}
\label{sec:further-questions}

\subsection{Consequences and limitations}

\begin{example}[Exact additivity beyond the endpoint bounds]
\label{ex:strict-product-improvement-proof}
\label{ex:strict-product-improvement}
There are compact digit sets $E,F\subset[0,1]$, each with distinct
Hausdorff and packing dimensions, for which the product is
Hausdorff-additive:
\[
 \dimH E=\dimH F=\frac14,\qquad
 \dimP E=1,\qquad \dimP F=\udimB F=\frac12,
 \qquad \dimH(E\times F)=\frac12.
\]
The complementary-window bound gives
\[
 \frac12\le\dimH(E\times F)\le\frac34-\frac{\theta}{4}
 \qquad(0<\theta<1).
\]
Letting $\theta\uparrow1$ gives the value $1/2$, whereas the classical
Hausdorff--packing bound gives only
\[
 \dimH(E\times F)
 \le\min\left\{\frac14+\frac12,\,1+\frac14\right\}
 =\frac34.
\]
\end{example}

\begin{proof}
By Remark~\ref{rem:digit-profile-realization}, choose a compact digit
set $E$ with $\dim_\theta^{\mathrm P}E=1/4$ for $0\le\theta<1$
and $\dim_1^{\mathrm P}E=1$. The function
$g(\theta)=1/2+\theta/4$ also satisfies the realization hypotheses:
its Dini bound reduces to $\theta(4-\theta)\ge0$ for
$0\le\theta<1$. Let $E_T$ realize $g$ and put $F=E_{T^c}$.
Lemma~\ref{lem:complementary-digits} gives
$\overline{\dim}_{\eta}F=(1+\eta)/4$ for $0\le\eta\le1$ and
$\dimP F=\udimB F=1/2$. Thus
Theorem~\ref{thm:direct-ffk-hausdorff-product} gives the displayed
upper bound for $0<\theta<1$, while the classical Hausdorff product
lower bound gives $\dimH(E\times F)\ge1/2$. Passing to the limit
proves the asserted additivity.
\end{proof}

\begin{example}[Equal endpoint dimensions, different geometry]
\label{ex:extreme-profiles}
Let $d\ge1$ be an integer and fix $0<h<d$. The profiles
\[
 f_{\mathrm{flat}}(\theta)=
 \begin{cases}
 h,&0\leq\theta<1,\\
 d,&\theta=1,
 \end{cases}
 \qquad
 f_{\mathrm{fast}}(\theta)
 =\frac{dh}{d(1-\theta)+\theta h}
 \quad(0\leq\theta\leq1)
\]
are realized by compact digit sets in $[0,1]^d$ with the same
Hausdorff dimension $h$ and packing dimension $d$. The flat profile
has a jump of size $d-h$ at the packing endpoint. The fast profile
attains the upper bound in \eqref{eq:dim_inequality} for every pair
$0\le\theta_1<\theta_2<1$, proving sharpness of the full
two-parameter estimate. Moreover, every $\alpha$-H\"older surjection
from a flat realization onto a fast realization satisfies
$\alpha\le h/d$, although the separate Hausdorff and packing
bounds give only $\alpha\le1$.
\end{example}

\begin{proof}
Both profiles are nondecreasing and continuous on $[0,1)$, with
$f_{\mathrm{flat}}'=0$ there and
\[
 f_{\mathrm{fast}}'(\theta)
 =\frac{f_{\mathrm{fast}}(\theta)
        (d-f_{\mathrm{fast}}(\theta))}{d(1-\theta)}
 \qquad(0\le\theta<1).
\]
Theorem~\ref{thm:attain} and
Remark~\ref{rem:digit-profile-realization} therefore supply the digit
realizations. Direct substitution of $f_{\mathrm{fast}}$ into
\eqref{eq:dim_inequality} gives equality for every pair of parameters.
For a H\"older surjection $E_{\mathrm{flat}}\to E_{\mathrm{fast}}$,
the classical Hausdorff bound first gives $\alpha\le1$.
Proposition~\ref{prop:basic-properties}\ref{item:basic-mappings}
then yields
$f_{\mathrm{fast}}(\theta)\le h/\alpha$ for $\theta<1$;
letting $\theta\uparrow1$ gives $\alpha\le h/d$.
\end{proof}

For the uniform digit measure $\mu$ on a flat realization,
Remark~\ref{rem:digit-profile-realization} gives, at every
$x\in\supp\mu$,
\[
 \lim_{\theta\uparrow1}d_\theta^\mu(x)
 =h<d
 =\limsup_{r\downarrow0}\frac{\log\mu(B(x,r))}{\log r}.
\]
Hence the finite-window local exponents do not in general converge
to the classical upper local dimension at the packing endpoint.

\begin{example}[Compactness does not ensure attainment]
\label{ex:add-nonattainment}
Although positive critical outer measure guarantees attainment in
Theorem~\ref{thm:main} by Corollary~\ref{cor:positive-content},
compactness does not. There is a compact set $K\subset[0,1]$ such
that, for every $0<\theta<1$,
\[
 \dim_\theta^{\mathrm P}K=1,
 \qquad \mathcal P^1_\theta(K)=0,
\]
but every $\mu\in\mathcal M_1(K)$ satisfies
\[
 \operatorname*{ess\,inf}_{x\sim\mu}d_\theta^\mu(x)<1.
\]
\end{example}

\begin{proof}
For $j\ge1$, choose a compact set $C_j\subset[0,1]$ with
\[
 \dimH C_j=\dimP C_j=D_j:=\frac{j}{j+1}\uparrow1;
\]
for example, take the two-branch Cantor set with contraction ratio
$2^{-1/D_j}$. Put
\[
 K_j=2^{-j}+2^{-j-2}C_j,
 \qquad K=\{0\}\cup\bigcup_{j\ge1}K_j.
\]
The sets $K_j$ lie in pairwise disjoint closed intervals accumulating
at $0$, so $K$ is compact. For every $0<\theta<1$, the endpoint
bounds and bi-Lipschitz invariance give
$\dim_\theta^{\mathrm P}K_j=D_j$. Countable stability gives
$\dim_\theta^{\mathrm P}K=1$, while the critical-exponent property
and countable subadditivity give $\mathcal P^1_\theta(K)=0$.

Fix $0<\theta<1$ and $\mu\in\mathcal M_1(K)$.
If $\mu(\{0\})>0$, then $d_\theta^\mu(0)=0$.
Otherwise $\mu(K_j)>0$ for some $j$, and
Corollary~\ref{cor:measure-spectra} gives
\[
 \operatorname*{ess\,inf}_{\mu}d_\theta^\mu
 =P^-_\theta(\mu)
 \le\dim_\theta^{\mathrm P}K_j=D_j<1.
\]
Thus no probability measure attains the supremum.
\end{proof}

\subsection{Comparison with other intermediate dimensions}

For a compactly supported Borel probability measure $\nu$ and
$0<\theta\le1$, write $\dim_\theta^{\mathrm{AP}}\nu$ for
Angelini's packing intermediate dimension.
This is denoted by $\dim_{P,\theta}\nu$ in
\cite{Angelini2026c}. We use the ball characterization
\cite[Theorem~4.3]{Angelini2026c}:
\begin{equation}\label{eq:angelini-measure-ball}
 \dim_\theta^{\mathrm{AP}}\nu
 =\sup\left\{s\ge0:
   \liminf_{R\downarrow0}
   \sup_{R^{1/\theta}\le r\le R}
      \frac{\nu(B(x,r))}{r^s}=0
   \text{ for $\nu$-almost every }x\right\}.
\end{equation}
For a nonempty analytic set $K$, Angelini's set invariant is
\cite[Definition~8.2]{Angelini2026c}
\begin{equation}\label{eq:angelini-set-definition}
 \dim_\theta^{\mathrm{AP}}K
 :=\sup_{\substack{\nu\text{ probability}\\\supp\nu\Subset K}}
       \dim_\theta^{\mathrm{AP}}\nu
 \qquad(0<\theta\le1).
\end{equation}
At $\theta=1$, this gives $\dimP K$.

\begin{proposition}
\label{rem:angelini-homogeneous-separation}
\label{prop:angelini-homogeneous-comparison}
Let $\mu$ be a Borel probability measure with
$\supp\mu=K\subset\R^d$ satisfying \eqref{eq:homogeneous}. Then
\begin{equation}
\label{eq:angelini-homogeneous-intermediate}
 \dim_\theta^{\mathrm{AP}}K
 =\overline{\dim}_\theta K
 =\overline{\dim}^{\mathrm{mod}}_\theta K
 \qquad(0<\theta\le1).
\end{equation}
\end{proposition}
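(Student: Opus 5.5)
The second equality is Remark~\ref{rem:homogeneous-countable-modification}, so the work is to show $\dim_\theta^{\mathrm{AP}}K=\overline{\dim}_\theta K$. By Theorem~\ref{thm:homogeneous-window-formula}, \eqref{eq:homI},
\[
 \overline{\dim}_\theta K=\beta_\theta:=\limsup_{R\downarrow0}\inf_{R^{1/\theta}\le r\le R}b(r),
\]
where $b(r)=\log m(r)/\log r$ is computed at a fixed $x_0\in K$. The comparison \eqref{eq:homogeneous} says that replacing $x_0$ by any $x\in K$ changes $\log\mu(B(x,r))$ by at most $\log L$. Hence, uniformly in $x\in K$,
\[
 \frac{\log\mu(B(x,r))}{\log r}=b(r)+O\bigl(1/|\log r|\bigr).
\]

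\emph{Lower bound.} I will first show $\dim_\theta^{\mathrm{AP}}\mu\ge\beta_\theta$ using the ball characterization \eqref{eq:angelini-measure-ball}. Fix $s<\beta_\theta$ and pick $s<u<\beta_\theta$. There are $R_k\downarrow0$ with $b(r)>u$ on all of $[R_k^{1/\theta},R_k]$. Then, for every $x\in K$,
\[
 \sup_{R_k^{1/\theta}\le r\le R_k}\frac{\mu(B(x,r))}{r^s}\le L\sup_r r^{b(r)-s}\le LR_k^{u-s}\to0 .
\]
So the $\liminf$ in \eqref{eq:angelini-measure-ball} is zero at every point, giving $s\le\dim_\theta^{\mathrm{AP}}\mu\le\dim_\theta^{\mathrm{AP}}K$ via \eqref{eq:angelini-set-definition}.

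\emph{Upper bound.} I need $\dim_\theta^{\mathrm{AP}}\nu\le\beta_\theta$ for every probability $\nu$ with $\supp\nu\Subset K$, not only for $\mu$. This is the main obstacle. My plan is to compare each $\nu$ with $\mu$ through a covering argument in the spirit of \eqref{eq:homogeneous-subset-covering-lower}. Fix $t>s>\beta_\theta$, and suppose for contradiction that $\nu$-a.e.\ $x$ satisfies
\[
 \liminf_{R\downarrow0}\sup_{R^{1/\theta}\le r\le R}\nu(B(x,r))/r^s=0 .
\]
Since $s>\beta_\theta$, for every small $R$ the window $[R^{1/\theta},R]$ contains a scale $r_R$ with $b(r_R)<s'$ for some fixed $\beta_\theta<s'<s$. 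Take a point $x$ and a sequence $R_k\to0$ along which the window supremum is below $\varepsilon_k\to0$. Then $\nu(B(x,r_{R_k}))\le\varepsilon_k r_{R_k}^s$ at such points. To turn this pointwise smallness into a contradiction with $\nu$ being a probability, I will use a uniformized version: by an Egorov/measurability argument as in Lemma~\ref{lem:compact-uniformization}, pass to a compact set $A$ with $\nu(A)>1/2$ on which a common sequence of good windows works. This is the delicate point, since the sequences $R_k$ depend on $x$. I would try to avoid needing a common sequence by working at each fixed scale with the sets
\[
 G_R=\Bigl\{x:\sup_{\text{window}}\nu(B(x,r))/r^s<\varepsilon\Bigr\}.
\]
Cover $K$ at scale $r_R$ by $N\lesssim 1/m(r_R)\le r_R^{-s'}$ balls, as in the proof of Lemma~\ref{lem:homogeneous-scale-estimates}. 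Then
\[
 \nu(G_R)\le C\varepsilon\,N\,r_R^{s}\lesssim\varepsilon\, r_R^{s-s'},
\]
which tends to zero. Summing over dyadic $R$ with $\varepsilon=1$ shows that $\nu$-a.e.\ $x$ lies in only finitely many $G_{2^{-k}}$, contradicting the almost-everywhere $\liminf=0$ after adjusting $R$ to dyadic values by monotonicity. Here I expect to lose only constants, since $r_R\le R$ decays geometrically in $k$. This gives $s\ge\dim_\theta^{\mathrm{AP}}\nu$ for all $s>\beta_\theta$, which completes the upper bound and the proof. The case $\theta=1$ is consistent with \eqref{eq:homI} and Remark~\ref{rem:digit-endpoint-dimensions}.
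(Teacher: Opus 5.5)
Your lower bound and your use of Remark~\ref{rem:homogeneous-countable-modification} for the second equality match the paper. Your upper bound is correct but argued differently.

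The paper fixes $\nu\in\mathcal M_1(K)$ and invokes the differentiation theorem for Radon measures \cite[Theorem~2.12(1)]{Mattila1995}. For $\nu$-a.e.\ $x$ this gives $\mu(B(x,r))\le C_x\nu(B(x,r))$ for small $r$, hence $\nu(B(x,r))\ge(LC_x)^{-1}m(r)$. At a radius in the window with $b(r)<t$, the ratio $\nu(B(x,r))/r^s$ is then at least $(LC_x)^{-1}R^{t-s}\to\infty$.

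You instead prove the needed mass lower bound by hand. At a good radius $r_R$, you cover $K$ by $\lesssim 1/m(r_R)$ balls of radius $r_R/2$. The halved radius is what gives $B(y,r_R/2)\subset B(x,r_R)$ for $x\in G_R\cap B(y,r_R/2)$. This yields $\nu(G_R)\lesssim r_R^s/m(r_R)\le R^{s-s'}$, and Borel--Cantelli over dyadic $R$ then gives a positive lower bound for the window supremum $\nu$-a.e. This is essentially the single-scale covering argument behind the density theorem, specialized to the homogeneous setting. It avoids Besicovitch but only gives $\liminf\ge c>0$ rather than $\infty$, which suffices. It also needs a dyadic reduction that the paper does not.

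Make that reduction precise. For $R\in(2^{-k-1},2^{-k}]$, compare with the window of $2^{-k-1}$, not $2^{-k}$. That window lies in $[2^{-(k+1)/\theta},R]$. Radii in $[2^{-(k+1)/\theta},R^{1/\theta})$ can be pushed up to $R^{1/\theta}$ by monotonicity of $r\mapsto\nu(B(x,r))$, at a cost of the factor $2^{s/\theta}$. With the window of $2^{-k}$, the supremum could be attained in $(R,2^{-k}]$, where monotonicity does not help.

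The contradiction hypothesis and the Egorov detour are unnecessary. The fixed-scale estimate shows directly that every $s>\beta_\theta$ is excluded from the set in \eqref{eq:angelini-measure-ball}.
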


\begin{proof}
Fix $x_0\in K$, use the mass function $m(r)$ and exponent $b(r)$
from \eqref{eq:homogeneous-mass-notation}, and put
$\beta=\limsup_{R\downarrow0}\inf_{R^{1/\theta}\le r\le R}b(r)$.
For $0\le s<t<\beta$, arbitrarily small windows have $b(r)>t$
throughout, so spatial comparison gives
\[
\sup_{R^{1/\theta}\le r\le R}\frac{\mu(B(x,r))}{r^s}
 \le LR^{t-s}\longrightarrow0
 \qquad(x\in K)
\]
along those windows. Thus $\dim_\theta^{\mathrm{AP}}\mu\ge\beta$.

For the reverse bound, let $\nu\in\mathcal M_1(K)$ be arbitrary.
The differentiation theorem for Radon measures
\cite[Theorem~2.12(1)]{Mattila1995} gives, for $\nu$-almost every $x$,
a constant $C_x\ge1$ such that, for all sufficiently small $r$,
\[
 \mu(B(x,r))\le C_x\nu(B(x,r)),
 \qquad
 \nu(B(x,r))\ge(LC_x)^{-1}m(r).
\]
If $\beta<t<s$, every sufficiently small window contains a radius
with $b(r)<t$, and hence
\[
\sup_{R^{1/\theta}\le r\le R}\frac{\nu(B(x,r))}{r^s}
 \ge(LC_x)^{-1}R^{t-s}\longrightarrow\infty
 \quad\text{for $\nu$-almost every }x.
\]
Thus \eqref{eq:angelini-measure-ball} gives
$\dim_\theta^{\mathrm{AP}}\nu\le\beta$ for every $\nu\in\mathcal M_1(K)$.
Taking the supremum in \eqref{eq:angelini-set-definition} and applying
Theorem~\ref{thm:homogeneous-window-formula} gives
$\dim_\theta^{\mathrm{AP}}K=\overline{\dim}_\theta K$.
Remark~\ref{rem:homogeneous-countable-modification} supplies the
modified equality in \eqref{eq:angelini-homogeneous-intermediate}.
\end{proof}

\begin{example}[Incomparability with other intermediate dimensions]
\label{ex:comparison-constructions}
\label{ex:add-geometric-block-example}
The examples below distinguish our packing spectrum from Angelini's
packing intermediate dimension. Under the spatial mass comparison
assumption \eqref{eq:homogeneous}, Angelini's set invariant nevertheless
coincides with both the upper intermediate dimension and its countably
stable modification. All the digit sets below satisfy this assumption.

\emph{Geometric blocks.} Let
\[
 S=\bigcup_{j=0}^{\infty}
       \{4^j,4^j+1,\ldots,2\cdot4^j-1\},\qquad E=E_S\subset[0,1].
\]
Then
\begin{align}
 \dim_\theta^{\mathrm P}E
 &=\min\left\{\frac{1}{3-2\theta},\frac{2}{3}\right\}
 &&(0\le\theta\le1),\label{eq:add-block-packing}\\
 \dim_\theta^{\mathrm{AP}}E
 =\overline{\dim}_\theta E
 =\overline{\dim}^{\mathrm{mod}}_\theta E
 &=\max\left\{\frac{1}{3},\frac{2\theta}{1+2\theta}\right\}
 &&(0<\theta\le1).\label{eq:add-block-intermediate}
\end{align}

\emph{Flat profiles.} For every integer $d\ge1$ and $0\le h<d$,
there is a compact digit set $K_h\subset[0,1]^d$ such that
\begin{equation}\label{eq:maximal-construction-separation}
 \begin{aligned}
 \dim_\theta^{\mathrm P}K_h&=h,\\
 \dim_\theta^{\mathrm{AP}}K_h
 =\overline{\dim}_\theta K_h
 =\overline{\dim}^{\mathrm{mod}}_\theta K_h&=d
 \qquad(0<\theta<1).
 \end{aligned}
\end{equation}
\end{example}

\begin{proof}
For the first part, define $a:(0,\infty)\to[1/3,2/3]$ by
$a(4t)=a(t)$ and
\[
 a(t)=
 \begin{cases}
  1-\dfrac{2}{3t},&1\le t\le2,\\[4pt]
  \dfrac{4}{3t},&2\le t\le4.
 \end{cases}
\]
For each fixed $p\ge1$, the explicit formulas for $A_S$ and $a$ give,
uniformly for $t\in[1,4]$,
\begin{align*}
\max_{k\in\mathcal W_p(\lfloor4^j t\rfloor)}
\frac{A_S(k)}{k}
&=\max_{t\le u\le pt}a(u)+O_p(4^{-j}),\\
\min_{k\in\mathcal W_p(\lfloor4^j t\rfloor)}
\frac{A_S(k)}{k}
&=\min_{t\le u\le pt}a(u)+O_p(4^{-j}).
\end{align*}

Consequently,
\[
 \begin{aligned}
 \liminf_{N\to\infty}\max_{k\in\mathcal W_p(N)}\frac{A_S(k)}{k}
 &=\min_{1\le t\le4}\max_{t\le u\le pt}a(u)=:M(p),\\
 \limsup_{N\to\infty}\min_{k\in\mathcal W_p(N)}\frac{A_S(k)}{k}
 &=\max_{1\le t\le4}\min_{t\le u\le pt}a(u)=:m(p).
 \end{aligned}
\]
For $1/3\le b<2/3$, the connected components of $\{u:a(u)\le b\}$ are
\[
 4^j\left[\frac{4}{3b},\frac{8}{3(1-b)}\right]
 \qquad(j\in\mathbb Z).
\]
Their endpoint ratio is $2b/(1-b)$, so one contains a window
$[t,pt]$ if and only if $p\le2b/(1-b)$.
Using also $a(2u)=1-a(u)$, we obtain
\[
 M(p)=\min\left\{\frac{p}{p+2},\frac23\right\},
 \qquad
 m(p)=1-M(p)=\max\left\{\frac{2}{p+2},\frac13\right\}.
\]
Proposition~\ref{prop:digit-mass-dimension-formulas}, with
$p=(1-\theta)^{-1}$ and $p=\theta^{-1}$, gives the packing and
upper intermediate profiles, including the modified profile.
Remark~\ref{rem:digit-endpoint-dimensions} gives $\dimH E=1/3$
and $\dimP E=2/3$, supplying their endpoints.
The uniform digit measure satisfies
\eqref{eq:homogeneous}, so Proposition~\ref{rem:angelini-homogeneous-separation}
identifies Angelini's profile with the upper intermediate profile.

For the second part, Remark~\ref{rem:digit-profile-realization}
gives a digit set $K_h=E_T\subset[0,1]^d$ with the flat profile of
Example~\ref{ex:extreme-profiles}, including when $h=0$. Its
complementary digit set $L_h=E_{T^c}$ satisfies
\[
 \dimH L_h=d-\dimP K_h=0.
\]
Corollary~\ref{cor:packing-spectrum-regularity} therefore gives
$\dim_\alpha^{\mathrm P}L_h=0$ for every $0\le\alpha<1$.
Applying Lemma~\ref{lem:complementary-digits} to $L_h$ and
then Proposition~\ref{rem:angelini-homogeneous-separation} gives \eqref{eq:maximal-construction-separation}.
\end{proof}

\subsection{Further questions}

Complementary digit pairs attain the product bound at every
parameter by Lemma~\ref{lem:complementary-digits}.
Example~\ref{ex:strict-product-improvement} attains the optimized
bound as $\theta\uparrow1$.

\begin{question}\label{question:sharp-product-pairs}
Which ordered pairs of nonempty compact sets $E\subset\R^m$ and
$F\subset\R^n$ satisfy
\[
 \dimH(E\times F)=\inf_{0\le\theta\le1}
 \left\{\dim_\theta^{\mathrm P}E+\overline{\dim}_{1-\theta}F\right\}?
\]
\end{question}

Complementary digit sets satisfy the stronger identity
$E_S+E_{S^c}=[0,1]^n$, motivating an additive version of product
testing. For a nonempty compact set $E\subset\R^n$, put
\[
 \mathcal C(E)=\{B\subset\R^n:B\text{ nonempty and compact},
 \ \operatorname{int}(E+B)\ne\varnothing\}.
\]
Theorem~\ref{thm:direct-ffk-hausdorff-product} and Lipschitz
addition give
\begin{equation}\label{eq:additive-complement-bound}
 \dim_\theta^{\mathrm P}E
 \ge n-\inf_{B\in\mathcal C(E)}\overline{\dim}_{1-\theta}B
 \qquad(0\le\theta\le1),
\end{equation}
and the analogous bound with
$\overline{\dim}^{\mathrm{mod}}_{1-\theta}B$.
Equality at both endpoints, including for the modified dimensions,
follows from Keleti--M\'ath\'e~\cite[Theorems~1.7--1.8]{KeletiMathe2023}.
For every binary digit set, Lemma~\ref{lem:complementary-digits} and
\eqref{eq:complementary-digit-convolution} give equality at all
parameters with the single complement $B=E_{S^c}$.
The product tester supplied by
Lemma~\ref{lem:dyadic-tester-construction} does not settle the
additive problem: $\dimH\Gamma\ge n$ for a subset
$\Gamma\subset E\times B$ does not imply
$\operatorname{int}(E+B)\ne\varnothing$.

\begin{question}\label{question:additive-complement}
For every nonempty compact $E\subset\R^n$ and every $0<\theta<1$,
is
\[
 \dim_\theta^{\mathrm P}E
 =n-\inf_{B\in\mathcal C(E)}\overline{\dim}_{1-\theta}B?
\]
Does equality also hold with
$\overline{\dim}^{\mathrm{mod}}_{1-\theta}B$ in place of
$\overline{\dim}_{1-\theta}B$?
\end{question}

Proposition~\ref{prop:angelini-homogeneous-comparison} proves the
equality below under the spatial mass comparison assumption
\eqref{eq:homogeneous}.

\begin{question}\label{question:angelini-modified}
For which natural classes of nonempty analytic sets $E\subset\R^d$
does
\[
 \dim_\theta^{\mathrm{AP}}E
 =\overline{\dim}^{\mathrm{mod}}_\theta E
 \qquad(0<\theta<1)
\]
hold? In particular, can the spatial mass comparison assumption
\eqref{eq:homogeneous} be replaced by weaker geometric or
measure-theoretic conditions?
\end{question}
Such an extension would give a variational characterization of
modified upper intermediate dimension on these classes of sets
in terms of Angelini's measure invariant.

The monotonicity of the packing spectrum gives the limiting invariant
\begin{equation}\label{eq:left-packing-limit}
 \dim^{\mathrm{qP}}E
 :=\lim_{\theta\uparrow1}\dim_\theta^{\mathrm P}E
 =\sup_{0\le\theta<1}\dim_\theta^{\mathrm P}E.
\end{equation}
Proposition~\ref{prop:basic-properties} and
Corollary~\ref{cor:packing-spectrum-regularity} give
\[
 \dimH E\le\dim^{\mathrm{qP}}E\le\dimP E.
\]
Taking suprema in the corresponding spectral properties shows that
$\dim^{\mathrm{qP}}$ is countably stable and
bi-Lipschitz invariant, satisfies
\[
 \dim^{\mathrm{qP}}f(E)
 \le\alpha^{-1}\dim^{\mathrm{qP}}E
\]
for every $\alpha$-H\"older map $f$ with $0<\alpha\le1$, and is
compactly determined on nonempty analytic sets:
\[
 \dim^{\mathrm{qP}}E
 =\sup_{\substack{\varnothing\ne K\subset E\\K\text{ compact}}}
       \dim^{\mathrm{qP}}K.
\]
The flat and fast realizations in Example~\ref{ex:extreme-profiles}
have limiting values $h$ and $d$, respectively, despite their
identical Hausdorff and packing dimensions.

For a nonempty bounded set $F$, define
\[
 \overline{\dim}_{0+}F
 :=\lim_{\eta\downarrow0}\overline{\dim}_{\eta}F,
 \qquad
 \overline{\dim}^{\mathrm{mod}}_{0+}F
 :=\lim_{\eta\downarrow0}\overline{\dim}^{\mathrm{mod}}_{\eta}F.
\]
A positive gap $\dimP E-\dim^{\mathrm{qP}}E$ forces a jump at
$\eta=0$ in the upper intermediate dimensions of every sufficiently
accurate packing-dimension testing set.
\begin{proposition}
\label{rem:left-packing-limit}
\label{prop:tester-endpoint-jumps}
Let $E\subset\R^n$ be nonempty and analytic, let
$F\subset[0,1]^n$ be nonempty and compact, and let $\varepsilon\ge0$.
If
\[
 \dimH(E\times F)-\dimH F\ge\dimP E-\varepsilon,
\]
then
\begin{equation}\label{eq:tester-endpoint-jump}
 \begin{split}
 \overline{\dim}_{0+}F-\dimH F
 \ge\overline{\dim}^{\mathrm{mod}}_{0+}F-\dimH F
 \ge\dimP E-\dim^{\mathrm{qP}}E-\varepsilon.
 \end{split}
\end{equation}
\end{proposition}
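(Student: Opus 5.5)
The first inequality in \eqref{eq:tester-endpoint-jump} is immediate: for bounded $F$, the one-set decomposition gives $\overline{\dim}^{\mathrm{mod}}_{\eta}F\le\overline{\dim}_{\eta}F$ for every $0<\eta\le1$. Letting $\eta\downarrow0$ yields $\overline{\dim}^{\mathrm{mod}}_{0+}F\le\overline{\dim}_{0+}F$. Both limits exist because upper intermediate dimensions are nondecreasing in $\eta$, and this monotonicity passes to the modified version by taking infima over decompositions. So the substance is the second inequality.

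For that, I would apply the modified product bound of Theorem~\ref{thm:direct-ffk-hausdorff-product} with $E$ as the packing-spectrum factor and $F$ as the intermediate factor. For each $0\le\theta<1$ this gives
\[
 \dimH(E\times F)\le\dim_\theta^{\mathrm P}E+\overline{\dim}^{\mathrm{mod}}_{1-\theta}F.
\]
The theorem is stated for arbitrary sets in the modified form, so unboundedness of $E$ causes no trouble. Combining this with the hypothesis gives
\[
 \dimP E-\varepsilon+\dimH F\le\dim_\theta^{\mathrm P}E+\overline{\dim}^{\mathrm{mod}}_{1-\theta}F.
\]
Since $\dim_\theta^{\mathrm P}E\le\dim^{\mathrm{qP}}E$ by \eqref{eq:left-packing-limit}, rearranging yields
\[
 \overline{\dim}^{\mathrm{mod}}_{1-\theta}F-\dimH F\ge\dimP E-\dim^{\mathrm{qP}}E-\varepsilon
 \qquad(0\le\theta<1).
\]
Now let $\theta\uparrow1$, so that $\eta=1-\theta\downarrow0$, to obtain the bound for $\overline{\dim}^{\mathrm{mod}}_{0+}F$.

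The only point needing care is that the limit $\eta\downarrow0$ runs through positive $\eta$ rather than the endpoint $\eta=0$. At $\eta=0$ the convention sets the modified dimension equal to $\dimH F$, which would make the left side zero. The bound at each $\theta<1$ is uniform, so it survives passage to the limit. Monotonicity in $\eta$ of $\overline{\dim}^{\mathrm{mod}}_\eta$ guarantees that the limit defining $\overline{\dim}^{\mathrm{mod}}_{0+}F$ exists. Even without monotonicity, taking the $\liminf$ of the uniform bound suffices.

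I expect no genuine obstacle. The mildly delicate step is confirming that the monotonicity of $\overline{\dim}_\eta$ in $\eta$ (from \cite{FFK2020}) transfers to the modified dimension, so that the $0+$ limits are well defined. Analyticity of $E$ is not needed for this argument.
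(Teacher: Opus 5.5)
Your proof is correct and is essentially the paper's argument: apply the modified form of Theorem~\ref{thm:direct-ffk-hausdorff-product} for $\theta<1$, let $\theta\uparrow1$, combine with the hypothesis, and use $\overline{\dim}^{\mathrm{mod}}_{0+}F\le\overline{\dim}_{0+}F$. The paper takes the limit in the product bound before combining, while you first replace $\dim_\theta^{\mathrm P}E$ by $\dim^{\mathrm{qP}}E$ and then let $\theta\uparrow1$; the order makes no difference, and your side remarks that the limits exist by monotonicity in $\eta$ and that analyticity is not used are also correct.
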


\begin{proof}
Letting $\theta\uparrow1$ in the modified assertion of
Theorem~\ref{thm:direct-ffk-hausdorff-product} gives
\[
 \dimH(E\times F)
 \le\dim^{\mathrm{qP}}E
      +\overline{\dim}^{\mathrm{mod}}_{0+}F.
\]
Combine this with the hypothesis and
$\overline{\dim}^{\mathrm{mod}}_{0+}F\le\overline{\dim}_{0+}F$.
\end{proof}

It remains to characterize $\dim^{\mathrm{qP}}$ directly through
covers or measures and to determine geometric criteria for
$\dim^{\mathrm{qP}}E=\dimP E$.

\section*{Acknowledgements}
The author thanks Lars Olsen for the invitation to visit the Analysis
Group at the University of St Andrews, where part of this work was
carried out, and Kenneth Falconer for helpful discussions. The visit
was supported by the China Scholarship Council
(award No.\ 202506840058).

\end{document}